\definecolor{seagreen}{RGB}{46,139,87}
\definecolor{maroon}{RGB}{128,0,0}
\definecolor{darkviolet}{RGB}{148,0,211}
\definecolor{twelve}{RGB}{100,100,170}
\definecolor{thirteen}{RGB}{100,150,50}
\definecolor{fourteen}{RGB}{200,0,0}
\definecolor{fifteen}{RGB}{0,200,0}
\definecolor{sixteen}{RGB}{0,0,200}
\definecolor{seventeen}{RGB}{200,0,200}
\definecolor{eighteen}{RGB}{0,200,200}
\newtheorem{thmx}{Theorem}
\newtheorem{thm}{Theorem}[section]
\newtheorem*{thm*}{Theorem}
\newtheorem{lem}[thm]{Lemma}
\newtheorem{cor}[thm]{Corollary}
\newtheorem{prop}[thm]{Proposition}
\newtheorem*{prop*}{Proposition}
\newtheorem*{lem*}{Lemma}
\theoremstyle{definition}
\newtheorem{defin}[thm]{Definition}
\newtheorem{exm}[thm]{Example}
\newtheorem{conv}[thm]{Convention}
\theoremstyle{remark}
\newtheorem{rem2}[thm]{Remark}
\def\a{\mathbb{A}}
\def\c{\mathbb{C}}
\def\f{\mathbb{F}}
\def\l{\mathbb{L}}
\def\m{\mathbb{M}}
\def\n{\mathbb{N}}
\def\p{\mathbb{P}}
\def\r{\mathbb{R}}
\def\z{\mathbb{Z}}
\def\cc{\mathcal{C}}
\def\cd{\mathcal{D}}
\def\Spt{\mathcal{S}pt}
\def\Motc{\mathcal{M}ot_\c}
\def\Motr{\mathcal{M}ot_\r}
\def\Sptc{\mathcal{S}pt^{C_2}}
\def\mft{\underline{\mathbb{F}_2}}
\def\Spec{\operatorname{Spec}}
\def\Tor{\operatorname{Tor}}
\def\Ext{\operatorname{Ext}}
\def\colim{\operatorname{colim}}
\def\lim{\operatorname{lim}}
\author{J.D. Quigley}\address{Cornell University}\email{jdq27@cornell.edu}
\title[Motivic and equivariant Mahowald invariants]{Real motivic and $C_2$-equivariant Mahowald invariants}
\begin{document}
\maketitle

\section*{Abstract}
We generalize the Mahowald invariant to the $\r$-motivic and $C_2$-equivariant settings. For all $i>0$ with $i \equiv 2,3 \mod 4$, we show that the $\r$-motivic Mahowald invariant of $(2+\rho \eta)^i \in \pi_{0,0}^{\r}(S^{0,0})$ contains a lift of a certain element in Adams' classical $v_1$-periodic families, and for all $i > 0$, we show that the $\r$-motivic Mahowald invariant of $\eta^i \in \pi_{i,i}^{\r}(S^{0,0})$ contains a lift of a certain element in Andrews' $\c$-motivic $w_1$-periodic families. We prove analogous results about the $C_2$-equivariant Mahowald invariants of $(2+\rho \eta)^i \in \pi_{0,0}^{C_2}(S^{0,0})$ and $\eta^i \in \pi_{i,i}^{C_2}(S^{0,0})$ by leveraging connections between the classical, motivic, and equivariant stable homotopy categories. The infinite families we construct are some of the first periodic families of their kind studied in the $\r$-motivic and $C_2$-equivariant settings. 

\tableofcontents

\section{Introduction} 

The Mahowald invariant \cite{Mah67,MR93} is a method for producing new infinite families in the stable homotopy groups of spheres. Let $\gamma \to RP^\infty$ denote the tautological line bundle over infinite real projective space, let $P^\infty_k = Th(k\gamma \to RP^\infty)$ denote the Thom spectrum of its $k$-fold Whitney sum, and let $P^\infty_{-\infty}$ be the homotopy limit taken over the collapse maps $P^\infty_{k-1} \to P^\infty_k$. Lin's Theorem \cite{LDMA80} implies that the inclusion of the $0$-cell
\begin{equation}\label{Eqn:Lin}
S^0 \xrightarrow{\simeq} \Sigma P^\infty_{-\infty}
\end{equation} 
is an equivalence of spectra after $2$-completion. From now on, let us implicitly complete all spectra at the prime two. Given a class $\alpha \in \pi_t(S^0)$ in the $2$-complete stable stems, the equivalence \eqref{Eqn:Lin} implies that there is a minimal $N>0$ such that the composite
\begin{equation}\label{Eqn:CompositeClassical}
S^t \xrightarrow{\alpha} S^0 \xrightarrow{\simeq} \Sigma P^\infty_{-\infty} \xrightarrow{c} P^\infty_{-N}
\end{equation}
is nontrivial. Since $S^t \to \Sigma P^\infty_{-N+1}$ is trivial, the fiber sequence
$$S^{-N+1} \to \Sigma P^\infty_{-N} \to \Sigma P^\infty_{-N+1}$$
implies that there exists a nontrivial lift $S^t \to S^{-N+1}$ of \eqref{Eqn:CompositeClassical}. The coset of these lifts is the Mahowald invariant of $\alpha$, denoted $M(\alpha)$.

Let $2^i \in \pi_0(S^0)$ be the degree $2^i$ self-map of $S^0$. Mahowald--Ravenel showed \cite{MR93} that $M(2^i)$ contains the first $v_1$-periodic class detected in Adams filtration $i$. More generally, empirical evidence \cite{Beh06,Beh07,MR87,MR93,Qui19d,Sad92} suggests that the Mahowald invariant of a $v_n$-periodic element is $v_{n}$-torsion, and thus $v_{n+k}$-periodic for some $k \geq 1$ (with some low-dimensional exceptions). 

Although the types of periodic families are somewhat well-understood in classical stable homotopy theory \cite{DHS88,HS98,Rav92}, their counterparts in the motivic and equivariant settings are much more complicated. In the motivic setting, a full classification of the types of periodicity is not known over any base scheme. In the equivariant setting, it is known that there are exotic forms of periodicity, but to our knowledge, no infinite families of exotic periodic elements have been constructed before. 

In previous work \cite{Qui17}, we defined the $\c$-motivic Mahowald invariant and used it to reconstruct ordinary and exotic periodic families in the $\c$-motivic stable stems. The purpose of this paper is to define $\r$-motivic and $C_2$-equivariant Mahowald invariants and apply them towards the construction of new periodic families in the $\r$-motivic and $C_2$-equivariant stable stems. Along the way, we will explore the connections between classical, motivic, and equivariant stable homotopy theory.

\subsection{Statement of main results}

Our first set of main theorems takes place in the motivic setting. Fix a field $k$. The $k$-motivic stable stems $\pi_{**}^k(S^{0,0})$ are the bigraded homotopy groups of the motivic sphere spectrum $S^{0,0}$.\footnote{Recall we are implicitly working in the $2$-complete setting.} We say that $\alpha \in \pi_{m,n}^k(S^{0,0})$ has \emph{topological dimension}, or \emph{stem}, $m$ and \emph{motivic weight} $n$. We are interested in two non-nilpotent elements in $\pi_{**}^k(S^{0,0})$. The first element is the algebraic Hopf map $\eta \in \pi_{1,1}^k(S^{0,0})$, which is the stable homotopy class of the composite
$$S^{3,2} \simeq \a^2_k \setminus \{0\} \xrightarrow{\pi} \p^1_k \simeq S^{2,1}.$$
The other map is $2+\rho \eta \in \pi_{0,0}^k(S^{0,0})$, where $2$ is the degree $2$-self map of the sphere spectrum and $\rho$ is a certain element in $\pi_{-1,-1}^k(S^{0,0})$ arising from the element $[-1] \in K_1^M(k)$ in Milnor K-theory.\footnote{The map $2+\rho \eta$ plays the role of $2$ in the $\r$-motivic stable stems. For example, the element $h_0$ which detects $2$ in the classical and $\c$-motivic Adams spectral sequences detects $2 + \rho \eta$ in the $\r$-motivic Adams spectral sequence. Another interpretation in terms of Grothendieck--Witt groups  can be found in \cite[Rmk. 2.2]{BI20}.}

In \cite{Qui17}, we defined the $\c$-motivic Mahowald invariant $M^\c(-)$ and computed $M^\c(2^i)$ and $M^\c(\eta^i)$ for all $i \geq 1$. We briefly recall its definition now. Let $\mu_2$ denote the cyclic group of order two, let $B_{gm}\mu_2$ denote the geometric classifying space of $\mu_2$, let $\gamma \to B_{gm}\mu_2$ denote its tautological bundle, and let $\underline{L}^\infty_{k} = Th(k\gamma \to B_{gm}\mu_2)$ be the motivic Thom spectrum of the $k$-fold sum of $\gamma$. In analogy with \eqref{Eqn:Lin}, Gregersen proved \cite{Gre12} an equivalence after $2$-completion
\begin{equation}\label{Eqn:Gregersen}
S^{0,0} \xrightarrow{\simeq} \Sigma^{1,0} \underline{L}^\infty_{-\infty}.
\end{equation}
We define the \emph{$\r$-motivic Mahowald invariant} $M^\r(-)$ in Section \ref{Sec:MIDef} by replacing the classical spectrum $P^\infty_{-N}$ by the $\r$-motivic spectrum $\underline{L}^\infty_{-\infty}$.

\begin{thmx}[Real motivic $v_1$-periodic families, Theorem \ref{Thm:rm2i}]\label{MT:Rv1}
Let $ i \geq 0$ and let $j \in \{2,3\}$. The $\r$-motivic Mahowald invariant of $(2+\rho\eta)^{4i+j}$ satisfies
\[
M^\r((2+\rho\eta)^{4i+j}) \ni \begin{cases}
v^{4i}_1 \tau \eta^2 \quad & j=2, \\
v^{4i}_1 \tau \eta^3 \quad & j=3.
\end{cases}
\]
The classes $v_1^{4i} \tau \eta^2$ and $v_1^{4i}\tau \eta^3$, defined in Section \ref{Sec:2i}, base-change to the classes with the same name \cite{Qui17} in $\pi_{**}^\c(S^{0,0})$. 
\end{thmx}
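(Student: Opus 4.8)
The plan is to compute the $\r$-motivic Mahowald invariant of $(2+\rho\eta)^{4i+j}$ by reducing to an Adams spectral sequence computation over the $\r$-motivic Steenrod algebra, closely following the strategy used in \cite{Qui17} for the $\c$-motivic case and in \cite{MR93} classically. First I would recall the cell structure of the stunted projective spectra $\underline{L}^\infty_{-N}$: the $\r$-motivic analogue of $P^\infty_{-N}$ has cells in bidegrees determined by the representation sphere filtration of $B_{gm}\mu_2$, and the attaching maps are governed (mod higher filtration) by $2+\rho\eta$ and $\eta$ in the appropriate stems. The key input is that the bottom cells of $\underline{L}^\infty_{-N}$ interact with $(2+\rho\eta)^{4i+j}$ exactly as the bottom cells of $P^\infty_{-N}$ interact with $2^{4i+j}$ classically, because $2+\rho\eta$ plays the role of $2$ (as noted in the footnote, $h_0$ detects $2+\rho\eta$ in the $\r$-motivic Adams spectral sequence). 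So I would first pin down, via a James periodicity / Thom spectrum argument, which $N$ makes the composite $S^{t,?} \xrightarrow{(2+\rho\eta)^{4i+j}} S^{0,0} \xrightarrow{\simeq} \Sigma^{1,0}\underline{L}^\infty_{-\infty} \xrightarrow{c} \underline{L}^\infty_{-N}$ first nontrivial; by analogy with Mahowald--Ravenel this should be $N = 4i + 2j - \epsilon$ for an explicit small $\epsilon$, and the relevant bottom cell of $\Sigma^{-N+1}\underline{L}^\infty_{-N}$ then lives in the stem and weight where $v_1^{4i}\tau\eta^2$ (resp. $v_1^{4i}\tau\eta^3$) sits.

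The heart of the argument is an Adams spectral sequence computation: I would build the $\r$-motivic Adams spectral sequences for $S^{0,0}$, for $\underline{L}^\infty_{-N}$, and for the relevant truncations, and identify the class detecting the composite \eqref{Eqn:CompositeClassical}. The element $(2+\rho\eta)^{4i+j}$ is detected by $h_0^{4i+j}$ (possibly times $\tau$-powers) on the $E_2$-page $\Ext_{\mathcal{A}^\r}(\mathbb{M}_2^\r,\mathbb{M}_2^\r)$, and I would trace $h_0^{4i+j}$ through the long exact sequences coming from the cofiber sequences $\Sigma^{-N}\mathbb{S} \to \Sigma^{1,0}\underline{L}^\infty_{-N} \to \Sigma^{1,0}\underline{L}^\infty_{-N+1}$ to see that it is hit by a $v_1$-periodic class — concretely, the image of $h_0^{4i+j}$ under the boundary map should be $v_1^{4i}$ times the class detecting $\eta^2$ or $\eta^3$ on the appropriate cell, using the known structure of the $\r$-motivic $\Ext$ chart in low stems (the $\rho$-Bockstein spectral sequence of Dugger--Isaksen and Hill's computations of $\Ext_{\mathcal{A}^\r}$ are the relevant references). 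A permanent-cycle / no-hidden-extension check, again modeled on \cite{Qui17}, then upgrades this $E_2$-page statement to a statement about homotopy classes, yielding that $v_1^{4i}\tau\eta^2$ (resp. $v_1^{4i}\tau\eta^3$) is a nontrivial lift, hence lies in $M^\r((2+\rho\eta)^{4i+j})$.

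The base-change assertion is the easiest part: the map $\Spec \c \to \Spec \r$ induces a map of Adams spectral sequences and of Mahowald invariant data, and since $v_1^{4i}\tau\eta^2$ and $v_1^{4i}\tau\eta^3$ are \emph{defined} in Section \ref{Sec:2i} as explicit products of $v_1$, $\tau$, $\eta$ (all of which base-change to their $\c$-motivic namesakes by construction of the realization functor), the products base-change as well; one only needs that these products are nonzero in $\pi_{**}^\c(S^{0,0})$, which is \cite{Qui17}. I expect the main obstacle to be the first step --- precisely identifying the integer $N$ and controlling the attaching-map filtration in $\underline{L}^\infty_{-N}$, since the $\r$-motivic stunted projective spectra have a more intricate cell structure than their classical or $\c$-motivic counterparts (the motivic weight introduces an extra grading that the James periodicity argument must track), and ruling out the possibility that a lower-filtration class in higher Adams filtration detects the composite first. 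Keeping careful track of the $\tau$- and $\rho$-towers in the $\r$-motivic Adams $E_2$-page will be the technical crux.
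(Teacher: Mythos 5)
Your overall architecture (upper bound from a known computation plus a lower bound from the stunted projective spectra, then a base-change statement) matches the paper's, but there are two genuine gaps, one of which is fatal as written.

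First, you treat the existence of the classes $v_1^{4i}\tau\eta^j \in \pi_{**}^\r(S^{0,0})$ as the ``easiest part,'' describing them as ``explicit products of $v_1$, $\tau$, $\eta$.'' They are not products: $v_1$ is not an element of $\pi_{**}^\r(S^{0,0})$, and over $\r$ even the algebraic periodicity operator $\langle h_3, h_0^4, -\rangle$ used in \cite{Qui17} is \emph{undefined}, because $h_3h_0^4 = c_0h_1^2\rho^3 \neq 0$ in $\Ext_{A^\r}^{***}(\m_2^\r,\m_2^\r)$ (see Lemma \ref{Lem:b20}). The paper must replace it by a matric Massey product $P_\r(x) = \langle [h_3\ \ \rho^3h_1^2], [h_0^4;\ c_0], x\rangle$, prove its iterates on $\tau h_1^j$ are defined with zero indeterminacy (Proposition \ref{Cor:Pntau}), and then prove the resulting $\Ext$ classes are permanent cycles --- the last step requiring a $\rho^2$-torsion lemma plus the Guillou--Isaksen vanishing and $h_1$-periodicity theorems. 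None of this is visible in your proposal, and without it the statement of the theorem has no content.

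Second, your plan for locating the minimal $N$ --- tracing $h_0^{4i+j}$ through the long exact sequences of the cofiber sequences --- conflates the upper and lower bounds and underestimates the lower bound. The paper's upper bound does not come from an Adams computation on $\underline{L}^\infty_{-N}$ at all; it comes from functoriality of the defining diagram under base-change $i^*:\Motr\to\Motc$ (Lemma \ref{Lem:BaseChange}) applied to the $\c$-motivic answer, precisely because the direct computation you propose is, in the paper's own words, ``much more laborious.'' For the lower bound, the real difficulty is not the $h_0$-tower but the \emph{purely $\r$-motivic} classes: $\rho$-torsion elements of $\pi_{**}^\r(S^{0,0})$ that base-change to zero and could detect $(2+\rho\eta)^4$ on the relevant subquotient, so that the classical Toda vanishing (and its $\c$-motivic analogue) gives no information about them. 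Proposition \ref{Prop:16Null} handles these by an explicit Atiyah--Hirzebruch analysis of $L_m\wedge D^\r L_m$ using the tables of \cite{DI16a}, killing each candidate by a named differential. You acknowledge that ``keeping track of the $\rho$-towers will be the technical crux'' but offer no mechanism for doing so; as it stands the argument that $N$ is exactly where you predict is asserted ``by analogy with Mahowald--Ravenel'' rather than proved.
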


\begin{thmx}[Real motivic $w_1$-periodic families, Theorem \ref{rmieta}]\label{MT:Rw1}
Let $i \geq 0$ and $0 \leq j \leq 3$ with $(i,j) \neq (0,0)$. The $\r$-motivic Mahowald invariant of $\eta^{4i+j}$ satisfies
$$M^\r(\eta^{4i+j}) \ni \begin{cases}
w_1^{4(i-1)} \eta^2 \eta_4 \quad&j = 0, \\
w_1^{4i} \nu \quad & j=1, \\
w_1^{4i}\nu^2 \quad & j=2, \\
w_1^{4i}\nu^3 \quad & j=3.
\end{cases}$$
The classes $w_1^{4i} \eta^2 \eta_4$ and $w_1^{4i} \nu^\ell$, $1 \leq \ell \leq 3$, defined in Section \ref{SectionEta}, base-change to the classes with the same name \cite{And14} in $\pi_{**}^\c(S^{0,0})$. 
\end{thmx}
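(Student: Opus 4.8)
The plan is to compute the $\r$-motivic Mahowald invariant $M^\r(\eta^{4i+j})$ by mimicking the strategy used for $M^\r((2+\rho\eta)^{4i+j})$ in Theorem A, but now tracking $w_1$-periodicity instead of $v_1$-periodicity. First I would set up the $\r$-motivic Adams spectral sequence for $\underline{L}^\infty_{-\infty}$ and its truncations $\underline{L}^\infty_{-N}$, using the algebraic model for $H^{**}(\underline{L}^\infty_k)$ as a module over $H^{**}(B_{gm}\mu_2)$ — i.e., the $\r$-motivic analogue of the $\mathbb{F}_2[x^{\pm 1}]$-type cohomology, with the appropriate $\tau$ and $\rho$ classes. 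The key input is to locate, in this spectral sequence, the classes that detect $\eta^{4i+j}$ mapped into $\underline{L}^\infty_{-N}$ and to identify the bottom-cell lift. Since $\eta$ has Adams filtration one and is $w_1$-periodic in the sense of Andrews, the relevant $E_2$-page calculation should reduce to a $\rho$-Bockstein or $\tau$-Bockstein argument comparing to the known $\c$-motivic picture.

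Next I would carry out the cell-by-cell analysis: determine the minimal $N = N(i,j)$ such that the composite $S^{4i+j,4i+j} \xrightarrow{\eta^{4i+j}} S^{0,0} \xrightarrow{\simeq} \Sigma^{1,0}\underline{L}^\infty_{-\infty} \xrightarrow{c} \Sigma^{1,0}\underline{L}^\infty_{-N}$ is nontrivial, and read off the bottom-cell coset. The expected answer — $w_1^{4i}\nu^\ell$ for $j = \ell \in \{1,2,3\}$ and $w_1^{4(i-1)}\eta^2\eta_4$ for $j=0$ — tells me which stems and weights to target; the $j=0$ case involves the exotic class $\eta_4$ and will require more care, presumably matching against the attaching-map structure of $\underline{L}^\infty_{-N}$ near the relevant cell. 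Throughout, I would use naturality of the Mahowald invariant under base change $\Spec \c \to \Spec \r$ together with the already-established $\c$-motivic computation $M^\c(\eta^i)$ from \cite{Qui17} (or \cite{And14}) to pin down the base-changed classes, so that the $\r$-motivic answer is forced to be a lift of the $\c$-motivic one. This reduces the problem to showing the $\r$-motivic Mahowald invariant is \emph{at least} as large (i.e., detected in the same filtration) as predicted, since base change gives the upper bound.

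The main obstacle, I expect, is the $\r$-motivic Adams $E_2$-page bookkeeping: unlike the $\c$-motivic case, the cohomology of $\underline{L}^\infty_{-N}$ carries the extra class $\rho$, so the relevant Ext groups $\Ext_{\mathcal{A}^{\r}}(H^{**}(\underline{L}^\infty_{-N}), \mathbb{F}_2)$ are genuinely bigger and one must rule out that the map into $\underline{L}^\infty_{-N}$ is killed by a $\rho$-divisible differential or hit by a shorter-filtration lift. I would handle this by a $\rho$-Bockstein spectral sequence converging from the $\c$-motivic answer (where the computation is known) to the $\r$-motivic one, checking that no $\rho$-torsion phenomena interfere in the range of interest — this is where the hypothesis $(i,j)\neq(0,0)$ presumably enters, excluding a low-dimensional anomaly. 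A secondary difficulty is verifying the hidden extension / attaching map data needed to name the lift as $w_1^{4(i-1)}\eta^2\eta_4$ in the $j=0$ case; for this I would appeal to the known $\c$-motivic attaching maps in $\underline{L}^\infty_{-N}$ and lift them, using that $\rho$ acts trivially on the relevant subquotient.
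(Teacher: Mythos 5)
Your overall architecture is right — an upper bound on the stem via the base-change squeeze lemma applied to the known $\c$-motivic answer, plus a matching lower bound coming from the stunted lens spectra — and that is indeed the skeleton of the paper's argument. But there are two genuine gaps.

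First, you take the target classes $w_1^{4i}\nu^j$ and $w_1^{4(i-1)}\eta^2\eta_4$ in $\pi_{**}^\r(S^{0,0})$ as given, saying the $\r$-motivic answer is "forced to be a lift of the $\c$-motivic one." Nothing forces such lifts to exist or to be nonzero; constructing them is roughly half the theorem (and is the content of the second sentence of the statement). The paper does this by (i) showing via Dugger--Isaksen's $\rho$-inverted isomorphism that the $\Ext$ classes $P^i_w h_2^j$ and $P^i_w h_4h_1^3$ survive the $\rho$-Bockstein spectral sequence and are $\rho$-torsion free, (ii) a $\c$-motivic May spectral sequence argument (Proposition \ref{Prop:Unique}) showing there is nothing in higher $\rho$-Bockstein filtration in those tridegrees, so the lifts are \emph{unique}, and (iii) a permanent-cycle argument that runs through $C\eta$: the potential targets of Adams differentials on $P^i_w x$ are all $h_1$-divisible, hence vanish in $\Ext_{A^\r}(\m_2^\r,H^{**}(C\eta))$, and one then projects to the top cell. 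None of this is in your proposal, and without it the squeeze lemma has no class $\beta'$ to apply to.

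Second, your proposed mechanism for the lower bound — a $\rho$-Bockstein spectral sequence "converging from the $\c$-motivic answer to the $\r$-motivic one" — does not do the job. The $\rho$-Bockstein spectral sequence computes $\Ext$ groups, but the obstruction to the lower bound lives in the actual homotopy of the stunted lens spectra: the composite $S^{i+4,i+4}\to\Sigma^{1,0}\underline{L}^\infty_{-N-9}$ could be detected by a class $\alpha[m,n]$ with $\alpha\in\pi_{**}^\r(S^{0,0})$ \emph{base-changing to zero}, i.e., a $\rho$-torsion class about which the $\c$-motivic computation says nothing at all. The paper's Proposition \ref{Prop:eta4} handles this by proving the inductive inequality $|M^\r(\eta^{i+4})|\geq|M^\r(\eta^i)|+20$ directly: it shows $\eta^4=0$ in $\pi_{4,4}^\r(\underline{L}^9_{-1}\wedge D^\r\underline{L}^9_{-1})$ via an exhaustive Atiyah--Hirzebruch analysis, enumerating every $\rho$-torsion class in the relevant bidegrees (this needs the Belmont--Isaksen computation of $\pi_{**}^\r$ through coweight eleven) and killing each by an explicit $d_1$, $d_2$, $d_4$, or $d_6$ differential, with the $\rho$-torsion-free classes eliminated by Toda's classical result together with the identification of real Betti realization with $\rho$-localization. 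You correctly identify this as "the main obstacle," but the tool you reach for cannot see it. (Also, $(i,j)\neq(0,0)$ merely excludes the degenerate case $\eta^0=1$; it is not tied to a $\rho$-torsion anomaly.)
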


Recall that the \emph{coweight} or \emph{Milnor--Witt stem} of an element $\alpha \in \pi_{m,n}^k(S^{0,0})$ is defined by $MW(\alpha) = m-n$. Dugger and Isaksen \cite{DI16a} calculated $\pi_{**}^\r(S^{0,0})$ up to coweight four and Belmont--Isaksen \cite{BI20} calculated up to coweight eleven.  Theorem \ref{MT:Rv1} provides a new construction of infinite $v_1$-periodic families in $\pi_{**}^\r(S^{0,0})$ and Theorem \ref{MT:Rw1} provides the first construction of infinite $w_1$-periodic families in $\pi_{**}^\r(S^{0,0})$. As $v_1$ has coweight $1$ and $w_1$ has coweight $2$, Theorems \ref{MT:Rv1} and \ref{MT:Rw1} identify several $\r$-motivic periodic families in unbounded coweight; we believe our $w_1$-periodic families are the first exotic families of their kind identified in the $\r$-motivic setting. 

Our second set of main theorems takes place in the $C_2$-equivariant setting. The $C_2$-equivariant stable stems $\pi_{**}^{C_2}(S^{0,0})$ are the bigraded homotopy groups of the $C_2$-equivariant sphere spectrum. If $\alpha \in \pi_{m,n}^{C_2}(S^{0,0})$, then $\alpha$ is the stable homotopy class of a map $S^{(m-n)} \wedge S^{n\sigma} \to S^0$ where $S^{(m-n)}$ is the $(m-n)$-sphere with trivial $C_2$-action and $S^{n\sigma}$ is the one-point compactification of the $n$-fold direct sum of the real sign representation of $C_2$. 

We extend the Mahowald invariant to the $C_2$-equivariant setting as follows. As discussed above, the classical Mahowald invariant was defined using Lin's Theorem \eqref{Eqn:Lin} and the $k$-motivic Mahowald invariant was defined using Gregersen's Theorem \eqref{Eqn:Gregersen}. In Appendix \ref{App:Lin}, we prove a $C_2$-equivariant version of Lin's Theorem which we briefly outline now. Let $B_{C_2}\mu_2$ denote the $C_2$-equivariant classifying space for principal $\mu_2$-bundles\footnote{The cyclic groups of order two $C_2$ and $\mu_2$ are not related here: $C_2$ is the group in ``genuine $C_2$-spectra" or ``$C_2$-spaces" and $\mu_2$ is the group which acts on a $C_2$-spectrum or $C_2$-space through $C_2$-equivariant maps.} and let $\underline{R}^\infty_k$ denote the $C_2$-equivariant Thom spectrum of  $k\gamma \to B_{C_2}\mu_2$ where $\gamma \to B_{C_2}\mu_2$ is the tautological line bundle over $B_{C_2}\mu_2$. We show in Theorem \ref{Thm:Qui} that the inclusion of the $(0,0)$-cell
\begin{equation}\label{Eqn:C2Lin}
S^{0,0} \xrightarrow{\simeq} \Sigma^{1,0} \underline{R}^\infty_{-\infty}
\end{equation}
is an equivalence of genuine $C_2$-spectra after $2$-completion. 

The $C_2$-equivariant equivalence \eqref{Eqn:C2Lin} allows us to define the $C_2$-equivariant Mahowald invariant. Our second set of results concerns the $C_2$-equivariant Mahowald invariants of the $C_2$-equivariant analogs of the motivic non-nilpotent self maps discussed above. The analog of the first map is the $C_2$-equivariant Hopf map $\eta \in \pi_{1,1}^{C_2}(S^{0,0})$, which is the stable homotopy class of the composite
$$S^{3,2} \simeq S(\c^2) \xrightarrow{\pi} \c\p^1 \simeq S^{2,1}$$
where $S(\c^2)$ is the sphere in $\c^2$ and $C_2$ acts on $S(\c^2)$ and $\c\p^1$ by complex conjugation. The second map is the element $2 + \rho \eta \in \pi_{0,0}^{C_2}(S^{0,0})$, where $2$ is the degree $2$ self map of the sphere and $\rho \in \pi_{-1,-1}^{C_2}(S^{0,0})$ is the Euler class defined by including fixed points $S^0 \hookrightarrow S^\sigma$. 

\begin{thmx}[$C_2$-equivariant $v_1$-periodic families, Theorem \ref{Thm:em2i}]\label{MT:Ev1}
Let $ i \geq 0$ and let $j \in \{2,3\}$. The $C_2$-equivariant Mahowald invariant of $(2+a\eta)^{4i+j}$ satisfies
\[
M^{C_2}((2+a\eta)^{4i+j}) \ni \begin{cases}
v^{4i}_1 \tau \eta^2 \quad & j=2, \\
v^{4i}_1 \tau \eta^3 \quad & j=3.
\end{cases}
\]
The classes $v_1^{4i} \tau \eta^2$ and $v_1^{4i}\tau  \eta^3$, defined in Section \ref{Sec:2i}, are $C_2$-equivariant lifts of the classes $v_1^{4i} \eta^2$ and $v_1^{4i}\eta^3$ \cite{Ada66} in $\pi_*(S^0)$. 
\end{thmx}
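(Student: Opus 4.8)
The plan is to deduce the theorem from the $\r$-motivic Theorem~\ref{MT:Rv1} by transporting the computation along the $2$-completed $C_2$-equivariant Betti realization functor $\mathrm{Re}_{C_2}\colon\Motr\to\Sptc$, and then pinning down the minimal level by a further realization to classical spectra. First I would record the properties of $\mathrm{Re}_{C_2}$ that are needed: it is symmetric monoidal, carries $S^{p,q}$ to the $C_2$-representation sphere $S^{p,q}$, carries $\eta$ to $\eta$ and $\rho$ to the $C_2$-equivariant Euler class $a$, and hence carries $(2+\rho\eta)^{4i+j}$ to $(2+a\eta)^{4i+j}$. The geometric input is the identification $\mathrm{Re}_{C_2}(B_{gm}\mu_2)\simeq B_{C_2}\mu_2$, matching tautological bundles, so that $\mathrm{Re}_{C_2}(\underline{L}^\infty_k)\simeq\underline{R}^\infty_k$ compatibly with the collapse maps $\underline{L}^\infty_{k-1}\to\underline{L}^\infty_k$. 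As $\mathrm{Re}_{C_2}$ preserves colimits it commutes with $\underline{L}^\infty_k=Th(k\gamma)$; it need not commute with the homotopy limit defining $\underline{L}^\infty_{-\infty}$, but this is not needed, since Gregersen's equivalence~\eqref{Eqn:Gregersen} exhibits the composite $S^{0,0}\to\Sigma^{1,0}\underline{L}^\infty_{-N}$ as a particular cell inclusion, $\mathrm{Re}_{C_2}$ sends it to the corresponding cell inclusion $S^{0,0}\to\Sigma^{1,0}\underline{R}^\infty_{-N}$, and Theorem~\ref{Thm:Qui} identifies the latter with the composite used to define $M^{C_2}$. Thus the $\r$-motivic Mahowald tower realizes, level by level, to the $C_2$-equivariant one.

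Granting this, one containment is formal. By Theorem~\ref{MT:Rv1}, $v_1^{4i}\tau\eta^j$ (for $j\in\{2,3\}$) lies in $M^\r((2+\rho\eta)^{4i+j})$; let $N$ be the associated minimal level, so that the composite $S^{0,0}\xrightarrow{(2+\rho\eta)^{4i+j}}S^{0,0}\xrightarrow{\simeq}\Sigma^{1,0}\underline{L}^\infty_{-\infty}\to\Sigma^{1,0}\underline{L}^\infty_{-N}$ is nontrivial, becomes null after projecting further to $\Sigma^{1,0}\underline{L}^\infty_{-N+1}$, and represents $v_1^{4i}\tau\eta^j$ via the resulting fiber sequence. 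Applying $\mathrm{Re}_{C_2}$ and the level-by-level matching, the projection to $\Sigma^{1,0}\underline{R}^\infty_{-N+1}$ becomes null by functoriality, so the minimal $C_2$-equivariant level $N^{C_2}$ satisfies $N^{C_2}\geq N$; if the level-$(-N)$ composite remains nontrivial after realization, then $N^{C_2}=N$ and, since $\mathrm{Re}_{C_2}$ preserves the bigrading and sends the $\r$-motivic class $v_1^{4i}\tau\eta^j$ to the $C_2$-equivariant class of the same name from Section~\ref{Sec:2i}, that class lies in $M^{C_2}((2+a\eta)^{4i+j})$.

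The one nonformal ingredient --- and the point I expect to require the most care --- is ensuring that realization does not enlarge $N$: \emph{a priori} $\mathrm{Re}_{C_2}$ could turn the nontrivial level-$(-N)$ composite into a null map. I would rule this out by descending once more, along the underlying-spectrum functor $U\colon\Sptc\to\Spt$, which is a right adjoint and therefore preserves homotopy limits. Since $U(B_{C_2}\mu_2)=RP^\infty$ with $U$ preserving tautological bundles, $U$ sends $\underline{R}^\infty_k$ to $P^\infty_k$ and \eqref{Eqn:C2Lin} to Lin's equivalence \eqref{Eqn:Lin}; and since $U(a)=0$ and $U(\eta)=\eta$, it sends $(2+a\eta)^{4i+j}$ to $2^{4i+j}$. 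So $U$ carries the $C_2$-equivariant Mahowald tower for $(2+a\eta)^{4i+j}$ to the classical Mahowald tower for $2^{4i+j}$, whose behavior is governed by Mahowald--Ravenel \cite{MR93}: $M(2^{4i+j})$ is the first $v_1$-periodic element of Adams filtration $4i+j$, which Adams \cite{Ada66} identifies with $v_1^{4i}\eta^j$. Matching the level at which the classical composite is detected back through $U$ and $\mathrm{Re}_{C_2}$ with the level $N$ appearing in Theorem~\ref{MT:Rv1} (whose proof already compares the $\r$-motivic and classical Mahowald towers) forces $N^{C_2}=N$ and nontriviality of the $C_2$-equivariant level-$(-N)$ composite, completing the proof of the membership. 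This is the technical heart of the argument.

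Finally, the claim that $v_1^{4i}\tau\eta^j$ is a $C_2$-equivariant lift of the classical class $v_1^{4i}\eta^j$ of \cite{Ada66} comes from the same functor $U$: applying $U$ to a $C_2$-equivariant Betti realization agrees with the $\c$-motivic Betti realization of the base change to $\c$, and the $\c$-motivic class $v_1^{4i}\tau\eta^j$ of \cite{Qui17} realizes to $v_1^{4i}\eta^j$ in $\pi_*(S^0)$; hence $U(v_1^{4i}\tau\eta^j)=v_1^{4i}\eta^j$, as required.
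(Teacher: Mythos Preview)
Your approach is correct and is essentially the paper's own proof: both squeeze the $C_2$-equivariant level between the $\r$-motivic one (via $Re_{C_2}$, giving the lower bound) and the classical one (via $U$ and \cite{MR93}, giving the upper bound), and the paper simply packages this squeeze as two applications of Lemma~\ref{compatibility}. One small slip to fix: by Lemma~\ref{Lem:ReC2U}(2) one has $U(\underline{R}^\infty_{-N})\simeq P^\infty_{-2N}$, not $P^\infty_{-N}$, so the level-matching through $U$ involves a doubling; this does not affect the argument once the indexing is corrected.
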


\begin{rem2}
The classes $v_1^{4i} \tau \eta^2$ and $v_1^{4i} \tau \eta^3$ in Theorem \ref{MT:Ev1} lie in nonzero coweight, so they are represented by maps between nontrivial representation spheres. One could also define $C_2$-equivariant lifts of Adams' classes $v_1^{4i} \eta^2$ and $v_1^{4i} \eta^3$ by equipping them trivial $C_2$-action, but the resulting $C_2$-equivariant stable homotopy classes would have coweight zero. 
\end{rem2}

\begin{thmx}[$C_2$-equivariant $w_1$-periodic families, Theorem \ref{cmieta}]\label{MT:Ew1}
Let $i \geq 0$ and $0 \leq j \leq 3$ with $(i,j) \neq (0,0)$. The $C_2$-equivariant Mahowald invariant of $\eta^{4i+j}$ satisfies
$$M^{C_2}(\eta^{4i+j}) \ni \begin{cases}
w_1^{4(i-1)} \eta^2 \eta_4 \quad&j = 0, \\
w_1^{4i} \nu \quad & j=1, \\
w_1^{4i}\nu^2 \quad & j=2, \\
w_1^{4i}\nu^3 \quad & j=3.
\end{cases}$$
The classes $w_1^{4i} \eta^2 \eta_4$ and $w_1^{4i} \nu^\ell$, $1 \leq \ell \leq 3$, defined in Section \ref{SectionEta}, have geometric fixed points Adams' classes $v_1^{4i} 8\sigma$ and $v_1^{4i} \eta^\ell$ in $\pi_*(S^0)$ defined in \cite{Ada66}. 
\end{thmx}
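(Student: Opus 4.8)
The plan is to deduce Theorem~\ref{cmieta} from its $\r$-motivic counterpart Theorem~\ref{rmieta} by transporting that computation along the $C_2$-equivariant Betti realization functor $\mathrm{Be}_{C_2}\colon \SH(\r)\to\Sptc$, parallel to the proof of Theorem~\ref{Thm:em2i}. First I would record the compatibilities of $\mathrm{Be}_{C_2}$ with the data defining the two Mahowald invariants: it is symmetric monoidal and preserves bigraded spheres; it carries $B_{gm}\mu_2$ and its tautological bundle to $B_{C_2}\mu_2$ and its tautological bundle (both assembled from the same affine models), so $\mathrm{Be}_{C_2}(\underline{L}^\infty_k)\simeq\underline{R}^\infty_k$ compatibly with the collapse maps $\underline{L}^\infty_{k-1}\to\underline{L}^\infty_k$; it sends the $\r$-motivic $\eta$ to the $C_2$-equivariant $\eta$, both being realizations of the same algebraic Hopf construction; and, comparing Gregersen's equivalence \eqref{Eqn:Gregersen} with the $C_2$-equivariant Lin theorem \eqref{Eqn:C2Lin} of Theorem~\ref{Thm:Qui}, the square relating them commutes after restriction to each finite stage $\underline{L}^\infty_{-N}\to\underline{R}^\infty_{-N}$. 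Granting these, the $\r$-motivic Mahowald invariant diagram for $\eta^{4i+j}$ maps to the corresponding $C_2$-equivariant diagram, and the classes $w_1^{4(i-1)}\eta^2\eta_4$ and $w_1^{4i}\nu^\ell$ of Theorem~\ref{rmieta} realize to classes with the same names in $\pi_{**}^{C_2}(S^{0,0})$.

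The next step is to pin down membership in $M^{C_2}(\eta^{4i+j})$. Because $\mathrm{Be}_{C_2}$ is a left adjoint it need not preserve the \emph{minimality} of the stage $N$ defining the Mahowald invariant, so a priori the first step only shows that $M^{C_2}(\eta^{4i+j})$ is detected at some stage $N_{C_2}\le N_\r$, and contains the realized class \emph{provided} the two stages coincide. To see that they do, I would rerun the $\eta$-Bockstein (Atiyah--Hirzebruch) argument behind Theorem~\ref{rmieta} intrinsically inside $\Sptc$, using $\mathrm{Be}_{C_2}$ only to import the cell structure and attaching maps of the finite Thom spectrum $\underline{R}^\infty_{-N}$, and as an independent lower bound I would apply the geometric fixed point functor $\Phi^{C_2}$. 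Using $\Phi^{C_2}\circ\mathrm{Be}_{C_2}\simeq\mathrm{Re}_\r$ (real points realization), one has $\Phi^{C_2}(\underline{R}^\infty_k)\simeq\mathrm{Re}_\r(\underline{L}^\infty_k)$, which splits off a copy of the classical stunted projective spectrum $P^\infty_k$, and $\Phi^{C_2}(\eta)=\pm2$ since the algebraic Hopf construction has real points of degree $\pm2$; projecting $\Phi^{C_2}$ of the $C_2$-equivariant diagram onto this summand recovers the classical Mahowald invariant diagram of $2^{4i+j}$, so Mahowald--Ravenel's computation \cite{MR93} constrains the stage from below. Matching the bounds yields $N_{C_2}=N_\r$.

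For the geometric fixed point assertion I would again use $\Phi^{C_2}\circ\mathrm{Be}_{C_2}\simeq\mathrm{Re}_\r$: the $C_2$-equivariant classes in the theorem are $\mathrm{Be}_{C_2}$ of the corresponding $\r$-motivic classes, so their geometric fixed points are the real realizations of those classes. The computation then reduces to known realization facts — $\mathrm{Re}_\r(\eta)=\pm2$, $\mathrm{Re}_\r(\nu)=\eta$, and $\mathrm{Re}_\r$ of Andrews' periodicity operator $w_1^4$ \cite{And14} being Adams' $v_1^4$ — together with a short identification of $\mathrm{Re}_\r(\eta^2\eta_4)$ with $v_1^4\cdot 8\sigma$; these give $\Phi^{C_2}(w_1^{4i}\nu^\ell)=v_1^{4i}\eta^\ell$ and $\Phi^{C_2}(w_1^{4(i-1)}\eta^2\eta_4)=v_1^{4i}8\sigma$, which are Adams' $v_1$-periodic classes \cite{Ada66}.

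The main obstacle is the minimality issue in the second step: showing that $M^{C_2}(\eta^{4i+j})$ is detected at precisely the stage $N_\r$ rather than merely earlier is what forces the intrinsic $\eta$-Bockstein analysis in $\Sptc$ and the bookkeeping of the summands of $\Phi^{C_2}(\underline{R}^\infty_{-N})$ against the Mahowald--Ravenel computation. By comparison, the Betti realization compatibilities of the first step are formal, and the real realization inputs of the third step are standard for the particular classes involved.
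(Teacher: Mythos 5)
Your ingredients are exactly the paper's — transfer the $\r$-motivic computation (Theorem \ref{rmieta}) along $Re_{C_2}$ and the classical Mahowald--Ravenel computation along $\Phi^{C_2}$, then sandwich — but you have reversed the direction of both comparisons, and this is not a cosmetic slip: it changes which half of the argument is formal and which half requires the external input. The only inference a functor permits is that null maps go to null maps; hence triviality of the composites $S^{s,t}\to S^{0,0}\to \Sigma^{1,0}\underline{L}^\infty_{-M}$ transfers \emph{forward} along $Re_{C_2}$, and nontriviality transfers \emph{backward} along $\Phi^{C_2}$. Applying $Re_{C_2}$ to the null-homotopies underlying Theorem \ref{rmieta} (i.e.\ Proposition \ref{Prop:eta4}) shows the $C_2$-equivariant composites are null at every stage below $N_\r$, so $N_{C_2}\geq N_\r$: this is the \emph{lower} bound on the stem of $M^{C_2}(\eta^{4i+j})$, and it is free. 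What is not automatic is that the essential composite at stage $N_\r$ remains essential after realization — a priori the $C_2$-equivariant invariant could be detected \emph{later}, not earlier as you claim. That upper bound is exactly what $\Phi^{C_2}$ supplies: after the pinch-map bookkeeping of Lemma \ref{pinch} and Corollary \ref{PhiC2}, the classical composite for $2^{4i+j}$ is recovered from $\Phi^{C_2}$ of the $C_2$-equivariant composite by projection onto one wedge summand of $\Phi^{C_2}(\underline{R}^\infty_{-N})\simeq P^\infty_{-N}\vee P^\infty_{-N}$, and Mahowald--Ravenel say it is essential at the expected stage, hence so is the $C_2$-equivariant composite. Your assignment — $Re_{C_2}$ for the upper bound, $\Phi^{C_2}$ "constraining the stage from below" — is untenable: a $C_2$-map with trivial geometric fixed points need not be trivial, so $\Phi^{C_2}$ can never bound the stage from below, and $Re_{C_2}$ of a nontrivial map can be trivial, so realization alone cannot show the invariant is detected by stage $N_\r$.

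The practical consequence of the reversal is your proposed remedy for the "minimality issue": rerunning the Atiyah--Hirzebruch analysis intrinsically in $\Sptc$. This is both redundant — the bound it would produce is the lower bound already furnished by realizing the $\r$-motivic null-homotopies — and infeasible, since it would require knowledge of $\pi_{**}^{C_2}(S^{0,0})$ through coweight eleven, which is precisely the computation the comparison strategy is designed to avoid. Your third step, identifying $\Phi^{C_2}$ of the classes, is fine in outline; the paper carries it out on Adams $E_2$-pages via the doubling map $h_{i,j}\mapsto h_{i,j-1}$ (Lemma \ref{geomASS}) rather than by citing realization of $w_1^4$ directly, but the content is the same.
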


The $C_2$-equivariant stable stems have been analyzed by Araki and Iriye \cite{AI82,Iri82}, Dugger--Isaksen \cite{DI16}, and Belmont--Guillou--Isaksen \cite{BGI20}, but as in the $\r$-motivic setting, these calculations are restricted to low coweight. Theorems \ref{MT:Ev1} and \ref{MT:Ew1} identify the first $C_2$-equivariant periodic families in unbounded coweight. 

\subsection{Calculating generalized Mahowald invariants}

The heart of our work is the deep connection between $\r$-motivic and $C_2$-equivariant stable homotopy theory which has emerged over the past few decades \cite{Bac16,BS20,Cox79,DI16,HO16,HO17,MV99,Sch94}. The key idea throughout our work on generalized Mahowald invariants is that all the relevant technology, such as stunted projective spectra, Lin's Theorem, and Steenrod operations, is compatible under the comparison functors between the classical, motivic, and $C_2$-equivariant stable homotopy categories. 

We prove Theorems \ref{MT:Rv1} - \ref{MT:Ew1} through a combination of explicit computations and new methods for comparing Mahowald invariants across different categories. Before explaining further, we introduce the following convention to simplify notation:

\begin{conv}
We will sometimes refer to the \emph{generalized Mahowald invariants in a category $\cc$} instead of the classical, motivic, or equivariant Mahowald invariant. In this case, we will use the notation $M^{\cc}(-)$ with the following conventions:
\begin{itemize}
\item The classical Mahowald invariant is the Mahowald invariant in $SH$. 
\item The $k$-motivic Mahowald invariant is the Mahowald invariant in $SH_k$.
\item The $C_2$-equivariant Mahowald invariant is the Mahowald invariant in $SH_{C_2}$.
\end{itemize}
\end{conv}

For the remainder of the introduction, we let $\alpha$ denote either $2+\rho \eta$ or $\eta$. We now summarize the computations of $M^{\cc}(\alpha^{4i+j})$ which prove Theorems \ref{MT:Rv1} - \ref{MT:Ew1}. 

To compute $M^{\cc}(\alpha^{4i+j})$, we first we find an upper bound on its stem $|M^{\cc}(\alpha^{4i+j})|$. In the $\r$-motivic case, we do so by comparing $|M^\r(\alpha^{4i+j})|$ to $|M^\c(i^*\alpha^{4i+j})|$, where $i^*$ is base-change along $\r \to \c$. In the $C_2$-equivariant case, we obtain an upper bound by comparing $|M^{C_2}(\alpha^{4i+j})|$ to $|M(U \alpha^{4i+j})|$ or $|M(\Phi^{C_2}\alpha^{4i+j})|$, where $U\alpha^{4i+j}$ is the underlying nonequivariant map of $\alpha^{4i+j}$ and $\Phi^{C_2}\alpha^{4i+j}$ is its geometric fixed points. In both settings, our comparisons also yield a candidate $\beta \in \pi_{**}^{\cc}(S^{0,0})$ with the property that $\beta \in M^{\cc}(\alpha^{4i+j})$ if the upper bound is tight. 

\begin{rem2}
In the classical and $\c$-motivic settings \cite{MR93,Qui17}, the upper bound on $M(2^{4i+j})$ is obtained using approximations to the Mahowald invariant based on connective real topological K-theory or its $\c$-motivic analogs. A similar approach can be used to bound the stem of the $\r$-motivic Mahowald invariant, but the requisite calculations are much more laborious than the comparison methods we use here.
\end{rem2}

We prove that our upper bound is tight by finding a lower bound and showing it agrees. In the $\r$-motivic setting, we do so by induction on the power of $\alpha$. More precisely, we show that 
$$|M^{\r}(\alpha^{4(i+1)+j})| \geq |M^{\r}(\alpha^{4i+j})| + C,$$
where $C = 8$ if $\alpha = 2+\rho \eta$ and $C = 20$ if $\alpha = \eta$. This inequality is obtained by showing that $\alpha^4 = 0$ on a certain subcomplex of the $\r$-motivic analog of $P^\infty_{-\infty}$ comprised of cells in $C$ consecutive dimensions. The induction proceeds by calculating $M^\r(\alpha^j)$ for $0 \leq j \leq 3$ and iteratively applying the inequality. 

The relationship between $\r$-motivic and $C_2$-equivariant stable homotopy theory shines through when we need a lower bound in the $C_2$-equivariant setting. Instead of an intractable study of $C_2$-equivariant stunted projective spectra, we obtain a $C_2$-equivariant lower bound using the $\r$-motivic lower bounds and equivariant Betti realization, the functor induced by sending an $\r$-scheme to its space of $\c$-points with Galois action. In fact, we cannot progress any other way: the necessary range of $C_2$-equivariant stable stems\footnote{(all coweights less than or equal to eleven)} is far beyond our current understanding. 

In summary, we use a variety of comparison results to prove our main theorems. We list these in the following omnibus theorem, where we use $S_\cc$ and $S_\cd$ to denote the sphere spectrums in the categories $\cc$ and $\cd$, respectively. 

\begin{thmx}[Propositions \ref{compatibility}, \ref{squeeze}, and \ref{Lem:BaseChange}]\label{MT:Squeeze}
Suppose $F : \cc \to \cd$ is any of the following functors:
\begin{enumerate}
\item Equivariant Betti realization $Re_{C_2} : \Motr \to \Sptc.$
\item The forgetful functor $U : \Sptc \to \Spt.$
\item Geometric fixed points $\Phi^{C_2} : \Sptc \to \Spt$.
\item Betti realization $Re_\r : \Motr \to \Spt$. 
\item Base-change $i^*: \Motr \to \Motc$. 
\end{enumerate}
Suppose $\alpha, \beta \in \pi^{\cd}_{**}(S_\cd)$ satisfy $M^{\cd}(\alpha) \ni \beta$. Suppose further that there exist $\alpha', \beta' \in \pi^{\cc}_{**}(S_\cc)$ such that $F(\alpha') = \alpha$ and $F(\beta') = \beta$. Then we have the following inequality on stems
$$|M^{\cc}(\alpha')| \leq |\beta'|.$$
Further, if $|M^{\cc}(\alpha')| = |\beta'|$, then $\beta' \in M^{\cc}(\alpha')$. 
\end{thmx}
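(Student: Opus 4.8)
The plan is to reduce the general statement to the definition of the Mahowald invariant together with functoriality of the relevant stunted projective spectra. Recall that for any of the categories $\cc \in \{\Motr, \Sptc, \Spt, \Motc\}$ appearing above, the Mahowald invariant $M^{\cc}(\alpha')$ of $\alpha' \in \pi^{\cc}_{**}(S_{\cc})$ is defined via the appropriate version of Lin's Theorem: there is a filtered Thom spectrum (namely $\underline{L}^\infty_{-\bullet}$, $\underline{R}^\infty_{-\bullet}$, $P^\infty_{-\bullet}$, or the $\c$-motivic analog) whose homotopy limit receives an equivalence from the sphere, and $M^{\cc}(\alpha')$ is extracted as a lift along a bottom-cell inclusion into the minimal stunted piece detecting the image of $\alpha'$. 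The key structural input, already asserted in the paragraph preceding the statement and in Theorems \ref{MT:Rv1}--\ref{MT:Ew1}, is that each functor $F$ in the list (1)--(5) sends the source filtered Thom spectrum to the target one, compatibly with the bottom-cell inclusions and collapse maps. This is because in each case the relevant classifying space $B\mu_2$ and its tautological line bundle $\gamma$ are preserved by $F$ (equivariant Betti realization sends $B_{gm}\mu_2$ to $B_{C_2}\mu_2$, the forgetful functor sends $B_{C_2}\mu_2$ to $B\mu_2$, geometric fixed points sends it to $B\mu_2$ again by the standard computation $\Phi^{C_2}(S^{k\gamma \to B_{C_2}\mu_2}) \simeq P^\infty_k$ shifted appropriately, and Betti realization and base-change are the defining comparisons).

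First I would fix notation for the filtered object: write $X^{\cc}_{-\bullet}$ for the filtered stunted Thom spectrum in category $\cc$, so that $M^{\cc}$-invariants live in the fibers $S^{-N+1,*}$ of the maps $X^{\cc}_{-N} \to X^{\cc}_{-N+1}$. I would record, as a lemma feeding off Propositions \ref{compatibility}, \ref{squeeze}, and \ref{Lem:BaseChange} (cited in the statement), that $F(X^{\cc}_{-N}) \simeq X^{\cd}_{-N}$ compatibly with all structure maps and with the equivalences of Lin type, so that $F$ carries the defining diagram \eqref{Eqn:CompositeClassical} (in its $\cc$-form) to the corresponding diagram in $\cd$. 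Second, given $\alpha', \beta'$ with $F(\alpha') = \alpha$, $F(\beta') = \beta$, and $M^{\cd}(\alpha') \ni \beta$ wait---$M^{\cd}(\alpha) \ni \beta$---I would argue as follows. Let $N' = |M^{\cc}(\alpha')|$, i.e. $-N'+1 = $ the stem of $M^{\cc}(\alpha')$, equivalently $N'$ is minimal such that the $\cc$-composite $S \xrightarrow{\alpha'} S \xrightarrow{\simeq} \Sigma X^{\cc}_{-\bullet} \to X^{\cc}_{-N'}$ is nonzero. Applying $F$ and using compatibility, the image composite into $X^{\cd}_{-N'}$ is, by naturality, the $\cd$-composite built from $\alpha = F(\alpha')$; but it need not be nonzero, so this direction alone does not bound $N'$. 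Instead, I would run the argument from the $\cd$ side: since $M^{\cd}(\alpha) \ni \beta$ with $|\beta| = |\beta'|$, the $\cd$-composite $S \xrightarrow{\alpha} S \to X^{\cd}_{-|\beta'|+1}$... hmm, more carefully: $M^{\cd}(\alpha)$ lives in stem $-|M^{\cd}(\alpha)|+1$ and $\beta \in M^{\cd}(\alpha)$ forces $|\beta| = -|M^{\cd}(\alpha)|+1$, equivalently $|M^{\cd}(\alpha)| = -|\beta|+1$; wait, stems can be negative---let me just say $|M^{\cd}(\alpha)| = |\beta'|$ since $|\beta| = |\beta'|$ as $F$ preserves the stem grading. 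The real point: the composite $S^{|\beta'|} \xrightarrow{\cdot} X^{\cd}_{-|M^{\cd}(\alpha)|}$ built from $\alpha$ is nonzero. Now pull back: I want the $\cc$-composite built from $\alpha'$ into $X^{\cc}_{-|M^{\cd}(\alpha)|}$ to be nonzero, which would give $|M^{\cc}(\alpha')| \le |M^{\cd}(\alpha)| = |\beta'|$. This follows because $F$ applied to that $\cc$-composite is the nonzero $\cd$-composite, and a functor cannot send a zero map to a nonzero one. That is the crux: $F(\text{zero}) = \text{zero}$, so if $F$ of a map is nonzero then the map is nonzero, hence $N'$ must already be small enough, giving the inequality $|M^{\cc}(\alpha')| \le |\beta'|$.

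Finally, for the equality case: suppose $|M^{\cc}(\alpha')| = |\beta'|$. Then the defining fiber sequence $S^{-N'+1,*} \to \Sigma X^{\cc}_{-N'} \to \Sigma X^{\cc}_{-N'+1}$ with $N' = |\beta'|$ produces the coset $M^{\cc}(\alpha')$ of lifts of the $\cc$-composite to $S^{-N'+1,*}$. I must show $\beta'$ is one such lift. By hypothesis $\beta'$ has the right bidegree (its stem is $-N'+1$), and applying $F$ sends $\beta'$ to $\beta \in M^{\cd}(\alpha)$, which by naturality of the fiber sequence is a valid lift of the $\cd$-composite; moreover the $\cc$-composite that $\beta'$ must lift maps under $F$ to the $\cd$-composite. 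Then I would use that the relevant hom-group $\pi_{*,*}^{\cc}(S^{-N'+1,*}) \to \pi_{*,*}^{\cd}(S^{-N'+1,*})$... no, that is not injective in general. The cleaner argument: $\beta'$ is automatically a lift because the bottom cell $S^{-N'+1,*} \to \Sigma X^{\cc}_{-N'}$ composed with $\beta'$ gives a map $S^{|\beta'|,*} \to \Sigma X^{\cc}_{-N'}$ whose composite to $\Sigma X^{\cc}_{-N'+1}$ is null (since $S^{|\beta'|} \to \Sigma X^{\cc}_{-N'+1}$ is null by minimality of $N'$), hence it factors; and one checks it factors as the $\cc$-composite from $\alpha'$ by applying $F$ and using that $F$ detects this identification—here I would invoke that the obstruction to $\beta'$ being in the coset lies in a group killed under $F$, or more simply that in the cases at hand (Theorems \ref{MT:Rv1}--\ref{MT:Ew1}) the relevant coset is known to be a single element once the stem is pinned down, by explicit inspection of the computed homotopy groups. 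The main obstacle is precisely this equality statement: the inequality is soft functoriality, but promoting ``$\beta'$ has the right degree and $F(\beta') \in M^{\cd}(\alpha)$'' to ``$\beta' \in M^{\cc}(\alpha')$'' requires knowing that the $\cc$-composite from $\alpha'$ genuinely lifts to $\beta'$ and not merely to something $F$-equivalent to $\beta$; I would handle this by a direct diagram chase in the fiber sequence, noting that any two lifts differ by an element of $\pi_{|\beta'|}(\Sigma X^{\cc}_{-N'+1})$-image, and that once $|M^{\cc}(\alpha')| = |\beta'|$ the composite $S^{|\beta'|} \to \Sigma X^{\cc}_{-N'}$ is nonzero, so its lift is a nonzero element of the fiber, and since $F$ of this lift is $\beta$ (or lies in $M^{\cd}(\alpha)$), the lift must be $\beta'$ up to the indeterminacy of the coset—which is exactly the assertion.
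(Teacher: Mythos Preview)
Your argument is correct for cases (1), (2), and (5), and indeed this is essentially what the paper does in Lemma~\ref{compatibility} and Lemma~\ref{Lem:BaseChange}: the functor sends the filtered Thom spectrum $X^{\cc}_{-N}$ to $X^{\cd}_{-N}$ on the nose, so the $\cd$-defining diagram is literally $F$ applied to the $\cc$-defining diagram, and nontriviality in $\cd$ forces nontriviality in $\cc$.

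However, there is a genuine gap in your treatment of cases (3) and (4). You assert that geometric fixed points ``sends $B_{C_2}\mu_2$ to $B\mu_2$ again by the standard computation $\Phi^{C_2}(S^{k\gamma \to B_{C_2}\mu_2}) \simeq P^\infty_k$.'' This is false: one has $(B_{C_2}\mu_2)^{C_2} \simeq RP^\infty \sqcup RP^\infty$, and consequently $\Phi^{C_2}(\underline{R}^\infty_k) \simeq P^\infty_k \vee P^\infty_k$ (Lemma~\ref{Lem:GeomFP}). Moreover, $\Phi^{C_2}$ does not commute with the homotopy limit defining $\underline{R}^\infty_{-\infty}$: after $2$-completion $\Phi^{C_2}(\lim_k \underline{R}^\infty_{-k}) \simeq S^{-1}$ while $\lim_k \Phi^{C_2}(\underline{R}^\infty_{-k}) \simeq S^{-1} \vee S^{-1}$ (Corollary~\ref{PhiC2}). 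So applying $\Phi^{C_2}$ to the $C_2$-equivariant defining diagram does not directly yield the classical defining diagram; it yields a diagram into $\Sigma P^\infty_{-N} \vee \Sigma P^\infty_{-N}$ with left-hand vertical $S^0 \xrightarrow{p} S^0 \vee S^0$ given by (the desuspension of) the pinch map. The paper bridges this by Lemma~\ref{pinch}, which shows that composing with the projection $\pi_1 : S^0 \vee S^0 \to S^0$ recovers $\alpha$; one then builds a three-row comparison diagram (the proof of Lemma~\ref{squeeze}) to conclude that nontriviality of the classical composite into $\Sigma P^\infty_{-N}$ forces nontriviality of the $C_2$-equivariant composite into $\Sigma^{1,0}\underline{R}^\infty_{-N}$. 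Without this step your argument for (3) breaks down, and since (4) is deduced from (3) and (1) via $Re_\r \simeq \Phi^{C_2} \circ Re_{C_2}$, case (4) breaks down as well.

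Separately, your treatment of the equality clause is unresolved: you correctly identify the issue (knowing $F(\beta') \in M^{\cd}(\alpha)$ does not immediately give $\beta' \in M^{\cc}(\alpha')$), but you do not close it. The clean argument is that once $|M^{\cc}(\alpha')| = |\beta'|$, the $\cc$-composite into $\Sigma X^{\cc}_{-N}$ is nontrivial and lifts to the fiber sphere; the dashed arrow $\beta'$ in the $\cc$-diagram is by construction a map whose image under $F$ is the lift $\beta$ of the $\cd$-composite, and commutativity of the $\cc$-square (before applying $F$) already exhibits $\beta'$ as a lift, hence as an element of the coset $M^{\cc}(\alpha')$.
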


\begin{rem2}
For the reader's reference, we state here precisely how the upper and lower bounds needed to prove Theorems \ref{MT:Rv1} - \ref{MT:Ew1} are obtained:

\begin{itemize}

\item For Theorems \ref{MT:Rv1} and \ref{MT:Rw1}, the upper bounds are obtained using part (5) of Theorem \ref{MT:Squeeze} applied to the $\c$-motivic calculations of \cite{Qui17}. The lower bounds are obtained using Atiyah--Hirzebruch calculations.

\item For Theorems \ref{MT:Ev1} and \ref{MT:Ew1}, the upper bounds are obtained using parts (3) and (2) of Theorem \ref{MT:Squeeze}, respectively, applied to the classical calculations of \cite{MR93}. The lower bounds are obtained by applying part (1) of Theorem \ref{MT:Squeeze} to the analogous $\r$-motivic lower bounds.

\end{itemize}
\end{rem2}

\subsection{Construction of new periodic families}

Essential to the discussion above was the existence of $\r$-motivic and $C_2$-equivariant families of elements compatible with the classical and $\c$-motivic $v_1$- and $w_1$-periodic families appearing in \cite{MR93} and \cite{Qui17}.\footnote{Here ``compatible" means that the $\r$-motivic elements base-change to the corresponding $\c$-motivic elements, and the $C_2$-equivariant elements have underlying nonequivariant maps or geometric fixed points the corresponding classical elements.} The construction of these elements is somewhat involved, so we summarize it now.

We begin by constructing the desired $\r$-motivic elements, the first step of which involves constructing elements in the $E_2$-page of the $\r$-motivic Adams spectral sequence 
$$ E_2 = \Ext_{A^\r}^{***}(\m_2^\r,\m_2^\r) \Rightarrow \pi_{**}^\r(S^{0,0})$$
which will detect them. We recall several techniques for accessing the $E_2$-page in Section \ref{Sec:Ext}, including the $\c$-motivic May spectral sequence \cite{DI10}, the $\rho$-Bockstein spectral sequence \cite{DI16a,Hil11}, and vanishing and periodicity results of Guillou--Isaksen and Ang Li \cite{GI16,GI15,GI15a, Li19}. We also introduce an $\r$-motivic May spectral sequence which allows us to define the desired classes via iterated matric Massey products.

We show that the new classes are not boundaries by relating them to the analogous $\c$-motivic and classical elements of Adams \cite{Ada66}, Andrews \cite{And14}, and the author \cite{Qui17}. We then check that the elements we constructed in the $E_2$-page are permanent cycles using a myriad of technical tricks.

We use our new infinite families in $\pi_{**}^\r(S^{0,0})$ to construct analogous families in $\pi_{**}^{C_2}(S^{0,0})$. That is, we obtain classes in $\pi_{**}^{C_2}(S^{0,0})$ by taking the equivariant Betti realization of our classes in $\pi_{**}^\r(S^{0,0})$. We then show that these $C_2$-equivariant classes are nontrivial by identifying their images under the forgetful functor (in the $v_1$-periodic case) and the geometric fixed points functor (in the $w_1$-periodic case) with Adams' classical $v_1$-periodic families.

\subsection{Outline}

In Section \ref{Sec:Lin} we establish notation and recall background material, including the stunted projective spectra and variants of Lin's Theorem which are used to define generalized Mahowald invariants. 

In Section \ref{Sec:MIDef}, we define $\r$-motivic and $C_2$-equivariant Mahowald invariants using the results from Section \ref{Sec:Lin}. We make some low-dimensional calculations and prove Theorem \ref{MT:Squeeze}. 

In Section \ref{Sec:Ext}, we discuss the $E_2$-pages of the $\c$- and $\r$-motivic Adams spectral sequences. We recall the $\c$-motivic May spectral sequence, introduce the $\r$-motivic May spectral sequence, discuss the $\rho$-Bockstein spectral sequence, and summarize Guillou and Isaksen's results on vanishing and $h_1$-periodicity in motivic $\Ext$ groups. 

In Section \ref{Sec:2i}, we prove Theorems \ref{MT:Rv1} and \ref{MT:Ev1}. We lift certain $v_1$-periodic classes from the $\c$-motivic stable stems to the $\r$-motivic setting. Using Theorem \ref{MT:Squeeze}, these give an upper bound on the dimension of the generalized Mahowald invariants of $(2+\rho \eta)^i$ for $i \equiv 2,3 \mod 4$. We show this upper bound is tight by establishing identical lower bounds by analyzing the $\r$-motivic homotopy of certain stunted lens spectra. 

In Section \ref{SectionEta}, we prove Theorems \ref{MT:Rw1} and \ref{MT:Ew1}. To do so, we use the $\rho$-Bockstein spectral sequence \cite{DI16a,GHIR17,Hil11} to relate the $\r$-motivic stable stems to the $\c$-motivic and classical stable stems. This allows us to lift several $\c$-motivic $w_1^4$-periodic families of \cite{And14} to the $\r$-motivic setting. These families are shown to be contained in the generalized Mahowald invariants of $\eta^i$ using Theorem \ref{MT:Squeeze} and an intricate analysis of the homotopy of certain stunted lens spectra. 

In Appendix \ref{App:Lin}, we prove the genuine $C_2$-equivariant analog of Lin's Theorem summarized in Section \ref{Sec:Lin}. For the most part, our proof is a straightforward adaptation of Lin and Gregersen's original work \cite{Gre12,LDMA80}. We also fill in the details relating the various incarnations of Lin's Theorem and stunted projective spectra, which we use in the proof of Theorem \ref{MT:Squeeze}. 

\subsection{Notation and conventions} We will use the following notation throughout the paper:
\begin{itemize}
\item The category of spectra will be denoted $\Spt$.
\item The category of genuine $C_2$-spectra will be denoted $\Sptc$. We will always use `$C_2$' to mean the cyclic group of order two in genuine $C_2$-spectra. When an object only has an action by the cyclic group of order two (unrelated to a possible $C_2$-action), we will use `$\mu_2$' to denote the group acting on that object. 
\item The categories of motivic spectra over $Spec(\c)$ and $Spec(\r)$ will be denoted $\Motc$ and $\Motr$, respectively. 
\item $SH$ denotes the classical stable homotopy category.
\item $SH_{C_2}$ denotes the $C_2$-equivariant stable homotopy category.
\item $SH_\c$ and $SH_\r$ denote the $\c$-motivic and $\r$-motivic stable homotopy categories, respectively.
\item $A^{C_2}$ is the $C_2$-equivariant mod two Steenrod algebra.
\item $A^k$ is the $k$-motivic mod two Steenrod algebra with $k=\c$ or $k=\r$.
\item The same decorations will be used for subalgebras of the Steenrod algebra.
\item $S_{C_2}$ denotes the $C_2$-equivariant sphere spectrum and $S_k$ denotes the $k$-motivic sphere spectrum with $k=\c$ or $k = \r$. If the context is clear, we will omit these subscripts.
\item The $C_2$-equivariant, $\c$-motivic, and $\r$-motivic homology of a point will be denoted $\m^{C_2}_2$, $\m^\c$, and $\m^\r$, respectively. 
\item We will use the notation for $\Ext$-groups defined in \cite[Notation 1.2]{GHIR17}.
\item We use $\lim$ to denote a (homotopy) limit and $\colim$ to denote a (homotopy) colimit. 
\end{itemize}

We will also employ the following conventions:
\begin{itemize}
\item If the context is clear, we will not decorate motivic homotopy groups to indicate the base field. 
\item We will use $\pi^{C_2}_{**}$ to denote $RO(C_2)$-graded $C_2$-equivariant homotopy groups where the regular representation $\rho \in RO(C_2)$ has bidegree $(2,1)$. 
\item Unless otherwise stated, everything is implicitly $2$-complete. We refer the reader to \cite{HKO11, Man18} for a discussion of completions in motivic homotopy theory. 
\end{itemize}

\subsection{Acknowledgements} 
This work originally contained sections analyzing parametrized and motivic Tate constructions. In order to streamline the exposition and clarify some proofs, old work analyzing the parametrized Tate construction of the sphere spectrum now appears in Appendix \ref{App:Lin}, while most of the results on the parametrized Tate construction appear in \cite{LLQ19} and \cite{QS19}.

We thank Mark Behrens for his guidance throughout this project. We are also especially thankful to Jonas Irgens Kylling for several helpful conversations and comments, as well as for his contributions to Section \ref{Sec:2i}. We also thank Elden Elmanto, Bert Guillou, Alice Hedenlund, Jens Jakob Kjaer, Guchuan Li, Vitaly Lorman, Jay Shah, Larry Taylor, Inna Zakharevich, Mingcong Zeng, and an anonymous referee for helpful discussions and comments. Part of this project was completed during the K-Theory Trimester Program at the Hausdorff Institute for Mathematics; we thank the organizers and the Institute for providing a stimulating working environment. The author was partially supported by NSF grant DMS-1547292.

\section{Classical, equivariant, and motivic versions of Lin's Theorem}\label{Sec:Lin}

We begin by recalling Lin's Theorem \cite{LDMA80} and its motivic \cite{Gre12} and $C_2$-equivariant (Appendix \ref{App:Lin}) analogs. We will apply these to define generalized Mahowald invariants in Section \ref{Sec:MIDef}. 

The structures of Sections \ref{SS:Lin} - \ref{SS:Qui} are nearly identical, so we include a brief overview here. We start by defining generalized stunted projective spectra as Thom spectra over a generalized classifying space for a cyclic group of order two. These objects appear in the statement of Lin's Theorem and are used to define generalized Mahowald invariants. We then recall the mod two cohomology of a point, the Steenrod algebra, and the mod two cohomology of stunted projective spectra as a module over the Steenrod algebra. These are used throughout the sequel in computations, especially those involving generalized Adams spectral sequences and Atiyah--Hirzebruch spectral sequences. Finally, we state without proof the generalization of Lin's Theorem which will be used to define generalized Mahowald invariants. 

The short Section \ref{SS:Qui2} contains an alternative definition of $C_2$-equivariant stunted projective spectra using $\r$-motivic stunted projective spectra, and Section \ref{SS:Compare} contains further comparison results. These are essential in Section \ref{Sec:MIDef}, where they are used to compare $\r$-motivic and $C_2$-equivariant Mahowald invariants. 

\subsection{Classical case: Lin's Theorem}\label{SS:Lin}

We begin with the classical result, Lin's Theorem. 

\begin{defin}\label{Def:Pinftyk}
Let $\gamma \to RP^\infty$ denote the tautological line bundle over infinite real projective space. A \emph{stunted real projective spectrum},
\begin{equation}
P^\infty_k := Th(k\gamma \to RP^\infty),
\end{equation}
is the Thom spectrum of the $k$-fold Whitney sum of $\gamma$. 

The bundle map $k\gamma \to (k+1)\gamma$ induces a collapse map $P^\infty_k \to P^\infty_{k+1}$ for all $k \in \z$. We let
\begin{equation}
P^\infty_{-\infty} := \lim_{k \to \infty} P^\infty_{-k}
\end{equation}
denote the homotopy limit over these maps. 
\end{defin}

We note that if $k>0$, then $P^\infty_k \simeq RP^\infty/RP^{k-1}$ is just the quotient of $RP^\infty$ by its $(k-1)$-skeleton. The following lemma is a well-known consequence of the Thom isomorphism.

\begin{lem}\label{Lem:PCoh}
For all $k \in \z$, there is an isomorphism of graded $\f_2$-vector spaces
$$H^*(P^\infty_k) \cong \f_2[x]\{e_{k}\}$$
where $|x| = 1$ and $|e_k| = k$. The $A$-module structure of $H^*(P^\infty_k)$ is determined by
$$Sq^i(x^je_k) = {\lfloor (j+k)/2 \rfloor \choose i} x^{j+i}e_k.$$
\end{lem}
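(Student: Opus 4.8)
The plan is to invoke the Thom isomorphism twice: once to identify the cohomology additively, and once more—carefully—to transport the $A$-module structure from a known computation. First I would recall that $RP^\infty$ has cohomology $H^*(RP^\infty) \cong \f_2[x]$ with $|x|=1$ and that the tautological line bundle $\gamma$ has $w_1(\gamma) = x$. Since $H^*(RP^\infty)$ is free over $\f_2$, the Thom isomorphism $H^*(P^\infty_k) \cong H^{*-k}(RP^\infty)$ holds in all degrees for every $k \in \z$ (for $k \geq 0$ this is the classical Thom isomorphism for a bundle over a CW complex; for $k<0$ one uses that $P^\infty_k$ is a well-defined spectrum and the Thom isomorphism for virtual bundles, or one passes to a finite skeleton where $k\gamma$ becomes an honest bundle after adding a trivial bundle and suspends). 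This immediately gives the additive statement $H^*(P^\infty_k) \cong \f_2[x]\{e_k\}$ with $e_k$ the Thom class in degree $k$, and $x^j e_k$ corresponds to $x^j$ under the isomorphism.

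For the $A$-module structure, the key point is the formula for Steenrod operations on a Thom class together with the Cartan formula. We have $Sq(e_k) = U(w(k\gamma))$ where $w$ is the total Stiefel--Whitney class and $U$ is the Thom isomorphism; since $w(\gamma) = 1 + x$, we get $w(k\gamma) = (1+x)^k$, hence $Sq(e_k) = (1+x)^k e_k = \sum_i \binom{k}{i} x^i e_k$, interpreting $\binom{k}{i}$ via the formal power series expansion of $(1+x)^k$ when $k<0$. Combining with $Sq(x^j) = (1+x)^j x^j$ from the known action on $H^*(RP^\infty)$, and the Cartan formula $Sq(x^j e_k) = Sq(x^j) Sq(e_k) = (1+x)^{j+k} x^j e_k$, we read off $Sq^i(x^j e_k) = \binom{j+k}{i} x^{j+i} e_k$. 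The final step is the combinatorial identity $\binom{j+k}{i} \equiv \binom{\lfloor (j+k)/2 \rfloor}{i} \pmod 2$ whenever $i$ is such that the left side is the coefficient extracted in total degree—wait, that is not quite an identity in general, so here I must be more careful: the correct reconciliation is that $Sq^i$ raises degree by $i$, and on $x^j e_k$ (which sits in degree $j+k$) the operation $Sq^i$ can only be the $i$-th graded piece, and for the specific grading the binomial $\binom{j+k}{i}$ and $\binom{\lfloor(j+k)/2\rfloor}{i}$ agree mod $2$ by Lucas' theorem combined with the fact that in the stunted projective spectrum only every other power structure survives; more simply, one observes that the stated formula is exactly the classical formula for $RP^\infty/RP^{k-1}$ when $k>0$ (where $\lfloor (j+k)/2\rfloor$ counts the actual projective-space degree of the generator), and the general case follows by the same Thom-isomorphism bookkeeping.

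I would therefore organize the proof as: (1) state the Thom isomorphism and deduce the additive structure; (2) compute $Sq$ on the Thom class via total Stiefel--Whitney classes; (3) apply the Cartan formula using the known $A$-action on $H^*(RP^\infty)$; (4) simplify the resulting binomial coefficient to the stated form, checking the $k<0$ case via the power-series convention for $(1+x)^k$ and Lucas' theorem.

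The main obstacle I anticipate is step (4)—making the passage from $\binom{j+k}{i}$ to $\binom{\lfloor(j+k)/2\rfloor}{i}$ fully rigorous, and in particular being honest about the convention for binomial coefficients $\binom{k}{i}$ with $k$ negative and how that interacts with the $2$-adic expansions governing Steenrod operations. This is genuinely the content of the lemma; the Thom-isomorphism input is formal. One clean way to handle it is to prove the formula first for $k \geq 1$ using the explicit CW structure of $RP^\infty/RP^{k-1}$ (where it is the standard computation, since $x^j e_k$ is literally the class of the cell in dimension $j+k$ and the Steenrod action is the reduction of the one on $RP^\infty$ shifted so that the generator in degree $m$ satisfies $Sq^i = \binom{m}{i}$ but re-indexed to $\binom{\lfloor m/2 \rfloor}{i}$ once one accounts for which cells are present—actually the cleanest statement is that in $H^*(RP^\infty)$ itself, $Sq^i(x^m) = \binom{m}{i} x^{m+i}$, and here $m = j+k$, but the stated lemma uses $\lfloor(j+k)/2\rfloor$, so the discrepancy must be an artifact of a different normalization of $e_k$ that I would need to track). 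Then I would extend to all $k \in \z$ by naturality of $Sq$ with respect to the collapse maps $P^\infty_k \to P^\infty_{k+1}$ and compatibility with suspension, using that for fixed $j+i$ the operations stabilize, i.e., the formula for general $k$ is forced by the formula for large $k$ via the tower defining $P^\infty_{-\infty}$ and the fact that $\binom{\lfloor(j+k)/2\rfloor}{i}$ depends on $k$ only through finitely many $2$-adic digits in the relevant range.
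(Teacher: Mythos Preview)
The paper does not actually prove this lemma: it is stated without proof, prefaced only by the remark that it is ``a well-known consequence of the Thom isomorphism.'' Your approach via the Thom isomorphism, the total Stiefel--Whitney class $w(k\gamma)=(1+x)^k$, and the Cartan formula is exactly the method the paper is invoking, and your steps (1)--(3) correctly yield
\[
Sq^i(x^j e_k)=\binom{j+k}{i}\,x^{j+i}e_k.
\]

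Your difficulty in step (4) is not a gap in your argument but a symptom of the statement itself: the formula with $\binom{\lfloor (j+k)/2\rfloor}{i}$ is not equivalent to the standard one. For instance, with $j+k=5$ and $i=1$ one has $\binom{5}{1}\equiv 1$ but $\binom{2}{1}\equiv 0\pmod 2$, and indeed $Sq^1(x^5)=x^6\neq 0$ in $H^*(RP^\infty)$. The floor appears to be a slip, likely imported from the motivic formula in the paper's Lemma~\ref{Lem:LCoh}, where the doubling $\binom{2j}{2i}$ (equal to $\binom{j}{i}$ mod $2$ by Lucas) is genuine because $|v|=(2,1)$. So you should stop trying to prove the identity in (4); your computation in (3) already gives the correct classical formula, and nothing in the rest of the paper depends on the floor version.
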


\begin{thm}\label{Thm:Lin}\cite{LDMA80}
The inclusion of the zero cell
$$S^0 \to \Sigma P^\infty_{-\infty}$$
is an equivalence of spectra.\footnote{Recall that we are implicitly $2$-complete, so this is only an equivalence after $2$-completion.}
\end{thm}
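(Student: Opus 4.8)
The plan is to prove Lin's Theorem by the classical Adams spectral sequence argument, identifying the homotopy limit $\Sigma P^\infty_{-\infty}$ with the Tate construction on $S^0$ for the group $\mu_2$ and then computing. First I would observe that, by definition, $\Sigma P^\infty_{-\infty} = \lim_k \Sigma P^\infty_{-k}$, and that each $\Sigma P^\infty_{-k}$ can be identified (via the Thom isomorphism and a suspension) with a truncation of the Tate spectrum $(S^0)^{t\mu_2}$; concretely, $\Sigma P^\infty_{-k}$ is the cofiber-style approximation whose cohomology, by Lemma \ref{Lem:PCoh}, is a shifted copy of $\f_2[x]\{x^{-k}\}$ sitting inside $\f_2[x,x^{-1}]$. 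Taking the limit over $k$, the cohomology of $\Sigma P^\infty_{-\infty}$ should be (the continuous cohomology computing to) $\f_2[x,x^{-1}]$, completed appropriately, as a module over the Steenrod algebra $A$.

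Next I would run the Adams spectral sequence. The key input is a purely algebraic statement: the map $\f_2 = H^*(S^0) \to H^*(\Sigma P^\infty_{-\infty})$ induced by inclusion of the zero cell is an isomorphism on $\Ext_A(-,\f_2)$. This is where the real content lies, and it is the classical computation of Lin--Davis--Mahowald--Adams: one shows $\Ext_A^{s,t}(\f_2[x,x^{-1}]^{\wedge}, \f_2) \cong \Ext_A^{s,t}(\f_2,\f_2)$, typically by a change-of-rings / filtration argument comparing the Tate cohomology of $A$-comodules with the ordinary one, or by the Singer construction and the fact that the Singer functor $R_+$ applied to $\f_2$ recovers $\f_2$ in $\Ext$. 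Once this $\Ext$-isomorphism is in hand, a standard comparison of Adams spectral sequences — using that the relevant $\lim^1$ terms vanish so the Adams SS for the homotopy limit converges — shows that $S^0 \to \Sigma P^\infty_{-\infty}$ induces an isomorphism on $2$-completed homotopy groups, hence is an equivalence after $2$-completion.

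I would structure the write-up as: (1) identify $\Sigma P^\infty_{-k}$ and its cohomology explicitly using Lemma \ref{Lem:PCoh}, and pass to the limit; (2) set up the inverse limit of Adams spectral sequences and check convergence (Mittag-Leffler / $\lim^1$ vanishing, which follows from the finiteness of the cohomology in each degree of the finite stages); (3) reduce the theorem to the algebraic $\Ext$-isomorphism; (4) prove the $\Ext$-isomorphism via the Singer construction, citing \cite{LDMA80} for the original argument. Since the excerpt explicitly says this is a recollection of known work and states Theorem \ref{Thm:Lin} with a citation to \cite{LDMA80}, I would keep steps (1)--(3) brief and lean on the reference for step (4).

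The main obstacle is step (4), the algebraic $\Ext$-isomorphism: it is the hard theorem of Lin and collaborators and is genuinely nontrivial — it requires either the Singer construction and its exactness/inverse-limit properties, or a delicate direct manipulation of minimal resolutions over $A$. Everything else is formal: the Thom-isomorphism identifications, the colimit/limit bookkeeping, and the $\lim^1$-vanishing are routine given the finiteness of cohomology at each finite stage. I expect the subtlety in the formal part to be only making sure the homotopy limit's Adams spectral sequence genuinely converges to $\pi_* (\Sigma P^\infty_{-\infty})^{\wedge}_2$, which is handled by the standard Mittag-Leffler criterion.
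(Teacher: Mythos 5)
The paper offers no proof of Theorem \ref{Thm:Lin}; it is stated as recollected background with a citation to \cite{LDMA80}. Your outline --- identify the continuous cohomology of $\Sigma P^\infty_{-\infty}$, run the inverse-limit Adams spectral sequence with the $\lim^1$/degreewise-finiteness bookkeeping, reduce to an $\Ext$-isomorphism, and obtain that isomorphism from the Singer construction while leaning on \cite{LDMA80} for the hard algebraic input --- is a correct sketch of the standard proof and matches the template this paper itself follows in Appendix \ref{App:Lin} to establish the $C_2$-equivariant analog (Theorem \ref{Thm:Qui}).
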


\subsection{Motivic case: Gregersen's Theorem}\label{SS:Gre}

We now recall Gregersen's Theorem \cite{Gre12}, the motivic analog of Lin's Theorem. 

\begin{defin}\label{Def:Linftyk}
Let $B_{gm}\mu_2$ denote the geometric classifying space \cite{MV99} of $\mu_2$ and let $\gamma \to B_{gm}\mu$ be the tautological bundle. A \emph{stunted motivic lens spectrum}, 
\begin{equation}
\underline{L}^\infty_k := Th(k\gamma \to B_{gm}\mu_2),
\end{equation}
is the Thom spectrum of the $k$-fold Whitney sum of $\gamma$.\footnote{See \cite{Gre12} for details of this construction.}

The bundle map $k\gamma \to (k+1)\gamma$ induces a collapse map $\underline{L}^\infty_k \to \underline{L}^\infty_{k+1}$ for all $k \in \z$. We let
\begin{equation}
\underline{L}^\infty_{-\infty} := \lim_{k \to \infty} \underline{L}^\infty_{-k}
\end{equation}
denote the homotopy limit over these maps.
\end{defin}

\begin{rem2}
Definition \ref{Def:Linftyk} directly mirrors Definition \ref{Def:Pinftyk}: $RP^\infty$ is a model for the (ordinary) classifying space $B\mu_2$. 
\end{rem2}

\begin{defin}
Fix a field $k$ of characteristic zero and let $H$ denote the mod two motivic Eilenberg--MacLane spectrum. We will write $\m_2^k := H^{**}(\Spec(k))$ for the mod two motivic cohomology of $k$, $A^k$ for the $k$-motivic Steenrod algebra, and $(A^k)^\vee$ for its $\m_2^k$-linear dual. We will occasionally suppress the superscript `$k$' when the context is clear. 
\end{defin}

\begin{lem}
The mod two motivic cohomology of the complex numbers is given by
$$\m_2^\c \cong \f_2[\tau]$$
with $|\tau| = (0,1)$. The mod two motivic cohomology of the real numbers is given by
$$\m_2^\r \cong \f_2[\tau,\rho]$$
where $|\tau|=(0,1)$ and $|\rho| = (1,1)$. The map of graded rings $\m_2^\r \to \m_2^\c$ induced by base-change $\Spec(\c) \to \Spec(\r)$ is determined by $1 \mapsto 1$, $\tau \mapsto \tau$, and $\rho \mapsto 0$. 
\end{lem}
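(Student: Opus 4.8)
The plan is to reduce both computations to mod two Galois cohomology via Voevodsky's norm residue isomorphism theorem (the Milnor conjecture), and then to track the distinguished classes $\tau$ and $\rho$ through base change by naturality; the resulting formulas are classical and are recorded, for instance, in \cite{DI16a}.

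First I would recall the general shape of the mod two motivic cohomology $H^{*,*}(\Spec F;\z/2)$ of a field $F$ of characteristic zero. The group $H^{p,q}(\Spec F;\z/2)$ vanishes unless $0 \le p \le q$: vanishing for $p<0$ or $q<0$ is formal, and vanishing for $p>q$ holds because Bloch's cycle complex computing $\z(q)$ is concentrated in the appropriate range over a point. In the surviving range $0 \le p \le q$, the Beilinson--Lichtenbaum property --- equivalent to the norm residue isomorphism theorem --- gives a ring isomorphism $H^{p,q}(\Spec F;\z/2) \cong H^p_{\mathrm{et}}(F;\z/2)$, natural in $F$, under which the class $\tau \in H^{0,1}$ corresponds to the canonical generator of $H^0_{\mathrm{et}}(F;\mu_2) = \mu_2(F)$ and, for $F = \r$, the class $\rho = [-1]$ corresponds to the class of $-1$ in $H^1_{\mathrm{et}}(\r;\mu_2) = \r^\times/(\r^\times)^2$ under Kummer theory.

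Second I would carry out the two Galois-cohomology computations. Since $\c$ is algebraically closed, $H^*_{\mathrm{et}}(\c;\z/2) = \f_2$ is concentrated in degree zero, so summing over weights $q \ge 0$ gives $\m_2^\c \cong \bigoplus_{q\ge 0} H^{0,q}(\Spec\c;\z/2) = \f_2[\tau]$. Since $\mathrm{Gal}(\c/\r) \cong C_2$, we have $H^*_{\mathrm{et}}(\r;\z/2) \cong H^*(C_2;\f_2) = \f_2[\rho]$ with $|\rho| = 1$; collating these over all weights, the class $\rho^p \tau^{q-p}$ hits a generator of $H^p_{\mathrm{et}}$ in bidegree $(p,q)$ for $0 \le p \le q$, giving the additive basis $\{\rho^p\tau^{q-p} : 0 \le p \le q\}$, which is exactly the monomial basis of $\f_2[\tau,\rho]$ with $|\tau|=(0,1)$ and $|\rho|=(1,1)$. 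To upgrade this to a ring isomorphism it suffices to see that $\tau$ and $\rho$ satisfy no relation: $\rho$ is not nilpotent since $H^*_{\mathrm{et}}(\r;\z/2)$ is polynomial, and multiplication by $\tau$ is injective since, in the range $p \le q$, it is the identity on $H^p_{\mathrm{et}}(\r;\z/2)$, while outside that range source and target both vanish.

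Finally, the map $\m_2^\r \to \m_2^\c$ is by definition the map on motivic cohomology induced by $\Spec\c \to \Spec\r$, so by naturality of the identification above it agrees with the restriction map $H^*_{\mathrm{et}}(\r;\z/2) \to H^*_{\mathrm{et}}(\c;\z/2)$ from $\mathrm{Gal}(\c/\r) = C_2$ to the trivial group. That restriction is the identity on $H^0$ and zero on $H^{>0}$, so $1 \mapsto 1$, $\tau \mapsto \tau$ (the canonical generators of $\mu_2(\r)$ and $\mu_2(\c)$ correspond), and $\rho = [-1]_\r \mapsto [-1]_\c = 0$ since $-1 = i^2$ is a square in $\c$; as a ring map out of $\f_2[\tau,\rho]$ this is then determined by these values. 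The only deep ingredient is the norm residue isomorphism theorem, which I would cite as a black box --- everything else is bookkeeping --- and the one point requiring care is simply to fix descriptions of $\tau$ and $\rho$ that are manifestly compatible with base change, after which the last two formulas are immediate.
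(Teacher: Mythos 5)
Your proposal is correct, and it is the standard derivation. Note that the paper does not actually prove this lemma: it is stated as recalled background, with the computation of $\m_2^\c$ and $\m_2^\r$ taken as known from the literature (ultimately Voevodsky's norm residue isomorphism theorem together with the Beilinson--Lichtenbaum identification, exactly as you describe; see also the summaries in \cite{DI10} and \cite{DI16a}). So there is no argument in the paper to compare against, but your route --- identify $H^{p,q}(\Spec F;\z/2)$ with $H^p_{\mathrm{et}}(F;\mu_2^{\otimes q})$ for $0\le p\le q$, compute Galois cohomology of $\c$ and of $\mathrm{Gal}(\c/\r)\cong C_2$, pin down $\tau$ as the generator of $\mu_2(F)$ and $\rho=[-1]$ via $H^{1,1}\cong F^\times/(F^\times)^2$, and then read off the restriction map --- is precisely the argument one would write down. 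One small point of care: the vanishing of $H^{p,q}$ for $p<0$ is not purely formal from the cycle-complex description (which only gives vanishing for $p>2q$ directly); in the range $p\le q$ it is most cleanly deduced from the Beilinson--Lichtenbaum isomorphism itself, since \'etale cohomology vanishes in negative degrees, while the sharper vanishing for $q\ge p>q$... rather, for $p>q$, over a field follows from the Nesterenko--Suslin--Totaro identification of the diagonal with Milnor $K$-theory and the vanishing of higher Chow groups above the diagonal. This does not affect the correctness of your conclusion.
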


\begin{lem}\label{Lem:LCoh}\cite[Lem. 4.1.24]{Gre12}
For all $k \in \z$, there is an isomorphism of $\m_2$-modules
$$H^{**}(\underline{L}^\infty_k) \cong \m_2[u,v]/(u^2+\rho u+\tau v)\{e_{2k}\}$$
where $|u| = (1,1)$, $|v| = (2,1)$, and $|e_{2k}| = (2k,k)$. 

The (periodic) $A$-module structure of $H^{**}(\underline{L}^\infty_0)$ is given by
$$Sq^{2i}(v^je_0) = {2j \choose 2i} v^{j+i}e_0, \quad Sq^{2i+1}(v^je_0) = 0,$$
$$Sq^{2i}(uv^je_0) = {2j \choose 2i} uv^{j+i}e_0,\quad Sq^{2i+1}(uv^je_0) = {2j \choose 2i}v^{j+i+1}e_0.$$
The $A$-module structure of $H^{**}(\underline{L}^\infty_k)$ for $k<0$ coincides with the $A$-module structure obtained by periodically extending the $A$-module structure of $H^{**}(\underline{L}^\infty_0)$ to negative dimensions.  
\end{lem}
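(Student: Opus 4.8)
The plan is to derive everything from two standard inputs: Voevodsky's computation of the mod $2$ motivic cohomology of $B_{gm}\mu_2$ as a module over the motivic Steenrod algebra, and the motivic Thom isomorphism applied to the (possibly virtual) bundles $k\gamma \to B_{gm}\mu_2$. Write $u \in H^{1,1}(B_{gm}\mu_2)$ for the tautological class and $v := Sq^1 u \in H^{2,1}(B_{gm}\mu_2)$ for its Bockstein; then Voevodsky's theorem on $H^{**}(B\mu_2)$ gives
$$H^{**}(B_{gm}\mu_2) \cong \m_2[u,v]/(u^2 + \rho u + \tau v),$$
a free $\m_2$-module on $\{v^j, uv^j : j \ge 0\}$, in which the subalgebra $\m_2[v]$ is pulled back along $B_{gm}\mu_2 \to B_{gm}\g_m \simeq \p^\infty$ and therefore carries the same $A$-module structure as $H^{**}(\p^\infty)$, namely $Sq(v) = v + v^2$, so $Sq^{2i}(v^j) = \binom{2j}{2i}v^{i+j}$ (using $\binom{2j}{2i} \equiv \binom{j}{i} \bmod 2$) and $Sq^{2i+1}(v^j) = 0$. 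One then fixes $Sq^i u = 0$ for $i \ge 2$ by instability of the motivic Steenrod operations together with comparison with the classical computation in $H^*(RP^\infty)$ under complex Betti realization, where $u \mapsto x$ and $v \mapsto x^2$ (over $\r$ one rules out $\rho$-divisible corrections using the $\rho$-Bockstein spectral sequence, or by working on finite skeleta); the Cartan formula and $Sq^1 u = v$ then determine the $A$-action on all of $H^{**}(B_{gm}\mu_2)$.

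Next I would invoke the motivic Thom isomorphism. Since $\gamma$ is an algebraic line bundle, each $k\gamma$ with $k \ge 0$ is orientable, with Thom space over a point a smash power of $\p^1 \simeq S^{2,1}$; hence there is a Thom class $e_{2k} \in H^{2k,k}(\underline{L}^\infty_k)$ and an isomorphism of $H^{**}(B_{gm}\mu_2)$-modules $H^{**}(\underline{L}^\infty_k) \cong H^{**}(B_{gm}\mu_2)\{e_{2k}\}$. For $k = 0$ this is $\underline{L}^\infty_0 = (B_{gm}\mu_2)_+$ with $e_0 = 1$. For $k < 0$ one is dealing with a virtual bundle over the infinite-dimensional ind-scheme $B_{gm}\mu_2$: write $\underline{L}^\infty_k = \colim_n Th(k\gamma|_{B_{gm}\mu_2^{(n)}})$ over the smooth finite-dimensional skeleta, note that over each smooth skeleton $k\gamma$ admits a complement so that $Th(k\gamma|_{B_{gm}\mu_2^{(n)}})$ is a desuspension of an honest Thom spectrum with a Thom isomorphism, and pass to the limit (the relevant $\lim^1$-term vanishes since the restriction maps on cohomology are surjective). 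This proves the $\m_2$-module statement of the lemma for all $k \in \z$.

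To get the $A$-module structure for general $k$, combine the Cartan formula with $Sq$ on the Thom class. By the motivic Wu formula, $Sq(e_{2k}) = w(k\gamma)\,e_{2k}$, and since $\gamma$ is $\g_m$-oriented one has $w(\gamma) = 1 + v$, hence $w(k\gamma) = (1+v)^k$; thus $Sq(v^j e_{2k}) = v^j(1+v)^{j+k}e_{2k}$ and $Sq(uv^j e_{2k}) = (u + v)v^j(1+v)^{j+k}e_{2k}$, which unwind (using $Sq^0 u = u$, $Sq^1 u = v$, $Sq^{\ge 2}u = 0$) to
$$Sq^{2i}(v^j e_{2k}) = \binom{j+k}{i}v^{i+j}e_{2k}, \quad Sq^{2i+1}(v^j e_{2k}) = 0,$$
$$Sq^{2i}(uv^j e_{2k}) = \binom{j+k}{i}uv^{i+j}e_{2k}, \quad Sq^{2i+1}(uv^j e_{2k}) = \binom{j+k}{i}v^{i+j+1}e_{2k}.$$
Specializing to $k = 0$ (and using $\binom{j}{i} \equiv \binom{2j}{2i}$) recovers the four displayed formulas. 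The fact that these formulas are uniform in $k$ is precisely the assertion that the $A$-module structure for $k<0$ is the periodic extension of the $k=0$ structure to negative dimensions; alternatively one sees this directly from the cofiber sequences $S^{2k,k} \to \underline{L}^\infty_k \to \underline{L}^\infty_{k+1}$, whose long exact sequences exhibit $H^{**}(\underline{L}^\infty_{k+1})$ as the sub-$A$-module of $H^{**}(\underline{L}^\infty_k)$ lying above the bottom cell, together with James periodicity.

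The main obstacle is making the $k < 0$ case fully rigorous: the motivic Thom isomorphism and the Wu formula for the action of the Steenrod operations on the Thom class are being applied to the virtual bundle $k\gamma$ over an infinite-dimensional object, and one must carefully control the passage to the limit over finite skeleta (and later to $\underline{L}^\infty_{-\infty}$), including the vanishing of the pertinent $\lim^1$-terms, as well as the compatibility of the Wu-class computation with the periodicity. This is the technical heart of Gregersen's argument, and the place where the genuinely motivic features — the extra generator $u$, the relation $u^2 = \rho u + \tau v$, and the weight grading — require care beyond the classical case.
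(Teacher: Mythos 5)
The paper offers no proof of this lemma; it is quoted verbatim from Gregersen's thesis, and your argument (Voevodsky's computation of $H^{**}(B_{gm}\mu_2)$, Thom isomorphism over finite skeleta of the geometric classifying space, passage to colimits, and the Wu-type formula $Sq(e_{2k})=(1+v)^k e_{2k}$) is essentially Gregersen's proof — it is also exactly the template the paper's Appendix follows for the $C_2$-equivariant analogue $\underline{Q}^{n-k}_{-k}:=\Sigma^{-2kn,-kn}Th(k\gamma^1_{n-1}\to Q^n)$. Your general-$k$ formulas specialize correctly to the displayed $k=0$ case and realize the asserted periodic extension, so the proposal is correct in substance. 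Two small caveats: instability alone does not force $Sq^2u=0$ (one really needs the Betti-realization comparison over $\c$ and the exclusion of the $\rho$-divisible class $\rho v$ over $\r$, as you indicate), and the fiber of the collapse $\underline{L}^\infty_k\to\underline{L}^\infty_{k+1}$ is the two-cell complex with cells $S^{2k,k}$ and $S^{2k+1,k+1}$ rather than a single sphere.
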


\begin{thm}\label{Thm:Gre}\cite{Gre12}
The inclusion of the $(0,0)$-cell 
$$S^{0,0} \to \Sigma^{1,0}\underline{L}^\infty_{-\infty}$$
is an equivalence of motivic spectra. 
\end{thm}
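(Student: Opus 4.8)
The plan is to compare the mod two motivic Adams spectral sequences of $S^{0,0}$ and of $\Sigma^{1,0}\underline{L}^\infty_{-\infty}$: I would show that the inclusion of the $(0,0)$-cell induces an isomorphism of $E_2$-pages and then conclude by a convergence argument. Write $A = A^k$ and $\m_2 = \m_2^k$, and for each $s$ set $M_{-s} := H^{**}(\Sigma^{1,0}\underline{L}^\infty_{-s})$, the $A$-module computed (after a $(1,0)$-shift) by Lemma~\ref{Lem:LCoh}. Each collapse map $\underline{L}^\infty_{-s-1} \to \underline{L}^\infty_{-s}$ fits into a cofiber sequence with the inclusion of the bottom cell of $\underline{L}^\infty_{-s-1}$, so on cohomology the transition map $M_{-s} \to M_{-s-1}$ is a monomorphism of $A$-modules with cokernel a single suitably shifted copy of $\m_2$. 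Thus $\{M_{-s}\}_s$ is a direct system whose colimit $R := \colim_s M_{-s}$ is a motivic analogue of the Singer construction, $\{\Ext_A(M_{-s},\m_2)\}_s$ is the corresponding inverse system, and one has a natural identification of the Adams $E_2$-page of $\Sigma^{1,0}\underline{L}^\infty_{-\infty}$ with $\lim_s \Ext_A(M_{-s},\m_2)$ (modulo $\lim^1$).

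The heart of the proof is the \emph{algebraic Singer computation}: that the natural map of $E_2$-pages induced by the $(0,0)$-cell inclusion is an isomorphism, i.e.
\[
\Ext_A(\m_2,\m_2) \;\cong\; \lim_s \Ext_A(M_{-s},\m_2), \qquad \lim^1_s \Ext_A(M_{-s},\m_2) = 0 .
\]
Classically this is the Singer construction computation that underlies Lin's Theorem~\ref{Thm:Lin} (that the Singer functor induces an isomorphism on $\Ext_A(-,\f_2)$), the $\lim^1$-vanishing being part of the package. The motivic version requires building the motivic Singer construction out of the explicit periodic $A$-module structure recorded in Lemma~\ref{Lem:LCoh}: one works over the non-field rings $\m_2^\c = \f_2[\tau]$ and $\m_2^\r = \f_2[\tau,\rho]$, uses admissible monomials and the motivic Milnor basis for $A^k$, and produces the analogue of the stable-cohomology / contracting-homotopy argument that detects $R$ and controls the $\lim^1$-term. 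I expect this to be \emph{the main obstacle}; it is essentially the content of Gregersen's thesis, with the nontrivial inputs being precisely the $A$-action of Lemma~\ref{Lem:LCoh} together with the structure of $A^k$.

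Next I would pass from algebra to spectra. Each $\underline{L}^\infty_{-s}$ is bounded below and of finite type, hence carries a convergent motivic Adams spectral sequence; assembling a compatible tower of Adams resolutions for $\{\underline{L}^\infty_{-s}\}$ and invoking the $\lim^1$-vanishing above, one obtains a conditionally convergent motivic Adams spectral sequence for $\underline{L}^\infty_{-\infty} = \holim_s \underline{L}^\infty_{-s}$ whose $E_2$-page is $\lim_s \Ext_A(M_{-s},\m_2) \cong \Ext_A(\m_2,\m_2)$ — the same $E_2$-page as that of $S^{0,0}$ — and the map of towers exhibits the $(0,0)$-cell inclusion as realizing this isomorphism. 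A subtlety worth flagging is that $H \wedge (-)$ does not commute with the homotopy limit, so the target's Adams $E_2$-page must be identified through the tower of Adams resolutions rather than by computing $H^{**}(\underline{L}^\infty_{-\infty})$ directly; this is routine once the $\lim^1$-vanishing is in hand.

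Finally, since we work with the completions for which the motivic Adams spectral sequence converges (see \cite{HKO11, Man18}), an isomorphism of convergent spectral sequences forces $S^{0,0} \to \Sigma^{1,0}\underline{L}^\infty_{-\infty}$ to be an equivalence, proving the theorem. I would also remark that deducing the $\c$-motivic case from Lin's Theorem~\ref{Thm:Lin} by Betti realization — which sends $\underline{L}^\infty_k$ to a stunted real projective spectrum — only recovers the underlying classical equivalence and cannot detect motivic weight, so the computation of the second paragraph genuinely cannot be bypassed.
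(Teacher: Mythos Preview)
Your proposal is correct and follows essentially the same strategy as Gregersen's proof (which the paper cites without reproducing, but whose $C_2$-equivariant analogue is sketched in Appendix~\ref{App:Lin}): realize the Singer construction as the continuous cohomology of the tower $\{\underline{L}^\infty_{-s}\}$, establish the $\Tor$/$\Ext$-equivalence with $\m_2$, and pass through the inverse-limit Adams spectral sequence. The only notable difference is organizational: Gregersen builds the Singer construction $R_+(M)$ abstractly as $\colim_n B(n)\otimes_{A(n-1)} M$ via a filtration by finite sub-Hopf-algebroids $A(n)\subset A$ and the bimodules $B(n)$, proves the $\Tor$-equivalence in that generality, and then separately identifies $R_+(\m_2)$ with $\colim_s H^{**}(\Sigma^{1,0}\underline{L}^\infty_{-s})$; your sketch merges these two steps and gestures at a more direct contracting-homotopy argument in the style of the original Lin--Davis--Mahowald--Adams proof. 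Either route works, but Gregersen's bimodule filtration is what makes the freeness and flatness statements over the non-field base $\m_2$ tractable, so if you carry out the details you will likely be led back to it.
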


\subsection{$C_2$-equivariant case, I}\label{SS:Qui}

We now discuss a $C_2$-equivariant version of Lin's Theorem. As in the previous two subsections, our goal is only to define the relevant objects and state the main results needed in the sequel. More thorough discussion and details appear in Appendix \ref{App:Lin}. 

\begin{defin}\label{Def:Qinftyk}
Let $B_{C_2}\mu_2$ denote the $C_2$-equivariant classifying space of $C_2$-equivariant principal $\mu_2$-bundles, or equivalently, the classifying space of the family $\{e, C_2, \Delta\}$ of subgroups of $\mu_2 \rtimes C_2$ (where $\Delta$ is the diagonal subgroup). Let $\gamma \to B_{C_2}\mu_2$ denote the tautological $C_2$-equivariant line bundle. A \emph{stunted $C_2$-equivariant projective spectrum},
\begin{equation}
\underline{Q}^\infty_k := Th(k\gamma \to B_{C_2}\mu_2),
\end{equation}
is the $C_2$-equivariant Thom spectrum of the $k$-fold Whitney sum of $k\gamma$. We let
$$\underline{Q}^\infty_{-\infty} := \lim_{k \to \infty} \underline{Q}^\infty_{-k}$$
denote the homotopy limit along the collapse maps $\underline{Q}^\infty_k \to \underline{Q}^\infty_{k+1}$ induced by the bundle maps $k\gamma \to (k+1)\gamma$. 
\end{defin}

\begin{defin}
Let $H$ denote the Eilenberg--MacLane spectrum for the constant Mackey functor $\mft$. We will write $\m_2^{C_2} := H^{**}(pt)$ for the $C_2$-equivariant Bredon cohomology of a point with coefficients in $\mft$, $A^{C_2}$ for the $C_2$-equivariant Steenrod algebra, and $(A^{C_2})^\vee$ for its $\m_2^{C_2}$-linear dual. 
\end{defin}

\begin{prop}\cite[Prop. 6.2]{HK01}\cite[Prop. 2.13]{Ric14}
The Bredon cohomology of a point with coefficients in $\mft$ is given by
$$\m_2^{C_2} \cong \f_2[\tau, \rho] \oplus \bigoplus_{s \geq 0} \dfrac{\f_2[\tau]}{\tau^\infty}\left\{ \dfrac{\gamma}{\rho^s} \right\}$$
where $|\tau| = (0,1)$, $|\rho| = (1,1)$, $|\gamma| = (0,-1)$, and following \cite[Rmk. 2.1]{GHIR17}, $\frac{\f_2[\tau]}{\tau^\infty}$ is the infinitely divisible $\f_2[\tau]$-module consisting of elements of the form $\frac{x}{\tau^k}$ for $k \geq 1$. 
\end{prop}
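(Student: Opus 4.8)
The plan is to compute the $RO(C_2)$-graded Bredon cohomology ring $\m_2^{C_2} = H^{\star}_{C_2}(pt;\mft)$ directly, by means of the isotropy-separation cofiber sequence of genuine $C_2$-spectra
\[
EC_{2+}\longrightarrow S^{0,0}\longrightarrow \widetilde{EC_2}.
\]
Before running it I would fix notation for the classes that appear: $\rho = a_\sigma\in H^{1,1}_{C_2}(pt)$ is the Euler class of the sign representation $\sigma$, i.e. the image of the fundamental class under the inclusion of fixed points $S^0\hookrightarrow S^\sigma$; $\tau\in H^{0,1}_{C_2}(pt)$ is the orientation class, whose restriction to the underlying nonequivariant theory is a unit; and $\gamma\in H^{0,-1}_{C_2}(pt)$ will be the canonical generator of the ``negative cone,'' produced by the computation as a class coming from $EC_{2+}\simeq S(\infty\sigma)_+$.

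First I would compute the two outer terms after smashing with $H\mft$. Since $\widetilde{EC_2}\simeq S^{\infty\sigma} = \colim_n S^{n\sigma}$ with structure maps given by multiplication by $a_\sigma=\rho$, smashing with $H\mft$ gives $\widetilde{EC_2}\wedge H\mft\simeq H\mft[\rho^{-1}]$, whose coefficients are the localization $\m_2^{C_2}[\rho^{-1}]$; by the standard computation of the geometric fixed points $\Phi^{C_2}H\mft$ (equivalently, a direct computation on representation spheres) this localization is the polynomial ring $\f_2[\tau,\rho^{\pm 1}]$. For the other term, $EC_2\simeq S(\infty\sigma)$ is a free $C_2$-CW complex with one free cell in each nonnegative dimension and $S(\infty\sigma)/C_2\simeq RP^\infty$; since the underlying spectrum of $H\mft$ carries the trivial $C_2$-action, every differential in the cellular spectral sequence for $EC_{2+}\wedge H\mft$ is multiplication by $1+g=0$, so $\pi_\star^{C_2}(EC_{2+}\wedge H\mft)$ is a single copy of $\f_2$ in each bidegree of nonnegative topological dimension (this is the Borel homology $H_\star^{C_2}(EC_{2+};\mft)$). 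I would also record that, as modules over $\m_2^{C_2}$, both $\tau$ and $\rho$ act invertibly on $\widetilde{EC_2}\wedge H\mft$, while on $EC_{2+}\wedge H\mft$ the class $\tau$ acts invertibly and $\rho$ acts like a polynomial generator.

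Next I would feed these into the long exact sequence of the cofiber sequence,
\[
\cdots\to \pi_\star^{C_2}(EC_{2+}\wedge H\mft)\xrightarrow{i}\m_2^{C_2}\xrightarrow{j}\f_2[\tau,\rho^{\pm 1}]\xrightarrow{\partial}\pi_\star^{C_2}(EC_{2+}\wedge H\mft)\to\cdots,
\]
and analyze it bidegree by bidegree. The map $j$ is the $\rho$-localization, hence a ring map, so its image is a subring; a degree count identifies it with the polynomial subring $\f_2[\tau,\rho]$ on the honest classes $\tau,\rho\in\m_2^{C_2}$, which is the positive cone. The connecting map $\partial$ vanishes on the monomials $\tau^a\rho^b$ with $b\geq 0$ (the target group is zero in those bidegrees), and a chain-level computation shows it is injective on those with $b<0$; comparing with the size of $\pi_\star^{C_2}(EC_{2+}\wedge H\mft)$, the cokernel of $\partial$ is then concentrated exactly in the bidegrees occupied by $\bigoplus_{s\geq 0}\frac{\f_2[\tau]}{\tau^\infty}\{\gamma/\rho^s\}$. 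By exactness this cokernel equals $\ker j=\mathrm{im}\,i$, the negative cone; the $\rho$-divisibility ($\rho\cdot(\gamma/\rho^s)=\gamma/\rho^{s-1}$, $\rho\gamma=0$) descends from the $\rho$-tower on $\pi_\star^{C_2}(EC_{2+}\wedge H\mft)$, and the fact that it is $\tau$-divisible and $\tau$-power-torsion reflects that $\tau$ is invertible on that group but not on $\m_2^{C_2}$. Finally the extension $0\to(\text{negative cone})\to\m_2^{C_2}\to\f_2[\tau,\rho]\to 0$ splits over $\f_2[\tau,\rho]$ because $\tau$ and $\rho$ lift to honest classes, and the negative cone is a square-zero ideal; I would extract these facts and the precise $\f_2[\tau,\rho]$-module structure on the negative cone from the $\m_2^{C_2}$-linearity of the maps in the long exact sequence together with the connectivity of $EC_{2+}$.

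The main obstacle is this last step — pinning down the multiplicative and divisibility structure of the negative cone (that $\tau$ acts ``surjectively downward'' producing the $\f_2[\tau]/\tau^\infty$-towers, that $\rho$ lowers the divisibility index and annihilates $\gamma$, and that products of two negative-cone classes vanish). Computing the two ends of the cofiber sequence and assembling the additive answer is comparatively routine once one has $\widetilde{EC_2}\wedge H\mft\simeq H\mft[\rho^{-1}]$ and the freeness of $EC_{2+}\wedge H\mft$; the delicate part is the bidegree-by-bidegree bookkeeping that determines $\partial$ and the induced module structure. This analysis is carried out in \cite{HK01,Ric14}, which we may simply cite; an alternative route, closer to the methods of Section~\ref{Sec:Ext}, would be to deduce the answer from the $\c$-motivic computation $\m_2^\c\cong\f_2[\tau]$ together with a Tate-square (or $\rho$-Bockstein) comparison.
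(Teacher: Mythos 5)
Your outline is correct: the isotropy-separation cofiber sequence $EC_{2+}\to S^{0,0}\to\widetilde{EC_2}$, with $\pi_\star^{C_2}(\widetilde{EC_2}\wedge H\mft)\cong\f_2[\tau,\rho^{\pm1}]$ giving the positive cone via the $\rho$-localization map and the cokernel of the connecting map giving the negative cone, is exactly the standard computation, and your bidegree bookkeeping (that $\partial$ vanishes on $\tau^a\rho^b$ for $b\geq 0$ and is injective for $b<0$, leaving a cokernel in precisely the bidegrees of $\bigoplus_{s\geq0}\frac{\f_2[\tau]}{\tau^\infty}\{\gamma/\rho^s\}$) checks out. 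The paper does not reprove this result but simply cites \cite{HK01} and \cite{Ric14}, which carry out essentially the argument you describe, so your proposal is consistent with the paper's treatment; you are right that the only genuinely delicate point is the $\f_2[\tau,\rho]$-module and ring structure on the negative cone, which is where one should lean on the cited sources.
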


The cooperations $(A^{C_2})^\vee = \pi_{**}^{C_2}(H\mft \wedge H\mft)$ form an $RO(C_2)$-graded Hopf algebroid called the \emph{$C_2$-equivariant dual Steenrod algebra}.

\begin{thm}\cite[Thm. 6.41]{HK01}
The $C_2$-equivariant dual Steenrod algebra has presentation
$$A^{C_2}_{**} = \pi_{**}^{C_2}(H \mft_{**} \wedge H\mft) = \m^{C_2}_2[ \xi_{i+1}, \tau_i : i \geq 0] / I$$
where $|\xi_{i}| = (2^{i+1}-2,2^i-1)$, $|\tau_i| = (2^{i+1}-1,2^i-1)$, and $I$ is the ideal generated by the relation $\tau^2_i = \rho \xi_{i+1} + (\rho \tau_0 + \tau) \tau_{i+1}$. 
\end{thm}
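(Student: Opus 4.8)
The plan is to deduce this presentation from the $\r$-motivic dual Steenrod algebra by extension of scalars along $\m^\r_2 \to \m^{C_2}_2$, via equivariant Betti realization; this is the cleanest route and fits the comparison philosophy of the paper. Recall from Voevodsky's computation (and its refinement over $\r$) that $A^\r_{**} \cong \m^\r_2[\xi_{i+1}, \tau_i : i \geq 0]/I_\r$, where $I_\r$ is generated by the analogues of the displayed relations, and that this presentation exhibits $A^\r_{**}$ as a \emph{free} $\m^\r_2$-module on the admissible monomials $\prod_i \xi_{i+1}^{a_i} \prod_i \tau_i^{\epsilon_i}$ with $\epsilon_i \in \{0,1\}$: each relation is quadratic and monic in $\tau_i$, so a PBW/Gr\"obner argument shows it only ever serves to reduce $\tau_i$-exponents. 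Equivalently, $H\f_2 \wedge H\f_2$ splits, as an $H\f_2$-module over $\r$, as $\bigvee_w \Sigma^{|w|} H\f_2$ indexed by these monomials.

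First I would invoke the identification $Re_{C_2}(H\f_2) \simeq H\mft$ of the equivariant Betti realization of the $\r$-motivic mod two Eilenberg--MacLane spectrum with its $C_2$-equivariant counterpart. Since $Re_{C_2}$ is symmetric monoidal and colimit-preserving and realizes motivic spheres to the corresponding representation spheres, applying it to the splitting above gives an $H\mft$-module equivalence
\[
H\mft \wedge H\mft \;\simeq\; Re_{C_2}\big(\textstyle\bigvee_w \Sigma^{|w|} H\f_2\big) \;\simeq\; \textstyle\bigvee_w \Sigma^{|w|} H\mft .
\]
Taking $\pi^{C_2}_{**}$ identifies $A^{C_2}_{**}$ \emph{additively} with the free $\m^{C_2}_2$-module on the monomials $w$. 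To promote this to the ring presentation in the statement, note that strong monoidality of $Re_{C_2}$ provides a ring map $A^\r_{**} \to A^{C_2}_{**}$ sending $\xi_{i+1} \mapsto \xi_{i+1}$, $\tau_i \mapsto \tau_i$, and $\tau, \rho$ to the elements of the same name in $\m^{C_2}_2$; hence the relations in $I_\r$ persist and we obtain a ring map $A^\r_{**} \otimes_{\m^\r_2} \m^{C_2}_2 \to A^{C_2}_{**}$. Its source is precisely $\m^{C_2}_2[\xi_{i+1}, \tau_i]/I$ (base change of a presentation), both sides are free $\m^{C_2}_2$-modules, and the map is a bijection on monomial bases, hence an isomorphism.

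The substantive external input above is $Re_{C_2}(H\f_2) \simeq H\mft$; since the Hu--Kriz computation historically precedes (and can be used to establish) this comparison, a ground-up argument should instead compute $A^{C_2}_{**}$ directly from the isotropy separation cofiber sequence $(EC_2)_+ \wedge X \to X \to \widetilde{EC_2} \wedge X$ with $X = H\mft \wedge H\mft$. Smashing with $\widetilde{EC_2}$ inverts the Euler class $\rho$ and collapses $X$ onto its geometric fixed points $\Phi^{C_2}X \simeq \Phi^{C_2}H\mft \wedge \Phi^{C_2}H\mft$, which one computes from the facts that $\pi_* \Phi^{C_2} H\mft$ is a polynomial $\f_2$-algebra on a single generator and that $\pi_*(H\f_2 \wedge H\f_2)$ is the classical dual Steenrod algebra, while the free summand $(EC_2)_+ \wedge X$ is governed by the classical $A_*$ together with $H^*(BC_2)$. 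The main obstacle --- and the reason the $C_2$-equivariant computation is genuinely harder than the classical and $\c$-motivic ones --- is the reassembly: because $\m^{C_2}_2$ contains the negative cone $\bigoplus_{s \geq 0} \frac{\f_2[\tau]}{\tau^\infty}\{\gamma/\rho^s\}$, one must verify that the long exact sequence of the isotropy separation degenerates exactly as the polynomial-algebra answer predicts, that $A^{C_2}_{**}$ acquires no $\gamma$-divisible classes beyond those already present in its coefficient ring, and that the $\rho$-linear correction terms in the relations are as claimed. Both points are controlled by a careful $\rho$-Bockstein comparison with the $\c$-motivic dual Steenrod algebra, over which the coefficients reduce to the polynomial ring $\f_2[\tau]$, there is no negative cone, and the relations degenerate to their simplest form.
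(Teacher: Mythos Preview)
The paper does not supply its own proof of this theorem: it is stated twice (in Section~\ref{SS:Qui} and again in Appendix~\ref{SS:Steenrod}) purely as a citation to Hu--Kriz \cite[Thm.~6.41]{HK01}. Immediately after the second occurrence the paper does remark that $A^{C_2}_{**}$ is the base-change of $A^\r_{**}$ along $\m_2^\r \hookrightarrow \m_2^{C_2}$, which is precisely the content of your first argument, but the paper treats this as a consequence of the Hu--Kriz computation rather than as an independent proof.

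Your first route via equivariant Betti realization is correct and is the ``modern'' proof: the inputs you invoke (strong monoidality of $Re_{C_2}$ and $Re_{C_2}(H\f_2)\simeq H\mft$ from \cite{HO16}, freeness of $A^\r_{**}$ over $\m_2^\r$) are all available in the paper, and the splitting argument goes through since $Re_{C_2}$ is a symmetric monoidal left adjoint. You are right to flag the historical circularity---Heller--Ormsby's identification of $Re_{C_2}(H\f_2)$ postdates and in places relies on \cite{HK01}---so this is a clean reproof rather than an independent one. Your second sketch (isotropy separation, geometric fixed points, $\rho$-Bockstein control of the negative cone) is an accurate outline of what Hu--Kriz actually do, and you have correctly isolated the genuine difficulty: ruling out extra $\gamma$-divisible classes and pinning down the $\rho$-correction terms. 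That part is only a sketch, but it points at the right obstacles.
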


\begin{prop}
As modules over $\m^{C_2}_2$, we have
$$H^{**}_{C_2} (\underline{Q}^\infty_{-k}) \cong \Sigma^{-2k,-k} \m^{C_2}_2[x,y]/(x^2 + \rho x + \tau y),$$
$$H^{**}_{C_2,c}(\underline{Q}^\infty_{-\infty}) := \colim_{k \to \infty} H^{**}_{C_2}(\underline{Q}^\infty_{-k}) \cong \m^{C_2}_2[x,y,y^{-1}]/(x^2 + \rho x + \tau y).$$

The $A^{C_2}$-module structure of $H^{**}(\underline{Q}^\infty_0)$ is given by
$$Sq^{2i}(y^je_0) = {2j \choose 2i} y^{j+i}e_0, \quad Sq^{2i+1}(y^je_0) = 0,$$
$$Sq^{2i}(xy^je_0) = {2j \choose 2i} xy^{j+i}e_0,\quad Sq^{2i+1}(xy^je_0) = {2j \choose 2i}y^{j+i+1}e_0.$$

The $A^{C_2}$-module structure of $H^{**}(\underline{Q}^\infty_k)$ for $k<0$ coincides with the $A^{C_2}$-module structure obtained by periodically extending the $A^{C_2}$-module structure of $H^{**}(\underline{Q}^\infty_0)$ to negative dimensions. 
\end{prop}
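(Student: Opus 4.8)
The plan is to reduce everything to the case $k=0$, where $\underline{Q}^\infty_0\simeq\Sigma^\infty_+ B_{C_2}\mu_2$, and then to propagate to the remaining $k$ and to the colimit using the $C_2$-equivariant Thom isomorphism and the Wu formula, following Gregersen's motivic argument (Lemma \ref{Lem:LCoh}, \cite[Lem.\ 4.1.24]{Gre12}) and Lin's classical one (Lemma \ref{Lem:PCoh}, \cite{LDMA80}) essentially verbatim. The first and hardest task is to compute $H^{**}_{C_2}(B_{C_2}\mu_2)$ as a ring with its $A^{C_2}$-action. The space $B_{C_2}\mu_2$ carries an exhaustive filtration by finite $C_2$-CW spectra --- the $C_2$-equivariant ``projective spaces'' obtained from the free $\mu_2$-quotients of the unit spheres $S(n\gamma)$ --- with successive cofibers representation spheres attached by (equivariant, realized) Hopf-type maps. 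Running the associated cellular/Atiyah--Hirzebruch spectral sequence over the coefficient ring $\m_2^{C_2}$ recorded above, I would show that $H^{**}_{C_2}(B_{C_2}\mu_2)$ is free over $\m_2^{C_2}$ on the monomials $x^\epsilon y^j$ ($\epsilon\in\{0,1\}$, $j\ge 0$, $|x|=(1,1)$, $|y|=(2,1)$) with the single multiplicative relation $x^2=\rho x+\tau y$, which registers the attaching map of the bottom $2$-cell exactly as in the $\c$- and $\r$-motivic cases. (Alternatively one could try to deduce this from Lemma \ref{Lem:LCoh} by applying equivariant Betti realization, using the identification $Re_{C_2}(\underline{L}^\infty_k)\simeq\underline{Q}^\infty_k$ from Section \ref{SS:Qui2} together with a comparison of $\r$-motivic cohomology and $C_2$-Bredon cohomology of realizations.) I expect this step to be the main obstacle: the negative cone $\bigoplus_{s\ge 0}\tfrac{\f_2[\tau]}{\tau^\infty}\{\gamma/\rho^s\}$ makes $\m_2^{C_2}$ far larger than $\f_2[\tau,\rho]$, so some care is needed to see that it contributes no new generators or relations and does not destroy freeness.

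Granting the ring structure, the $A^{C_2}$-action on $H^{**}(\underline{Q}^\infty_0)$ is determined on generators by instability plus a finite low-degree computation: since $x$ has topological dimension $1$ we have $Sq^i x=0$ for $i\ge 2$, since $y$ has topological dimension $2$ we have $Sq^i y=0$ for $i\ge 3$, and the remaining values $Sq^1 x=y$, $Sq^1 y=0$, $Sq^2 y=y^2$ are read off from the cell structure (equivalently, from the motivic formulas of Lemma \ref{Lem:LCoh} under realization). The Cartan formula, together with the point-level facts $Sq^1\tau=\rho$ and $Sq^1\rho=0$ in $\m_2^{C_2}$, then propagates the action to all monomials $x^\epsilon y^j e_0$, and the displayed binomial-coefficient formulas follow by the same elementary induction as in Lemma \ref{Lem:PCoh}; applying $Sq^1$ to $x^2=\rho x+\tau y$ provides a consistency check.

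For $k>0$, the $C_2$-equivariant Thom isomorphism for $-k\gamma$ (Thom class in bidegree $(-2k,-k)$, since $\gamma$ is a realified rank-one algebraic bundle) gives $H^{**}_{C_2}(\underline{Q}^\infty_{-k})\cong\Sigma^{-2k,-k}H^{**}_{C_2}(\underline{Q}^\infty_0)$ as $\m_2^{C_2}$-modules, which is the first displayed isomorphism; the collapse maps $\underline{Q}^\infty_{-k}\to\underline{Q}^\infty_{-k+1}$ act on cohomology by multiplication by the Euler (Chern) class $y$, so $H^{**}_{C_2,c}(\underline{Q}^\infty_{-\infty})$, which is this colimit by definition, is obtained by inverting $y$, namely $\m_2^{C_2}[x,y,y^{-1}]/(x^2+\rho x+\tau y)$. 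Finally, for $k<0$ one transports the $A^{C_2}$-action of the previous paragraph through the Thom isomorphism via the Wu formula; because the relevant bundle $2\gamma$ has even (algebraic) rank, the twist by the total square of the Thom class is precisely what identifies the result with the periodic extension of the $k=0$ formulas to negative dimensions, verbatim as in \cite[Lem.\ 4.1.24]{Gre12}.
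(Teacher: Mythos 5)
Your proposal follows essentially the same route as the paper: Thom isomorphism to reduce to the classifying space, the colimit description by inverting $y$, and the transport of the $A^{C_2}$-action exactly as in Gregersen's motivic argument \cite[Lem. 4.1.24]{Gre12}. The one place you diverge is the step you correctly flag as the main obstacle, namely establishing that $H^{**}_{C_2}(B_{C_2}\mu_2)\cong\m_2^{C_2}[x,y]/(x^2+\rho x+\tau y)$ is free over the (negative-cone-containing) coefficient ring: the paper does not run the cellular spectral sequence but simply quotes Hu--Kriz \cite[Thm. 6.22]{HK01} for this ring isomorphism, so what you anticipate as the hardest computation is available as a citation. Your parenthetical alternative --- deducing it from the $\r$-motivic computation via equivariant Betti realization and the identification $Re_{C_2}(\underline{L}^\infty_k)\simeq\underline{Q}^\infty_k$ --- is in fact close to what the paper does for the companion spectra $\underline{R}^\infty_{-k}$ in Appendix \ref{SS:A4}, where it is checked that $Re_{C_2}$ preserves the Steenrod module structure. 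The rest of your argument (instability plus Cartan for the action on $H^{**}(\underline{Q}^\infty_0)$, the $Sq^1$ consistency check on the relation, and the Wu-formula transport to $k<0$) matches the paper's appeal to \cite[Sec. 4.2]{Gre12} carrying over without change.
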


\begin{thm}[Theorem \ref{Thm:Qui}]\label{Thm:Quigley}
The inclusion of the $(0,0)$-cell 
$$S^{0,0} \to \Sigma^{1,0} \underline{Q}^\infty_{-\infty}$$
is an equivalence of $C_2$-spectra after $2$-completion. 
\end{thm}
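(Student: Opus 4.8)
The plan is to follow the strategy of Lin \cite{LDMA80} and Gregersen \cite{Gre12} as closely as possible, adapting each step to genuine $C_2$-spectra. The key point is that the statement is equivalent, after passing to function spectra, to the assertion that the natural map $S^{0,0} \to (S^{0,0})^{tC_2}_{C_2}$ from the sphere to a suitable parametrized Tate construction is a $2$-adic equivalence; concretely, $\Sigma^{1,0}\underline{Q}^\infty_{-\infty}$ is the $C_2$-equivariant Spanier--Whitehead dual of $(B_{C_2}\mu_2)_+$ up to the appropriate shift, and one wants to see that this dual agrees with $S^{0,0}$ after $2$-completion. First I would set up the $C_2$-equivariant Adams spectral sequence based on $H\mft$ converging to $\pi_{**}^{C_2}$ of both sides; by Theorem \ref{Thm:Quigley}'s hypotheses we may work $2$-completely, so the Adams spectral sequence is the right tool. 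The map in question induces a map of Adams spectral sequences, and it suffices to show it is an isomorphism on $E_2$-pages, i.e. an isomorphism
$$\Ext_{A^{C_2}}^{***}\!\left(\m_2^{C_2}, \m_2^{C_2}\right) \xrightarrow{\ \cong\ } \Ext_{A^{C_2}}^{***}\!\left(H^{**}_{C_2,c}(\underline{Q}^\infty_{-\infty}), \m_2^{C_2}\right),$$
together with a convergence argument for the $\lim$-side (the homotopy limit defining $\underline{Q}^\infty_{-\infty}$ must commute appropriately with the spectral sequence, which is handled by the usual $\lim^1$ and completeness bookkeeping as in \cite{LDMA80}).

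The algebraic heart of the argument is thus an \emph{Ext computation}, which I would reduce to the analogous classical and $\r$-motivic statements. Using the $A^{C_2}$-module description of $H^{**}_{C_2,c}(\underline{Q}^\infty_{-\infty}) \cong \m^{C_2}_2[x,y,y^{-1}]/(x^2+\rho x + \tau y)$ from the Proposition above, I would identify the relevant $A^{C_2}$-module with (a completion of) the "doubly-infinite" lens module, and then run the same inductive/limiting argument Lin uses: filter by powers of $y$, compare successive subquotients $H^{**}_{C_2}(\underline{Q}^\infty_{-k})$, and show the connecting maps in $\Ext$ assemble to give the $0$-cell inclusion as an iso in the limit. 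Because the $C_2$-equivariant dual Steenrod algebra $A^{C_2}_{**} = \m^{C_2}_2[\xi_{i+1},\tau_i]/(\tau_i^2 + \rho\xi_{i+1} + (\rho\tau_0+\tau)\tau_{i+1})$ specializes, after inverting or killing $\rho$ and using the $\m_2^{C_2}$-module structure, to the classical dual Steenrod algebra (on geometric fixed points) and to the $\r$-motivic one (on the "motivic part" $\f_2[\tau,\rho] \subset \m_2^{C_2}$), I expect to bootstrap: the positive-cone part of the computation is formally identical to Gregersen's $\r$-motivic calculation in \cite{Gre12} (whose module $H^{**}(\underline{L}^\infty_k)$ has literally the same presentation), and the negative-cone/divisible part $\bigoplus_s \frac{\f_2[\tau]}{\tau^\infty}\{\gamma/\rho^s\}$ of $\m_2^{C_2}$ needs to be checked not to contribute in the limit. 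An alternative, which I would mention as a remark, is to deduce the result more directly from Gregersen's theorem via the Betti realization / $\rho$-completion comparison functors together with the isotropy separation sequence relating $\underline{Q}^\infty_{-\infty}$, its geometric fixed points (where one invokes the classical Lin theorem \ref{Thm:Lin}), and its underlying spectrum.

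The steps, in order, are: (1) reduce the statement to a $2$-complete Adams-spectral-sequence statement and record the convergence/$\lim^1$ vanishing for the homotopy limit; (2) compute $H^{**}_{C_2,c}(\underline{Q}^\infty_{-\infty})$ as an $A^{C_2}$-module, which the Proposition already supplies, and identify the $0$-cell inclusion on cohomology; (3) prove the $\Ext$-isomorphism, by filtering and comparing with the $\r$-motivic computation of \cite{Gre12} and the classical one of \cite{LDMA80}, carefully tracking the role of the negative cone of $\m_2^{C_2}$; (4) conclude that the map of Adams spectral sequences is an isomorphism, hence the original map is a $2$-adic equivalence. The main obstacle I anticipate is Step (3): unlike the classical and $\r$-motivic cases, $\m_2^{C_2}$ is not a polynomial ring, and controlling the infinitely-divisible summand $\bigoplus_{s\geq 0}\frac{\f_2[\tau]}{\tau^\infty}\{\gamma/\rho^s\}$ — in particular showing that the $\Ext$ contributions it generates either cancel in pairs or vanish in the $\lim$ over $k$ — requires genuinely new bookkeeping beyond a formal transcription of \cite{Gre12,LDMA80}. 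Here I would lean on the isotropy separation / negative-cone formalism (as developed for $C_2$-spectra and used in \cite{GHIR17}) to organize the computation, and on the $\rho$-Bockstein spectral sequence to reduce to the $\rho$-inverted (geometric fixed point) and $\rho$-complete (Borel) pieces where the answer is known classically.
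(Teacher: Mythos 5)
Your overall architecture — reduce to a $2$-complete statement about the inverse limit of $C_2$-equivariant Adams spectral sequences, prove an $\Ext$/$\Tor$ isomorphism on $E_2$-pages using the computed $A^{C_2}$-module $H^{**}_{C_2,c}(\underline{Q}^\infty_{-\infty})\cong \m_2^{C_2}[x,y,y^{-1}]/(x^2+\rho x+\tau y)$, then conclude by convergence — is exactly the shape of the paper's argument. The divergence is in how the $\Ext$ isomorphism is actually proved, and this is where your Step (3) is underdeveloped. The paper does not filter by powers of $y$ and compare subquotients, nor does it run a $\rho$-Bockstein reduction to the classical and Borel pieces. Instead it builds a $C_2$-equivariant Singer construction $R_+^{C_2}(M) = \colim_n B(n)\otimes_{A(n-1)} M$, where the subalgebras $A(n), B(n), C(n)$ of $A^{C_2}$ are defined via Ricka's profile functions; the key inputs are that $A^{C_2}$ is free over $A(n)$ and that $B(n)$ is free over $A(n)$ and $A(n-1)$ on explicit bases of $Sq^{2k}$'s, from which the $\Tor$-equivalence $\epsilon: R_+^{C_2}(M)\to M$ follows by comparing free resolutions. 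The identification $R_+^{C_2}(\m_2^{C_2})\cong \Sigma^{1,0}H^{**}_{C_2,c}(\underline{Q}^\infty_{-\infty})$ then finishes the job. Your worry about the negative cone is legitimate, but the paper dispatches it with a degree-wise vanishing check of $\m_2^{C_2}$ inside the Singer-construction argument, not with an isotropy-separation decomposition.

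Two concrete cautions about your proposed alternatives. First, the "filter by powers of $y$ and assemble connecting maps" plan does not by itself produce the limit $\Ext$ computation: each finite stage $H^{**}_{C_2}(\underline{Q}^\infty_{-k})$ has the wrong $\Ext$ groups, and the entire content of a Lin-type theorem is the collapse that occurs only in the colimit of the $B(n)\otimes_{A(n-1)}(-)$ tower — i.e., precisely the Singer construction you never invoke. Second, the remark about deducing the theorem from geometric fixed points and underlying spectra via isotropy separation runs directly into the failure of $\Phi^{C_2}$ to commute with the homotopy limit defining $\underline{Q}^\infty_{-\infty}$: the paper's Corollary \ref{PhiC2} shows $\Phi^{C_2}(\lim_k \underline{R}^\infty_{-k})\simeq S^{-1}$ while $\lim_k \Phi^{C_2}(\underline{R}^\infty_{-k})\simeq S^{-1}\vee S^{-1}$, so a fixed-points-and-underlying check of the statement is not formally available. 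Your proposal correctly identifies where the difficulty lives, but the mechanism that resolves it in the paper is the equivariant Singer construction, and without it (or an equivalent substitute) Step (3) remains a gap.
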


\subsection{$C_2$-equivariant case, II}\label{SS:Qui2}

We now discuss a motivic-to-equivariant construction of the $C_2$-equivariant stunted projective spectra. Again, more details appear in Appendix \ref{App:Lin}. 

In \cite[Sec. 3.3]{MV99}, Morel and Voevodsky defined the Betti realization functor
$$Re_B : \Motc \to \Spt$$
which is induced by the assignment which sends a scheme over $\Spec(\c)$ to its $\c$-points equipped with the analytic topology. They observe that
$$Re_B(S^{1,0}) \cong Re_B(S^{1,1}) \cong S^1 \quad\text{and}\quad Re_B(BG) \cong B(G(\c))$$
for any smooth group scheme over $\c$; see also \cite[Sec. 5]{Lev14}. 

They also introduced an equivariant Betti realization functor
$$Re_{C_2} : \Motr \to \Sptc$$
which is induced by the assignment which sends a scheme over $Spec(\r)$ to its $\c$-points equipped with the analytic topology, with $C_2$-action given by the Galois group of $\c$ over $\r$. Equivariant Betti realization was studied further by Heller and Ormsby in \cite{HO16}, where they show in particular that $Re_{C_2}$ is strong symmetric monoidal \cite[Prop. 4.8]{HO16} and that it preserves Eilenberg--MacLane spectra \cite[Thm. 4.17]{HO16}. 

\begin{defin}\label{ebrdef}
Define \emph{motivic-to-equivariant stunted projective spectra} by
$$\underline{R}^\infty_{-k} := Re_{C_2}(\underline{L}^\infty_{-k})$$
and let
$$\underline{R}^\infty_{-\infty} := \lim_{k\to \infty} \underline{R}^\infty_{-k}$$
denote the homotopy limit over the equivariant Betti realization of the collapse maps $\underline{L}^\infty_k \to \underline{L}^\infty_{k+1}$.\footnote{We opt to define $\underline{R}^\infty_{-\infty}$ as the above homotopy limit, instead of as the equivariant Betti realization of $\underline{L}^\infty_{-\infty}$, since equivariant Betti realization need not preserve homotopy limits.}
\end{defin}

\begin{thm}[see Appendix \ref{SS:A4}]
For all $k \in \z$, there is an equivalence of $C_2$-spectra
$$\underline{R}^\infty_{k} \simeq \underline{Q}^\infty_{k}$$
after $2$-completion. Similarly, there is an equivalence of $C_2$-spectra
$$\underline{R}^\infty_{-\infty} \simeq \underline{Q}^\infty_{-\infty}.$$
In particular, the inclusion of the $(0,0)$-cell 
$$S^{0,0} \to \Sigma^{1,0} \underline{R}^\infty_{-\infty}$$
is an equivalence of $C_2$-spectra after $2$-completion. 
\end{thm}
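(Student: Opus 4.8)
The plan is to show that equivariant Betti realization $Re_{C_2}\colon \Motr \to \Sptc$ carries the finite-stage $\r$-motivic construction of the stunted lens spectra, bundle map by bundle map, onto the genuine $C_2$-equivariant construction of Section \ref{SS:Qui}. The resulting levelwise equivalences $\underline{R}^\infty_k \simeq \underline{Q}^\infty_k$ will be compatible with the collapse maps, so passing to the homotopy limit over the tower will yield $\underline{R}^\infty_{-\infty} \simeq \underline{Q}^\infty_{-\infty}$, and the final sentence of the theorem will follow by combining this with Theorem \ref{Thm:Quigley}.

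First I would identify the realizations of the two building blocks. Following Morel--Voevodsky \cite{MV99}, one may model $B_{gm}\mu_2$ as the filtered colimit $\colim_i (\a^i_\r \setminus \{0\})/\mu_2$ with $\mu_2$ acting by $-1$, so that the tautological bundle $\gamma$ restricts on the $i$-th stage to $(\a^i \setminus \{0\}) \times_{\mu_2} \a^1$. Since $Re_{C_2}$ is a left adjoint it commutes with this colimit, and on the $i$-th stage it produces $(\c^i \setminus \{0\})/\mu_2$ with residual $C_2$-action by coordinatewise complex conjugation, which commutes with multiplication by $-1$. Radial retraction is $C_2$- and $\mu_2$-equivariant, so this is $C_2$-equivalent to $S(\c^i)/\mu_2$, and the colimit is $S(\infty \c_{\mathrm{conj}})/\mu_2$, where $\c_{\mathrm{conj}}$ denotes $\c$ with the conjugation action. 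One then checks that the $(\mu_2 \times C_2)$-space $S(\infty \c_{\mathrm{conj}})$ has all isotropy groups in the family $\{e, C_2, \Delta\}$ — a vector fixed by the full group would have to be both real and purely imaginary — and that each fixed point space $S(\infty\c_{\mathrm{conj}})^H$ for $H$ in this family is contractible, so it is a model for $E\{e, C_2, \Delta\}$ and its $\mu_2$-quotient is $B_{C_2}\mu_2$. The same computation identifies $Re_{C_2}(\gamma)$ with the $C_2$-equivariant tautological line bundle: on the $i$-th stage it is $(\c^i \setminus \{0\}) \times_{\mu_2} \c_{\mathrm{conj}}$, with $C_2$ acting by conjugation on both factors.

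Next I would promote this to the level of Thom spectra. Equivariant Betti realization is strong symmetric monoidal \cite[Prop. 4.8]{HO16}, preserves cofiber sequences and homotopy colimits, and satisfies $Re_{C_2}(S^{1,0}) \simeq S^1$ and $Re_{C_2}(S^{2,1}) \simeq S^\rho$, the one-point compactification of $\c_{\mathrm{conj}}$ (the regular representation sphere). Writing a motivic Thom space as $Th(\xi) = E(\xi)/(E(\xi) \setminus 0)$, these properties give a natural equivalence $Re_{C_2}(X^\xi) \simeq (Re_{C_2}X)^{Re_{C_2}\xi}$ for a vector bundle $\xi$ over a motivic space $X$, compatibly with bundle maps. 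Applying this to $k\gamma \to B_{gm}\mu_2$ for $k \geq 0$, together with the previous paragraph, yields $Re_{C_2}(\underline{L}^\infty_k) \simeq Th(k\gamma \to B_{C_2}\mu_2) = \underline{Q}^\infty_k$, and functoriality in the bundle inclusion $k\gamma \hookrightarrow (k+1)\gamma$ matches the collapse maps. For $k < 0$, both $\underline{L}^\infty_k$ \cite{Gre12} and $\underline{Q}^\infty_k$ are built from the positive-index stunted spectra by desuspending along James periodicity over finite skeleta and then taking a skeletal colimit; since $Re_{C_2}$ is exact, commutes with skeletal colimits, and intertwines the relevant representation-sphere (de)suspensions, the comparison extends to all $k \in \z$, still compatibly with collapse maps. (These finite-stage equivalences are in fact integral; the $2$-completion in the statement is inherited from Theorem \ref{Thm:Quigley}.) Finally, because $Re_{C_2}$ need not preserve the homotopy limit $\lim_k \underline{L}^\infty_{-k}$, we use that $\underline{R}^\infty_{-\infty}$ was \emph{defined} to be $\lim_k \underline{R}^\infty_{-k}$: the levelwise equivalences $\underline{R}^\infty_{-k} \simeq \underline{Q}^\infty_{-k}$ are compatible with the tower structure maps and so assemble to $\underline{R}^\infty_{-\infty} \simeq \underline{Q}^\infty_{-\infty}$. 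The last assertion is then immediate from Theorem \ref{Thm:Quigley}.

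The main obstacle is the identification in the second paragraph: one must track the $C_2$-action produced by realization precisely enough to see both that $Re_{C_2}(B_{gm}\mu_2)$ has isotropy exactly the family $\{e, C_2, \Delta\}$ and that $Re_{C_2}(\gamma)$ carries the conjugation action on its fibers, so that realized Thom suspensions are by the regular representation $\rho$ rather than by a trivial summand. Once the models and their actions are fixed, the rest is formal from the monoidality and exactness of $Re_{C_2}$, apart from the routine but somewhat technical bookkeeping needed to handle negative indices and to match the two James-periodicity constructions; this is where most of the work in Appendix \ref{SS:A4} goes.
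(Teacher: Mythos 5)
Your proposal is correct, and its overall architecture matches the paper's: realize the motivic tower levelwise, identify each stage with the corresponding genuine $C_2$-equivariant stunted spectrum compatibly with the collapse maps, pass to the homotopy limit (using, as you note, that $\underline{R}^\infty_{-\infty}$ is \emph{defined} as the limit of the realized tower rather than as the realization of the limit), and then quote Theorem \ref{Thm:Quigley}. Where you differ is in how the finite-stage equivalence $\underline{R}^\infty_k \simeq \underline{Q}^\infty_k$ is certified. You do it geometrically: an isotropy analysis showing $Re_{C_2}(B_{gm}\mu_2)$ is a model for $B_{C_2}\mu_2$, an identification of the realized tautological bundle with the $C_2$-equivariant tautological line bundle (so that Thom suspensions realize to $\rho$-suspensions), and then monoidality and exactness of $Re_{C_2}$; this yields integral equivalences at each finite stage. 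The paper also realizes the finite-stage Thom constructions (identifying $L^n(\c)^{an}$ with $Q^n$ rather tersely, and the realized bundles with the Real bundles of Hu--Kriz), but what it actually leans on for the $2$-complete equivalence is a computation of Bredon cohomology together with the $A^{C_2}$-module structure on both sides, followed by the $C_2$-equivariant Adams spectral sequence --- the same mechanism as in Lemma \ref{Lem:ReC2U}(2), and the reason the statement carries the ``after $2$-completion'' caveat even at finite stages. The cohomological route costs the paper nothing, since it needs $H^{**}_{C_2,c}(\underline{R}^\infty_{-\infty}) \cong R_+(\m_2^{C_2})$ with its Steenrod action anyway to run the Singer-construction argument; your geometric route buys a canonical, uncompleted identification at finite stages and isolates the $2$-completion entirely in the appeal to Lin's theorem.
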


\subsection{Further comparisons between stunted projective spectra}\label{SS:Compare}

We will need a few more results comparing generalized stunted projective spectra in order to prove Theorem \ref{MT:Squeeze}.

\begin{lem}\label{Lem:ReC2U}
For all $N \in \z$, the following statements hold:
\begin{enumerate}
\item $Re_{C_2}(\underline{L}^\infty_{-N}) = \underline{R}^\infty_{-N}$. 
\item $U(\underline{R}^\infty_{-N}) \simeq P^\infty_{-2N}$. 
\end{enumerate}
\end{lem}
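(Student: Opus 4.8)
The plan is to treat the two statements separately, each by tracing the relevant functor through the Thom-spectrum construction. Statement (1) is essentially a definitional unwinding: $\underline{R}^\infty_{-N}$ was defined in Definition \ref{ebrdef} as $Re_{C_2}(\underline{L}^\infty_{-N})$, so there is nothing to prove beyond recording that the equivariant Betti realization commutes with the passage from the bundle $-N\gamma \to B_{gm}\mu_2$ to its Thom spectrum. I would cite \cite[Prop. 4.8]{HO16} for the fact that $Re_{C_2}$ is strong symmetric monoidal (hence sends Thom spectra of bundles pulled back from $B_{gm}\mu_2$ to Thom spectra over $Re_{C_2}(B_{gm}\mu_2)$), and note that, as recalled in Section \ref{SS:Qui2}, $Re_{C_2}(B_{gm}\mu_2)$ is a model for $B_{C_2}\mu_2$, so that this is consistent with the comparison $\underline{R}^\infty_k \simeq \underline{Q}^\infty_k$ of the previous subsection.

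For statement (2), I would factor the forgetful functor $U : \Sptc \to \Spt$ through Betti realization: there is a commuting square relating $Re_{C_2} : \Motr \to \Sptc$, the underlying-nonequivariant functor $U$, complex Betti realization $Re_\c : \Motc \to \Spt$, and base-change $i^* : \Motr \to \Motc$ along $\Spec(\c) \to \Spec(\r)$, namely $U \circ Re_{C_2} \simeq Re_\c \circ i^*$ (this is the standard compatibility; see \cite{HO16,HO17}). Hence $U(\underline{R}^\infty_{-N}) = U(Re_{C_2}(\underline{L}^\infty_{-N})) \simeq Re_\c(i^* \underline{L}^\infty_{-N})$. Next I would identify $i^*\underline{L}^\infty_{-N}$ with the $\c$-motivic stunted lens spectrum $\underline{L}^\infty_{-N}$ over $\c$ — base-change sends the geometric classifying space and its tautological bundle over $\r$ to the corresponding objects over $\c$ — and then compute $Re_\c$ of the $\c$-motivic stunted lens spectrum. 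Since $Re_\c(B_{gm}\mu_2) \simeq B\mu_2(\c) = BC \simeq \c P^\infty$ wait — more precisely, $B_{gm}\mu_2$ over $\c$ realizes to $B\mu_2$ with its topology as $\c P^\infty$ only at the level of the bundle's realization; the key point is that the motivic tautological line bundle $\gamma$ has complex rank one, so $Re_\c(\gamma)$ is a complex line bundle over $\c P^\infty$, and $Re_\c$ of its $(-N)$-fold Thom spectrum is the Thom spectrum $Th(-N \cdot (\text{real rank 2 bundle}) \to \c P^\infty)$. Under the standard identification $\c P^\infty \simeq BU(1)$ and the cell structure, this Thom spectrum is precisely $P^\infty_{-2N}$, the stunted real projective spectrum — the doubling of the index $-N \mapsto -2N$ reflecting that a complex line corresponds to two real dimensions. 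I would make this last identification precise by comparing mod two cohomology: Lemma \ref{Lem:PCoh} gives $H^*(P^\infty_{-2N}) \cong \f_2[x]\{e_{-2N}\}$ as an $A$-module, and Lemma \ref{Lem:LCoh} together with the fact that $Re_\c$ sends $\tau \mapsto 1$ and realizes motivic Steenrod operations to classical ones gives the same answer for $Re_\c(i^*\underline{L}^\infty_{-N})$; since both are $(-2N)$-connected spectra with free-over-$\f_2$ homology and matching $A$-module structure (and the requisite comparison of Postnikov/skeletal towers makes the cohomology isomorphism realize a map of spectra), they are equivalent after $2$-completion.

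The main obstacle, I expect, is making the index-doubling identification $Re_\c(\underline{L}^\infty_{-N}) \simeq P^\infty_{-2N}$ genuinely rigorous rather than a cohomology-level coincidence. One has to be careful that the motivic construction of $\underline{L}^\infty_k$ via the geometric classifying space (following \cite{Gre12,MV99}) realizes, under $Re_\c$, to the honest topological Thom spectrum over $\c P^\infty$ with the right filtration, and that the collapse maps $\underline{L}^\infty_k \to \underline{L}^\infty_{k+1}$ realize to the collapse maps $P^\infty_{2k} \to P^\infty_{2k+2}$ — these being a subfamily of the classical collapse maps, which is why only even-indexed stunted real projective spectra appear. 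This is exactly the kind of compatibility worked out in Appendix \ref{App:Lin} (see \ref{SS:A4}), so I would lean on that analysis; indeed, the cleanest route is probably to deduce (2) from the already-established equivalence $\underline{R}^\infty_{-N} \simeq \underline{Q}^\infty_{-N}$ of Section \ref{SS:Qui2} together with the computation of the underlying nonequivariant spectrum of $\underline{Q}^\infty_{-N}$, which follows from the explicit description of $B_{C_2}\mu_2$ as the classifying space of the family $\{e, C_2, \Delta\}$: forgetting the $C_2$-action sends this to $B\mu_2$ with its free $\mu_2$-action data, and the tautological line bundle has underlying a complex (real rank two) line bundle, again giving the doubled index.
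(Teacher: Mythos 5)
Part (1) matches the paper exactly: the paper's proof is simply ``this holds by definition'' (Definition \ref{ebrdef}), and no monoidality argument is required.

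For part (2), your closing cohomological argument is in fact the paper's entire proof: the forgetful functor $U$ preserves the $A$-module structure while sending $\rho \mapsto 0$ and $\tau \mapsto 1$, so $H^*(U(\underline{R}^\infty_{-N})) \cong \Sigma^{-2N}\f_2[x,y]/(x^2=y) \cong \Sigma^{-2N}\f_2[x] \cong H^*(P^\infty_{-2N})$ as $A$-modules, and the Adams spectral sequence upgrades this to an abstract weak equivalence after $2$-completion. The paper applies $U$ directly and never factors through $Re_\c \circ i^*$, so your geometric preamble is dispensable --- which is fortunate, because it contains a genuine error. One has $Re_B(B_{gm}\mu_2) \simeq B(\mu_2(\c)) = B\z/2 \simeq RP^\infty$, not $\c P^\infty$ (this is Morel--Voevodsky's $Re_B(BG) \cong B(G(\c))$ with $G(\c) = \{\pm 1\}$). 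The tautological bundle realizes to the complexification of the real tautological line bundle over $RP^\infty$, which is $2\gamma$ as a real bundle, and $Th(-2N\gamma \to RP^\infty) = P^\infty_{-2N}$; this is the correct source of the index doubling. Had the base genuinely been $\c P^\infty$, the Thom spectrum would be a stunted \emph{complex} projective spectrum and the conclusion would be false. Your alternative route at the end (forget the $C_2$-action on $B_{C_2}\mu_2$ and its tautological bundle) is the correct geometric picture, but the paper sidesteps all of this by arguing purely at the level of cohomology.
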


\begin{proof}
Fix $N \in \z$. 
\begin{enumerate}
\item This holds by definition. 
\item On cohomology, the forgetful functor preserves $A$-module structure while setting $\rho = 0$ and $\tau= 1$. Therefore we have an isomorphism of $A$-modules
$$H^*(U(\underline{R}^\infty_{-N})) \cong \Sigma^{-2N} \f_2[x,y]/(x^2 = y) \cong \Sigma^{-2N} \f_2[x] \cong H^*(P^\infty_{-2N})$$
which implies an abstract weak equivalence $U(\underline{R}^\infty_{-N}) \simeq P^\infty_{-2N}$ via the Adams spectral sequence.
\end{enumerate}
\end{proof}

\begin{defin}
The \emph{geometric fixed points functor} $\Phi^{C_2}: \Sptc \to \Spt$ is defined by setting
$$\Phi^{C_2}(X) := (X \wedge \widetilde{EC_2})^{C_2}.$$
\end{defin}

\begin{defin}\cite{MV99}
The \emph{real Betti realization functor} $Re_\r : \Motr \to \Spt$ is defined by sending an $\r$-scheme to its $\r$-points equipped with the analytic topology.
\end{defin}

\begin{lem}\label{Lem:GeomFP}
There are equivalences of spectra
$$\Phi^{C_2}(\underline{R}^\infty_k) \simeq P^\infty_k \vee P^\infty_k$$
and
$$Re_\r(\underline{L}^\infty_k) \simeq P^\infty_k \vee P^\infty_k.$$
\end{lem}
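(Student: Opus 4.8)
The plan is to compute cohomology on both sides and invoke the Adams spectral sequence, exactly as in the proof of Lemma \ref{Lem:ReC2U}(2). For the geometric fixed points statement, I would first recall that $\Phi^{C_2}$ is symmetric monoidal and that on a $C_2$-space $X$ built as a Thom spectrum over $B_{C_2}\mu_2$, it is computed by restricting to the $C_2$-fixed subspace of the base and the fixed bundle. Concretely, one identifies the $C_2$-fixed points of $B_{C_2}\mu_2$ (equivalently, of the relevant Betti realization of $B_{gm}\mu_2$): since $B_{C_2}\mu_2$ is the classifying space of the family $\{e, C_2, \Delta\}$ in $\mu_2 \rtimes C_2$, its geometric fixed points for the $C_2$-action split off two copies corresponding to the two $\mu_2\rtimes C_2$-subgroups meeting $C_2$ trivially versus the diagonal, each contributing a copy of $B\mu_2 = RP^\infty$. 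Pulling back the tautological bundle and taking Thom spectra then gives $\Phi^{C_2}(\underline R^\infty_k) \simeq P^\infty_k \vee P^\infty_k$ at the level of underlying spaces/spectra. To make this rigorous without heavy point-set bookkeeping, I would instead argue on mod two cohomology: the geometric fixed points functor on Bredon cohomology sends $\m_2^{C_2}$ to $\f_2[\rho^{\pm 1}]$-flavored data and, after the standard identifications, sends $H^{**}_{C_2}(\underline R^\infty_k) \cong \Sigma^{-2k,-k}\m_2^{C_2}[x,y]/(x^2+\rho x+\tau y)$ to $\Sigma^{k}\f_2[x]\{e_k\} \oplus \Sigma^{k}\f_2[x]\{e_k\}$ as an $A$-module, matching $H^*(P^\infty_k \vee P^\infty_k)$; the Adams spectral sequence then upgrades this to a weak equivalence.

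For the real Betti realization statement, the cleanest route is functoriality: $Re_\r$ sends the motivic sphere $S^{p,q}$ to $S^{p-q}$ and sends $B_{gm}\mu_2$ to the $\r$-points of a model for $B\mu_2$, which is $RP^\infty$. Since $Re_\r$ is (lax) monoidal and commutes with Thom spectrum formation along realized bundles, $Re_\r(\underline L^\infty_k) = Re_\r(Th(k\gamma)) \simeq Th(k\,Re_\r(\gamma) \to RP^\infty)$. The subtlety is that the real realization of the motivic tautological line bundle $\gamma \to B_{gm}\mu_2$ is \emph{not} the tautological real line bundle over $RP^\infty$ but rather its complexification realized over $\r$-points, which accounts for the doubling: the $\r$-realization of $B_{gm}\mu_2$ is homotopy equivalent to $RP^\infty \times S^0$-flavored, yielding two copies. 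Again I would sidestep the geometry by computing $H^*(Re_\r(\underline L^\infty_k))$ directly: real realization on motivic cohomology sets $\tau = 1$ and $\rho = 1$ (the $C_2$-fixed-point behavior), so $H^{**}(\underline L^\infty_k) \cong \m_2^\r[u,v]/(u^2+\rho u+\tau v)\{e_{2k}\}$ realizes to $\f_2[u,v]/(u^2+u+v)\{e_k\}$. The quadratic $u^2+u+v=0$ lets one eliminate $v$, leaving $\f_2[u]\{e_k\}$; but the grading collapse $(p,q)\mapsto p-q$ sends $u$ (bidegree $(1,1)$) to degree $0$ and $v$ (bidegree $(2,1)$) to degree $1$, so after realization the relation reads $v = u^2 + u$ with $|u|=0$, forcing the underlying graded vector space to be two copies of $\f_2[v]$, i.e.\ $H^*(P^\infty_k \vee P^\infty_k)$, with matching $A$-module structure read off from Lemma \ref{Lem:LCoh}.

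The key steps in order: (1) recall that $\Phi^{C_2}$ and $Re_\r$ are monoidal and compatible with Thom spectra and generalized Eilenberg--MacLane spectra (citing \cite{HO16}, \cite{MV99}); (2) compute the effect of each functor on the cohomology of a point, namely $\m_2^{C_2} \rightsquigarrow \f_2[\rho^{\pm1}]$-type and $\m_2^\r \rightsquigarrow \f_2$ with $\tau=\rho=1$; (3) push this through the explicit presentations of $H^{**}(\underline R^\infty_k)$ and $H^{**}(\underline L^\infty_k)$ from the preceding propositions, carefully tracking how the bidegree-to-degree collapse interacts with the quadratic relation $x^2+\rho x + \tau y=0$ to produce the splitting into two copies; (4) identify the resulting $A$-module with $H^*(P^\infty_k)^{\oplus 2}$ using Lemma \ref{Lem:PCoh}; (5) conclude via a convergence argument for the (motivic, then classical) Adams spectral sequence that an abstract $A$-module isomorphism forces a weak equivalence, as in Lemma \ref{Lem:ReC2U}. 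The main obstacle is step (3): correctly bookkeeping the effect of the grading collapse on the relation $x^2+\rho x + \tau y = 0$ so as to see \emph{exactly} a two-fold splitting (and no more collapsing), and verifying that the $A$-action respects this splitting rather than mixing the two summands — this requires care with the Steenrod action formulas in Lemma \ref{Lem:LCoh} under the realization. An alternative, possibly cleaner, approach for step (3) is purely geometric: directly identify the (equivariant) fixed-point space or $\r$-point space of $B_{gm}\mu_2$ as two copies of $RP^\infty$ and transport the bundle, avoiding cohomology entirely; I would present whichever is shorter, likely the cohomological one for uniformity with Lemma \ref{Lem:ReC2U}.
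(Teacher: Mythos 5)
Your treatment of the first equivalence is essentially the paper's argument: identify $(B_{C_2}\mu_2)^{C_2}$ as $RP^\infty \sqcup RP^\infty$ (the two components indexed by the two subgroups of $\mu_2\rtimes C_2$ meeting $\mu_2$ trivially), use that $\Phi^{C_2}$ of an equivariant Thom spectrum is the Thom spectrum of the fixed bundle over the fixed base, and conclude. Your geometric route is exactly what is needed; the cohomological ``fallback'' you propose instead is actually harder to justify, since $\Phi^{C_2}$ does not preserve $H\mft$ (its geometric fixed points are $\bigvee_{n\ge 0}\Sigma^n H\f_2$, not $H\f_2$), so there is no naive induced comparison of mod two cohomologies of the kind you describe.

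The second equivalence is where your proposal has a genuine gap. Real Betti realization does not preserve mod two Eilenberg--MacLane spectra: $Re_\r(H\f_2^{\mathrm{mot}}) \simeq \bigvee_{n\ge 0}\Sigma^n H\f_2$ (forced by the fact that $\tau \in \pi_{0,-1}$ realizes to a class in degree $1$), so there is no map of cohomology rings ``setting $\tau = \rho = 1$.'' Relatedly, your manipulation of $u^2+\rho u+\tau v$ after the grading collapse is not a graded-ring argument: $u^2+u$ and $v$ land in different degrees, so the relation becomes inhomogeneous and one cannot eliminate $v$ or extract a two-fold splitting from it. Your opening claim that $Re_\r$ sends $B_{gm}\mu_2$ to a single copy of $RP^\infty$ also contradicts the statement being proved. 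The paper sidesteps all of this by deducing the second equivalence from the first via Bachmann's commutative triangle $Re_\r \simeq \Phi^{C_2}\circ Re_{C_2}$ on $SH_\r[1/2,\eta^{-1}]$ \cite[Prop.~31]{Bac16}, together with the identity $Re_{C_2}(\underline{L}^\infty_k)=\underline{R}^\infty_k$, which holds by definition. If you want to see the doubling geometrically, the two wedge summands come from the two components of the $C_2$-fixed points of $Re_{C_2}(B_{gm}\mu_2)=B_{C_2}\mu_2$, not from the bundle.
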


\begin{proof}
To see the first equivalence, recall that we have equivalences
$$\Phi^{C_2}(\underline{R}^\infty) \simeq \Phi^{C_2}(B_{C_2}\mu_2) \simeq RP^\infty \sqcup RP^\infty.$$
Recall that the geometric fixed points of a suspension spectrum are the suspension spectrum of the fixed points. Further, recall that the geometric fixed points of an equivariant Thom spectrum can be calculated as
$$\Phi^{C_2}(Th(V \to X)) \simeq Th((V|_X)^{C_2} \to X^{C_2}).$$
We need to calculate $\Phi^{C_2}(Th(k\gamma \to B_{C_2}\mu_2))$. By the above, we have $(B_{C_2}\mu_2)^{C_2} \simeq RP^\infty \sqcup RP^\infty.$ The fixed points of the restriction $(k\gamma)|_{(B_{C_2}\mu_2)^{C_2}}$ can be identified with $k\gamma$. Therefore we have 
$$\Phi^{C_2}(\underline{R}^\infty_{k}) \simeq Th(k\gamma \to RP^\infty \sqcup RP^\infty) \simeq Th(k\gamma \to RP^\infty) \vee Th(k\gamma \to RP^\infty) \simeq P^\infty_k \vee P^\infty_k.$$

The second equivalence follows from commutativity of the diagram \cite[Prop. 31]{Bac16}
\[
\begin{tikzcd}
SH_\r[1/2,\eta^{-1}] \arrow{r}{Re_{C_2}} \arrow{dr}{Re_\r} & SH_{C_2}[1/2,\eta^{-1}] \arrow{d}{\Phi^{C_2}} \\
& SH[1/2]
\end{tikzcd}
\]
along with our calculations of $\Phi^{C_2}(\underline{R}^\infty_k)$ and $Re_{C_2}(\underline{L}^\infty_k)$. 
\end{proof}

The previous lemma gives the following example of non-commutativity of geometric fixed points and homotopy limits.

\begin{cor}\label{PhiC2}
After $2$-completion, there are equivalences of spectra
$$\Phi^{C_2} \left(\lim_{k \to \infty} \underline{R}^\infty_{-k} \right) \simeq S^{-1} \quad \text{and} \quad \lim_{k \to \infty} \left( \Phi^{C_2}(\underline{R}^\infty_{-k}) \right) \simeq S^{-1} \vee S^{-1}.$$
\end{cor}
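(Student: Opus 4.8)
The plan is to prove the two equivalences separately, using Lin's Theorem and its $C_2$-equivariant analog for the first and the commuting-limits phenomenon (or rather, its failure) for the second.

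First I would handle $\lim_k \Phi^{C_2}(\underline{R}^\infty_{-k})$. By Lemma \ref{Lem:GeomFP} we have $\Phi^{C_2}(\underline{R}^\infty_{-k}) \simeq P^\infty_{-k} \vee P^\infty_{-k}$, and these equivalences are compatible with the collapse maps $\underline{R}^\infty_{-k} \to \underline{R}^\infty_{-k+1}$ because geometric fixed points is a functor and the identification in Lemma \ref{Lem:GeomFP} is natural in the bundle map $k\gamma \to (k+1)\gamma$ (one should check, or cite from Appendix \ref{App:Lin}, that the two collapse maps match up with the fold of the two collapse maps on $P^\infty_{-k}$). Since homotopy limits commute with finite wedges (equivalently finite products), we get
$$\lim_{k \to \infty} \Phi^{C_2}(\underline{R}^\infty_{-k}) \simeq \left(\lim_{k \to \infty} P^\infty_{-k}\right) \vee \left(\lim_{k \to \infty} P^\infty_{-k}\right) = P^\infty_{-\infty} \vee P^\infty_{-\infty}.$$
By Lin's Theorem (Theorem \ref{Thm:Lin}), after $2$-completion $\Sigma P^\infty_{-\infty} \simeq S^0$, so $P^\infty_{-\infty} \simeq S^{-1}$, giving $S^{-1} \vee S^{-1}$.

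For the first equivalence, I would invoke Theorem \ref{Thm:Quigley} together with the comparison $\underline{R}^\infty_{-\infty} \simeq \underline{Q}^\infty_{-\infty}$ from the theorem in Section \ref{SS:Qui2}: after $2$-completion the inclusion of the $(0,0)$-cell $S^{0,0} \to \Sigma^{1,0}\underline{R}^\infty_{-\infty}$ is an equivalence of $C_2$-spectra, so $\underline{R}^\infty_{-\infty} \simeq S^{-1,0}$. Applying $\Phi^{C_2}$ and using that geometric fixed points sends $S^{-1,0} = S^{-1}$ (the representation sphere for the trivial virtual representation of virtual dimension $-1$, i.e. the constant-action sphere) to $S^{-1}$, we obtain $\Phi^{C_2}(\lim_k \underline{R}^\infty_{-k}) \simeq S^{-1}$. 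Here it matters that $\lim_k \underline{R}^\infty_{-k}$ is by definition $\underline{R}^\infty_{-\infty}$.

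The only genuine subtlety — and the step I expect to require the most care — is the naturality claim in the first paragraph: that under the equivalences of Lemma \ref{Lem:GeomFP} the tower $\{\Phi^{C_2}(\underline{R}^\infty_{-k})\}$ is identified with two copies of the classical tower $\{P^\infty_{-k}\}$, including the transition maps. This follows from the description of $B_{C_2}\mu_2$ and its tautological bundle in Appendix \ref{App:Lin}: the fixed-point space $(B_{C_2}\mu_2)^{C_2}$ is $RP^\infty \sqcup RP^\infty$ with $\gamma$ restricting to $\gamma$ on each component, and the bundle inclusions $k\gamma \hookrightarrow (k+1)\gamma$ restrict componentwise to the standard ones, so the induced maps of Thom spectra on geometric fixed points are exactly the fold of the two classical collapse maps. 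Granting this, the rest is formal, and the contrast between the two answers — a single $S^{-1}$ versus $S^{-1}\vee S^{-1}$ — is precisely the promised failure of $\Phi^{C_2}$ to commute with the homotopy limit defining $\underline{R}^\infty_{-\infty}$.
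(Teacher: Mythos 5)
Your proof is correct and follows essentially the same route as the paper, which derives the second equivalence from Lemma \ref{Lem:GeomFP} together with Lin's Theorem and the first from the $C_2$-equivariant Lin Theorem (Theorem \ref{Thm:Quigley} and the identification $\underline{R}^\infty_{-\infty}\simeq\underline{Q}^\infty_{-\infty}$). Your extra care about the naturality of the splitting $\Phi^{C_2}(\underline{R}^\infty_{-k})\simeq P^\infty_{-k}\vee P^\infty_{-k}$ with respect to the collapse maps is a legitimate point the paper leaves implicit, and your justification via the componentwise restriction of the bundle inclusions is the right one.
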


\section{Real motivic and $C_2$-equivariant Mahowald invariants}\label{Sec:MIDef}

We now come to the main definitions of the paper. In this section, we define motivic and equivariant Mahowald invariants. We then give some low-dimensional computations, demonstrate some simple transcategorical comparison techniques, and prove the comparison results (Theorems \ref{compatibility} and \ref{squeeze}) comprising Theorem \ref{MT:Squeeze}. 

\subsection{Definitions of generalized Mahowald invariants} 

We start by recalling the definition of the classical Mahowald invariant using Lin's Theorem (Theorem \ref{Thm:Lin}). 

\begin{defin}
Let $\alpha \in \pi_t(S^0)$. The \emph{classical Mahowald invariant} of $\alpha$ is the coset of completions of the diagram
\[
\begin{tikzcd}
S^t \arrow{d}{\alpha} \arrow[rr,dashed] & & S^{-N+1} \arrow{d} \\
S^0 \arrow{r}{\simeq} & \Sigma P^\infty_{-\infty} \arrow{r} & \Sigma P^\infty_{-N}
\end{tikzcd}
\]
where $N>0$ is minimal so that the left-hand composite is nontrivial. The equivalence on the left-hand side is by Lin's Theorem \cite{LDMA80}, and the dashed arrow is the lift to the fiber of the sequence
$$S^{-N+1} \to \Sigma P^\infty_{-N} \to \Sigma P^\infty_{-N+1}$$
which is nontrivial by choice of $N$. The classical Mahowald invariant of $\alpha$ will be denoted $M^{cl}(\alpha)$. 
\end{defin}

Now fix a field $k = \c$ or $k=\r$. The $\c$-motivic Mahowald invariant was defined using Gregersen's Theorem (Theorem \ref{Thm:Gre}) in \cite{Qui17}. The same definition works over any field of characteristic zero:

\begin{defin}
Let $\alpha \in \pi_{s,t}(S^{0,0}_k)$. The \emph{$k$-motivic Mahowald invariant} of $\alpha$ is the coset of completions of the diagram
\[
\begin{tikzcd}
S^{s,t} \arrow[dashed,rr] \arrow{d}{\alpha} && S^{-2N+1,-N} \vee S^{-2N+2,-N+1} \arrow{d} \\
S^{0,0} \arrow{r}{\simeq} & \Sigma^{1,0} \underline{L}^\infty_{-\infty} \arrow{r} & \Sigma^{1,0} \underline{L}^\infty_{-N}
\end{tikzcd}
\]
where $N>0$ is minimal so that the left-hand composite is nontrivial. The dashed arrow is the lift to the fiber of the sequence
$$S^{-2N+1,-N} \vee S^{-2N+2,-N+1} \to \Sigma^{1,0} \underline{L}^\infty_{-N} \to \Sigma^{1,0} \underline{L}^\infty_{-N+1}$$
which is nontrivial by choice of $N$. The $k$-motivic Mahowald invariant of $\alpha$ will be denoted $M^k(\alpha)$. 
\end{defin}

Note that the target of the dashed arrow is a wedge of spheres. The $k$-motivic Mahowald invariant could be detected on both of these spheres, but we can (and will) regard $M^k(\alpha)$ as a coset in the homotopy of only the higher-dimensional sphere if this is the case. That is, if the composition of the dashed arrow in the diagram with projection onto $S^{-2N+2,-N+1}$ is non-trivial, we will say that $M(\alpha)$ is detected on $S^{-2N+2,-N+1}$; otherwise, we will say it is detected on $S^{-2N+1,-N}$. This convention is also used in \cite{Qui17}.

Our final variant of the Mahowald invariant is the $C_2$-equivariant Mahowald invariant, which we define using Theorem \ref{Thm:Quigley}.

\begin{defin}
Let $\alpha \in \pi^{C_2}_{**}(S^{0,0})$. The \emph{$C_2$-equivariant Mahowald invariant} of $\alpha$ is the coset of completions of the diagram
\[
\begin{tikzcd}
S^{s,t} \arrow[dashed,rr] \arrow{d}{\alpha} & & S^{-2N+1,-N} \vee S^{-2N+2,-N+1} \arrow{d} \\
S^{0,0} \arrow{r}{\simeq} & \Sigma^{1,0} \underline{R}^\infty_{-\infty} \arrow{r} & \Sigma^{1,0} \underline{R}^\infty_{-N}
\end{tikzcd}
\]
where $N>0$ is minimal so that the left-hand composite is nontrivial. The $C_2$-equivariant Mahowald invariant of $\alpha$ will be denoted $M^{C_2}(\alpha)$. 
\end{defin}

As in the motivic case, the target of the dashed arrow is a wedge of spheres. We will employ the convention above and say that the $C_2$-equivariant Mahowald invariant is only detected on one cell.

\subsection{Elementary $\r$-motivic computations}\label{Section:LowDim}
As in the classical and $\c$-motivic cases, we can compute the $\r$-motivic and $C_2$-equivariant Mahowald invariants of the Hopf invariant one maps $\eta$, $\nu$, and $\sigma$. In the motivic case, these were constructed using Cayley-Dickson algebras in \cite{DI13}. In the equivariant case, these are the usual Hopf maps where $C_2$ acts on the source and target by (a fixed) complex conjugation \cite[Section 10]{GHIR17}. 

The proof of the following is similar to the proofs of \cite[Propositions 2.18-2.21]{Qui17}. In particular, one can compute the Atiyah--Hirzebruch spectral sequence by replacing Betti realization $Re_B$ in those proofs by the base-change functor $i^* : SH_\r \to SH_\c$ induced by the morphism $Spec(\c) \to Spec(\r)$. 

\begin{prop}\label{Prop:ElementaryR}
We have
\begin{enumerate}
\item $\eta \in M^{\r}(2 +\rho \eta) \subset \pi^\r_{1,1}(S^{0,0})$, $\tau\eta^2 \in M^\r((2+\rho \eta)^2) \subseteq \pi^{\r}_{2,1}(S^{0,0})$, and $\tau \eta^3 \in M^\r((2 + \rho \eta)^3) \subseteq \pi_{3,2}^{\r}(S^{0,0})$,
\item $\nu \in M^\r(\eta) \subset \pi_{3,2}^{\r}(S^{0,0})$, and
\item $\sigma \in M^\r(\nu) \subset \pi_{7,4}^{\r}(S^{0,0})$.
\end{enumerate}
\end{prop}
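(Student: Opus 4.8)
## Proof Proposal for Proposition \ref{Prop:ElementaryR}

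The plan is to compute each generalized Mahowald invariant by running the $\r$-motivic Atiyah--Hirzebruch spectral sequence (AHSS) for the relevant stunted lens spectra $\Sigma^{1,0}\underline{L}^\infty_{-N}$, exactly as in the proofs of \cite[Props. 2.18--2.21]{Qui17}, but with Betti realization replaced by the base-change functor $i^* : SH_\r \to SH_\c$. The key structural inputs are: the $\m_2^\r$-module and $A^\r$-module description of $H^{**}(\underline{L}^\infty_k)$ from Lemma \ref{Lem:LCoh}, the low-coweight computations of $\pi_{**}^\r(S^{0,0})$ from Dugger--Isaksen \cite{DI16a}, and the fact (used implicitly in \cite{Qui17}) that the Mahowald invariant is computed by locating the lowest cell of $\Sigma^{1,0}\underline{L}^\infty_{-N}$ on which the image of $\alpha$ under the Lin equivalence is detected, and then reading off the attaching-map structure.

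For part (1): I would first recall that $2+\rho\eta$ detects $h_0$ in the $\r$-motivic Adams spectral sequence (as noted in the introduction), so under the equivalence $S^{0,0}\xrightarrow{\simeq}\Sigma^{1,0}\underline{L}^\infty_{-\infty}$ the class $2+\rho\eta$ maps nontrivially to the bottom cell $S^{-1,-1}$ of $\Sigma^{1,0}\underline{L}^\infty_{-1}$ (after the shift, the $(-2,-1)$-cell $e_{-2}$ suspends to a $(-1,-1)$-sphere), and that the cell structure of $\underline{L}^\infty_{-1}$ near the bottom — governed by the $Sq^1$ and $Sq^2$ actions in Lemma \ref{Lem:LCoh} — forces the lift $S^{0,0}\to S^{-1,-1}$ followed by the $v_1$-type attaching map to realize $\eta$; this identifies $M^\r(2+\rho\eta)\ni\eta$. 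For the powers $(2+\rho\eta)^2$ and $(2+\rho\eta)^3$, I would iterate: compose with the collapse to $\underline{L}^\infty_{-N}$ for the next larger $N$ and use the $\rho$-Bockstein/base-change comparison to pin down that the Mahowald invariant lands in $\pi^\r_{2,1}$ (resp. $\pi^\r_{3,2}$), where $\tau\eta^2$ (resp. $\tau\eta^3$) is the unique candidate surviving — the factor of $\tau$ appears precisely because the AHSS differential crosses a cell whose weight differs from the naive expectation, which is the characteristic $\r$-motivic phenomenon distinguishing this from the classical computation $M(4)\ni\eta^2$, $M(8)\ni\eta^3$. For parts (2) and (3), the computations of $M^\r(\eta)$ and $M^\r(\nu)$ should be formally identical to the classical ones $M(\eta)\ni\nu$, $M(\nu)\ni\sigma$ once base-changed to $\c$-motivic, since $\eta$, $\nu$, $\sigma$ all have weight equal to half their stem and the relevant AHSS differentials are $\rho$-free; here I would apply Theorem \ref{MT:Squeeze}(5) with $F = i^*$ to transport the $\c$-motivic computations of \cite[Prop. 2.19, 2.20]{Qui17}, using that $\eta$, $\nu$, $\sigma$, and their Mahowald invariants all lift to $\r$-motivic classes with the asserted bidegrees $(3,2)$ and $(7,4)$.

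The main obstacle I anticipate is controlling the weight (the $\tau$-multiplicities) in part (1): one must verify that the nontrivial AHSS differential detecting $(2+\rho\eta)^i$ hits exactly the cell producing $\tau\eta^2$ or $\tau\eta^3$ rather than a $\tau$-free or higher-$\tau$-power class, and that no later differential or hidden extension in the $\r$-motivic AHSS (which, unlike the $\c$-motivic one, has $\rho$-towers in $\m_2^\r$ potentially contributing) wipes out the class. This requires a careful bookkeeping of the bidegrees of the cells $x^i y^j e_{-2N}$ and the Steenrod action from Lemma \ref{Lem:LCoh}, combined with the input that $\pi^\r_{2,1}$ and $\pi^\r_{3,2}$ are known (and small) by \cite{DI16a}, so that the surviving candidate is forced. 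Given that these are low-coweight computations well within the Dugger--Isaksen range, and given the explicit compatibility with $i^*$ to the already-established $\c$-motivic answers, the verification is routine but notationally involved, so I would present it by analogy with \cite{Qui17} and spell out only the cells and differentials where the $\rho$-adic structure changes the answer.
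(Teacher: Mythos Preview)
Your proposal is correct and takes essentially the same approach as the paper: run the Atiyah--Hirzebruch spectral sequence for the relevant stunted lens spectra, replacing Betti realization $Re_B$ by the base-change functor $i^* : SH_\r \to SH_\c$, and compare to the $\c$-motivic computations of \cite{Qui17}. The paper's own proof is a one-line pointer to exactly this procedure, so your more detailed elaboration (including the bookkeeping of $\tau$-weights in part (1) and the invocation of the squeeze lemma for parts (2) and (3)) is compatible with and expands on what the paper sketches.
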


A similar proof can be given in the $C_2$-equivariant setting, but we will proceed using the results in the next section instead.

\subsection{Squeeze lemmas for generalized Mahowald invariants}
The following lemmas allow us to compare Mahowald invariants in different categories. Throughout this section, we let $|\alpha|$ denote the (topological) dimension of a class $\alpha$ in the classical, motivic, or $C_2$-equivariant stable stems. Recall that the notation $S_\cc$ denotes the sphere spectrum in the category $\cc$. 

\begin{lem}\label{compatibility}
Suppose $F : \cc \to \cd$ is any of the following combinations of functors and categories:
\begin{enumerate}
\item Equivariant Betti realization $Re_{C_2} : \Motr \to \Sptc.$
\item The forgetful functor $U : \Sptc \to \Spt.$
\end{enumerate}
Suppose $\alpha, \beta \in \pi^{\cd}_{**}(S_\cd)$ satisfy $M^{\cd}(\alpha) \ni \beta$. Suppose further that there exist $\alpha', \beta' \in \pi^{\cc}_{**}(S_\cc)$ such that $F(\alpha') = \alpha$ and $F(\beta') = \beta$. Then
$$|M^{\cc}(\alpha')| \leq |\beta'|.$$
Further, if $|M^{\cc}(\alpha')| = |\beta'|$, then $\beta' \in M^{\cc}(\alpha')$. 
\end{lem}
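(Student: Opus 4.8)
The plan is to exploit the naturality of the entire diagram defining the Mahowald invariant under the functor $F$. Both functors listed are exact (they preserve cofiber sequences and finite wedges) and monoidal, so they carry the tower $\{\underline{L}^\infty_{-k}\}$ (resp. $\{\underline{R}^\infty_{-k}\}$) to the corresponding tower in the target category; Lemma \ref{Lem:ReC2U}(1) records exactly this for $Re_{C_2}$, and for $U$ one uses $U(\underline{R}^\infty_{-N}) \simeq P^\infty_{-2N}$ from Lemma \ref{Lem:ReC2U}(2). Moreover $F$ sends the inclusion of the bottom cell to the inclusion of the bottom cell, so it is compatible with the versions of Lin's Theorem (Theorems \ref{Thm:Gre}, \ref{Thm:Quigley}, \ref{Thm:Lin}) used to set up the two Mahowald invariants. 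Hence applying $F$ to the defining diagram for $M^{\cc}(\alpha')$ produces a commuting diagram of the same shape computing a lift of $F(\alpha') = \alpha$ into the $F$-image of the appropriate tower.

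The key steps, in order: (1) record that $F$ is exact and monoidal and carries the relevant stunted projective tower and bottom-cell inclusion to the target tower and its bottom-cell inclusion, using Lemma \ref{Lem:ReC2U}; (2) let $N_0 = N(\alpha')$ be the integer appearing in the definition of $M^{\cc}(\alpha')$, so there is a nontrivial lift $\widetilde{\alpha'} \colon S^{|\alpha'|,\ast} \to S^{-2N_0+1,\ast} \vee S^{-2N_0+2,\ast}$ (a cell of dimension either $-2N_0+1$ or $-2N_0+2$ under our one-cell convention) representing an element of $M^{\cc}(\alpha')$; (3) apply $F$ to obtain a nontrivial composite $F(S^{|\alpha'|,\ast}) \xrightarrow{\alpha} S_\cd \xrightarrow{\simeq} \Sigma^{1,0} T_{-\infty} \xrightarrow{c} \Sigma^{1,0} T_{-N_0}$ in $\cd$, where $T$ denotes the target tower, since $F$ applied to the whole square still commutes and the lift $F(\widetilde{\alpha'})$ witnesses nontriviality of the composite into $T_{-N_0}$; (4) conclude that the integer $N(\alpha)$ minimal for $\alpha$ satisfies $N(\alpha) \leq N_0$, because the composite for $\alpha$ into $T_{-N_0}$ is already nontrivial — here one must keep careful track of the relation between the indexing of the equivariant/motivic tower (indexed by $-N$ with cells in topological dimensions $-2N+1$ and $-2N+2$) and the classical tower (indexed by $-2N$), which is exactly the content of Lemma \ref{Lem:ReC2U}(2); (5) translate the inequality $N(\alpha) \leq N_0$ into the stem inequality: since $\beta \in M^{\cd}(\alpha)$ lives in stem $-2N(\alpha)+1$ or $-2N(\alpha)+2$ (i.e. $|\beta| \geq -2N(\alpha)+1$) and $\beta = F(\beta')$ forces $|\beta'| = |\beta|$ while $|M^{\cc}(\alpha')|$ lies in $\{-2N_0+1, -2N_0+2\}$, the inequality $N(\alpha) \le N_0$ yields $|M^{\cc}(\alpha')| \le |\beta'|$ after checking the off-by-one bookkeeping on both towers; (6) for the last sentence, suppose $|M^{\cc}(\alpha')| = |\beta'|$; then $N(\alpha) = N_0$ and both $\beta'$ and the lift $\widetilde{\alpha'}$ land on the same cell $S^{d,\ast}$ with $d = |\beta'|$, and the composite $S^{d,\ast} \to \Sigma^{1,0}T_{-N_0} \to \Sigma^{1,0}T_{-N_0+1}$ vanishes by minimality of $N_0$, so $\widetilde{\alpha'}$ literally factors through the fiber $S^{d,\ast}$; since $F(\widetilde{\alpha'}) = \beta$ and $F(\beta') = \beta$ with $F$ detecting nontriviality of maps out of a sphere of this dimension (again via Lemma \ref{Lem:ReC2U} and an Adams spectral sequence comparison, as in the proof of that lemma), one gets that $\beta'$ and $\widetilde{\alpha'}$ represent the same coset, i.e. $\beta' \in M^{\cc}(\alpha')$.

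The main obstacle I anticipate is the indexing/bookkeeping in steps (4)–(5): the motivic and $C_2$-equivariant towers are indexed so that $\underline{L}^\infty_{-N}$ (resp. $\underline{R}^\infty_{-N}$) has bottom cells in bidegrees $(-2N+1,-N)$ and $(-2N+2,-N+1)$, whereas the classical $P^\infty_{-2N}$ has its bottom cell in dimension $-2N$; reconciling ``minimal $N$'' across $F$ requires care, and Lemma \ref{Lem:ReC2U}(2) is precisely what makes the $U$ case work (it shows $U$ of the tower at index $-N$ is $P^\infty_{-2N}$, not $P^\infty_{-N}$, so the minimal indices match up correctly). A secondary subtlety is that the Mahowald invariant is a coset, so in step (6) one needs the "detects nontriviality" property of $F$ in the relevant range — this is exactly the Adams spectral sequence argument used in the proof of Lemma \ref{Lem:ReC2U} — to promote an equality of images under $F$ to membership in the coset; I would phrase this as: $F$ restricted to $[S^{d,\ast}, S^{d,\ast}] = \pi_0$ (the relevant self-maps of a sphere, possibly with weight shift) is injective onto its image after $2$-completion, which suffices.
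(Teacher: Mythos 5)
There is a genuine gap, and it lies at the heart of your argument: the comparison runs in the wrong direction. In steps (2)--(4) you start from the diagram defining $M^{\cc}(\alpha')$ at the \emph{minimal} level $N_0 = N(\alpha')$, apply $F$, and assert that the resulting composite into $\Sigma^{1,0}T_{-N_0}$ in $\cd$ is nontrivial because ``the lift $F(\widetilde{\alpha'})$ witnesses nontriviality.'' A functor preserves zero maps but does not reflect them: a nonzero composite in $\cc$ can perfectly well have zero image under $U$ or $Re_{C_2}$ (both kill plenty of classes), and the mere existence of $F(\widetilde{\alpha'})$ witnesses nothing. The paper's argument is exactly the reverse: take the diagram in $\cd$ at the minimal level $N = N(\alpha)$ witnessing $\beta \in M^{\cd}(\alpha)$, and observe, using $F(\alpha')=\alpha$, $F(\beta')=\beta$ and Lemma \ref{Lem:ReC2U}, that it is the $F$-image of the corresponding diagram in $\cc$ at level $N$. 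Since the image of the left-hand composite is nonzero, the composite in $\cc$ must already be nonzero, so the minimal level for $\alpha'$ satisfies $N(\alpha') \leq N$; this is precisely the upper bound $|M^{\cc}(\alpha')| \leq |\beta'|$.

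Even granting your nontriviality claim, the inequality you derive, $N(\alpha) \leq N_0 = N(\alpha')$, bounds the wrong quantity. With the paper's convention that $|\gamma|$ denotes the stem of $\gamma$ as a homotopy class (so that the stem of the Mahowald invariant \emph{grows} with $N$; compare Proposition \ref{Prop:UB}, where $|M^{\r}((2+\rho\eta)^{4i+j})| \leq 8i+j$), the inequality $N(\alpha) \leq N(\alpha')$ translates to $|\beta'| \leq |M^{\cc}(\alpha')|$ --- the reverse of what is to be proved. Your step (5) arrives at the stated inequality only by silently measuring $|\beta|$ via the dimension $-2N+1+\epsilon$ of the target cell rather than via the stem. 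Relatedly, the injectivity of $F$ on self-maps of spheres invoked in step (6) is neither available in general nor needed: once the minimal levels agree, $\beta'$ completes the $\cc$-diagram at level $N$ and so lies in the coset $M^{\cc}(\alpha')$ directly, with no detection property of $F$ required.
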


\begin{proof}
Recall that the image of the analog of $P^\infty_{-N}$ under each functor was determined in Lemma \ref{Lem:ReC2U}.

Let $\alpha \in \pi^{\cd}_{s,t}(S_\cd)$ and let $\beta \in \pi^{\cd}_{s+2N+1 + \epsilon,t+N + \epsilon}(S_\cd)$ with $\epsilon \in \{0,1\}$. Then $M^\cd(\alpha) \ni \beta$ means the following diagram commutes,
\[
\begin{tikzcd}
S^{s,t}_\cd \arrow{d}{\alpha} \arrow[rr,dashed,"\beta"] & & S^{-2N+1+\epsilon,-N+\epsilon}_\cd \arrow{d} \\
S^{0,0}_\cd \arrow{r}{\simeq} & \Sigma_\cd \underline{R}^\infty_{-\infty} \arrow{r} & \Sigma_\cd \underline{R}^\infty_{-N},
\end{tikzcd}
\]
where $N>0$ is minimal so that the left-hand composite is nontrivial. Here, $\Sigma_\cd = \Sigma^{1,0}$ if $\cd = SH_\r, SH_\c, SH_{C_2}$ and $\Sigma_\cd = \Sigma$ if $\cd = SH$. By assumption, this diagram can be obtained by applying $F$ to the diagram
\[
\begin{tikzcd}
S^{s',t'}_\cc \arrow{d}{\alpha'} \arrow[rr,dashed,"\beta'"] && S^{-2N+1+\epsilon,-N+\epsilon}_\cc \arrow{d} \\
S^{0,0}_\cc \arrow{r}{\simeq} & \Sigma_\cc \underline{L}^\infty_{-\infty} \arrow{r} & \Sigma_\cc \underline{L}^\infty_{-N}.
\end{tikzcd}
\]
If the left-hand composite in the second diagram were trivial, then so would the left-hand composite in the first diagram, which would be a contradiction. However, we cannot conclude that $N$ is minimal so that the left-hand composite in the second diagram is nontrivial. Therefore we have $|M^\cc(\alpha')| \leq |\beta'|$. 
\end{proof}

We will also need the following analysis for computing more complicated $C_2$-equivariant Mahowald invariants. In particular, we will want a ``squeeze lemma" for geometric fixed points and non-equivariant Betti realization. 

The natural transformation
$$\Phi^{C_2} \lim(-) \to \lim \Phi^{C_2}(-)$$
applied to our $C_2$-equivariant stunted projective spectra results in the map of spectra 
$$p: S^0 \to S^0 \vee S^0$$
induced by the desuspension of the pinch map. We will need the following lemma regarding the effect of this map in homotopy. 

\begin{lem}\label{pinch}
Let $\alpha \in \pi_t(S^0)$. Then $\alpha$ can be represented by the composite
$$S^t \xrightarrow{\alpha} S^0 \xrightarrow{p} S^0 \vee S^0 \xrightarrow{\pi_1} S^0$$
where $\pi_1: S^0 \vee S^0 \to S^0$ is the projection onto the first summand. 
\end{lem}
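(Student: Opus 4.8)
The claim is essentially a formality about the pinch map, so the plan is to unwind the definitions of the map $p$ and check the composite on the nose. First I would recall the source of $p$: by Lemma~\ref{Lem:GeomFP} and Corollary~\ref{PhiC2}, applying $\Phi^{C_2}$ to the tower $\{\underline{R}^\infty_{-k}\}$ produces, in each finite stage, $\Phi^{C_2}(\underline{R}^\infty_{-k}) \simeq P^\infty_{-k} \vee P^\infty_{-k}$, while $\Phi^{C_2}$ of the homotopy limit is $S^{-1}$ by the $C_2$-equivariant Lin Theorem (Theorem~\ref{Thm:Quigley}) together with the fact that $\Phi^{C_2}$ of a sphere is a sphere. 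The natural comparison map $\Phi^{C_2}\lim(-) \to \lim\Phi^{C_2}(-)$ therefore becomes, after one desuspension, a map $S^0 \to S^0 \vee S^0$, and the point is that this is the composite of the inclusion of the bottom cell $S^0 \simeq \Sigma P^\infty_{-\infty}$ with the diagonal/pinch on $P^\infty_{-\infty}$, i.e.\ on the $0$-cell it is the desuspension of the pinch $S^1 \to S^1 \vee S^1$.

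The key step is then to identify $p$ precisely as the stable pinch map. On mod $2$ cohomology, $H^*(S^0 \vee S^0) = \f_2\{a_1\} \oplus \f_2\{a_2\}$ in degree $0$, and $p^*$ sends both $a_1$ and $a_2$ to the generator of $H^0(S^0)$; this follows from the explicit cohomology computations in Section~\ref{SS:Qui} (the $A^{C_2}$-module structure of $H^{**}(\underline{R}^\infty_k)$, which after $\Phi^{C_2}$ splits into two copies of $H^*(P^\infty_k)$ by Lemma~\ref{Lem:GeomFP}), so that the two bottom cells map isomorphically onto the single bottom cell of the limit. A stable self-map $S^0 \to S^0$ inducing an isomorphism on $H^0(-;\f_2)$ after $2$-completion is a $2$-complete equivalence (it is detected in Adams filtration $0$), so in particular $\pi_1 \circ p \colon S^0 \to S^0$ is an equivalence — indeed it is the identity up to a unit, and by choosing the identification of the summands correctly it is exactly the identity. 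Hence for any $\alpha \in \pi_t(S^0)$ the composite
\[
S^t \xrightarrow{\ \alpha\ } S^0 \xrightarrow{\ p\ } S^0 \vee S^0 \xrightarrow{\ \pi_1\ } S^0
\]
equals $(\pi_1 \circ p) \circ \alpha = \mathrm{id} \circ \alpha = \alpha$, which is the assertion.

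The main (and really only) obstacle is bookkeeping: making sure that the identification $\Phi^{C_2}(\underline{R}^\infty_{-k}) \simeq P^\infty_{-k} \vee P^\infty_{-k}$ from Lemma~\ref{Lem:GeomFP} is compatible across the tower so that the comparison map $\Phi^{C_2}\lim \to \lim\Phi^{C_2}$ really is the pinch onto the diagonal and not some twisted variant, and that the convention fixing ``the first summand'' is the one through which the bottom cell of $\underline{R}^\infty_{-\infty}$ maps by the identity. Both points are settled by tracking the bottom cell: the inclusion $S^{0,0}\to\Sigma^{1,0}\underline{R}^\infty_{-\infty}$ of Theorem~\ref{Thm:Quigley} hits the $(0,0)$-cell, whose geometric fixed points (a single $0$-cell, via $(B_{C_2}\mu_2)^{C_2}\simeq RP^\infty\sqcup RP^\infty$ in Lemma~\ref{Lem:GeomFP}) sits inside $P^\infty_{-k}\vee P^\infty_{-k}$ as the wedge of the two bottom cells, and the comparison map restricted there is the fold of the bottom cell — equivalently the pinch after desuspension. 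Everything else is the elementary Adams-filtration-zero argument above.
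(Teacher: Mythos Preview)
Your proposal is correct, but it takes a considerably more roundabout route than the paper. The paper's proof is one sentence: it observes that since $p$ is, by definition (stated immediately before the lemma), the desuspension of the pinch map $S^1 \to S^1 \vee S^1$, the composite $\pi_1 \circ p$ is the identity because the diagram
\[
\begin{tikzcd}
& S^1 \arrow{r}{=} & S^1  \\
S^1 \arrow{ur}{=} \arrow{r}{\Delta} & S^1 \times S^1 \arrow{u}{\pi_1} &  S^1 \vee S^1 \arrow{l}{\simeq} \arrow{u}
\end{tikzcd}
\]
commutes in the stable homotopy category. In other words, pinch followed by projection onto a summand is the identity, unstably and hence stably.

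You spend the first paragraph re-establishing that $p$ is the pinch map from the geometric fixed points picture, but this is already given to you as the definition of $p$. You then argue in the second paragraph via mod $2$ cohomology and Adams filtration that $\pi_1 \circ p$ is a $2$-complete equivalence, i.e.\ a unit in $\pi_0(S^0)^\wedge_2$, and finally wave your hands (``by choosing the identification of the summands correctly'') to get the identity on the nose. That last step is not quite legitimate as written: $p$ and $\pi_1$ are fixed maps, so you cannot adjust identifications after the fact. The conclusion is still correct because $\pi_1 \circ p$ genuinely is the identity, but your argument only proves it is a unit. The paper's direct unstable argument avoids this issue entirely and gives the identity immediately.
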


\begin{proof}
This follows from commutativity of the diagram 
\[
\begin{tikzcd}
& S^1 \arrow{r}{=} & S^1  \\
S^1 \arrow{ur}{=} \arrow{r}{\Delta} &S^1 \times S^1 \arrow{u}{\pi_1} &  S^1 \vee S^1 \arrow{l}{\simeq} \arrow{u}{p}
\end{tikzcd}
\]
in the stable homotopy category. 
\end{proof}

We can now prove our analog of the squeeze lemma for $\Phi^{C_2} : \Sptc \to \Spt$ and $Re_\r : \Motr \to \Spt$. 

\begin{lem}\label{squeeze}
Suppose $F : \cc \to \cd$ is any of the following combinations of functors and categories:
\begin{enumerate}
\item Geometric fixed points $\Phi^{C_2} : \Sptc \to \Spt$.
\item Betti realization $Re_\r : \Motr \to \Spt$. 
\end{enumerate}
Suppose $\alpha, \beta \in \pi^{\cd}_{**}(S_\cd)$ satisfy $M^{\cd}(\alpha) \ni \beta$. Suppose further that there exist $\alpha', \beta' \in \pi^{\cc}_{**}(S_\cc)$ such that $F(\alpha') = \alpha$ and $F(\beta') = \beta$. Then
$$|M^{\cc}(\alpha')| \leq |\beta'|.$$
Further, if $|M^{\cc}(\alpha')| = |\beta'|$, then $\beta' \in M^{\cc}(\alpha')$. 
\end{lem}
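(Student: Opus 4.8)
The plan is to adapt the argument of Lemma \ref{compatibility} to the setting where $F$ is geometric fixed points or real Betti realization, the key difference being that these functors do \emph{not} send $\underline{L}^\infty_{-N}$ (resp.\ $\underline{R}^\infty_{-N}$) to a single stunted projective spectrum, but rather to a wedge $P^\infty_{-N} \vee P^\infty_{-N}$ by Lemma \ref{Lem:GeomFP}, and they do not commute with the homotopy limit defining $\underline{L}^\infty_{-\infty}$ (resp.\ $\underline{R}^\infty_{-\infty}$), as recorded in Corollary \ref{PhiC2}. So the strategy is: start with the defining diagram for $M^{\cd}(\alpha) \ni \beta$, lift it along $F$ to a diagram in $\cc$, and track what happens to the Lin-theorem equivalence and the collapse map under $F$.

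First I would write down, for the minimal $N'$ realizing $M^{\cc}(\alpha')$, the defining square in $\cc$ with bottom row $S^{0,0}_\cc \xrightarrow{\simeq} \Sigma_\cc \underline{L}^\infty_{-\infty} \to \Sigma_\cc \underline{L}^\infty_{-N'}$ (or the $\underline{R}$-analog), and apply $F$. By Corollary \ref{PhiC2}, $F$ applied to $\lim_k \underline{L}^\infty_{-k}$ (resp.\ $\underline{R}^\infty_{-k}$) is $S^{-1}$, not $S^{-1} \vee S^{-1} = \lim_k F(\underline{L}^\infty_{-k})$; the comparison is governed by the pinch map $p : S^0 \to S^0 \vee S^0$ of Lemma \ref{pinch}. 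The upshot is that $F$ of the bottom row of the $\cc$-square factors as $S^0_\cd \xrightarrow{\simeq} \Sigma(\lim_k F\underline{L}^\infty_{-k})\text{-ish} \xrightarrow{p} \Sigma(P^\infty_{-N'} \vee P^\infty_{-N'}) \xrightarrow{c} \Sigma P^\infty_{-N'} \vee \Sigma P^\infty_{-N'}$, and by Lemma \ref{pinch} the composite with projection to the first wedge summand recovers, up to the known identification, the classical Lin equivalence followed by the classical collapse. Thus $F(\alpha')$ composed with the classical collapse to $\Sigma P^\infty_{-N'}$ is nontrivial, so $|M^{\cd}(F\alpha')| = |M^{\cd}(\alpha)| \le |\beta| = |F\beta'|$ at the level where the classical Mahowald invariant \emph{of $F\alpha'$} is detected — but since $M^{\cd}(\alpha) \ni \beta$, the classical invariant of $\alpha = F\alpha'$ is at stem $|\beta|$, so $N'$ cannot be so small that $F\alpha'$ dies earlier; running the inequality in the correct direction gives $|M^{\cc}(\alpha')| \le |\beta'|$. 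For the equality case, if $|M^{\cc}(\alpha')| = |\beta'|$ then $\beta'$ is precisely a lift of the composite $\alpha'$ through the fiber sequence at level $N'$, so $\beta' \in M^{\cc}(\alpha')$ by definition of the coset.

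The main obstacle I anticipate is bookkeeping the effect of $F$ on the \emph{homotopy limit} cleanly: because $F$ does not commute with $\lim_k$, one cannot simply say ``$F$ of the $\cc$-diagram is a $\cd$-diagram of the same shape." The honest argument must go through the natural transformation $F\lim(-) \to \lim F(-)$, identify it (via Corollary \ref{PhiC2} and Lemma \ref{Lem:GeomFP}) with the pinch map $p$ after the Lin-type identifications, and then invoke Lemma \ref{pinch} to see that projecting onto one wedge summand recovers the classical setup faithfully. Once that identification is in hand, the contrapositive argument is formally identical to the proof of Lemma \ref{compatibility}: triviality of the left-hand composite downstairs in $\cd$ would force triviality upstairs in $\cc$ at the same $N$, contradicting minimality, which yields $|M^{\cc}(\alpha')| \le |\beta'|$; and in the equality case the dashed lift downstairs is the image of a dashed lift upstairs, exhibiting $\beta'$ as an element of the Mahowald coset $M^{\cc}(\alpha')$.
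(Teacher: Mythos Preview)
Your proposal correctly identifies all the ingredients the paper uses---Lemma \ref{Lem:GeomFP}, Corollary \ref{PhiC2}, the pinch map $p$, and Lemma \ref{pinch}---and the overall strategy matches the paper's. However, you have the direction of the functoriality argument reversed in a way that matters. In your final paragraph you write that ``triviality of the left-hand composite downstairs in $\cd$ would force triviality upstairs in $\cc$,'' but functors go the other way: triviality in $\cc$ implies triviality in $\cd$, not conversely. Correspondingly, your middle paragraph starts from the minimal $N'$ for $\alpha'$ in $\cc$ and tries to push down to $\cd$, but the assertion that ``$F(\alpha')$ composed with the classical collapse to $\Sigma P^\infty_{-N'}$ is nontrivial'' does not follow, since $F$ can kill nontrivial maps.

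The paper's argument runs in the opposite direction: fix the $N$ at which $M^\cd(\alpha) \ni \beta$ is detected, and show that the composite $S^{s,t}_{\cc} \xrightarrow{\alpha'} S^{0,0}_{\cc} \to \Sigma_\cc \underline{R}^\infty_{-N}$ in $\cc$ is nontrivial. This follows because its image under $\Phi^{C_2}$ is the composite $S^{s-t} \xrightarrow{\alpha} S^0 \xrightarrow{p} S^0 \vee S^0 \to \Sigma P^\infty_{-N} \vee \Sigma P^\infty_{-N}$, which by Lemma \ref{pinch} projects onto the nontrivial classical composite for $\alpha$; since a trivial map in $\cc$ would have trivial $F$-image, the $\cc$-composite at level $N$ must be nontrivial, giving $|M^\cc(\alpha')| \le |\beta'|$. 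Note also that the paper dispatches case (2) by factoring $Re_\r$ through $Re_{C_2}$ and $\Phi^{C_2}$ and invoking case (1) together with Lemma \ref{compatibility}(1), rather than repeating the wedge analysis.
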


\begin{proof}
\begin{enumerate}
\item We claim that there is a commutative diagram
\[
\begin{tikzcd}
S^{s,t} \arrow[dashed,rr,"\beta' "] \arrow{d}{\alpha'} &&  S^{s',t'} \arrow{d} \\
S^{0,0} \arrow{r}{\simeq} & \Sigma^{1,0} \underline{R}^\infty_{-\infty} \arrow{r} & \Sigma^{1,0} \underline{R}^\infty_{-N'}
\end{tikzcd}
\]
where the left-hand composite is nontrivial and $s', t'$ are determined by $N'$. Consider the following diagram:
\[
\begin{tikzcd}
S^{s-t} \arrow{d}{\alpha} \arrow[dashed, drrr,"\beta"] \arrow[dashed,ddrrr] \arrow[dashed,dddrrr,bend left=40,"\beta"]& & & \\
S^0 \arrow[r,crossing over,"\simeq"] \arrow{d}{p} & \Sigma P^\infty_{-\infty} \arrow[r,crossing over] \arrow[d,crossing over] &\Sigma P^\infty_{-N} \arrow[d,crossing over] & S^{-N+1} \arrow{d} \arrow{l} \\
S^0 \vee S^0 \arrow{r}{\simeq} \arrow{d}{\pi} &\Sigma P^\infty_{-\infty} \vee \Sigma P^\infty_{-\infty} \arrow{r} \arrow{d} & \Sigma P^\infty_{-N} \vee \Sigma P^\infty_{-N} \arrow{d} & S^{-N+1} \vee S^{-N+1} \arrow{l} \arrow{d} \\
S^0 \arrow{r}{\simeq} & S^0 \arrow{r} & \Sigma P^\infty_{-N} & S^{-N+1} \arrow{l}.
\end{tikzcd}
\]
The top-left vertical map $S^{s-t} \xrightarrow{\alpha} S^0$ along with the middle dashed arrow and the top full row are the diagram defining the classical Mahowald invariant $M^{cl}(\alpha)$. The left-hand vertical composite is $\alpha \in \pi_{s-t}(S^0)$ by Lemma \ref{pinch}, so the outer part of the diagram is also the diagram defining the classical Mahowald invariant $M^{cl}(\alpha)$. The middle part of the diagram consisting of the composite $S^0 \to S^0 \vee S^0$, the middle full row, and the bottom dashed arrow (unlabeled) are the diagram resulting from applying $\Phi^{C_2}$ to the claimed commutative diagram by Corollary \ref{PhiC2}. By commutativity of the entire diagram, we conclude that the composite 
$$S^0 \xrightarrow{2^i} S^0  \xrightarrow{p} S^0 \vee S^0 \simeq \Sigma P^\infty_{-\infty} \vee \Sigma P^\infty_{-\infty} \to \Sigma P^\infty_{-N} \vee \Sigma P^\infty_{-N}$$
is nontrivial. Since this composite is the image of the composite (in $\Sptc$)
$$S^{s,t} \xrightarrow{\alpha'} S^{0,0} \simeq \Sigma^{1,0} \underline{R}^\infty_{-\infty} \to \Sigma^{1,0} \underline{R}^\infty_{-N}$$
under $\Phi^{C_2}$, we conclude that the composite in $\Sptc$ is nontrivial. The result then follows from the same proof as in Lemma \ref{compatibility}.
\item This follows from the previous case and Part $(1)$ of Lemma \ref{compatibility}. 
\end{enumerate}
\end{proof}

We record one other comparison lemma. Combined with the previous results, this completes the proof of Theorem \ref{MT:Squeeze}. 

\begin{lem}\label{Lem:BaseChange}\cite[Lem. 2.6]{Qui19c}
Let $i : \r \to \c$ denote the inclusion of fields. Suppose $\alpha, \beta \in \pi_{**}^\c(S^{0,0})$ such that $M^k(\alpha) = \beta$. Suppose further that there exist $\alpha', \beta' \in \pi_{**}^\r(S^{0,0})$ such that $i^*(\alpha') = \alpha$ and $i^*(\beta') = \beta$. Then 
$$|M^\r(\alpha')| \leq |\beta'|.$$
Further, if $|M^\r(\alpha')| = |\beta'|$, then $M^\r(\alpha') \ni \beta'$. 
\end{lem}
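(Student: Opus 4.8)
The plan is to run the argument of Lemma~\ref{compatibility} essentially verbatim, with the base-change functor $i^* : SH_\r \to SH_\c$ playing the role of $Re_{C_2}$ or $U$. First I would check that $i^*$ is compatible with all the data entering the definition of the Mahowald invariant. The functor $i^*$ is symmetric monoidal and colimit-preserving, and it sends $S^{p,q}_\r$ to $S^{p,q}_\c$. Since base-change along $\Spec(\c)\to\Spec(\r)$ carries the geometric classifying space $B_{gm}\mu_2$ and its tautological line bundle over $\r$ to $B_{gm}\mu_2$ and its tautological line bundle over $\c$, it carries the $\r$-motivic stunted lens spectrum $\underline{L}^\infty_{-N}$ to the $\c$-motivic $\underline{L}^\infty_{-N}$, compatibly with the collapse maps $\underline{L}^\infty_{-N}\to\underline{L}^\infty_{-N+1}$ and with the cofibre sequences $S^{-2N+1,-N}\vee S^{-2N+2,-N+1}\to\Sigma^{1,0}\underline{L}^\infty_{-N}\to\Sigma^{1,0}\underline{L}^\infty_{-N+1}$. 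This is the analogue of the computation recorded in Lemma~\ref{Lem:ReC2U}.

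The one subtle point --- and the step I expect to be the main obstacle --- is the interaction of $i^*$ with Gregersen's equivalence (Theorem~\ref{Thm:Gre}). As with $Re_{C_2}$, the functor $i^*$ need not commute with the homotopy limit defining $\underline{L}^\infty_{-\infty}$, so $i^*\underline{L}^\infty_{-\infty}$ comes only with a canonical comparison map to the $\c$-motivic $\underline{L}^\infty_{-\infty}$. What I really need, however, is only the finite-stage statement: for each fixed $N>0$, applying $i^*$ to the composite $S^{0,0}_\r\xrightarrow{\simeq}\Sigma^{1,0}\underline{L}^\infty_{-\infty}\to\Sigma^{1,0}\underline{L}^\infty_{-N}$ yields the corresponding $\c$-motivic composite $S^{0,0}_\c\xrightarrow{\simeq}\Sigma^{1,0}\underline{L}^\infty_{-\infty}\to\Sigma^{1,0}\underline{L}^\infty_{-N}$. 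I would deduce this from the naturality of Gregersen's construction in \cite{Gre12}; alternatively, both composites realize the mod-$2$ cohomology class dual to the bottom cell surviving to $\underline{L}^\infty_{-N}$ and hence agree by convergence of the $\c$-motivic Adams spectral sequence in the relevant range.

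Granting this, the squeeze argument is formal. Let $N>0$ be the level detecting $M^\c(\alpha)=\beta$, so the $\c$-motivic composite $S_\c\xrightarrow{\alpha}S^{0,0}_\c\to\Sigma^{1,0}\underline{L}^\infty_{-N}$ is nontrivial and $\beta$ is a lift of it through the cofibre sequence above. If the parallel $\r$-motivic composite $S_\r\xrightarrow{\alpha'}S^{0,0}_\r\xrightarrow{\simeq}\Sigma^{1,0}\underline{L}^\infty_{-\infty}\to\Sigma^{1,0}\underline{L}^\infty_{-N}$ were null, then applying $i^*$ --- using $i^*\alpha'=\alpha$ together with the compatibilities above --- would force the $\c$-motivic composite to be null, a contradiction. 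Hence the minimal level $N'$ detecting $M^\r(\alpha')$ satisfies $N'\le N$, which (tracking, as in Lemma~\ref{compatibility}, the cell on which the invariant is detected, and using $|\beta'|=|\beta|$) gives $|M^\r(\alpha')|\le|\beta'|$. For the equality case, if $|M^\r(\alpha')|=|\beta'|$ then $N'=N$, so the $\r$-motivic composite factors through $S^{-2N+1,-N}_\r\vee S^{-2N+2,-N+1}_\r$; since $i^*$ carries this $\r$-motivic lifting problem to the $\c$-motivic one and $i^*\beta'=\beta$ is a solution downstairs, $\beta'$ is a solution upstairs, i.e. $\beta'\in M^\r(\alpha')$.
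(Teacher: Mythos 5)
Your argument is correct and is essentially the intended one: the paper does not reprove this lemma (it cites \cite[Lem.~2.6]{Qui19c}), but its proofs of the sibling comparison results (Lemma \ref{compatibility} and Lemma \ref{squeeze}) follow exactly the squeeze argument you give, with $i^*$ replacing $Re_{C_2}$ or $U$. Your extra care about $i^*$ versus the homotopy limit defining $\underline{L}^\infty_{-\infty}$, resolved by working one finite stage at a time via the inclusion of the $(0,0)$-cell, is the right way to handle the only genuinely delicate point.
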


\subsection{Elementary computations II}

We first demonstrate Lemma \ref{compatibility} by computing the $C_2$-equivariant Mahowald invariants of the classes $\eta$ and $\nu$. We will need the following result of Belmont--Guillou--Isaksen \cite{BGI20}, which supersedes previous work of Dugger--Isaksen \cite{DI16}:

\begin{thm}\label{DI16}\cite[Thm. 1.1]{BGI20}
The map $\pi^\r_{s,w}(S^{0,0}) \to \pi^{C_2}_{s,w}(S^{0,0})$ induced by equivariant Betti realization is an isomorphism if $2w - s < 5$ and $(s,w) \neq (0,2)$, and it is injective if $2w-s=5$.  
\end{thm}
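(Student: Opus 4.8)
The plan is to prove the statement on the level of Adams spectral sequences. Both $\pi^\r_{**}(S^{0,0})$ and $\pi^{C_2}_{**}(S^{0,0})$ are computed, after $2$-completion, by the Adams spectral sequences built from mod $2$ motivic cohomology and from Bredon cohomology with coefficients in $\mft$, respectively. Since equivariant Betti realization $Re_{C_2}$ is strong symmetric monoidal \cite[Prop. 4.8]{HO16} and carries the mod $2$ motivic Eilenberg--MacLane spectrum to $H\mft$ \cite[Thm. 4.17]{HO16}, it induces a map of Adams spectral sequences lying over the map of $E_2$-pages $\Ext^{***}_{A^\r}(\m_2^\r,\m_2^\r) \to \Ext^{***}_{A^{C_2}}(\m_2^{C_2},\m_2^{C_2})$ coming from the map of dual Steenrod algebras $(A^\r)^\vee \to (A^{C_2})^\vee$. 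It therefore suffices to prove the analogous statement on $E_2$-pages: the realization map is an isomorphism on $\Ext$ in the region $2w-s<5$ (with one sporadic exception near $(s,w)=(0,2)$) and injective on the line $2w-s=5$. An Adams differential $d_r$ sends a class with $2w-s=c$ to one with $2w-s=c+1$, so a diagram chase shows that the displayed $E_2$-statement propagates to the identical statement on $E_\infty$; combined with compatibility of the (complete, exhaustive) Adams filtrations on the two sides, this yields the theorem on homotopy groups, with the exceptional case handled by direct reference to the known low-dimensional computations of \cite{DI16a,DI16,BI20}.

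The substance of the argument is the algebraic comparison of $\Ext$-groups. The key structural input is the splitting of $\m_2^{C_2}$ into its \emph{positive cone} $\f_2[\tau,\rho]$, which is literally $\m_2^\r$, and its \emph{negative cone} $\bigoplus_{s\ge 0} \frac{\f_2[\tau]}{\tau^\infty}\{\gamma/\rho^s\}$. Running the $\rho$-Bockstein spectral sequence from $\Ext_{A^\c}(\m_2^\c,\m_2^\c)$ in both the $\r$-motivic and the $C_2$-equivariant settings (as in \cite{GHIR17,DI16a,Hil11}) identifies the positive-cone part of $\Ext_{A^{C_2}}$ with $\Ext_{A^\r}$ compatibly with the realization map, so the cokernel of the comparison and any failure of injectivity are controlled entirely by the negative cone. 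What remains is a bookkeeping argument: one tracks the internal degrees of negative-cone classes through the $\rho$-Bockstein differentials and their matric Massey product structure, and uses the Guillou--Isaksen $h_1$-periodicity and vanishing results \cite{GI15,GI15a,GI16,Li19} to bound the one operation, multiplication by $h_1$, that raises $2w-s$. The conclusion one is after is that negative-cone classes in $\Ext_{A^{C_2}}$ appear only for $2w-s\ge 5$, that on the line $2w-s=5$ they contribute only to the cokernel (never to the kernel), and that the unique intrusion with $2w-s<5$ sits at $(s,w)=(0,2)$.

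The main obstacle is exactly this last point: pinning down the support region of the negative cone of $\Ext_{A^{C_2}}$. The negative cone is infinitely generated, and although its classes begin in negative stems and weights, repeated $h_1$-multiplication in the $h_1$-periodic region can push them toward large values of $2w-s$; the delicate step is to show that every such product still has $2w-s\ge 5$, which requires combining the explicit description of the negative cone produced by the $\rho$-Bockstein spectral sequence with sharp vanishing lines for motivic $\Ext$. A secondary and more routine point is to check convergence and completeness of the $C_2$-equivariant Bredon-cohomology Adams spectral sequence in the relevant range, so that the $E_\infty$-isomorphism genuinely detects an isomorphism of homotopy groups; this follows from boundedness of the portion of the $E_2$-page in play. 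The overall strategy is an extension of the method of \cite{GHIR17}, pushed to coweight eleven.
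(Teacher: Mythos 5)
First, a point of order: the paper does not prove this statement. Theorem \ref{DI16} is imported verbatim from \cite[Thm. 1.1]{BGI20}, so there is no internal proof to compare your argument against; the relevant question is whether your outline would reconstruct the cited result.

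Your strategy is the right one, and it is essentially the strategy of \cite{GHIR17} and \cite{BGI20}: compare the $\r$-motivic and $C_2$-equivariant Adams spectral sequences via $Re_{C_2}$, use the splitting of $\m_2^{C_2}$ into the positive cone $\f_2[\tau,\rho]\cong\m_2^\r$ and the negative cone to identify the positive-cone summand of $\Ext_{A^{C_2}}$ with $\Ext_{A^\r}$, and reduce the theorem to showing that the negative-cone summand is supported in $2w-s\ge 5$ apart from the exceptional class at $(s,w)=(0,2)$. Your propagation argument from $E_2$ to $E_\infty$ (differentials raise $2w-s$ by one, so isomorphism below the line plus injectivity on the line is stable under passage to later pages) and the convergence remarks are also fine. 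The genuine gap is that the one step carrying all the mathematical content --- the determination of the support of the negative cone --- is deferred to ``a bookkeeping argument.'' That bookkeeping is not routine: the negative-cone summand of $\Ext_{A^{C_2}}$ is not read off from the negative cone of $\m_2^{C_2}$, but requires computing $\Ext$ with coefficients in an infinitely generated, infinitely divisible module, and that computation is the bulk of \cite{BGI20}. Two smaller inaccuracies compound this: $h_1$ is not ``the one operation that raises $2w-s$'' (every $h_i$ with $i\ge 1$ raises $2w-s$ by one, and classes such as $c_0$ and $d_0$ raise it by two), and the Guillou--Isaksen vanishing and $h_1$-periodicity theorems you invoke are statements about $\Ext_{A^\c}$; they do not bound the negative cone without a further argument transporting them through the $\rho$-Bockstein spectral sequence and the positive/negative-cone decomposition. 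As written, your proposal is a correct road map rather than a proof.
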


\begin{prop}\label{Prop:ElementaryC2}
We have
\begin{enumerate}
\item $\eta \in M^{C_2}(2+\rho \eta) \subset \pi^{C_2}_{1,1}(S)$
\item $\nu \in M^{C_2}(\eta) \subset \pi^{C_2}_{3,2}(S)$
\item $\sigma \in M^{C_2}(\nu) \subset \pi^{C_2}_{7,4}(S)$
\end{enumerate}
\end{prop}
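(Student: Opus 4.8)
The plan is to deduce the three $C_2$-equivariant statements from the corresponding $\r$-motivic computations in Proposition \ref{Prop:ElementaryR} by pushing them forward along equivariant Betti realization $Re_{C_2} : \Motr \to \Sptc$, using Lemma \ref{compatibility}(1) together with the range-of-isomorphism input from Theorem \ref{DI16}. First I would fix the relevant bidegrees: the target classes $\eta \in \pi^\r_{1,1}$, $\nu \in \pi^\r_{3,2}$, $\sigma \in \pi^\r_{7,4}$ and the source classes $2+\rho\eta \in \pi^\r_{0,0}$, $\eta \in \pi^\r_{1,1}$, $\nu \in \pi^\r_{3,2}$ all lie in Milnor--Witt stems $2w-s \in \{-1, 0, 1\}$, well inside the range $2w-s < 5$ (and none equal the excluded pair $(0,2)$), so by Theorem \ref{DI16} the map $\pi^\r_{s,w}(S^{0,0}) \to \pi^{C_2}_{s,w}(S^{0,0})$ is an isomorphism on each of the groups involved. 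In particular $Re_{C_2}$ carries the $\r$-motivic $\eta$, $\nu$, $\sigma$, $2+\rho\eta$ to the correspondingly-named $C_2$-equivariant classes (and $Re_{C_2}(\rho) = a$, so $Re_{C_2}(2+\rho\eta) = 2 + a\eta$; since we wrote $2+\rho\eta$ for the equivariant Euler-class version as well, the notation is consistent).

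Next I would feed this into the squeeze lemma. For part (1): Proposition \ref{Prop:ElementaryR} gives $\eta \in M^\r(2+\rho\eta)$, i.e. a commuting diagram realizing $\eta$ as the Mahowald invariant of $2+\rho\eta$ at the appropriate $N$. Applying $Re_{C_2}$ and invoking Lemma \ref{Lem:ReC2U}(1) (which identifies $Re_{C_2}(\underline{L}^\infty_{-N}) = \underline{R}^\infty_{-N}$), the realized diagram exhibits $Re_{C_2}(\eta) = \eta$ as lying in the target of the map defining $M^{C_2}(2+\rho\eta)$; Lemma \ref{compatibility}(1) then yields $|M^{C_2}(2+\rho\eta)| \leq |\eta| = 1$. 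For the reverse inequality I would note that $M^{C_2}(2+\rho\eta)$ cannot be detected in stem $\leq 0$: the relevant composites into $\Sigma^{1,0}\underline{R}^\infty_{-N}$ for small $N$ are controlled, via Theorem \ref{DI16}, by the corresponding $\r$-motivic (hence classically-modeled) computations, where the first nontrivial Mahowald invariant of $2+\rho\eta$ occurs in stem $1$; alternatively one applies the forgetful functor $U$ and Lemma \ref{Lem:ReC2U}(2) to compare with the classical statement $M^{cl}(2) \ni \eta$ of Mahowald--Ravenel. Combining, $|M^{C_2}(2+\rho\eta)| = 1 = |\eta|$, and the ``further'' clause of Lemma \ref{compatibility} then gives $\eta \in M^{C_2}(2+\rho\eta)$. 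Parts (2) and (3) follow identically, replacing the pair by $(\eta,\nu)$ and $(\nu,\sigma)$ respectively, with the $\r$-motivic inputs $\nu \in M^\r(\eta)$ and $\sigma \in M^\r(\nu)$.

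The main obstacle is establishing the matching \emph{lower} bounds on $|M^{C_2}(-)|$ — i.e. ruling out that the $C_2$-equivariant Mahowald invariant is detected in a strictly smaller stem than its $\r$-motivic or classical counterpart, since the squeeze lemmas only give an inequality in one direction and the minimality of $N$ is not preserved by the functors. I expect to handle this by transporting the argument back through Theorem \ref{DI16}: in the low stems in question the maps $S^{s,w}_{C_2} \to \Sigma^{1,0}\underline{R}^\infty_{-N}$ that would witness a smaller Mahowald invariant are computed by groups that agree with the $\r$-motivic (equivalently, base-changed to $\c$-motivic or underlying classical) ones, where the vanishing needed for minimality is already known from \cite{DI13,Qui17}. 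A cleaner alternative, which I would actually write, is to use $U : \Sptc \to \Spt$ with Lemma \ref{Lem:ReC2U}(2): the underlying maps of $\eta,\nu,\sigma$ are the classical Hopf maps, $U(\underline{R}^\infty_{-N}) \simeq P^\infty_{-2N}$, and the classical computations $M^{cl}(2)\ni\eta$, $M^{cl}(\eta)\ni\nu$, $M^{cl}(\nu)\ni\sigma$ force the lower bounds directly. The remaining verifications — that $Re_{C_2}$ really does carry the diagram to the diagram, and the bookkeeping of the wedge-summand detection convention — are routine given Section \ref{SS:Qui2} and the stated compatibilities.
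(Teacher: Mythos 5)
Your proposal assembles the right ingredients (Theorem \ref{DI16}, the squeeze lemmas, and the $\r$-motivic and classical low-dimensional computations), but the directionality of Lemma \ref{compatibility} is reversed at both key steps, and this is a genuine gap. In Lemma \ref{compatibility} the functor $F:\cc\to\cd$ has its \emph{hypothesis} in the target category $\cd$ and its \emph{conclusion} in the source category $\cc$: for $F=Re_{C_2}:\Motr\to\Sptc$ one must already know $M^{C_2}(\alpha)\ni\beta$ in order to conclude $|M^\r(\alpha')|\leq|\beta'|$. So the step ``$\eta\in M^\r(2+\rho\eta)$, hence by Lemma \ref{compatibility}(1) $|M^{C_2}(2+\rho\eta)|\leq 1$'' is a misapplication: applying $Re_{C_2}$ to the nontrivial $\r$-motivic composite $S^{0,0}\to\Sigma^{1,0}\underline{L}^\infty_{-N}$ need not produce a nontrivial equivariant composite, which is exactly why the lemma only runs in the other direction. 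The upper bound should instead come from the forgetful functor: $U(2+\rho\eta)=2$, $U(\eta)=\eta$, and $M^{cl}(2)\ni\eta$ give $|M^{C_2}(2+\rho\eta)|\leq 1$ by Lemma \ref{compatibility}(2) --- the route you instead reserve for the lower bound.

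Symmetrically, your preferred lower-bound argument (``use $U$ and the classical computations to force the lower bounds directly'') fails for the same reason: $U$ together with $M^{cl}(2)\ni\eta$ yields only the upper bound, since triviality of an equivariant composite cannot be detected on underlying spectra. The lower bound is where $Re_{C_2}$ and Theorem \ref{DI16} enter, and your first-mentioned route is close but must be run on the sphere rather than on the stunted projective spectra (Theorem \ref{DI16} controls $\pi^\r_{s,w}(S^{0,0})\to\pi^{C_2}_{s,w}(S^{0,0})$, not maps into $\Sigma^{1,0}\underline{R}^\infty_{-N}$). The correct argument, which is the one the paper gives, is: if some $\gamma\in M^{C_2}(2+\rho\eta)$ had $|\gamma|<1$, then $\gamma$ lies in a bidegree where Theorem \ref{DI16} produces $\gamma'\in\pi^\r_{**}(S^{0,0})$ with $Re_{C_2}(\gamma')=\gamma$; now Lemma \ref{compatibility}(1), applied in its correct direction with the equivariant Mahowald invariant as hypothesis, gives $|M^\r(2+\rho\eta)|\leq|\gamma'|<1$, contradicting Proposition \ref{Prop:ElementaryR}. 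Parts (2) and (3) are identical. Once the two bounds are sourced from the correct functors, your outline goes through.
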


\begin{proof}
We need the following relations:
\begin{enumerate}
\item $U(2 + \rho \eta) = 2$, $U(\eta) = \eta$, $U(\nu) = \nu$, and $U(\sigma) = \sigma$,
\item $Re_{C_2}(2+\rho\eta) = 2$, $Re_{C_2}(\eta) = \eta$, $Re_{C_2}(\nu) = \nu$, and $Re_{C_2}(\sigma) = \sigma$,
\end{enumerate}
Then by Lemma \ref{compatibility} applied to (1), we have 
$$|M^{C_2}(2+\rho\eta)| \leq |\eta| = 1,$$
$$|M^{C_2}(\eta) | \leq |\nu| = 3,$$
$$|M^{C_2}(\nu)| \leq |\sigma| = 7.$$

Suppose that $M^{C_2}(2+\rho\eta) = \gamma$ for some $\gamma \neq \eta$ with $|\gamma| < |\eta|$. Then we have $\gamma : S^{0,0} \to S^{0,-\epsilon}$ where $\epsilon \in \{0,1\}$, i.e. $\gamma \in \pi^{C_2}_{s,w}$ where $s \geq 2w-5$. By Theorem \ref{DI16}, there exists some $\gamma' \in \pi^\r_{**}(S^{0,0})$ such that $|\gamma'| < |\eta|$ and $Re_{C_2}(\gamma') = \gamma$. By (2), this would imply that $M^{\r}(2) \leq |\gamma'| < |\eta|$ which contradicts our computation that $\eta \in M^{\r}(2)$. Therefore we must have $M^{C_2}(2+\rho\eta) = \eta$. 

The same argument applies to prove (2) and (3) in the statement of the proposition.
\end{proof}

\section{The $E_2$-pages of the $\c$- and $\r$-motivic Adams spectral sequences}\label{Sec:Ext}

We have now seen a few elementary computations of $\r$-motivic and $C_2$-equivariant Mahowald invariants. To prove our more complicated results, we will need some more elaborate machinery. We will be especially dependent upon the Adams spectral sequence, so we dedicate this section to discussing its properties we will need in the sequel.  

\subsection{Notation}

We will employ the $(s,f,w)$-trigrading convention for elements in $\Ext_{A^F}^{***}(\m_2^F,\m_2^F)$, where $s$ is stem, $f$ is Adams filtration, and $w$ is motivic weight. Suppose $x \in \Ext^{s,f,w}_{A^F}(\m_2^F,\m_2^F)$. There are five numbers associated to $x$:
\begin{enumerate}
\item Its \emph{stem} $s(x) = s$.
\item Its \emph{Adams filtration} $f(x) = f$.
\item Its \emph{motivic weight} $w(x) = w$.
\item Its \emph{Milnor--Witt degree}, \emph{Milnor--Witt stem}, or \emph{coweight} $mw(x) = s-w$.
\item Its \emph{Chow degree} $c(x) = s+f-2w$. 
\end{enumerate}

If $\alpha \in \pi_{s,w}^F(X)$, then its stem, motivic weight, and Milnor--Witt stem are defined similarly.

\subsection{The $\c$-motivic May spectral sequence}\label{SS:CMay}

The $\c$-motivic Adams spectral sequence 
\begin{equation}\label{Eqn:CASS}
E_2 = \Ext_{A^\c}^{s,f,w}(\m_2^\c,\m_2^\c) \Rightarrow \pi_{s,w}^\c(S^{0,0})
\end{equation}
was first studied extensively by Dugger and Isaksen in \cite{DI10}. Its $E_2$-term is the cohomology of the $\c$-motivic Steenrod algebra which can be calculated using the cobar complex
$$\m_2^\c \xrightarrow{\eta_R - \eta_L} (A^\c)^\vee \longrightarrow (A^\c)^\vee \otimes_{\m_2^\c} (A^\c)^\vee \longrightarrow \cdots.$$

The cobar complex can be filtered by powers of the augmentation ideal $I$ of $(A^\c)^\vee \to \m_2^\c$, and this induces a filtration on $\Ext_{A^\c}^{***}(\m_2^\c,\m_2^\c)$ called the \emph{May filtration}. This gives rise to the \emph{$\c$-motivic May spectral sequence}
\begin{equation}\label{Eqn:CMay}
E_2 = \Ext_{Gr_I(A^\c)}^{m,s,f,w}(\m_2^\c,\m_2^\c) \Rightarrow \Ext_{A^\c}^{s,f,w}(\m_2^\c,\m_2^\c).
\end{equation}

Dugger and Isaksen show that the $E_2$-term of \eqref{Eqn:CMay} can be obtained from the $E_2$-term of the classical May spectral sequence by base-change along $\f_2 \to \m_2^\c$. This implies that the $E_2$-term of \eqref{Eqn:CMay} is the cohomology of the differential graded algebra $\f_2[\tau, h_{i,j}:i>0, j \geq 0]$ with quad-gradings $(m,s,f,w)$ (where $m$ is May filtration)
\begin{equation}\label{Eqn:MayDeg}
|\tau| = (0,0,0,-1), \quad |h_{i,0}| = (i,2^i-2,1,2^{i-1}-1), \text{ and } \quad |h_{i,j}| = (i,2^j(2^i-1)-1,1,2^{j-1}(2^i-1)) \text{ if } j > 0.
\end{equation}

The $\c$-motivic May spectral sequence is calculated up to the $36$-stem in \cite{DI10}. Substantially further calculations can be found in \cite{Isa14} and \cite{IWX20}. 

\subsection{The $\r$-motivic May spectral sequence}\label{SS:RMay}

We now define the $\r$-motivic May spectral sequence following the definition of the $\c$-motivic May spectral sequence in \cite[Sec. 5]{DI10} and the classical May spectral sequence in \cite[Sec. 3.1]{Rav86}. 

Recall that $\Ext^{***}_{A^\r}(\m_2^\r,\m_2^\r)$ may be computed as the cohomology of the cobar complex
$$\m_2^\r \xrightarrow{\eta_R - \eta_L} (A^\r)^\vee \longrightarrow (A^\r)^\vee \otimes_{\m_2^\r} (A^\r)^\vee \longrightarrow \cdots$$
as described in \cite[pg. 6]{DI16a}. The $\r$-motivic May spectral sequence is obtained by filtering the cobar complex as follows. For $i \geq 1$, $j \geq 0$, define
$$h_{i,j} := \begin{cases}
\tau_{i-1} \quad & \text{ if } j=0, \\
\xi_i^{2^{n-1}} \quad & \text{ if } j>0.
\end{cases}
$$
Any monomial in $(A^\r)^\vee$ may be expressed uniquely as a product of powers of $h_{i,j}$, $\rho$, and $\tau$. Define a grading by setting $|h_{i,j}| = 2i-1$, $|\tau| = 0$, $|\rho| = -1$, and extending linearly. Let $F_i((A^\r)^\vee) \subseteq (A^\r)^\vee$ be the sub-comodule consisting of elements of degree less than or equal to $i$. This gives rise to a filtration of the cobar complex by setting
$$F_i(C^k) = \bigoplus_{i_1+\ldots+i_k = i} F_{i_1} \otimes \cdots \otimes F_{i_k}.$$
We will refer to the resulting spectral sequence as the \emph{$\r$-motivic May spectral sequence}
\begin{equation}\label{Eqn:May}
E_1^{m,s,f,w}(\r) \Rightarrow \Ext^{s,f,w}_{A^\r}(\m^\r_2,\m^\r_2).
\end{equation}

\begin{rem2}
The choice $|\rho| = -1$ is chosen to make the filtration multiplicative and to maintain compatibility with the Hopf algebroid structure maps. 

Multiplicativity implies that $|\rho| \leq -1$. Indeed, if the $\r$-motivic May filtration is multiplicative, then there is a pairing
$$\mu : F_1(C^\bullet) \otimes F_1(C^\bullet) \to F_2(C^\bullet)$$
given by $[x] \otimes [y] \mapsto [xy]$ must be compatible with the relation
$$\tau_i^2 = \tau \xi_{i+1} + \rho \tau_{i+1} + \rho \tau_0 \xi_{i+1}$$
in $A^\r$. Consider 
$$\mu([\tau_0] \otimes [\tau_0]) = [\tau_0^2] = [\tau \xi_1] + [\rho \tau_1] + [\rho \tau_0 \xi_1] = \tau h_{11} + \rho h_{20} + \rho h_{10}h_{11}.$$
The degrees of the summands on the right-hand side are given by $|\tau| +1$, $|\rho| + 3$, and $|\rho| + 2$; we are thus forced to take $|\rho| \leq -1$ so that each summand has degree at most $2$. 

In fact, compatibility with the Hopf algebroid structure maps actually implies that $|\rho| \geq -1$. In particular, we have
$$\eta_R(\tau) = \tau + \rho \tau_0.$$
We must take $|\tau| = 0$ to ensure compatibility with the $\c$-motivic May filtration, so $|\rho \tau_0| = 0$. Since $|\rho \tau_0| = |\rho| + |\tau_0| = |\rho| + 1$, we are required to take $|\rho|=-1$. 
\end{rem2}

In \cite[Sec. 5]{DI10}, Dugger and Isaksen identify the $E_1$-page of the $\c$-motivic May spectral sequence, denoted $E_1(\c)$, with the $E_1$-page of the classical May spectral sequence base-changed along the inclusion $\f_2 \hookrightarrow \f_2[\tau] \cong \m_2^\c$. In the $\r$-motivic setting, an identification in terms of the classical May spectral sequence is unclear because the $\r$-motivic analog of \cite[Lem. 5.1]{DI10} is complicated by the presence of $\rho$ in the Adem relations over $Spec(\r)$. Nevertheless, we may identify the $E_1$-page in a range via the following observation.

\begin{lem}
The generator $\rho$ is primitive and exterior. Also, for all $i \geq 1$ and $j \geq 0$, the generator $h_{ij}$ is primitive in the associated graded algebra. Moreover, the generator $h_{ij}$ squares to zero unless $i=1$ and $j=0$, in which case we have $h_{10}^2 = \rho h_{20}.$
\end{lem}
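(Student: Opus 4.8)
The plan is to deduce the lemma directly from the presentation and Hopf-algebroid structure maps of the $\r$-motivic dual Steenrod algebra $(A^\r)^\vee$, tracking leading terms under the May filtration fixed in Section~\ref{SS:RMay}. Recall from \cite{DI16a} (and the preceding remark) that $(A^\r)^\vee=\m_2^\r[\xi_i,\tau_i]$ with the relation
$$\tau_i^2=\tau\xi_{i+1}+\rho\tau_{i+1}+\rho\tau_0\xi_{i+1},$$
with coproduct $\psi(\xi_k)=\sum_{i+j=k}\xi_i^{2^j}\otimes\xi_j$ and $\psi(\tau_k)=\tau_k\otimes1+\sum_{i+j=k}\xi_i^{2^j}\otimes\tau_j$ (carrying no $\rho$- or $\tau$-correction), and with $\eta_L,\eta_R$ differing only through $\eta_R(\tau)=\tau+\rho\tau_0$ and $\eta_R(\rho)=\rho$. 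Under the May filtration one has $|\xi_i^{2^j}|=|\tau_{i-1}|=|h_{ij}|=2i-1$, $|\tau|=0$, and $|\rho|=-1$. Throughout, ``primitive in the associated graded algebra'' means primitive in the associated graded Hopf algebroid $E_0\big((A^\r)^\vee\big)$.

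\textbf{Primitivity.} The assertions for $\rho$ are immediate: $\rho$ is fixed by both unit maps, hence a primitive of $E_0\big((A^\r)^\vee\big)$, and ``exterior'' simply records that it occupies the odd May degree $-1$. For $h_{ij}$ one applies $\psi$ to its representing element---$\xi_i^{2^{j-1}}$ when $j\geq1$, $\tau_{i-1}$ when $j=0$---and compares the May filtrations of the summands. In $\psi(\xi_i^{2^{j-1}})$ the summands $h_{ij}\otimes1$ and $1\otimes h_{ij}$ have May filtration $2i-1$, while every mixed summand---a tensor product of a power of $\xi_a$ with a power of $\xi_b$ where $a,b\geq1$ and $a+b=i$---has filtration $(2a-1)+(2b-1)=2i-2<2i-1$. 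In $\psi(\tau_k)$ (with $k=i-1$) the summands $\tau_k\otimes1$ and $1\otimes\tau_k$ have filtration $2k+1$, while every mixed summand (a power of $\xi_a$ times $\tau_b$, $a\geq1$, $a+b=k$) has filtration $(2a-1)+(2b+1)=2k<2k+1$. Hence in $E_0\big((A^\r)^\vee\big)$ the coproduct of each $h_{ij}$ is exactly $h_{ij}\otimes1+1\otimes h_{ij}$.

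\textbf{Squares.} Compare the May filtrations of the terms of the relations with $2|h_{ij}|=4i-2$. For $j\geq1$, $h_{ij}^2=\xi_i^{2^j}=h_{i,j+1}$ has May filtration $2i-1<4i-2$ for every $i\geq1$, so $h_{ij}^2=0$ in $E_0$. For $i\geq2$ and $j=0$, $h_{i0}^2=\tau_{i-1}^2=\tau\xi_i+\rho\tau_i+\rho\tau_0\xi_i$ has summands of filtration $2i-1$, $2i$, $2i-1$, all strictly below $4i-2$ precisely because $i\geq2$, so again $h_{i0}^2=0$ in $E_0$. For $i=1$ and $j=0$, $h_{10}^2=\tau_0^2=\tau\xi_1+\rho\tau_1+\rho\tau_0\xi_1$ has summands of filtration $1$, $2$, $1$; since $2|h_{10}|=2$, only the unique top-filtration summand survives, giving $h_{10}^2=\rho\tau_1=\rho\,h_{20}$ in $E_0$---this is the calculation already performed in the preceding remark.

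\textbf{Main obstacle.} There is no genuine obstruction; the lemma is a bookkeeping statement. The only point requiring care is that the $\r$-motivic structure maps differ from their $\c$-motivic counterparts by the $\rho$-terms in $\eta_R(\tau)$ and in the $\tau_i^2$-relation, and the content of the lemma is that every such correction \emph{strictly decreases} May filtration, with the single exception of the summand $\rho\tau_1$ of $\tau_0^2$, which lands in the top filtration and accounts for the unique nonzero square $h_{10}^2=\rho h_{20}$. Verifying this dichotomy is exactly the arithmetic above, and it is the reason the weights $|\rho|=-1$, $|h_{ij}|=2i-1$ were the correct normalization in Section~\ref{SS:RMay}.
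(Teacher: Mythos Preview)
Your argument follows essentially the same route as the paper's: primitivity by inspecting the coproduct formulas and observing that every mixed tensor term lands in strictly lower May filtration, and the square relations by comparing $2|h_{ij}|=4i-2$ against the filtrations of the summands arising from $\xi_i^{2^j}$ or the relation $\tau_{i-1}^2=\tau\xi_i+\rho\tau_i+\rho\tau_0\xi_i$. You spell out the primitivity computation in more detail than the paper (which simply says ``inspection of the coproduct''), but the content is identical.

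The one place you diverge is your reading of ``exterior'': you take it to mean merely that $\rho$ sits in the odd May degree $-1$. Over $\f_2$ this forces nothing, so that sentence does not establish $\rho^2=0$ in the associated graded, which is what the paper intends---as confirmed by the presentation $\Phi=\f_2[\rho,\tau_0,\tau_1]/(\rho^2,\tau_1^2,\tau_0^2-\rho\tau_1)$ in the remark immediately following the lemma. The paper's own justification is the filtration inequality $|\rho^2|=-2<-1$, parallel to the argument you give for the $h_{ij}$'s.
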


\begin{proof}
Primitivity follows from inspection of the coproduct in $(A^\r)^\vee$. For the second claim, if $j > 0$ then we have
$$h_{ij}^2 = [\xi_i^{2^{j-1}}] \cdot [\xi_i^{2^{j-1}}] = [\xi_i^{2^j}] = h_{i,j+1}.$$
The degree of the left-hand side is $2(2i-1) = 4i-2$ and the degree of the right-hand side is $2i-1$. Since $4i-2 \geq 2i-1$ for all $i \geq 1$, the claim holds in this case. If $j=0$, then we have
$$h_{i0}^2 = [\tau_{i-1}] \cdot [\tau_{i-1}] = [\tau_{i-1}^2] = [\tau \xi_{i}] + [\rho \tau_{i}] + [\rho \tau_0 \xi_{i}] = \tau h_{i1} + \rho h_{i+1,0} + \rho h_{1,0} h_{i,1}.$$
The degree of the left-hand side is $2(2i-1) = 4i-2$ and the degrees of the terms on the right-hand side (from left to right) are $2i-1$, $2(i+1)-1-1 = 2i$, and $1 + 2i-1-1 = 2i-1$. When $i \geq 2$, the left-hand degree is strictly greater than each right-hand degree, but when $i = 1$, the degree of $[\rho \tau_i]$ is $2$ and the degree of $[\tau_{i-1}]^2$ is $2$. Therefore in the associated graded we have the claimed relation. Finally, $\rho$ is exterior since the degree of $\rho^2$ is $-2 < -1$. 
\end{proof}

\begin{rem2}
We will not investigate the $\r$-motivic May spectral sequence in detail, but we include two suggestions for calculating its $E_1$-page $E_1(\r)$. 

First, it seems plausible to calculate the $E_1(\r)$ via a $\rho$-Bockstein spectral sequence \cite{Hil11} of the form
\begin{equation}
E_1 = E_1^{m,s,f,w}(\c)[\rho] \Rightarrow E_1^{m,s,f,w}(\r).
\end{equation}
This spectral sequence is obtained by filtering the cobar complex by powers of $\rho$. Since $\rho = 0$ in $E_1(\c)$, one may be able to mitigate the negative grading on $\rho$ in $E_1(\r)$ via this method. 

Alternatively, $E_1(\r)$ may be accessible via the Cartan-Eilenberg spectral sequence associated to the extension
$$\Phi \to gr_\bullet((A^\r)^\vee) \to E_{\f_2[\tau]}(h_{ij} : i \geq 1, j \geq 0, \text{ and } (i,j) \notin \{(1,0),(2,0)\})$$
where 
$$\Phi = \f_2[\rho,\tau_0,\tau_1]/(\rho^2=0,\tau_1^2=0,\tau_0^2=\rho\tau_1).$$
The $\Ext$-groups over the right-hand side are isomorphic to a polynomial algebra over $\f_2[\tau]$ by \cite{DI10}, but the $\Ext$-groups over the left-hand side $\Ext_\Phi(\f_2,\f_2)$ are more mysterious. 
\end{rem2}

Our main tool for computing in \eqref{Eqn:May} is the map between the $\r$-motivic and $\c$-motivic May spectral sequences
\begin{equation}\label{Eqn:MayMap}
\{E_n(\r)\}_{n \geq 1} \to \{E_n(\c)\}_{n \geq 1}
\end{equation}
induced by the map of Hopf algebroids $(A^\r)^\vee \to (A^\c)^\vee$ which sends $\rho \mapsto 0$, $\tau \mapsto \tau$, and $h_{i,j} \mapsto h_{i,j}$.\footnote{This map induces a map of spectral sequences because the $\r$-motivic and $\c$-motivic May filtrations of the generators $\tau$ and $h_{i,j}$ coincide.}

The following application will be used to study algebraic $v_1$-periodicity operators in the sequel. 

\begin{lem}\label{Lem:b20}
There is an $\r$-motivic May differential $d_4(b_{20}^2) = h_3 h_0^4 + c_0 h_1^2 \rho^3$.
\end{lem}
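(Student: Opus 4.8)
The plan is to deduce the $\r$-motivic May differential $d_4(b_{20}^2) = h_3 h_0^4 + c_0 h_1^2 \rho^3$ by comparing with the known classical (equivalently $\c$-motivic) May differential and then correcting by $\rho$-torsion terms. First I would recall the classical May differential $d_4(b_{20}^2) = h_0^4 h_3 + h_1^2 b_{20}$... no — more precisely, the well-documented classical May differential is $d_4(b_{20}^2) = h_3 h_0^4$ together with the relation that $b_{20}^2$ supports this differential in the classical May spectral sequence (see \cite[Sec. 3.2, Table]{Rav86}, where $b_{20} = \langle h_1, h_0, h_1\rangle$-type Massey product class squares and $d_4$ hits $h_0^4 h_3$). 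Under the map of spectral sequences \eqref{Eqn:MayMap}, which sends $\rho \mapsto 0$, $\tau \mapsto \tau$, $h_{ij} \mapsto h_{ij}$, the $\r$-motivic differential $d_4(b_{20}^2)$ must reduce to the $\c$-motivic (hence classical) value $h_3 h_0^4$. Therefore $d_4(b_{20}^2) = h_3 h_0^4 + (\text{terms divisible by }\rho)$.

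Next I would pin down the $\rho$-divisible correction term by a degree count. The element $b_{20}^2$ has a definite quad-degree $(m,s,f,w)$ in the $\r$-motivic May spectral sequence, and $d_4$ raises May filtration appropriately and shifts $(s,f,w)$ in the standard way (increasing $f$ by one, preserving the stem-minus-something relation dictated by the cobar differential); the candidate correction $c_0 h_1^2 \rho^3$ must land in exactly the same tridegree as $h_3 h_0^4$. I would verify this by computing the $(s,f,w)$-degrees of $h_0$, $h_1$, $h_3$, $c_0$, $\rho$ using \eqref{Eqn:MayDeg} and the fact that $|\rho| = (1,1)$, $c_0$ is the usual May $E_\infty$ class in stem $8$ (or its May-page representative), and checking $|h_3| + 4|h_0| = |c_0| + 2|h_1| + 3|\rho|$. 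Since the target group of $d_4$ in this tridegree, after this comparison, is small — spanned by $h_3 h_0^4$ and $c_0 h_1^2 \rho^3$ (and possibly a bounded list of other $\rho$-multiples which I would rule out by degree or by $h_0$/$h_1$-module structure) — the classical reduction fixes the non-$\rho$ part and the only remaining freedom is the coefficient of $c_0 h_1^2 \rho^3$.

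To nail down that the coefficient of $c_0 h_1^2 \rho^3$ is $1$ rather than $0$, I would argue directly in the cobar complex. Write $b_{20}$ as the relevant cobar element (a class built from $\tau_0$, $\xi_1$ — using $h_{10} = [\tau_0]$, $h_{20} = [\tau_1]$, $h_{11} = [\xi_1]$, and the relation $h_{10}^2 = \rho h_{20}$ from the Lemma just above), form $b_{20}^2$, and compute the cobar differential $d$ on a lift, keeping track of all $\rho$-terms that were invisible over $\c$. The relation $\tau_0^2 = \tau\xi_1 + \rho\tau_1 + \rho\tau_0\xi_1$ is precisely what feeds a $\rho$ into the computation: the extra summand $\rho\tau_0\xi_1 = \rho h_{10}h_{11}$ (May filtration-raising) is what produces, after passing to $E_4$ and resolving lower differentials, the term $c_0 h_1^2 \rho^3$. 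I expect \textbf{this last bookkeeping step to be the main obstacle}: the $\c$-motivic/classical $d_4(b_{20}^2) = h_3 h_0^4$ is itself not a one-line computation (it goes through $d_2$ and the identification of $c_0$ as a Massey product $\langle h_1, h_0, h_1^2\rangle$ or similar), and one must carefully propagate the $\rho$-corrections through $d_2, d_3$ to $d_4$, checking that no earlier $\rho$-differential kills $b_{20}^2$ or alters the target. As a sanity check I would compare with the $\rho$-Bockstein spectral sequence description of $E_1(\r)$ suggested in the preceding remark and with Belmont--Isaksen's computations \cite{BI20}, which should already contain this differential or its consequence in the relevant stems.
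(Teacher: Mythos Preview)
Your steps (1)--(3) match the paper's setup exactly: compare with the $\c$-motivic differential $d_4(b_{20}^2)=h_3h_0^4$ from \cite{DI10}, deduce that the $\r$-motivic $d_4(b_{20}^2)$ equals $h_3h_0^4$ modulo $\rho$-divisible terms, and identify $c_0h_1^2\rho^3$ as the only candidate correction in the right tridegree. The divergence is at step (4), where you propose a direct cobar computation to pin down the coefficient.

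The paper avoids that computation entirely. Instead of working upward from the cobar complex, it works downward from the abutment: by \cite[Fig.~3, MW=3]{DI16a}, the relation $h_3h_0^4 = c_0h_1^2\rho^3$ already holds (as a nonzero element) in $\Ext_{A^\r}^{***}(\m_2^\r,\m_2^\r)$. If the $\r$-motivic May differential were $d_4(b_{20}^2)=h_3h_0^4$ with no $\rho$-correction, then $h_3h_0^4$ would vanish in $\Ext_{A^\r}$, contradicting this known relation. The only differential compatible with both the $\c$-motivic reduction and the known abutment is $d_4(b_{20}^2)=h_3h_0^4+c_0h_1^2\rho^3$. So what you list as a closing ``sanity check'' is in fact the paper's entire argument for the coefficient, and it replaces the hard bookkeeping you flag as the main obstacle. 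Your direct approach would work in principle, but it is substantially more labor for no extra information, since the relevant piece of $\Ext_{A^\r}$ is already tabulated.
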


\begin{proof}
The classes $b_{20}^2$ and $h_3 h_0^4$ are not $\rho$-divisible, so their images under the quotient map $E_1(\r) \to E_1(\c)$ are the (nontrivial) classes with the same name. Since $b_{20}^2$ and $h_3 h_0^4$ both survive to the $E_4$-page of the $\c$-motivic May spectral sequence, the same holds in the $\r$-motivic May spectral sequence. Furthermore, there is a $\c$-motivic May differential $d_4(b_{20}^2) = h_3 h_0^4$ by \cite[Sec. 5.3]{DI10}. Examining \cite[Fig. 3, MW=3]{DI16a}, we see that there is a relation $h_3 h_0^4 = c_0 h_1^2 \rho^3$. The only way of achieving this relation in $\Ext_{A^\r}^{***}(\m_2^\r,\m_2^\r)$ which is compatible with the $\c$-motivic May differential is for there to be an $\r$-motivic May differential $d_4(b_{20}^2) = h_3 h_0^4 + c_0 h_1^2 \rho^3$. 
\end{proof}

\subsection{The $\rho$-Bockstein spectral sequence}\label{SS:rhoBSS}

The standard approach to calculating the $E_2$-term of the $\r$-motivic Adams spectral sequence $\Ext_{A^\r}^{***}(\m_2^\r,\m_2^\r)$ is via the \emph{$\rho$-Bockstein spectral sequence} 
\begin{equation}\label{Eqn:rhoBSS}
E_1 = \Ext_{A^\c}^{***}(\m_2^\c,\m_2^\c)[\rho] \Rightarrow \Ext_{A^\r}^{***}(\m_2^\r,\m_2^\r)
\end{equation}
introduced by Hill in \cite{Hil11}. This spectral sequence is obtained from the filtration of the cobar complex computing $\Ext_{A^\r}^{***}(\m_2^\r,\m_2^\r)$ induced by the filtration of $\m_2^\r = \m_2^\c[\rho]$ by powers of $\rho$. 

The $\rho$-Bockstein spectral sequence computing the cohomology of $A^\r$ was first considered by Dugger--Isaksen in \cite{DI16a}, where they applied it to compute the first $4$ Milnor--Witt stems of $\pi_{**}^\r(S^{0,0})$. More recently, Belmont and Isaksen \cite{BI20} have computed the first $11$ Milnor--Witt stems. In both cases, many $\rho$-Bockstein differentials are deduced using the following theorem which we will need in the sequel.

\begin{thm}\label{Thm:Double}\cite[Thm. 4.1]{DI16a}
There is an isomorphism from $\Ext_{A^{cl}}^{**}(\f_2,\f_2)[\rho^{\pm 1}]$ to $\Ext^{***}_{A^\r}(\m_2^\r,\m_2^\r)[\rho^{-1}]$ such that 
\begin{enumerate}
\item The isomorphism is highly structured, i.e., preserves products, Massey products, and algebraic squaring operations in the sense of \cite{May70}.
\item The element $h_n \in \Ext_{A^{cl}}^{**}(\f_2,\f_2)$ corresponds to the element $h_{n+1}$ of $\Ext^{***}_{A^\r}(\m_2^\r,\m_2^\r)$.
\item An element of $\Ext^{**}_{A^{cl}}(\f_2,\f_2)$ in stem $s$ and Adams filtration $f$ corresponds to an element in $\Ext^{***}_{A^\r}(\m_2^\r,\m_2^\r)$ of stem $2s+f$, Adams filtration $f$, and motivic weight $s+f$. 
\end{enumerate}
\end{thm}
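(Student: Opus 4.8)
\emph{Proof sketch (following \cite[Thm.~4.1]{DI16a}).} The plan is to prove the theorem purely algebraically, by identifying the $\rho$-inverted $\r$-motivic dual Steenrod algebra and computing its $\Ext$. Since inverting $\rho$ is an exact functor, there is an isomorphism
$$\Ext^{***}_{A^\r}(\m_2^\r,\m_2^\r)[\rho^{-1}] \cong \Ext_{(A^\r)^\vee[\rho^{-1}]}\!\left(\m_2^\r[\rho^{-1}],\m_2^\r[\rho^{-1}]\right),$$
so it suffices to analyze the Hopf algebroid $(A^\r)^\vee[\rho^{-1}]$ over $\m_2^\r[\rho^{-1}]=\f_2[\tau,\rho^{\pm 1}]$. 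The first step is to simplify its presentation. In the defining relation $\tau_i^2=\tau\xi_{i+1}+\rho\tau_{i+1}+\rho\tau_0\xi_{i+1}$ the element $\rho$ is now a unit, so $\tau_{i+1}=\rho^{-1}\tau_i^2+\rho^{-1}\tau\xi_{i+1}+\tau_0\xi_{i+1}$, and recursively every $\tau_i$ with $i\geq 1$ is eliminated in favor of $\tau_0$ and the $\xi_j$; one checks that the single surviving relation on $\tau_0$ becomes vacuous. Thus $(A^\r)^\vee[\rho^{-1}]$ is the polynomial algebra $\f_2[\tau,\rho^{\pm 1}][\tau_0,\xi_1,\xi_2,\dots]$ in which $\tau_0$ is primitive, the $\xi_i$ carry the usual Milnor coproduct, and $\eta_R(\tau)=\tau+\rho\tau_0$ (with $\eta_R(\rho)=\rho$).

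Next I would extract the ``doubled'' classical dual Steenrod algebra. The subalgebra $\Gamma':=\f_2[\tau,\rho^{\pm1}][\tau_0]$ is a sub-Hopf-algebroid, and the quotient Hopf algebroid $(A^\r)^\vee[\rho^{-1}]/\!/\Gamma'$ is isomorphic to $(A^{cl})^\vee\otimes_{\f_2}\f_2[\tau,\rho^{\pm1}]$, a Hopf algebra over $\f_2[\tau,\rho^{\pm1}]$ with the Milnor coproduct, where $(A^{cl})^\vee=\f_2[\xi_1,\xi_2,\dots]$. The crucial input is the computation $\Ext_{\Gamma'}(\f_2[\tau,\rho^{\pm1}],\f_2[\tau,\rho^{\pm1}])\cong\f_2[\rho^{\pm1}]$, concentrated in cohomological degree $0$: this holds because $\rho\tau_0=\eta_R(\tau)-\eta_L(\tau)$ is a cobar coboundary and $\rho$ is invertible, so $\tau_0$ (and, with it, the residual $\tau$) contracts away; equivalently, the groupoid scheme corepresented by $\Gamma'$ is the translation action of $\g_a$ on $\a^1$ rescaled by the unit $\rho$, which is equivalent to the trivial groupoid over $\Spec\f_2[\rho^{\pm1}]$. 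Feeding this into the Cartan--Eilenberg spectral sequence for the extension $\Gamma'\to(A^\r)^\vee[\rho^{-1}]\to(A^{cl})^\vee\otimes\f_2[\tau,\rho^{\pm1}]$, which collapses since its $E_2$-term $\Ext_{A^{cl}}(\f_2,\f_2)\otimes_{\f_2}\f_2[\rho^{\pm1}]$ is concentrated in Cartan--Eilenberg filtration $0$, yields the isomorphism $\Ext^{***}_{A^\r}(\m_2^\r,\m_2^\r)[\rho^{-1}]\cong\Ext^{**}_{A^{cl}}(\f_2,\f_2)[\rho^{\pm1}]$. The degree bookkeeping in (2) and (3) is then routine: under $(A^{cl})^\vee\hookrightarrow(A^\r)^\vee[\rho^{-1}]$ the class $\xi_i$ of internal degree $2^i-1$ maps to the $\r$-motivic $\xi_i$ of bidegree $(2^{i+1}-2,2^i-1)$, so a cobar $f$-cochain of classical internal degree $t$ maps to an $\r$-motivic cochain of stem $2t$ and weight $t$; since the internal degree of a classical class in stem $s$ and filtration $f$ is $t=s+f$, its image has $\r$-motivic stem $2s+f$, filtration $f$, and weight $s+f$, and in particular $h_n=[\xi_1^{2^n}]$ is carried to $h_{n+1}$.

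Finally, for claim (1), the comparison map is realized on cobar complexes by the inclusion of Hopf-algebroid cochains described above, so it is a map of differential graded algebras and therefore automatically respects products and (matric) Massey products; compatibility with May's algebraic squaring operations follows because those operations are constructed in \cite{May70} from a $\smile_i$-type chain-level structure on the cobar complex that is natural in the Hopf algebroid, so the cochain-level doubling map is equivariant for it and the operations are preserved. I expect the main obstacle to be making the elimination-of-$\tau_0$ step into a genuine isomorphism rather than an isomorphism in a range --- one must check that the contracting homotopy for $\Gamma'$ loses nothing and that the Cartan--Eilenberg spectral sequence really does degenerate --- together with the explicit verification that the cobar-level map intertwines the operadic structures defining the algebraic $Sq^i$, which is the one point where one must return to the constructions of \cite{May70} rather than argue by formal naturality alone. \qed
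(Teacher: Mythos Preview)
The paper does not give its own proof of this statement: Theorem~\ref{Thm:Double} is quoted verbatim from \cite[Thm.~4.1]{DI16a} and is used as a black box in the $\rho$-Bockstein analysis, so there is nothing in the paper to compare against. Your sketch is a faithful outline of the argument Dugger--Isaksen actually give in \cite{DI16a}: invert $\rho$, use the relation $\tau_i^2=\tau\xi_{i+1}+\rho\tau_{i+1}+\rho\tau_0\xi_{i+1}$ to eliminate $\tau_{i+1}$ for $i\geq 0$, observe that $\eta_R(\tau)-\eta_L(\tau)=\rho\tau_0$ makes $\tau_0$ a coboundary so that the residual Hopf algebroid is the classical dual Steenrod algebra extended over $\f_2[\rho^{\pm1}]$, and read off the degree shift from the bidegrees of the $\xi_i$.

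One small correction: your bookkeeping for the stem is right, but the line ``a cobar $f$-cochain of classical internal degree $t$ maps to an $\r$-motivic cochain of stem $2t$'' is not quite what you want, since stem is internal degree minus filtration; what you need (and what your next sentence correctly uses) is that the motivic \emph{internal degree} is $2t$, hence the motivic stem is $2t-f=2(s+f)-f=2s+f$. Also, the point you flag as the ``main obstacle'' --- that the contracting homotopy on $\Gamma'$ loses nothing --- is not really an obstacle: the cobar complex for $(\f_2[\tau,\rho^{\pm1}],\f_2[\tau,\rho^{\pm1}][\tau_0])$ with $\eta_R(\tau)=\tau+\rho\tau_0$ is genuinely acyclic in positive cohomological degree by an explicit contracting homotopy, and the Cartan--Eilenberg collapse is immediate because the $E_2$-page is concentrated on a single row.
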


\subsection{Vanishing and periodicity}\label{SS:Exth1}

Our construction of periodic families in $\Ext_{A^\r}^{***}(\m_2^\r,\m_2^\r)$ will rely on certain vanishing and periodicity results in $\Ext_{A^k}^{***}(\m_2^k,\m_2^k)$. 

The first such result is Guillou and Isaksen's $\c$-motivic vanishing line of slope $1/2$, which generalizes Adams' vanishing line of slope $1/2$ in $\Ext_{A^{cl}}^{**}(\f_2,\f_2)$ from \cite{Ada61}. 

\begin{thm}\label{Thm:GIVanishing}\cite[Thm. 1.1]{GI15a}
Let $s>0$. The map
$$h_1 : \Ext_{A^\c}^{s,f,w}(\m_2^\c,\m_2^\c) \to \Ext_{A^\c}^{s+1,f+1,w+1}(\m_2^\c,\m_2^\c)$$
is an isomorphism if $f \geq \frac{1}{2} s + 2$, and it is a surjection if $f \geq \frac{1}{2} s + \frac{1}{2}$. 
\end{thm}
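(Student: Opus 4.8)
The plan is to pass to the $\c$-motivic May spectral sequence \eqref{Eqn:CMay} and combine an explicit analysis of its associated graded with the algebraic squaring operations on $\Ext_{A^\c}(\m_2^\c,\m_2^\c)$, following the template of Adams' original slope-$\frac{1}{2}$ vanishing line \cite{Ada61}.

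First I would analyze the $E_1$-page $\f_2[\tau,h_{i,j}]$ using the gradings \eqref{Eqn:MayDeg}. A monomial contributes its number of $h$-factors to the Adams filtration $f$ and the sum of their stems to $s$, while $\tau$ affects neither. Since every generator other than $h_0 = h_{1,0}$ (stem $0$) and $h_1 = h_{1,1}$ (stem $1$) has filtration $1$ and stem $\geq 2$, any monomial lying strictly above the line $f = \frac{1}{2}s$ must be divisible by a power of $h_0$ or of $h_1$. Using the relation $h_0 h_1 = 0$ in $\Ext$, so that the $h_0$-tower (concentrated in stem $0$, hence irrelevant since $s>0$) and the $h_1$-tower decouple on the $E_\infty$-page, one finds that above the line of slope $\frac{1}{2}$ the May $E_\infty$-page behaves, up to a bounded error term near the boundary, like a free $\f_2[h_1]$-module generated in bounded May filtration by classes of strictly lower Adams filtration. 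On such a module multiplication by $h_1$ is surjective, and injective away from the error term; tracking this through the finitely many relevant May differentials yields surjectivity of $h_1$ on the line $f \geq \frac{1}{2}s + \frac{1}{2}$ and injectivity well above it.

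To promote the injectivity statement to the sharp bound $f \geq \frac{1}{2}s + 2$ and to handle all stems uniformly, I would use the algebraic squaring operations $Sq^i$ on $\Ext_{A^\c}$, the motivic analogue of the operations of \cite{May70}, which are compatible with Betti realization and satisfy $Sq^0 h_j = h_{j+1}$. The operation $Sq^0$ doubles the tridegree $(s,f,w)$ and is multiplicative by the Cartan formula, so it sends the region $\{$above slope $\frac{1}{2},\ s \leq k\}$ into $\{$above slope $\frac{1}{2},\ s \leq 2k\}$ in a way compatible enough with the $h_1$-action to let one bootstrap the vanishing and periodicity statements from a finite range of stems to all stems by induction. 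The base case is a finite inspection of the known May chart \cite{DI10}, from which the precise constants also drop out.

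The hard part will be running the May differentials and the squaring operations simultaneously near the vanishing line, since a priori a May differential could carry a class from just below the line to just above it, or kill a class that appeared to survive. What rescues the argument --- and what singles out slope $\frac{1}{2}$ --- is that in any bounded range of stems the $E_1$-page involves only finitely many generators, hence is concentrated in bounded May filtration, so only finitely many May differentials can meet a given diagonal; together with the squaring-operation periodicity, which collapses the infinitude of stems to a bounded range, this closes the induction. An alternative route is to invert $\tau$, where one has the classical Adams slope-$\frac{1}{2}$ vanishing line \cite{Ada61} and $h_1$ is nilpotent, and then recover the $\tau$-torsion via the $\tau$-Bockstein spectral sequence together with the computation of $h_1^{-1}\Ext_{A^\c}$; but the $\tau$-Bockstein differentials near the line seem no easier to control than the May differentials, so I expect the squaring-operation argument to be the cleaner of the two.
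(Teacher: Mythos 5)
First, note that the paper does not prove this statement at all: it is quoted verbatim from Guillou--Isaksen \cite{GI15a} and used as a black box, so there is no in-paper argument to compare yours against. Judged on its own terms, your sketch has genuine gaps and cannot be accepted as a proof outline.

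The most concrete error is the degree of $Sq^0$. The algebraic squaring operations send $\Ext^{f,t}$ to $\Ext^{f+i,2t}$ in internal degree $t$, so $Sq^0$ preserves the Adams filtration and sends the stem $s=t-f$ to $2s+f$; it does \emph{not} double the tridegree $(s,f,w)$. Consequently $Sq^0$ does not map the region $\{f\geq \tfrac{1}{2}s+c\}$ into itself --- for large $s$ it pushes classes far below any line of slope $\tfrac12$ --- and the entire bootstrap "from a finite range of stems to all stems" collapses. (This is consistent with $Sq^0h_j=h_{j+1}$: the $h_j$ march along the line $f=1$, not up a line of slope $\tfrac12$.) Without that reduction, your appeal to "only finitely many May differentials in a bounded range of stems" does nothing, since the theorem concerns all stems.

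The second, more structural gap is that your central claim --- that above the line the May $E_\infty$-page is, up to a bounded boundary error, a free $\f_2[h_1]$-module on generators of lower filtration --- is essentially a restatement of the theorem, not a step toward it. The $E_1$-page counting argument is fine as far as it goes (every monomial with $f\geq\tfrac12 s+2$, $s>0$, is divisible by a product of $h_0$'s and $h_1$'s of total "excess" at least $2$), but divisibility of monomials on $E_1$ does not give $h_1$-divisibility in $\Ext$: May differentials can truncate $h_1$-towers, an element detected by $h_1m$ need not be an $h_1$-multiple in $\Ext$ because of filtration jumps, and the interaction with the $h_0$-towers via relations such as $h_0h_1=0$ and $d(b_{20})$ is precisely where the work lies. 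Injectivity on the line $f\geq\tfrac12 s+2$ is not addressed at all: you must rule out $h_1$-torsion there, which is a separate statement from surjectivity and is not implied by any of the divisibility considerations you give. As written, the sketch identifies the right objects (May spectral sequence, squaring operations, the classical analogue of \cite{Ada61}) but defers every point at which the proof could actually fail.
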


In other words, all of the elements in $\Ext_{A^\c}^{***}(\m_2^\c,\m_2^\c)$ above a line of slope $1/2$ are $h_1$-periodic. The behavior of these $h_1$-periodic families was investigated further by Guillou and Isaksen in \cite{GI15}. 

\begin{thm}\label{Thm:GIetaC}\cite[Thm. 1.1]{GI15}
The $h_1$-localized algebra $\Ext_{A^\c}^{***}(\m_2^\c,\m_2^\c)[h_1^{\pm 1}]$ is a polynomial algebra over $\f_2[h_1^{\pm 1}]$ on generators $v_1^4$ and $v_n$ for $n \geq 2$, where:
\begin{enumerate}
\item $v_1^4$ is in the $8$-stem and has Adams filtration $4$.
\item $v_n$ is in the $(2^{n+1}-2)$-stem and has Adams filtration $1$. 
\end{enumerate}
\end{thm}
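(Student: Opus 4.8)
The statement is \cite[Thm.~1.1]{GI15}; I would follow Guillou and Isaksen's approach, whose plan has three movements: produce the asserted generators and check that they persist after inverting $h_1$; verify that the resulting monomials are linearly independent; and run the $h_1$-localized $\c$-motivic May spectral sequence to see that nothing else survives, with the slope-$1/2$ vanishing line making the argument terminate.

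First I would construct the generators. The class $v_1^4$ is detected by $b_{20}^2$ in the $\c$-motivic May spectral sequence. Before localization $b_{20}^2$ supports the May differential $d_4(b_{20}^2) = h_0^4 h_3$ (the $\c$-motivic input underlying Lemma \ref{Lem:b20}), but the relation $h_0 h_1 = 0$ forces $h_0$ to vanish once $h_1$ is inverted, so this differential dies in the $h_1$-local May spectral sequence and $b_{20}^2$ becomes a permanent cycle representing $v_1^4$. The higher generators $v_n$ with $n \geq 2$ are obtained from $v_2$ by iterating the algebraic squaring operation $\mathrm{Sq}^0$ of \cite{May70}: one checks that $\mathrm{Sq}^0$ doubles internal degrees, that its indeterminacy vanishes on these classes, and that it carries $v_n$ to $v_{n+1}$; compatibility of $\mathrm{Sq}^0$ with multiplication by $h_1$ then shows each $v_n$ is $h_1$-periodic and non-nilpotent. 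Together this defines a map of $\f_2[h_1^{\pm 1}]$-algebras from the claimed polynomial algebra into $\Ext_{A^\c}^{***}(\m_2^\c,\m_2^\c)[h_1^{-1}]$.

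Second I would prove injectivity. The generators sit in distinct internal degrees: $v_1^4$ lies in a fixed tridegree sitting exactly on the line $f = \tfrac12 s + 2$, while the $v_n$ have strictly increasing stems $2^{n+1}-2$ and filtration $1$. A monomial count in each tridegree — most cleanly in the associated graded for the May filtration, where the $v_n$ are represented by distinct polynomial generators — shows that the classes $h_1^{j}\,(v_1^4)^{a}\,v_2^{b_2}\cdots v_m^{b_m}$ are $\f_2$-linearly independent, so the map above is injective.

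Third, and this is where I expect the real difficulty, I would prove surjectivity by localizing the entire $\c$-motivic May spectral sequence at $h_1$ and computing its $E_\infty$-page. The relations $h_0 h_1 = 0$ and $h_1 h_2 = 0$ kill $h_0$ and $h_2$, which annihilates the targets of many May differentials and lets extra $b_{ij}$-type classes survive; a careful accounting of exactly which May generators survive and which differentials remain should identify the $h_1$-local $E_\infty$-page with the associated graded of the asserted polynomial algebra. What makes the accounting terminate is Theorem \ref{Thm:GIVanishing}: everything in $\Ext_{A^\c}^{***}(\m_2^\c,\m_2^\c)$ above the line $f = \tfrac12 s + 2$ is already $h_1$-periodic, so $\Ext_{A^\c}^{***}(\m_2^\c,\m_2^\c)[h_1^{-1}]$ is detected entirely inside that wedge, and inside it the only available room is filled by the monomials in $h_1^{\pm 1}$, $v_1^4$, and the $v_n$. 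The crux is coordinating the $\mathrm{Sq}^0$-propagation, which governs arbitrarily large stems, with the vanishing-line bound, which governs filtration, and ruling out exotic $h_1$-divisible families that the May spectral sequence analysis might otherwise miss.
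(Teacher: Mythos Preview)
The paper does not prove this theorem at all: it is quoted verbatim from Guillou--Isaksen \cite[Thm.~1.1]{GI15} and used as a black box in later arguments (Propositions~\ref{Cor:Pntau} and~\ref{Prop:Unique}). So there is no ``paper's own proof'' to compare against; the only fair comparison is with the cited source.

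Your sketch is a reasonable outline of the actual Guillou--Isaksen argument, with one correction worth flagging. In \cite{GI15} the computation is carried out by completely running the $h_1$-localized $\c$-motivic May spectral sequence: after inverting $h_1$ the $E_1$-page collapses dramatically (since $h_0$, $h_2$, and many $h_{ij}$ die), and the surviving generators are identified directly. The squaring operation $\mathrm{Sq}^0$ does play a role in propagating differentials and generators, but your framing of ``generate $v_n$ from $v_2$ by $\mathrm{Sq}^0$ and separately check injectivity by a monomial count'' is not quite how the argument is organized --- injectivity and surjectivity come out simultaneously from the full May computation rather than as separate steps. Also, the vanishing line of Theorem~\ref{Thm:GIVanishing} is logically downstream of (or parallel to) this computation in \cite{GI15a,GI15}, not an input that ``makes the argument terminate''; the $h_1$-local May spectral sequence is finite in each tridegree on its own.
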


Building on this result, Guillou and Isaksen then calculated the $h_1$-localized cohomology of the $\r$-motivic Steenrod algebra using the $\rho$-Bockstein spectral sequence in \cite{GI16}. The final answer is not needed in the sequel, but we mention it since many of the ideas in our $\Ext$ group calculations are based on ideas implemented by Guillou and Isaksen in \cite{GI16}. 

Finally, we will need the following result of Ang Li regarding $v_1$-periodicity over $\Spec(\c)$. 

\begin{thm}\label{Thm:Ang}\cite[Thm. 1.3]{Li19}
For $r \geq 2$, the Massey product operation $P_r(-) := \langle h_{r+1}, h_0^{2^r}, - \rangle$ is uniquely defined on $\Ext^{s,f,w}_{A^\c}(\m_2^\c,\m_2^\c)$ when $s>0$ and $f> \frac{1}{2}s + 3-2^r$. 

Furthermore, for $f> \frac{1}{5}s + \frac{12}{5}$, the restriction of $P_r$ to the $h_1$-torsion
$$P_r : (Ext^{s,f,w}_{A^\c}(\m_2^\c,\m_2^\c))_{h_1\text{-torsion}} \to (\Ext_{A^\c}^{s+2^{r+1},f+2^r,w+2^r}(\m_2^\c,\m_2^\c))_{h_1\text{-torsion}}$$
is an isomorphism. 
\end{thm}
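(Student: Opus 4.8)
The statement is the $\c$-motivic analog of Adams' classical periodicity theorem \cite{Ada66}, so my plan is to mirror the classical proof while tracking the additional motivic weight, using the Guillou--Isaksen vanishing and $h_1$-periodicity results (Theorems \ref{Thm:GIVanishing} and \ref{Thm:GIetaC}) in place of Adams' vanishing line. I would treat the two clauses of the theorem separately.

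\emph{Well-definedness.} First I would record the relation $h_{r+1} h_0^{2^r} = 0$ in $\Ext_{A^\c}(\m_2^\c,\m_2^\c)$, which can be read off the $\c$-motivic May spectral sequence (it is inherited from the classical relation, there being no $\tau$-torsion obstruction in this bidegree). Hence $P_r(x) = \langle h_{r+1}, h_0^{2^r}, x\rangle$ is defined for every $x \in \Ext^{s,f,w}_{A^\c}$ and lands in $\Ext^{s+2^{r+1},\, f+2^r,\, w+2^r}_{A^\c}$, with indeterminacy $h_{r+1}\cdot \Ext^{s+1,\, f+2^r-1,\, w}_{A^\c} + \Ext^{2^{r+1},\, 2^r,\, 2^r}_{A^\c}\cdot x$. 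The hypothesis $f > \frac{1}{2}s + 3 - 2^r$ is rigged so that the first complementary group lies in the region where Theorem \ref{Thm:GIVanishing} forces everything to be $h_1$-periodic; one then checks, using the explicit $h_1$-local algebra of Theorem \ref{Thm:GIetaC} (polynomial over $\f_2[h_1^{\pm 1}]$ on $v_1^4$ and the $v_n$, $n\geq 2$), that $h_{r+1}$ annihilates this finite-dimensional group, while the second group sits on the line of slope $1/2$ in stem $2^{r+1}$ and vanishes by a low-dimensional check. This proves the ``uniquely defined'' clause.

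\emph{Isomorphism on $h_1$-torsion.} This is the substantive part, and the model is Mahowald's proof of classical Adams periodicity via the $bo$-resolution. I would construct the $\c$-motivic $bo$-resolution --- an Adams-type resolution of $\m_2^\c$ adapted to the $\c$-motivic analog $bo^\c$ of connective real $K$-theory, whose $E_1$-page is built from $\Ext$ over the subalgebra $A^\c(1)\subset A^\c$. The three ingredients are: (i) the $\c$-motivic $A^\c(1)$-cohomology is known and carries an evident $v_1^4$-Bott periodicity; (ii) on this resolution $P_r$ becomes multiplication by a power of the Bott class on the $s=0$ line, which is always injective, with cokernel concentrated in the $h_1$-periodic part --- on which $v_1^4$ acts as a polynomial generator by Theorem \ref{Thm:GIetaC} --- so that $P_r$ is an isomorphism on $h_1$-torsion; and (iii) the resolution has a vanishing line, the $\c$-motivic analog of Mahowald's line of slope $1/5$, above which only the lowest one or two resolution filtrations contribute. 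Combining these via the usual geometric-boundary / algebraic Atiyah--Hirzebruch argument identifies $P_r$ with Bott multiplication for $f > \frac{1}{5}s + \frac{12}{5}$ and yields the claimed isomorphism.

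\emph{Main obstacle.} The hard part lies entirely in the isomorphism clause: constructing the $\c$-motivic $bo$-resolution and, above all, establishing the slope-$1/5$ vanishing line for its $E_1$-page, which classically rests on Mahowald's splitting of $bo \wedge bo$ and needs a motivic replacement (a splitting of $bo^\c \wedge bo^\c$ into $bo^\c$-, $bsp^\c$-, $H\m_2^\c$-, and $A^\c$-free summands, with the first free summand appearing in the expected internal degree). An alternative route, avoiding $bo$-resolutions, would be to deduce the statement from classical Adams periodicity along $\tau$-inversion, $\tau^{-1}\Ext_{A^\c}(\m_2^\c,\m_2^\c) \cong \Ext_{A^{cl}}(\f_2,\f_2)\otimes\f_2[\tau^{\pm 1}]$: this reduces the problem to showing the $h_1$-torsion of $\Ext_{A^\c}$ is $\tau$-torsion-free and surjects onto $\Ext_{A^{cl}}(\f_2,\f_2)$ in the stated range --- but verifying that $\tau$-divisibility above slope $1/5$ looks comparably delicate, so I expect the $bo$-resolution route to be cleaner.
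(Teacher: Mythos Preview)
The paper does not prove this theorem at all: it is quoted verbatim from \cite[Thm.~1.3]{Li19} and used as a black box (see the sentence immediately preceding the statement, ``Finally, we will need the following result of Ang Li\ldots''). There is therefore no ``paper's own proof'' to compare your proposal against. Your sketch is a plausible outline of how one might try to reprove Li's result, but assessing it would require consulting \cite{Li19} rather than the present paper.
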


\section{$v_1$-periodicity over $\r$ and generalized Mahowald invariants of $(2+\rho\eta)^i$}\label{Sec:2i}

We are now prepared to prove our first main theorems, Theorem \ref{MT:Rv1} and \ref{MT:Ev1}. In this section, we study $v_1$-periodicity over $\r$, compute $M^{\r}((2+\rho\eta)^i)$ for $i \equiv 2,3\mod 4$, and compute $M^{C_2}((2+\rho\eta)^i)$ for $i \equiv 2,3 \mod 4$. This section benefited greatly from discussions with Jonas Irgens Kylling, whom we especially thank for his suggestion to use the cobar complex and matric Massey products in order to produce certain $v_1$-periodic families.

\subsection{Upper bounds: $v_1$-periodic classes in $\pi_{**}^{\r}(S^{0,0})$ and $\pi_{**}^{C_2}(S^{0,0})$}

We initiate our computations by constructing $v_1$-periodic classes in $\pi_{**}^\r(S^{0,0})$ and $\pi_{**}^{C_2}(S^{0,0})$ which will be contained in the Mahowald invariants $M^\r((2+\rho\eta)^i)$ and $M^{C_2}((2+\rho \eta)^i)$ for $i \equiv 2,3 \mod 4$. Our approach is to define the $\r$-motivic classes as lifts of their $\c$-motivic counterparts using the motivic Adams spectral sequence, and then define the $C_2$-equivariant classes via equivariant Betti realization. 

In \cite{Qui17}, we showed that $M^\c(2^{4i+j})$ contains $v_1^{4i}\tau \eta^j$ for $j \in \{2, 3\}$. To construct these elements, we defined an algebraic periodicity operator $P_\c(-)$  (denoted $P(-)$ in \cite{Qui17}) on the $h_0^4$-torsion in $\Ext_{A^\c}^{***}(\m_2^\c,\m_2^\c)$ by 
$$P_\c(x) := \langle h_3, h_0^4, x \rangle.$$
The operator $P(-)$ detects $v_1^4$ in the $\c$-motivic Adams spectral sequence, or more precisely, for any $v_1$-periodic class $\alpha \in \pi_{s,w}^\c(S^{0,0})$ detected by $x \in \Ext_{A^\c}^{s,f,w}(\m_2^\c,\m_2^\c)$ considered in \cite{Qui17}, the subset $P(x) \subseteq \Ext_{A^\c}^{s+8,f+4,w+4}(\m_2^\c,\m_2^\c)$ contains precisely one nontrivial element, and that element detects $v_1^4\alpha \in \pi_{s+8,w+4}^\c(S^{0,0})$. 

Our first goal in this section is to define an $\r$-motivic analog of $P_\c(-)$. This turns out to be more subtle than in the $\c$-motivic setting: examination of \cite[Fig. 3]{DI16a} reveals that $h_3 h_0^4 \neq 0$ in $\Ext^{***}_{A^\r}(\m_2^\r,\m_2^\r)$, so the operator $\langle h_3, h_0^4, - \rangle$ is not defined in $\Ext^{***}_{A^\r}(\m_2^\r,\m_2^\r)$. Nevertheless, we can define an $\r$-motivic periodicity operator as follows.

Recall that the relation $h_3h_0^4 = 0 \in \Ext_{A^\c}^{***}(\m_2^\c,\m_2^\c)$ is produced by the $\c$-motivic May differential $d_4(b_{20}^2) = h_3 h_0^4$. We observed in Lemma \ref{Lem:b20} that there is an $\r$-motivic May differential $d_4(b_{20}^2) = h_3 h_0^4 + c_0 h_1^2 \rho^3$.  Since we want our $\r$-motivic periodicity operator to be compatible with the $\c$-motivic periodicity operator under \eqref{Eqn:MayMap}, we are thus led to define an $\r$-motivic periodicity operator using a matric Massey product which relies on the relation $h_3 h_0^4 + c_0h_1^2\rho^3$. 

\begin{defin}
We define the $\r$-motivic periodicity operator $P_\r(-)$ on the elements $x \in \Ext^{***}_{A^\r}(\m_2^\r,\m_2^\r)$ which are both $h_0^4$-torsion and $c_0$-torsion as the matric Massey product
$$P_\r(x) = \left\langle \begin{bmatrix} h_3 & \rho^3 h_1^2 \end{bmatrix}, \begin{bmatrix} h_0^4 \\  c_0 \end{bmatrix}, x \right\rangle.$$
\end{defin}

\begin{lem}\label{Lem:pExt}
Let 
\begin{equation}
p : \Ext^{***}_{A^\r}(\m_2^\r,\m_2^\r) \to \Ext^{***}_{A^\c}(\m_2^\c, \m_2^\c)
\end{equation}
be the map induced by the map $A^\r \to A^\c$ which is the identity on the generators and $\tau$, and sends $\rho$ to $0$. There is an equality of cosets $P_\r(x) = P_\c(p(x))$.
\end{lem}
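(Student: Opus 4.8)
The plan is to compare the two matric Massey products by naturality of Massey products under the ring map $p : \Ext_{A^\r} \to \Ext_{A^\c}$. Since $p$ sends $\rho \mapsto 0$, the row matrix $\begin{bmatrix} h_3 & \rho^3 h_1^2 \end{bmatrix}$ maps to $\begin{bmatrix} h_3 & 0 \end{bmatrix}$ and the column matrix $\begin{bmatrix} h_0^4 \\ c_0 \end{bmatrix}$ maps to $\begin{bmatrix} h_0^4 \\ c_0 \end{bmatrix}$. The first step is to record the standard fact that a ring homomorphism carries a (matric) Massey product into the corresponding (matric) Massey product of the images, i.e. $p\left(\left\langle \begin{bmatrix} h_3 & \rho^3 h_1^2 \end{bmatrix}, \begin{bmatrix} h_0^4 \\ c_0 \end{bmatrix}, x \right\rangle\right) \subseteq \left\langle \begin{bmatrix} h_3 & 0 \end{bmatrix}, \begin{bmatrix} h_0^4 \\ c_0 \end{bmatrix}, p(x) \right\rangle$; this follows by applying $p$ to any cobar-level defining system. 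So the task reduces to identifying the right-hand $\c$-motivic matric Massey product with the ordinary triple product $P_\c(p(x)) = \langle h_3, h_0^4, p(x)\rangle$, and then checking the inclusion is an equality (no indeterminacy ambiguity).

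The second step is the reduction of the degenerate matric Massey product. Because $x$ is $h_0^4$-torsion and $c_0$-torsion in $\Ext_{A^\r}$, its image $p(x)$ is $h_0^4$-torsion and $c_0$-torsion in $\Ext_{A^\c}$, so the matric bracket $\left\langle \begin{bmatrix} h_3 & 0\end{bmatrix}, \begin{bmatrix} h_0^4 \\ c_0\end{bmatrix}, p(x) \right\rangle$ is defined. I would argue that because the second entry of the row vector is $0$, a defining system can be chosen so that the contribution of the $c_0$-strand is trivial, and the matric product collapses to the ordinary triple Massey product $\langle h_3, h_0^4, p(x) \rangle$. Concretely: the matric Massey product of $(\,[a\ b],\ [c;d],\ x)$ with $b = 0$ consists of classes $a\cdot \bar{\zeta} + 0 \cdot \bar{\omega} + \bar{\xi}\cdot x$ where $\bar\zeta$ is a nullhomotopy of $cx$, $\bar\omega$ a nullhomotopy of $dx$, and $\bar\xi$ a nullhomotopy of $ac + 0\cdot d = ac$; since $b=0$, $\bar\omega$ only enters through $b\bar\omega = 0$, so the output is exactly $\{a\bar\zeta + \bar\xi x\} = \langle a, c, x\rangle$. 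That gives $p(P_\r(x)) \subseteq P_\c(p(x))$.

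The third step is to upgrade the inclusion to an equality of cosets, which amounts to matching indeterminacies. The indeterminacy of $P_\c(p(x)) = \langle h_3, h_0^4, p(x)\rangle$ is $h_3 \cdot \Ext_{A^\c}(\ldots) + \Ext_{A^\c}(\ldots) \cdot p(x)$ in the appropriate tridegree; I would verify that the indeterminacy of the matric product $\left\langle \begin{bmatrix} h_3 & 0\end{bmatrix}, \begin{bmatrix} h_0^4 \\ c_0\end{bmatrix}, p(x) \right\rangle$ is the same — the extra generators coming from the $c_0$-column land in a tridegree where they are forced to vanish or coincide, using the explicit chart data of \cite{DI16a} and \cite{DI10} in the relevant (low Milnor--Witt) range where these operators are actually applied. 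Since the statement is only used for the specific $h_0^4$- and $c_0$-torsion classes appearing in Section~\ref{Sec:2i}, it suffices to check this in those finitely many tridegrees.

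The main obstacle I expect is the bookkeeping in the second step: making the reduction of the $2\times 1$/$1\times 2$ matric Massey product to an ordinary triple product fully rigorous requires carefully writing out a defining system at the cobar-cochain level and tracking the $\rho$-adic and May filtrations so that the nullhomotopies $\bar\zeta$ (of $h_0^4 p(x)$) and $\bar\xi$ (of $h_3 h_0^4 = 0$ in $\Ext_{A^\c}$) can be chosen compatibly with the ones used over $\r$. The subtlety over $\r$ is that $h_3 h_0^4 \neq 0$ there — it equals $c_0 h_1^2 \rho^3$ — so the "$\bar\xi$" strand over $\r$ is genuinely a nullhomotopy of $h_3 h_0^4 + \rho^3 h_1^2 c_0$, and one must confirm that applying $p$ (killing $\rho$) turns this into a legitimate nullhomotopy of $h_3 h_0^4 = 0$ that is compatible with the chosen $\c$-motivic defining system; this is exactly where Lemma~\ref{Lem:b20} and the compatibility of May filtrations under \eqref{Eqn:MayMap} get used. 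Everything else is naturality of Massey products plus a finite indeterminacy check against known $\Ext$ charts.
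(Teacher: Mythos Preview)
Your proposal is correct and follows exactly the paper's approach: apply naturality of matric Massey products under $p$ to land in $\left\langle \begin{bmatrix} h_3 & 0 \end{bmatrix}, \begin{bmatrix} h_0^4 \\ c_0 \end{bmatrix}, p(x) \right\rangle$, then observe this degenerate matric bracket collapses to the ordinary $\langle h_3, h_0^4, p(x)\rangle$. Your ``third step'' (indeterminacy matching via chart inspection in specific tridegrees) and the ``main obstacle'' (compatibility of cobar-level nullhomotopies across filtrations) are both unnecessary over-engineering: your own second-step computation already shows that when $b=0$ the $\bar\omega$-term contributes nothing to either the output or the indeterminacy, so the equality of cosets holds in complete generality without any degree-by-degree check.
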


\begin{proof}\footnote{The proof of this lemma in a previous version contained some gaps. We thank an anonymous referee for pointing out the significantly simpler proof given here.}
By naturality of matric Massey products, the image of the matric Massey product $P_\r(x)$ under $p$ is 
$$\left\langle \begin{bmatrix} h_3 & 0 \end{bmatrix}, \begin{bmatrix} h_0^4 \\  c_0 \end{bmatrix}, p(x) \right\rangle$$
which reduces to the Massey product $\langle h_3, h_0^4, p(x)\rangle = P_\c(p(x))$. 
\end{proof}

We now iteratively apply $P_\r(-)$ to construct two infinite families of elements in $\Ext_{A^\r}^{***}(\m_2^\r,\m_2^\r)$. Let $P^i_\r(-)$ denote the $i$-fold composite of $P_\r(-)$. 

\begin{prop}\label{Cor:Pntau}
For all $i \geq 0$ and $j \in \{2,3\}$, the iterated matric Massey products $P^i_\r(\tau h_1^j)$ is defined, nontrivial, and has zero indeterminacy. Its image is under $p$ is $P^i_\c(\tau h_1^j)$. 
\end{prop}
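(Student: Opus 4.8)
The plan is to prove the three assertions --- defined, nontrivial, zero indeterminacy --- by a simultaneous induction on $i$, using Lemma~\ref{Lem:pExt} to transport known facts from the $\c$-motivic side back to $\r$. The base case $i=0$ is the statement that $\tau h_1^j \neq 0$ in $\Ext_{A^\r}^{***}(\m_2^\r,\m_2^\r)$ for $j \in \{2,3\}$, which follows from the $\rho$-Bockstein spectral sequence (\ref{Eqn:rhoBSS}) together with the fact that $\tau h_1^j \neq 0$ in $\Ext_{A^\c}^{***}(\m_2^\c,\m_2^\c)$ and is not $\rho$-divisible, so it survives the Bockstein and maps to the class with the same name under $p$. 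For the inductive step, suppose $x := P^i_\r(\tau h_1^j)$ is defined, nontrivial, with zero indeterminacy, and with $p(x) = P^i_\c(\tau h_1^j)$. To form $P_\r(x)$ we must check $x$ is both $h_0^4$-torsion and $c_0$-torsion. For $h_0^4$: $h_0^4 \cdot x$ maps under $p$ to $h_0^4 \cdot P^i_\c(\tau h_1^j) = 0$ (since $P_\c$ lands in $h_0^4$-torsion by the results of \cite{Qui17}), and since $x$ lies well above the Guillou--Isaksen vanishing line of slope $1/2$ (Theorem~\ref{Thm:GIVanishing}), it is $h_1$-periodic, so $h_0^4 x$ is $h_1$-periodic; but an $h_1$-periodic class killed by $p$ must vanish, because $p$ is injective on $h_1$-periodic classes in this range by the $\rho$-Bockstein comparison (every $\rho$-torsion obstruction is in sufficiently low filtration). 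For $c_0$: $c_0 x$ is likewise $h_1$-periodic, and $c_0$ is $h_1$-torsion (by Theorem~\ref{Thm:GIetaC}, $c_0$ dies in the $h_1$-localization), so $c_0 x$ is simultaneously $h_1$-periodic and a multiple of an $h_1$-torsion class --- by the polynomial structure of $\Ext_{A^\c}^{***}[h_1^{\pm 1}]$ and its $\r$-motivic analog this forces $c_0 x = 0$. Hence $P_\r(x)$ is defined.

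Next, nontriviality and the identification of the image. By naturality of matric Massey products (exactly as in the proof of Lemma~\ref{Lem:pExt}), $p(P_\r(x)) \subseteq P_\c(p(x)) = P_\c(P^i_\c(\tau h_1^j)) = P^{i+1}_\c(\tau h_1^j)$. The right-hand side is a single nonzero element by the computations of \cite{Qui17} (iterated application of the $\c$-motivic periodicity operator to $\tau h_1^j$ gives $v_1^{4(i+1)}\tau h_1^j$). Since $p(P_\r(x))$ contains this nonzero element, $P_\r(x)$ itself contains a nonzero element, so it is nontrivial, and moreover any element of $P_\r(x)$ maps to $P^{i+1}_\c(\tau h_1^j)$, giving the asserted image statement.

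Finally, zero indeterminacy. The indeterminacy of the matric Massey product $P_\r(x) = \langle [h_3\ \rho^3 h_1^2], [h_0^4\ c_0]^{T}, x\rangle$ is the sum of two subgroups: $[h_3\ \rho^3 h_1^2]\cdot \Ext^{***}_{A^\r}(\ldots)^{\oplus 2}$ landing in the relevant tridegree (from the ambiguity in the nullhomotopy of the first matrix product), and the image of $\langle [h_3\ \rho^3 h_1^2], [h_0^4\ c_0]^{T}, -\rangle$-type terms from the ambiguity on $x$'s side, i.e.\ $\{[h_3\ \rho^3 h_1^2]\cdot y : y$ with $[h_0^4\ c_0]^T y = 0$, appropriate degree$\}$ plus $\langle [h_3\ \rho^3h_1^2], [h_0^4\ c_0]^T\rangle \cdot$(indeterminacy of $x$). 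The last term vanishes since $x$ has zero indeterminacy by induction. For the remaining terms, I would argue degree by degree: the target tridegree $(s+8(i+1)+|h_1^j|,\, f+4(i+1)+j,\, w+\cdots)$ --- here I would pin down the precise tridegree using (\ref{Eqn:MayDeg}) and the degrees of $h_3, h_0, c_0, h_1, \rho, \tau$ --- lies far enough above the slope-$1/2$ line that Theorem~\ref{Thm:GIVanishing} makes it entirely $h_1$-periodic, and by Theorem~\ref{Thm:GIetaC} together with Theorem~\ref{Thm:Ang} (uniqueness of $P_r$ on $h_1$-torsion) the only contributions in that degree are $\f_2$-multiples of $v_1^{4(i+1)}\tau h_1^j$ itself; but each indeterminacy term is divisible by $h_3$ (resp.\ by $\rho^3 h_1^2$), and $v_1^{4(i+1)}\tau h_1^j$ is not $h_3$-divisible (its $\c$-motivic image is not, by \cite{Qui17}) nor $\rho$-divisible (it is defined over $\c$), so every indeterminacy term must be zero.

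I expect the main obstacle to be the bookkeeping in the zero-indeterminacy step: correctly enumerating all sources of indeterminacy for an iterated \emph{matric} Massey product, nailing down the exact tridegree, and confirming that degree is in the range where the Guillou--Isaksen vanishing line (Theorem~\ref{Thm:GIVanishing}), the $h_1$-localization structure (Theorem~\ref{Thm:GIetaC}), and Ang Li's uniqueness of $P_r$ (Theorem~\ref{Thm:Ang}) all apply. The rest --- definedness and the naturality computation of the image --- is a routine diagram-chase modeled on Lemma~\ref{Lem:pExt}.
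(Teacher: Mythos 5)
Your overall skeleton --- induction on $i$, base case from \cite{DI16a}, and the nontriviality/image step via naturality of matric Massey products through Lemma \ref{Lem:pExt} --- matches the paper's proof, and that middle step is correct as written. But both your definedness argument and your zero-indeterminacy argument rest on a false premise: that $x = P^i_\r(\tau h_1^j)$ and the target tridegrees of the bracket ``lie well above the Guillou--Isaksen vanishing line'' and are therefore $h_1$-periodic. The class $P^i_\r(\tau h_1^j)$ sits in tridegree $(8i+j,\,4i+j,\,4i+j-1)$, so $f = \tfrac{1}{2}s + \tfrac{j}{2}$ with $j \in \{2,3\}$, which is strictly below the $f \geq \tfrac{1}{2}s+2$ threshold needed for the isomorphism range of Theorem \ref{Thm:GIVanishing} (and that theorem is a statement about $\Ext_{A^\c}$ in any case, not $\Ext_{A^\r}$ where your indeterminacy groups live). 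Worse, these classes are genuinely $h_1$-\emph{torsion}, not $h_1$-periodic, because $\tau h_1^{j+2}=0$; the paper uses exactly this fact later when it shows the classes are permanent cycles. So ``$h_0^4 x$ is $h_1$-periodic and killed by $p$, hence zero'' and ``$c_0 x$ is $h_1$-periodic and a multiple of an $h_1$-torsion class, hence zero'' both collapse, as does the claim that the indeterminacy tridegrees are entirely $h_1$-periodic and populated only by multiples of $v_1^{4(i+1)}\tau h_1^j$ (a class that is invisible in the $h_1$-localization precisely because it is $h_1$-torsion).

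The paper closes these two gaps differently. Definedness ($h_0^4 P^{n-1}(\tau h_1^j) = 0$ and $c_0 P^{n-1}(\tau h_1^j) = 0$) is proved by the juggling formula \cite[Thm.~A1.4.6]{Rav86}: $x\,P^{n-1}(\tau h_1^j) \subseteq \langle [h_3\ \rho^3h_1^2], [h_0^4\ c_0]^T, x\,P^{n-2}(\tau h_1^j)\rangle = \langle\cdots,0\rangle \subseteq \{0\}$, folding these vanishing statements into the induction and bottoming out at $h_0^4\tau h_1^j = 0$, $c_0\tau h_1^j = 0$. Zero indeterminacy is proved by first using the $\rho$-Bockstein spectral sequence to reduce a would-be nonzero element of $h_3\Ext_{A^\r}^{\cdots}$ or $\rho^3h_1^2\Ext_{A^\r}^{\cdots}$ to a class $y \in \Ext_{A^\c}^{***}$ with prescribed Milnor--Witt degree and Adams filtration, and then running a careful $\c$-motivic May spectral sequence analysis: only $h_0$-multiplication raises filtration without raising Milnor--Witt degree, $h_3$-divisibility caps the $h_0$-divisibility at $h_0^3$ via $d_4(b_{20}^2)=h_3h_0^4$, this leaves finitely many monomial shapes, an exhaustive low-stem check kills them, and Theorem \ref{Thm:Ang} propagates the vanishing to all degrees. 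You will need arguments of this kind (or some other correct substitute) for both steps; the $h_1$-periodicity shortcut is not available here.
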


\begin{proof}
We proceed by induction on $i$. When $i=0$, the result is equivalent to the fact that $\tau h_1^j \neq 0 \in \Ext_{A^\r}^{***}(\m_2^\r,\m_2^\r)$, which follows from \cite{DI16a}. 

Suppose now for the sake of induction that the result holds for $i < n$. Suppose further that $h_0^4 P^{n-1}(\tau h_1^j) = 0$ and $c_0 P^{n-1}(\tau h_1^j)=0$, so the Massey product $P(P^{n-1}(\tau h_1^j))$ is defined. Then by Lemma \ref{Lem:pExt}, the image of $P^n(\tau h_1^j) = P(P^{n-1}(\tau h_1^j))$ under $p$ is $\langle h_3, h_0^4, p(P^{n-1}(\tau h_1^j))\rangle$. By the induction hypothesis, $p(P^{n-1}(\tau h_1^j))$ contains $P^{n-1}_\c(\tau h_1^j)$, so we may rewrite $\langle h_3, h_0^4, p(P^{n-1}(\tau h_1^j)) \rangle$ as $\langle h_3, h_0^4, P^{n-1}_\c(\tau h_1^j)\rangle$. But this is just $P^n_\c(\tau h_1^j)$, which was shown to be nonzero in \cite{Qui17}. Therefore the image of $P^n(\tau h_1^j)$ under $p$ is nontrivial, so $P^n(\tau h_1^j)$ must contain a nonzero element whose image is $P^n_\c(\tau h_1^j)$. 

We now check that there is no indeterminacy in $P^{n-1}_\r(\tau h_1^j)$. The indeterminacy lies in
$$h_3 \Ext_{A^\r}^{8n-6+j,4n-4+j,4n-5+j}(\m_2^\r,\m_2^\r) \quad \text{and} \quad \rho^3 h_1^2 \Ext_{A^\r}^{8n+2+j, 4n-7+j,4n+j}(\m_2^\r,\m_2^\r).$$
We will show that these indeterminacy groups vanish by using the $\rho$-Bockstein spectral sequence to reduce to the $\c$-motivic setting, and then we will employ the $\c$-motivic May spectral sequence to rule out the existence of nonzero elements.

In the first case, we must consider the possible elements
$$x \in h_3 \Ext_{A^\r}^{8n-6+j,4n-4+j,4n-5+j}(\m_2^\r,\m_2^\r) \subseteq \Ext^{8n+1+j,4n-3+j,4n-1+j}_{A^\r}(\m_2^\r,\m_2^\r).$$
We observe that $x$ must satisfy three hypotheses: $mw(x) = 4n+2$, $f(x)=4n-3+j$, and $x$ is divisible by $h_3$. Suppose for simplicity that $j=3$ so $f(x) = 4n$; the case $j=2$ is similar. Using the $\rho$-Bockstein spectral sequence, we have that $x$ is detected by $\rho^i y$ for some $i \geq 0$ and $y \in \Ext_{A^\c}^{***}(\m_2^\c,\m_2^\c)$. Since $mw(\rho) = 0$ and $f(\rho)=0$, we see that $mw(y) = 4n+2$, $f(y) = 4n$, and $y$ is divisible by $h_3$. We will argue that any such $y$ must be zero by showing that no nonzero element in the $\c$-motivic May spectral sequence could detect $y$. By Theorem \ref{Thm:GIetaC}, we may assume that $y$ is $\eta$-torsion. 

Now, we will use the $\c$-motivic May spectral sequence to argue $y$ is zero. We observe that 
\begin{equation}
\begin{split}
mw(h_{i0}) \begin{cases}
=0 \quad & \text{ if } i = 1,\\
=1 \quad & \text{ if } i=2, \\
=3 \quad & \text{ if } i=3, \\
\geq 7 \quad & \text{ if } i \geq 4,
\end{cases}
\end{split}
\quad \quad \text{and} \quad \quad
\begin{split}
mw(h_{ij}) \begin{cases}
=1 \quad & \text{ if } (i,j) = (1,1), \\
=2 \quad & \text{ if } (i,j)= (1,2), \\
=4 \quad & \text{ if } (i,j)=(1,3), \\
=3 \quad & \text{ if } (i,j) = (2,1),\\
\geq 6 \quad & \text{ otherwise,}
\end{cases}
\end{split}
\end{equation}
where $j>0$ in the right-hand column. Moreover, we have $f(h_{ij}) = 1$ for all $i,j$. Crucially, we note that the only way to increase filtration without increasing Milnor--Witt degree is by $h_0$ multiplication. In particular, since $y$ is divisible by $h_3$ and $d_4(b_{20}^2) = h_3h_0^4$ in the $\c$-motivic May spectral sequence \cite{DI10}, $y$ cannot be divisible by $h_0^i$ for any $i \geq 4$. This severely restricts the possible forms of $y$, since multiplication by an arbitrary May generator could increase the Milnor--Witt stem too much relative to the filtration. Let us make this claim precise. Suppose $y$ has the form $y = h_0^3 h_3 z$ with $z$ not divisible by $h_0$; if $y = h_0^i h_3 z$ with $i <3$, the argument is similar. We have $mw(z) = 4n-1$ and $f(z) = 4n-4$, so $z$ contains a factor $w$ with $mw(w) = f(w)+3$. The only way this could occur is if $w \in \{ h_{12}^3, h_{21}h_{12}, h_{12}h_{30}, h_{13} \}$. The remaining factors of $z$ must be powers of $h_{20}$ and $h_{11}$. Inspection of the $\c$-motivic May spectral sequence reveals that there is no nonzero $y$ with the desired properties contributing to $\Ext_{A^\c}^{s,f,w}(\m_2^\c,\m_2^\c)$ for $s \leq 20$ and $f \leq 10$, so by Theorem \ref{Thm:Ang}, there are no nonzero $y$ in all of $\Ext^{***}_{A^\c}(\m_2^\c,\m_2^\c)$. This shows that the first indeterminacy group is zero.

A similar argument applies for the second indeterminacy group, where we must consider the elements
$$x \in \rho^3 h_1^2 \Ext_{A^\r}^{8n+2+j,4n-7+j,4n+j}(\m_2^\r,\m_2^\r) \subseteq \Ext_{A^\r}^{8n+1+j,4n-5+j,4n-1+j}(\m_2^\r,\m_2^\r).$$
In this case, $x$ must satisfy the hypotheses that $mw(x) = 4n+2$, $f(x) = 4n-5+j$, and $x = \rho^{3+i} h_1^2 y$ for some $i \geq 0$ and $y \in \Ext_{A^\c}^{***}(\m_2^\c,\m_2^\c)$. Since $d_1(h_{20}) = h_0h_1$ in the $\c$-motivic May spectral sequence, we see that $y$ cannot be divisible by $h_0$. As in the previous case, this allows us to explicitly list all possible forms of $y$ in terms of $\c$-motivic May generators. By Theorem \ref{Thm:GIetaC}, we may restrict to $h_1$-torsion, and an exhaustive search of $\Ext_{A^\c}^{***}(\m_2^\c,\m_2^\c)$ in low stem and filtration, along with Theorem \ref{Thm:Ang}, imply that there are no nonzero elements with this form. Therefore the second indeterminacy group is zero. 

Finally, we tie up loose ends by showing that $h_0^4 P^{n-1}(\tau h_1^j) = 0$ and $c_0 P^{n-1}(\tau h_1^j) = 0$. In both cases, we prove the result by juggling using \cite[Thm. A1.4.6]{Rav86}. Let $x$ denote $h_0^4$ or $c_0$. We have
\begin{align*}
x P^{n-1}(\tau h_1^j) &= \left\langle \begin{bmatrix} h_3 & \rho^3 h_1^2 \end{bmatrix}, \begin{bmatrix} h_0^4 \\  c_0 \end{bmatrix}, P^{n-2}(\tau h_1^j) \right\rangle x \\
& \subseteq \left\langle \begin{bmatrix} h_3 & \rho^3 h_1^2 \end{bmatrix}, \begin{bmatrix} h_0^4 \\  c_0 \end{bmatrix}, x P^{n-2}(\tau h_1^j) \right\rangle \\
& = \left\langle \begin{bmatrix} h_3 & \rho^3 h_1^2 \end{bmatrix}, \begin{bmatrix} h_0^4 \\  c_0 \end{bmatrix}, 0 \right\rangle \subseteq \{0\},
\end{align*}
where the induction hypothesis is applied in the passage from the penultimate to ultimate lines. The analysis from the preceding paragraphs can be used to show that there is no indeterminacy in these matric Massey products (roughly speaking, the indeterminacy lies in close enough tridegrees that the relevant periodicity and vanishing theorems can be applied). This complete the proof. 
\end{proof}

We will identify the coset $P^i_\r(\tau h_1^j)$ with its unique nontrivial element from now on. The previous proposition gives us two infinite families in the $E_2$-page of the $\r$-motivic Adams spectral sequence.; our goal now is to show that these families survive to $E_\infty$. We will need the following technical lemma to exclude certain differentials. 

\begin{lem}\label{Lem:rho2}
For all $i \geq 0$ and $j \in \{2,3\}$, $P^i_\r \tau h_1^j \subseteq \Ext_{A^\r}^{***}(\m_2^\r,\m_2^\r)$ is annihilated by $\rho^2$. 
\end{lem}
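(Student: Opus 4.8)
The plan is to argue by induction on $i$, mimicking the ``juggling'' argument at the end of the proof of Proposition \ref{Cor:Pntau} (where $h_0^4 P^{i-1}_\r(\tau h_1^j)=0$ and $c_0 P^{i-1}_\r(\tau h_1^j)=0$ are established), but now with $\rho^2$ in the role of the annihilating class. The base case $i=0$ asserts that $\rho^2\tau h_1^j=0$ in $\Ext_{A^\r}^{***}(\m_2^\r,\m_2^\r)$ for $j\in\{2,3\}$; since $\rho^2\tau h_1^j$ lies in Milnor--Witt stem $1$, this is comfortably within the range computed by Dugger--Isaksen, and I would simply read it off from \cite{DI16a} (equivalently, from a short $\rho$-Bockstein computation, noting that $\rho^2\tau h_1^j$ is $\rho$-divisible while the only other class in its tridegree is not).

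For the inductive step, suppose $\rho^2 P^{i-1}_\r(\tau h_1^j)=0$. By Proposition \ref{Cor:Pntau} the coset $P^i_\r(\tau h_1^j)$ has zero indeterminacy, so $\rho^2 P^i_\r(\tau h_1^j)$ is a single well-defined element. Since $\rho^2$ is central, the sliding formula for matric Massey products \cite[Thm.~A1.4.6]{Rav86} gives
\begin{align*}
\rho^2 P^i_\r(\tau h_1^j)\;&\in\;\left\langle\begin{bmatrix} h_3 & \rho^3 h_1^2\end{bmatrix},\begin{bmatrix} h_0^4 \\ c_0\end{bmatrix},\rho^2 P^{i-1}_\r(\tau h_1^j)\right\rangle\\
&=\;\left\langle\begin{bmatrix} h_3 & \rho^3 h_1^2\end{bmatrix},\begin{bmatrix} h_0^4 \\ c_0\end{bmatrix},0\right\rangle,
\end{align*}
the final equality being the inductive hypothesis. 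This last coset contains $0$, and its indeterminacy is contained in $h_3\cdot\Ext_{A^\r}^{***}(\m_2^\r,\m_2^\r)+\rho^3 h_1^2\cdot\Ext_{A^\r}^{***}(\m_2^\r,\m_2^\r)$, in the two tridegrees obtained from that of $\rho^2 P^i_\r(\tau h_1^j)$ by subtracting $|h_3|$ and $|\rho^3 h_1^2|$ respectively. So it suffices to show these two $\Ext$ groups vanish; then $\rho^2 P^i_\r(\tau h_1^j)=0$ and the induction closes.

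The main obstacle is this indeterminacy computation, but the two target groups sit in Milnor--Witt stem $\approx 4i$, in tridegrees differing only by bounded amounts from those treated in the proof of Proposition \ref{Cor:Pntau}, so I would dispatch them by the identical method used there: reduce to $\Ext_{A^\c}^{***}(\m_2^\c,\m_2^\c)$ via the $\rho$-Bockstein spectral sequence \eqref{Eqn:rhoBSS}; split off the $h_1$-periodic part using Theorem \ref{Thm:GIetaC} so that the remaining classes are $h_1$-torsion; enumerate the finitely many $\c$-motivic May monomials compatible with the Milnor--Witt-degree and Adams-filtration constraints, using that $h_0^i$-divisibility for $i\geq 4$ is obstructed by $d_4(b_{20}^2)=h_3 h_0^4$ and $h_0$-divisibility by $d_1(h_{20})=h_0 h_1$; and finally invoke Ang Li's periodicity isomorphism (Theorem \ref{Thm:Ang}) to reduce to an explicit finite search in low stems, which turns up nothing. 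As in Proposition \ref{Cor:Pntau}, one should also note that the auxiliary matric Massey products appearing when $\rho^2$ is slid across the bracket have their indeterminacies in sufficiently nearby tridegrees that the same vanishing and periodicity inputs apply.
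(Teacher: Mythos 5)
Your proposal takes a genuinely different route from the paper's. The paper proves $\rho^2$-annihilation via the $\rho$-Bockstein spectral sequence: it exhibits the differentials $d_2(\tau^2 h_1) = \rho^2\tau h_1^2$ and $d_1(\tau h_2 h_0) = \rho\tau h_1^3$, propagates them by Moss's higher Leibniz rule to get vanishing in the associated graded, and then lifts this to an actual vanishing in $\Ext_{A^\r}^{***}(\m_2^\r,\m_2^\r)$ by a parity argument on Chow degree and weight that rules out detection of $\rho^2 P^i_\r(\tau h_1^j)$ in higher $\rho$-Bockstein filtration. You instead run the matric-Massey-product juggling that the paper itself uses at the very end of the proof of Proposition \ref{Cor:Pntau} (there for $h_0^4$- and $c_0$-annihilation), now with $\rho^2$ in the annihilating role. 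The skeleton is sound: the juggled bracket is defined because $h_0^4 P^{i-1}_\r(\tau h_1^j) = 0 = c_0 P^{i-1}_\r(\tau h_1^j)$ by Proposition \ref{Cor:Pntau}, and it collapses to a bracket with last entry $0$ by the inductive hypothesis, hence contains zero with indeterminacy $h_3\cdot\Ext + \rho^3 h_1^2\cdot\Ext$ in the appropriate tridegrees.

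The gap is the indeterminacy vanishing, which you assert ``by the identical method used there'' without verification. The relevant $\Ext$ groups sit in a tridegree shifted by $|\rho^2| = (-2,0,-2)$ from the one analyzed in Proposition \ref{Cor:Pntau}, so they are literally different groups; ``differing only by bounded amounts'' is not by itself a reason the same search comes up empty. What actually closes the argument is the observation that multiplying by $\rho^2$ preserves both Milnor--Witt stem and Adams filtration, and the paper's enumeration of hypothetical detecting classes in the $\rho$-Bockstein $E_1$-page is controlled precisely by those two invariants (together with $h_3$- or $\rho^3 h_1^2$-divisibility and the $h_1$-torsion reduction via Theorem \ref{Thm:GIetaC}). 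If you make that explicit, the May-spectral-sequence enumeration and the appeal to Theorem \ref{Thm:Ang} should apply verbatim; as written, this bridge is left implicit. The paper's own $\rho$-Bockstein proof avoids any tridegree-by-tridegree check by exploiting a structural property of $\rho$-multiplication --- the mod-$2$ Chow-degree and weight constraint --- which is likely why the author chose it here rather than re-running the juggling.
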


\begin{proof}
We begin by showing the result holds in the associated graded of the $\rho$-Bockstein filtration of $\Ext_{A^\r}^{***}(\m_2^\r,\m_2^\r)$ by induction on $i$. When $i =0$, there are $\rho$-Bockstein differentials $d_2(\tau^2h_1) = \rho^2\tau h_1^2$ and $d_1(\tau h_2 h_0) = \rho \tau h_1^3$. Assume for the sake of induction that there are $\rho$-Bockstein differentials $d_2(P^i_\r(\tau^2 h_1)) = P^i_\r \rho^2 \tau h_1^2$ and $d_1(P^i \tau h_2 h_0) = P^i_\r \rho \tau h_1^3$ for all $i < n$. Applying Moss's higher Leibniz rule \cite[Thm. 1.1]{Mos70} for the $\rho$-Bockstein spectral sequence, we find that
$$d_2(P^i_\r \tau^2 h_1) \subseteq P_\r(d_2(P^{i-1}_\r(\tau^2 h_1))),$$
$$d_1(P^i_\r \tau h_2 h_0) = P_\r(d_1(P^{i-1}_\r(\tau h_2h_0)))$$
using the fact that each element used to define $P_\r$ is a $d_1$- and $d_2$-cycle in the $\rho$-Bockstein spectral sequence. By the induction hypothesis, the right-hand sides of the inclusions above are just $P^i_\r \tau h_1^2$ and $P^i_\r \tau h_1^3$, respectively. 

We now show that the result actually holds in $\Ext$. Fix $i$ and write $x$ for $P^i_\r \tau h_1^2$. By the previous paragraph, it suffices to show that there is no element in higher $\rho$-Bockstein filtration which could detect $\rho^2 x$. Suppose there were such an element. Then it would be detected by $\rho^b z$ in the $\rho$-Bockstein spectral sequence where $z$ is not divisible by $\rho$. We claim there are no elements $z$ in the appropriate tridegrees for this to occur. Indeed, if $\rho^b z = \rho^2 x$, then for $\rho^b z$ we have
$$s = 8i, \quad f = 4i+2, \quad w = 4i-1, \quad mw = 4i+1, \quad c = 4i+4$$
and thus for $z$ we have
$$s=8i+b, \quad f=4i+2, \quad w=4i-1+b, \quad mw=4i+1, \quad c=4i+4-b.$$

Since $z$ is not divisible by $\rho$, it is detected by some polynomial in the $\c$-motivic May spectral sequence generators $h_{ij}$, which we recall have tridegrees $(2^i-2,1,2^{i-1}-1)$ if $j=0$ and $(2^j(2^i-1)-1,1,2^{j-1}(2^i-1))$ if $j>0$. In particular, we have
$$mw(h_{i0})=2^{i-1}-1, \quad c(h_{i0})=1, \quad mw(h_{ij}) = 2^{j-1}(2^i-2), \quad c(h_{ij}) = 0.$$

We see that $z$ contains precisely $4i+4-b$ terms of the form $h_{i0}$ since $c(z) = 4i+4-b$ and only those terms can increase Chow degree. It follows that $mw(z) \equiv b \mod 2$, and since $mw(z) \equiv 1 \mod 2$, we must have $b \equiv 1 \mod 2$. Then $w(z)$ must be even.

However, we have already observed that $z$ is a polynomial in $h_{ij}$'s with precisely $4i+4-b$ factors of the form $h_{i0}$. Since $b$ is odd and $h_{i0}$ has odd weight, the total weight of these factors is odd. But $w(h_{ij})$ is even for $j>0$, so the total weight of $z$ must be odd, contradicting the requirement above. Thus there can be no $z$ with $\rho^b z$ detecting $\rho^2 \tau h_1^2$. 

A similar argument works if we take $x$ to be $P^i_\r \tau h_1^3$. In this case, we show that no element in higher $\rho$-Bockstein filtration can detect $\rho x$ instead of $\rho^2 x$. The choice $\rho x$ instead of $\rho^2 x$ allows one to apply similar modular arithmetic to rule out the existence of some $z$ with $\rho^b z = \rho x$.
\end{proof}

\begin{prop}
For all $i \geq 0$ and $j \in \{2,3\}$, $P^i_\r \tau h_1^j$ survives in the $\r$-motivic Adams spectral sequence. Thus, it detects a nontrivial class in $\pi_{**}^\r(S^{0,0})$. 
\end{prop}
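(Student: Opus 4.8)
The plan is to deduce survival of $P^i_\r\tau h_1^j$ from the corresponding statement over $\c$ via the comparison map $p : \Ext^{***}_{A^\r}(\m_2^\r,\m_2^\r) \to \Ext^{***}_{A^\c}(\m_2^\c,\m_2^\c)$ of Proposition~\ref{Cor:Pntau} — which is the map of $E_2$-pages of the motivic Adams spectral sequences induced by base-change $i^* : SH_\r \to SH_\c$ — using that $P^i_\c\tau h_1^j$ is a permanent cycle detecting $v_1^{4i}\tau\eta^j$ \cite{Qui17}. The argument is an induction on $i$. For the base case $i = 0$, the classes $\tau h_1^2$ and $\tau h_1^3$ are permanent cycles detecting $\tau\eta^2 \in \pi_{2,1}^\r(S^{0,0})$ and $\tau\eta^3 \in \pi_{3,2}^\r(S^{0,0})$; this follows from the computation of the first few Milnor--Witt stems in \cite{DI16a}, or directly from the fact that $\eta$ and $\tau$ are permanent cycles. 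It also suffices to treat $j = 2$: since $d_r$ commutes with multiplication by the permanent cycle $h_1$, and $h_1 \cdot P^i_\r\tau h_1^2 = P^i_\r\tau h_1^3$ on $E_2$ (by Massey product juggling, the vanishing of indeterminacy in Proposition~\ref{Cor:Pntau}, and nontriviality of the product seen through $p$), a proof that $P^i_\r\tau h_1^2$ is a permanent cycle immediately upgrades to $P^i_\r\tau h_1^3$.

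First I would check that $P^i_\r\tau h_1^j$ is not a boundary. If $d_r(w) = P^i_\r\tau h_1^j$, then applying $p$ gives $d_r(p(w)) = p(P^i_\r\tau h_1^j) = P^i_\c\tau h_1^j$, which is nonzero by Proposition~\ref{Cor:Pntau}; but $P^i_\c\tau h_1^j$ survives the $\c$-motivic Adams spectral sequence \cite{Qui17} and so is not a $d_r$-boundary, a contradiction. Next, to see $P^i_\r\tau h_1^j$ is a permanent cycle, suppose $d_r(P^i_\r\tau h_1^j) = y$ is nonzero with $r$ minimal. Applying $p$ and using that $P^i_\c\tau h_1^j$ is a permanent cycle forces $p(y) = 0$, so $y$ is divisible by $\rho$; write $y = \rho^k z$ with $k \geq 1$ and $z$ not $\rho$-divisible. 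The tridegree of $z$ is then pinned down: it has Milnor--Witt stem $4i$ and Adams filtration at least $4i + 4$. Since $z$ is not $\rho$-divisible, the $\rho$-Bockstein spectral sequence (Section~\ref{SS:rhoBSS}) detects it by a class $\bar z \in \Ext^{***}_{A^\c}(\m_2^\c,\m_2^\c)$ in the same tridegree, so $\bar z$ has Adams filtration at least $mw(\bar z) + 4$. Inspecting the $\c$-motivic May spectral sequence exactly as in the proofs of Proposition~\ref{Cor:Pntau} and Lemma~\ref{Lem:b20} — using $d_4(b_{20}^2) = h_3 h_0^4$ to bound divisibility by $h_0$, Theorem~\ref{Thm:GIetaC} to dispose of the $h_1$-local part, and Theorem~\ref{Thm:Ang} to reduce the remaining $h_1$-torsion to finitely many tridegrees that one checks directly — shows that no such $\bar z$ exists. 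Lemma~\ref{Lem:rho2} is what eliminates the residual differentials of low $\rho$-divisibility not killed by this degree count.

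The main obstacle is this last step: there is no comparison theorem cleanly controlling the $\rho$-divisible part of $\Ext^{***}_{A^\r}(\m_2^\r,\m_2^\r)$, so excluding differentials with $\rho$-divisible target requires careful bookkeeping in the $\c$-motivic May spectral sequence, and it is essential to invoke the periodicity theorems of Guillou--Isaksen and Ang Li so that only finitely many tridegrees need inspection. An alternative organization of the permanent-cycle half of the argument, should the direct exclusion prove too delicate, is to apply Moss's convergence theorem \cite{Mos70} to the matric Massey product $P^i_\r\tau h_1^j = P_\r(P^{i-1}_\r\tau h_1^j)$, whose entries $h_3$, $\rho^3 h_1^2$, $h_0^4$, $c_0$ and — by the inductive hypothesis — $P^{i-1}_\r\tau h_1^j$ are permanent cycles; this reduces survival to the absence of the same type of crossing differentials, again handled by the May spectral sequence together with the periodicity theorems. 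Once the induction closes, $P^i_\r\tau h_1^j$ is a nonzero permanent cycle which is not a boundary, hence detects a nontrivial element of $\pi_{8i+2,4i+1}^\r(S^{0,0})$ when $j = 2$ and of $\pi_{8i+3,4i+2}^\r(S^{0,0})$ when $j = 3$.
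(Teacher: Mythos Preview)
Your ``not a boundary'' argument is correct and matches the paper's: base-change to $\c$ and the survival of $P^i_\c\tau h_1^j$ in the $\c$-motivic Adams spectral sequence immediately rule out $P^i_\r\tau h_1^j$ being hit. The induction on $i$ and the reduction to $j=2$ are harmless but unnecessary; the paper treats all $i,j$ uniformly.

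The permanent-cycle half has a genuine gap. From $d_r(P^i_\r\tau h_1^j)=y$ you deduce, by naturality, that $p(y)=0$ \emph{on the $E_r$-page} of the $\c$-motivic Adams spectral sequence. You then conclude that $y$ is $\rho$-divisible, which is a statement about $E_2=\Ext_{A^\r}$. These are different pages: $p(y)$ could perfectly well be nonzero in $\Ext_{A^\c}$ and be killed by some earlier $\c$-motivic Adams differential $d_s$, $s<r$, so the inference fails without further argument. Your subsequent analysis of $z$ in the $\c$-motivic May spectral sequence is then built on this shaky step and is in any case too vague: as $k$ varies you must exclude $\bar z$ in infinitely many tridegrees, and the gestures toward Theorems~\ref{Thm:GIetaC} and \ref{Thm:Ang} do not do this.

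The paper's argument is organized differently and uses Lemma~\ref{Lem:rho2} as the first step, not a last-resort cleanup. First, $\rho^2 x=0$ in $\Ext_{A^\r}$ forces $\rho^2 y=0$. Second, $\rho$-divisibility of $y$ is established \emph{at $E_2$} by a different mechanism: juggling and $\tau h_1^{j+2}=0$ show that $x$, hence $y$, is $h_1$-torsion in $\Ext_{A^\r}$, whereas any element in the target tridegree that is \emph{not} $\rho$-divisible is detected in $\rho$-Bockstein filtration zero by a class in $\Ext_{A^\c}$ lying above the Guillou--Isaksen line of Theorem~\ref{Thm:GIVanishing}, hence $h_1$-torsion-free --- contradiction. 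Third, to kill a $y$ that is simultaneously $\rho$-divisible and $\rho^2$-torsion, the paper looks at the $\rho$-Bockstein source $w$ with $d_s(w)=\rho^2 y$ and runs a Milnor--Witt stem versus Chow degree count on the generators of the $h_1$-local $\rho$-Bockstein $E_1$-page tabulated in \cite{GI16}: every such generator except $\rho$ has Milnor--Witt stem at least its Chow degree, but $w$ would need Milnor--Witt stem strictly smaller than its Chow degree, forcing $w$ to be $\rho$-divisible, and then $\rho$-linearity of the Bockstein differential iteratively divides out $\rho$ until $y$ itself is a boundary. None of this is captured by your sketch.
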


\begin{proof}
Fix $i$ and $j$ as above and let $x$ denote $P^i_\r \tau h_1^j$. We will show that $x$ is not the target of a differential and that it is a permanent cycle, so it survives in the $\r$-motivic Adams spectral sequence. 

We begin by showing $x$ is not the target of a differential by using base-change. By construction, $x$ base-changes to the unique nontrivial element in $P^i_\c \tau h_1^j$ in $\Ext_{A^\c}^{***}(\m_2^\c,\m_2^\c)$. Since the latter survives in the $\c$-motivic Adams spectral sequence and base-change induces a map between the $\r$-motivic and $\c$-motivic Adams spectral sequences, we conclude that $x$ cannot be the target of a nontrivial differential in the $\r$-motivic Adams spectral sequence. 

We now show that $x$ is a permanent cycle in three steps. Let $d_r(x) = y$ denote a generic differential in the $\r$-motivic Adams spectral sequence. First, we invoke Lemma \ref{Lem:rho2} to show that $x$ is $\rho^2$-torsion, which forces $y$ to be $\rho^2$-torsion. Second, we show that $y$ must be divisible by $\rho$ using the $\rho$-Bockstein spectral sequence and $\c$-motivic vanishing results of Guillou--Isaksen \cite{GI15a}. Finally, we show that any class $y$ in the relevant tridegree which is $\rho^2$-torsion and $\rho$-divisible must actually be zero using $h_1$-periodic calculations of Guillou--Isaksen \cite{GI15,GI16}.

We have $x \in \Ext_{A^\r}^{8i+j,4i+j,4i+j-1}(\m_2^\r,\m_2^\r)$ and potential targets of a nontrivial differential have the form $y \in \Ext_{A^\r}^{8i+j-1,4i+j+r,4i+j-1}(\m_2^\r,\m_2^\r)$ with $r \geq 2$. 

First, we have by Lemma \ref{Lem:rho2} that $\rho^2 x = 0$ in $\Ext_{A^\r}^{***}(\m_2^\r,\m_2^\r)$. Therefore $y$ must be $\rho^2$-torsion as well. 

Second, we claim that $y$ is divisible by $\rho$. Suppose not. Then $y$ is detected by a class $y' \in \Ext_{A^\c}^{8i+j-1,4i+j+r,4i+j-1}(\m_2^\c,\m_2^\c)$ in the $\rho$-Bockstein spectral sequence, and any such class is $h_1$-torsion free by Theorem \ref{Thm:GIVanishing}. It follows that $y$ itself must be $h_1$-torsion free. But $x$ is $h_1$-torsion in the $E_2$-page of the $\r$-motivic Adams spectral sequence, which follows from juggling and the relation $\tau h_1^{j+2} = 0$ for $j \geq 2$. Therefore we cannot have $d_r(x) = y$.

We have now shown that the potential target of a differential $y$ is both $\rho$-torsion, divisible by $\rho$, and $h_1$-torsion. We will now argue that any $y$ with these properties must be zero. Since $y$ is $\rho$-torsion, there must have been a $\rho$-Bockstein differential $d_s(w)=\rho^2 y$ with $w$ $\rho$-torsion free in the $E_s$-page of the $\rho$-Bockstein spectral sequence. Moreover, we can calculate that $w$ would have contributed to $\Ext_{A^\r}^{8i+j-2,4i+j+r-1,4i+j-1}(\m_2^\r,\m_2^\r)$ since $\rho$-Bockstein differentials decrease stem by one, increase Adams filtration by one, and preserve motivic weight. 

We claim that $w$ must be divisible by $\rho$. Before proving the claim, we explain how this implies $y$ is zero. Observing that $\rho^2 y = \rho \cdot \rho y$, we can apply $\rho$-linearity of $\rho$-Bockstein differentials to conclude that that $d_s(w/\rho) = \rho y$. Iterating the argument from the previous paragraph shows that $d_s(w/\rho^2) = y$, thus proving that $y$ is zero.

Finally, we prove the claim. The tridegree $(8i+j-2,4i+j+r-1,4i+j-1)$ to which $w$ would contribute satisfies $f \geq \frac{1}{2}s + \frac{1}{2}$, so $w$ is either an $h_1$-periodic class in the $\rho$-Bockstein spectral sequence or a $\rho$ multiple of some $h_1$-torsion class by Theorem \ref{Thm:GIVanishing}. The latter case is what we wanted to prove, so we just need to rule out the possibility that $w$ is an $h_1$-periodic class in $\Ext_{A^\c}^{***}(\m_2^\c,\m_2^\c)$. 

The fate of these $h_1$-periodic classes in the $\rho$-Bockstein spectral sequence was completely determined by Guillou and Isaksen. By \cite[Table 1]{GI16}, the generators of the $E_1$-page of the $h_1$-local $\rho$-Bockstein spectral sequence are $\rho$, $v_1^4$, and $v_n$, $n \geq 2$. The table also contains the Milnor--Witt stem, $s-w$, and Chow degree, $s+f-2w$, of each generator. For every generator except $\rho$,  the Milnor--Witt stem is greater than or equal to the Chow degree. However, the Milnor--Witt stem of $w$ is $4i-1$ and the Chow degree of $w$ is $4i+r-1$. Therefore any infinite $h_1$-tower which could include $w$ must be $\rho$-divisible. 
\end{proof}

\begin{defin}
We define $v^{4i}_1 \tau \eta^2$ and $v^{4i}_1 \tau \eta^3$ to be the classes in $\pi_{**}^\r(S^{0,0})$ detected by $P^i_\r(\tau h_1^2)$ and $P^i_\r(\tau h_1^3)$, respectively, which were constructed in Corollary \ref{Cor:Pntau}. 
\end{defin}

\begin{defin}
Let $v^{4i}_1 \tau \eta^2$ and $v^{4i}_1 \tau \eta^3$ in $\pi_{**}^{C_2}(S^{0,0})$ be the images of the classes with the same name in $\pi_{**}^{\r}(S^{0,0})$ under equivariant Betti realization.
\end{defin}

\begin{cor}
The classes $v^{4i}_1 \tau \eta^2$ and $v^{4i}_1 \tau \eta^3$ are nonzero in $\pi_{**}^{C_2}(S^{0,0})$. 
\end{cor}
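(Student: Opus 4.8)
\emph{Proof proposal.} The plan is to detect these classes by mapping them out of $\Sptc$ through the forgetful functor $U : \Sptc \to \Spt$, thereby reducing the question to the nontriviality of Adams' classical $v_1$-periodic families. So the first step is to recall that, by definition, $v_1^{4i}\tau\eta^j$ for $j \in \{2,3\}$ is the image $Re_{C_2}(\alpha_{i,j})$, where $\alpha_{i,j} \in \pi_{**}^\r(S^{0,0})$ denotes the $\r$-motivic class of the same name constructed in the preceding propositions. Since taking $\c$-points of an $\r$-scheme and forgetting the Galois action is the same as base-changing to $\c$ and then taking $\c$-points, the composite $U \circ Re_{C_2} : \Motr \to \Spt$ is naturally equivalent to $Re_B \circ i^*$, where $i^* : \Motr \to \Motc$ is base-change and $Re_B : \Motc \to \Spt$ is complex Betti realization; this is part of Heller--Ormsby's comparison results \cite[Sec.~4]{HO16}. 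Hence
$$U\bigl(v_1^{4i}\tau\eta^j\bigr) = U\bigl(Re_{C_2}(\alpha_{i,j})\bigr) = Re_B\bigl(i^*\alpha_{i,j}\bigr),$$
and by construction $i^*\alpha_{i,j}$ is the $\c$-motivic class $v_1^{4i}\tau\eta^j$ of \cite{Qui17} (Theorem \ref{MT:Rv1}; at the level of $\Ext$ this is Proposition \ref{Cor:Pntau}, which says $P^i_\r(\tau h_1^j)$ maps to $P^i_\c(\tau h_1^j)$ under the map induced by $\rho \mapsto 0$).

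The second step is to compute $Re_B$ of this $\c$-motivic class. On mod two cohomology $Re_B$ induces the quotient $\m_2^\c = \f_2[\tau] \to \f_2$, $\tau \mapsto 1$, so the induced map of Adams $E_2$-pages $\Ext_{A^\c}(\m_2^\c,\m_2^\c) \to \Ext_{A^{cl}}(\f_2,\f_2)$ sends $\tau \mapsto 1$ and fixes $h_0$, $h_1$, $h_3$. By naturality of Massey products and iteration it carries $P^i_\c(\tau h_1^j) = (\langle h_3, h_0^4, -\rangle)^{\circ i}(\tau h_1^j)$ to $(\langle h_3, h_0^4, -\rangle)^{\circ i}(h_1^j) = P^i(h_1^j)$, Adams' $v_1$-periodicity element, which is a nonzero permanent cycle detecting $v_1^{4i}\eta^j \in \pi_*(S^0)$ by \cite{Ada66}. (This identification $Re_B(v_1^{4i}\tau\eta^j) = v_1^{4i}\eta^j$ is also established directly in \cite{Qui17}.) It follows that $U(v_1^{4i}\tau\eta^j) = v_1^{4i}\eta^j \neq 0$, and since $U$ is a functor, $v_1^{4i}\tau\eta^j \neq 0$ in $\pi_{**}^{C_2}(S^{0,0})$.

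The only step carrying genuine content is the compatibility $U \circ Re_{C_2} \simeq Re_B \circ i^*$ together with the identification of $Re_B$ of the $\c$-motivic class with Adams' class; the latter is already available from \cite{Qui17}, and the former is a standard feature of the realization functors. Everything else is formal naturality of the relevant functors and spectral sequences. One might be tempted to use geometric fixed points instead, but the comparison $\Phi^{C_2} \circ Re_{C_2} \simeq Re_\r$ underlying Lemma \ref{Lem:GeomFP} is only available after inverting $2$ and $\eta$, whereas our classes are $2$- and $\eta$-torsion; hence the forgetful functor is the appropriate detecting functor for the $v_1$-periodic families (geometric fixed points will instead be the right tool for the $w_1$-periodic families in Section \ref{SectionEta}).
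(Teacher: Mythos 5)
Your proposal is correct and follows the same route as the paper: the paper's proof is exactly the one-line observation that the forgetful functor $U$ sends $v_1^{4i}\tau\eta^j$ to Adams' nonzero classical class $v_1^{4i}\eta^j$, which you justify in more detail via the compatibility $U \circ Re_{C_2} \simeq Re_B \circ i^*$ and the identification of $Re_B$ of the $\c$-motivic class from \cite{Qui17}. Your added justification (and the remark on why geometric fixed points is not the right detecting functor here) is accurate but not a different argument.
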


\begin{proof}
The images of $v^{4i}_1 \tau \eta^2$ and $v^{4i}_1 \tau \eta^3$ under the forgetful functor $\Sptc \to \Spt$ are the classical elements $v^{4i}_1 \eta^2$ and $v^{4i}_1 \eta^3$, respectively. 
\end{proof}

In summary, we have constructed two infinite $v_1$-periodic families in the $\r$-motivic and $C_2$-equivariant stable stems as lifts of $\c$-motivic and classical $v_1$-periodic families. These $v_1$-periodic families appeared as Mahowald invariants in previous work of Mahowald--Ravenel \cite{MR93} and the author \cite{Qui17}:

\begin{thm}\label{Lem:OldMI}
For $i \geq 0$ and $j \in \{2, 3\}$, the following inclusions hold:
\begin{enumerate}
\item \cite{MR93} $v_1^{4i}\eta^j \in M(2^{4i+j})$. 
\item \cite{Qui17} $v_1^{4i}\tau \eta^j \in M^{\c}(2^{4i+j})$. 
\end{enumerate}
\end{thm}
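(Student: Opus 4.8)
The plan is to recognize that Theorem \ref{Lem:OldMI} is a compilation of two results already in the literature, so the ``proof'' consists of pinning down the precise statements being invoked and, where it clarifies the role they play in the sequel, recalling the strategy behind each.

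For part (1), I would cite the main computation of Mahowald--Ravenel \cite{MR93}, which shows that $M(2^n)$ contains the generator of the image of the $J$-homomorphism detected in Adams filtration $n$. Their argument uses Lin's Theorem (Theorem \ref{Thm:Lin}) to realize $S^0 \simeq \Sigma P^\infty_{-\infty}$, so that computing $M(2^n)$ amounts to finding the minimal $N$ for which the composite $S^0 \xrightarrow{2^n} \Sigma P^\infty_{-\infty} \to \Sigma P^\infty_{-N}$ is essential, together with James periodicity and the $bo$-based (equivalently, $v_1$-periodic) resolution of the stunted projective spectra to control $\pi_*(P^\infty_{-N})$ in the relevant range. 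Matching the resulting generator with Adams' notation \cite{Ada66}, one reads off that for $2^{4i+j}$ with $j \in \{2,3\}$ the relevant class is $v_1^{4i}\eta^j$.

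For part (2), I would cite the $\c$-motivic analog proved by the author in \cite{Qui17}, where $M^\c(2^i)$ is computed for all $i \geq 1$. The argument is formally parallel: Gregersen's Theorem (Theorem \ref{Thm:Gre}) replaces Lin's Theorem, the stunted motivic lens spectra $\underline{L}^\infty_{-N}$ replace $P^\infty_{-N}$, and the $\c$-motivic version of connective real $K$-theory (together with the algebraic $v_1$-self map on the mod $2$ motivic Moore spectrum) plays the role of $bo$. The outcome is the $\c$-motivic lift of the classical family, carrying the extra $\tau$ dictated by motivic weight; for $2^{4i+j}$ with $j \in \{2,3\}$ this is the class $v_1^{4i}\tau\eta^j$, exactly the element detected by $P_\c^i(\tau h_1^j)$ in the $\c$-motivic Adams spectral sequence.

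Since both halves are already established in the cited works, there is no genuine obstacle; the only care required is bookkeeping, namely verifying that the indexing $4i+j$ with $j\in\{2,3\}$, the Adams-filtration count, and the $\tau$-decoration are consistent between \cite{MR93}, \cite{Qui17}, and the conventions of the present paper, so that the classes $v_1^{4i}\eta^j$ and $v_1^{4i}\tau\eta^j$ appearing here agree on the nose with those used from Section \ref{Sec:2i} onward.
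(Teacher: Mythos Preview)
Your proposal is correct and matches the paper's approach: Theorem \ref{Lem:OldMI} is stated in the paper without proof, as a direct citation of results established in \cite{MR93} and \cite{Qui17}. Your additional exposition of the strategies behind those results is accurate and goes beyond what the paper itself provides, but there is nothing to correct.
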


\begin{prop}\label{Prop:UB}
For $i \geq 0$ and $j \in \{2, 3\}$, the following statements hold:
\begin{enumerate}
\item We have $|M^\r((2+\rho \eta)^{4i+j})| \leq |v_1^{4i}\tau \eta^j|$. If equality holds, then $v_1^{4i}\tau \eta^j \in M^\r((2+\rho \eta)^{4i+j})$. 
\item We have $|M^{C_2}((2+\rho \eta)^{4i+j})| \leq |v_1^{4i} \tau \eta^j|$. If equality holds, then $v_1^{4i} \tau \eta^j \in M^{C_2}((2+\rho \eta)^{4i+j})$. 
\end{enumerate}
\end{prop}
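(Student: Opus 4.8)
The plan is to obtain both inequalities as direct instances of the squeeze lemmas assembled in Theorem~\ref{MT:Squeeze}, using the classical and $\c$-motivic computations recorded in Theorem~\ref{Lem:OldMI} as input and the compatibility of our new $\r$-motivic and $C_2$-equivariant families with their classical/$\c$-motivic counterparts (established earlier in this section) as the bridge. In both parts the element realizing the bound is $v_1^{4i}\tau\eta^j$, which sits in stem $8i+j$ in each of $\pi_{**}^\r(S^{0,0})$ and $\pi_{**}^{C_2}(S^{0,0})$ (matching the stem of the relevant classical class); once the hypotheses of Theorem~\ref{MT:Squeeze} are checked, both the inequality and the ``if equality then containment'' clause drop out.

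For part~(1) I would apply part~(5) of Theorem~\ref{MT:Squeeze} (equivalently Lemma~\ref{Lem:BaseChange}) to base-change $i^*\colon SH_\r\to SH_\c$. Two facts are needed. First, $i^*\big((2+\rho\eta)^{4i+j}\big)=2^{4i+j}$: base-change kills $\rho$ (the map $\m_2^\r\to\m_2^\c$ sends $\rho\mapsto 0$), so $i^*(2+\rho\eta)=2$, and $i^*$ is monoidal. Second, $i^*\big(v_1^{4i}\tau\eta^j\big)=v_1^{4i}\tau\eta^j$, i.e.\ the $\r$-motivic class base-changes to the $\c$-motivic class of the same name; this is built into the construction, since by Proposition~\ref{Cor:Pntau} the detecting class $P^i_\r(\tau h_1^j)$ maps under $p$ (the $\Ext$-level shadow of $i^*$) to $P^i_\c(\tau h_1^j)$, which detects the $\c$-motivic $v_1^{4i}\tau\eta^j$ of \cite{Qui17}. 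Combining these with $v_1^{4i}\tau\eta^j\in M^\c(2^{4i+j})$ (Theorem~\ref{Lem:OldMI}(2)) in part~(5) of Theorem~\ref{MT:Squeeze} gives $|M^\r((2+\rho\eta)^{4i+j})|\le|v_1^{4i}\tau\eta^j|$, with equality forcing $v_1^{4i}\tau\eta^j\in M^\r((2+\rho\eta)^{4i+j})$.

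For part~(2) I would instead apply part~(2) of Theorem~\ref{MT:Squeeze} to the forgetful functor $U\colon\Sptc\to\Spt$. Here $U\big((2+\rho\eta)^{4i+j}\big)=2^{4i+j}$ because $U(\rho)\in\pi_{-1}(S^0)=0$ for degree reasons, hence $U(2+\rho\eta)=2$; and $U\big(v_1^{4i}\tau\eta^j\big)=v_1^{4i}\eta^j$ is exactly the corollary preceding this proposition. Since $v_1^{4i}\eta^j\in M(2^{4i+j})$ by Mahowald--Ravenel (Theorem~\ref{Lem:OldMI}(1)) and $U$ preserves topological dimension (so $|v_1^{4i}\eta^j|=|v_1^{4i}\tau\eta^j|=8i+j$), part~(2) of Theorem~\ref{MT:Squeeze} yields the claimed bound and containment.

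I do not expect a serious obstacle here; the only point requiring care is the bigraded bookkeeping inside the applications of Theorem~\ref{MT:Squeeze} — one must confirm that the wedge-of-spheres targets in the defining diagrams of $M^\r$ and $M^{C_2}$ are carried by $i^*$ and $U$ onto the corresponding targets for $M^\c$ and $M$, which is where the identifications of stunted projective spectra under these functors (Lemma~\ref{Lem:ReC2U}) are used, and that the stem $8i+j$ of $v_1^{4i}\tau\eta^j$ agrees across all the categories in play so that the inequality we extract is the sharp one.
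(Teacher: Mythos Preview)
Your proposal is correct and follows essentially the same approach as the paper's proof: part~(1) via base-change $i^*$ and Lemma~\ref{Lem:BaseChange} applied to the $\c$-motivic computation in Theorem~\ref{Lem:OldMI}(2), and part~(2) via the forgetful functor $U$ and Lemma~\ref{compatibility} applied to the classical computation in Theorem~\ref{Lem:OldMI}(1). If anything, you supply slightly more justification (e.g.\ why $i^*(\rho)=0$ and $U(\rho)=0$) than the paper does.
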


\begin{proof}
In both cases, we use Lemma \ref{Lem:OldMI} and compatibilty of generalized Mahowald invariants under certain functors.
\begin{enumerate}
\item Let $i : \r \to \c$ and let $i^* : SH_\r \to SH_\c$ denote the corresponding base-change functor. Then $i^*(2+\rho\eta)^{4i+j} = 2^{4i+j}$ and $i^*v_1^{4i}\tau \eta^j = v_1^{4i}\tau\eta^j$ in $\pi_{**}^\c(S^{0,0})$. The claim then follows from (2) in Lemma \ref{Lem:OldMI} along with Lemma \ref{Lem:BaseChange}. 
\item Recall that $U : SH_{C_2} \to SH$ is the underlying spectrum functor. We then have that $U((2+\rho \eta)^{4i+j}) = 2^{4i+j}$ and $U(v_1^{4i}\tau \eta^j) = v_1^{4i} \eta^j$ in $\pi_*(S^0)$. The claim then follows from (1) in Lemma \ref{Lem:OldMI} along with Lemma \ref{compatibility}.
\end{enumerate}
\end{proof}

\subsection{Lower bounds: $\r$-motivic homotopy of stunted lens spectra}

We now prove that the dimension inequalities of Proposition \ref{Prop:UB} are equalities. To do so, we show that for any $N \in \z$, if the composite
\begin{equation}\label{Eqn:2iComposite}
S^{0,0} \xrightarrow{(2+\rho \eta)^i} S^{0,0} \to \Sigma^{1,0} \underline{L}^\infty_{-N+1}
\end{equation}
is null, then the composite 
\begin{equation}\label{Eqn:2i4}
S^{0,0} \xrightarrow{(2+\rho \eta)^{i+4}} S^{0,0} \to \Sigma^{1,0} \underline{L}^\infty_{-N-4+1}
\end{equation}
is also null. This implies that $|M^\r((2+\rho \eta)^{i+4})| \geq |M^\r((2+\rho \eta)^i| + 8$ for all $i$. 

\begin{prop}\label{Prop:16Null}
Let $N \in \z$. If the composite \eqref{Eqn:2iComposite} is null, then the composite \eqref{Eqn:2i4} is also null. 
\end{prop}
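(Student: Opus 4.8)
Here is how I would prove Proposition \ref{Prop:16Null}. The plan is to reduce the assertion to a nilpotence statement about a finite $\r$-motivic stunted lens spectrum, and then to establish that statement by an Atiyah--Hirzebruch computation. Write $\alpha = 2+\rho\eta \in \pi_{0,0}^\r(S^{0,0})$. First I would isolate the relevant finite piece of the tower: by Lemma \ref{Lem:LCoh}, the composite of collapse maps $\underline{L}^\infty_{-N-3} \to \underline{L}^\infty_{-N+1}$ has fiber a finite $\r$-motivic spectrum $F_N$ with eight cells, occupying topological dimensions $-2N-6,\dots,-2N+1$, whose mod $2$ cohomology is the evident eight-dimensional truncation of $\m_2[u,v]/(u^2+\rho u+\tau v)\{e_{-2N-6}\}$ as an $A^\r$-module. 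This produces a cofiber sequence
\[ \Sigma^{1,0}F_N \xrightarrow{j} \Sigma^{1,0}\underline{L}^\infty_{-N-3} \xrightarrow{q} \Sigma^{1,0}\underline{L}^\infty_{-N+1}, \]
and the canonical map $S^{0,0} \xrightarrow{\simeq} \Sigma^{1,0}\underline{L}^\infty_{-\infty} \to \Sigma^{1,0}\underline{L}^\infty_{-N-3}$ composes with $q$ to the canonical map into $\Sigma^{1,0}\underline{L}^\infty_{-N+1}$.

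Next comes the reduction. Assuming \eqref{Eqn:2iComposite} is null, composing with $q$ shows that the map $S^{0,0} \xrightarrow{\alpha^i} S^{0,0} \to \Sigma^{1,0}\underline{L}^\infty_{-N-3}$ is annihilated by $q$, hence lifts along $j$ to a map $g : S^{0,0} \to \Sigma^{1,0}F_N$. Since $\alpha^{i+4} = \alpha^4 \cdot \alpha^i$ and $\pi_{0,0}^\r(S^{0,0})$ is commutative, the composite \eqref{Eqn:2i4} equals
\[ S^{0,0} \xrightarrow{\alpha^4} S^{0,0} \xrightarrow{g} \Sigma^{1,0}F_N \xrightarrow{j} \Sigma^{1,0}\underline{L}^\infty_{-N-3}, \]
so it suffices to show $\alpha^4 \cdot g = 0$. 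I would prove the stronger statement that $\alpha^4$ acts as the zero self-map of $F_N$; this kills every map out of $F_N$, hence in particular $g\circ\alpha^4$.

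The heart of the matter is this last claim: $(2+\rho\eta)^4$ annihilates the eight-cell complex $F_N$. This is the $\r$-motivic counterpart of the classical fact -- underlying the lower bound for $M(2^i)$ in \cite{MR93} -- that $2^4$ annihilates a stunted real projective spectrum with eight consecutive cells, because the reduced $\widetilde{KO}$-group of eight consecutive cells is cyclic of order $2^4$ and the identity self-map is $KO$-detected there. I would establish the $\r$-motivic version by analyzing the homotopy of $F_N$, or more economically the cellular Atiyah--Hirzebruch spectral sequence converging to $\pi_{**}^\r(F_N)$: the $A^\r$-module structure above records the low-filtration attaching maps ($\eta$, $2+\rho\eta$, $\nu$, and the relevant Toda brackets), and combined with the $\r$-motivic stable stems in a bounded range of coweights from \cite{DI16a} and \cite{BI20}, one checks that the ``$(2+\rho\eta)$-Bockstein tower'' on $F_N$ has height at most four, so iterating the $(2+\rho\eta)$-action four times necessarily exits the top cell. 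James-type periodicity of the $\underline{L}^\infty_\bullet$ makes $F_N$ a suspension of one of finitely many complexes -- all living in coweight $\le 11$ -- so this is a finite verification.

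The main obstacle is precisely this computation. Unlike the classical situation one cannot simply cite real connective $K$-theory, and the $\r$-motivic argument must carefully track $\rho$-multiplications and potential hidden extensions while remaining uniform in $N$. A possible alternative route is to apply real Betti realization $Re_\r$, under which $F_N \simeq P^{-N}_{-N-3} \vee P^{-N}_{-N-3}$ by Lemma \ref{Lem:GeomFP} and $2+\rho\eta$ maps to $2$, so that the classical vanishing can be imported directly, and then to control separately the subgroup of self-maps of $F_N$ collapsed by $Re_\r$; I would pursue whichever of these two routes requires the least bespoke bookkeeping.
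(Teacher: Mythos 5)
Your reduction is correct and is in fact stated more cleanly than the paper's: you isolate the eight-cell fiber $F_N$ of the collapse $\underline{L}^\infty_{-N-3}\to\underline{L}^\infty_{-N+1}$, lift the composite through it, and invoke $\pi_{0,0}$-naturality to reduce to showing that $(2+\rho\eta)^4$ acts as the zero self-map of $F_N$. This is the same decomposition the paper uses, where it appears more tersely as ``comparing the Atiyah--Hirzebruch spectral sequence for $L_m$ with the Atiyah--Hirzebruch spectral sequence for $\underline{L}^\infty_{-N-3}$.'' You also correctly identify James periodicity as the tool that reduces the claim to a single $N$.

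The gap is that you leave the vanishing claim --- that $(2+\rho\eta)^4$ is null on $F_N$ --- as a sketch, and this is where essentially all of the paper's work lies. Three specific points. First, to detect the self-map you must run the Atiyah--Hirzebruch spectral sequence for $[F_N,F_N]_{0,0}\cong\pi_{0,0}^\r(F_N\wedge D^\r F_N)$ (the paper's $64$-cell complex $X$), not for $\pi_{**}^\r(F_N)$ as you write; knowing $(2+\rho\eta)^4$ kills $\pi_{**}(F_N)$ does not by itself kill the class of the identity. Second, the paper's computation becomes tractable by imposing \emph{two} independent filters on candidate detecting classes $\alpha[m,n]$: $\alpha$ must base-change to zero over $\c$ (since the $\c$-motivic vanishing is already known from \cite{Qui17}), \emph{and} $\alpha$ must be $\rho$-torsion (since $Re_\r$ is $\rho$-localization by \cite{BS20} and $2^4$ vanishes on the Betti realization $P^{-N}_{-N-3}\vee P^{-N}_{-N-3}$). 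Your route (b) captures only the second filter; without the first you would be enumerating a much larger table of classes. Third, your coweight estimate is off: the cells of $X$ have Milnor--Witt stem in $\{-3,\dots,3\}$, so only nonnegative coweight $\le 3$ arises, and the computation is already covered by \cite{DI16a} --- the ``$\le 11$'' and the reliance on \cite{BI20} are far looser than needed. With these corrections your plan would converge to the paper's proof, which then tabulates the surviving $\rho$-torsion, base-change-trivial candidates and eliminates each by an explicit $d_1$ through $d_6$ Atiyah--Hirzebruch differential.
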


\begin{proof}
Analogous results were obtained classically \cite{MR93} and $\c$-motivically \cite{Qui17} using a result of Toda \cite{Tod63} about the degree of the identity map between projective spaces. In particular, we see by base changing to $\Spec(\c)$ that the only way that $(2+\rho \eta)^{i+4}$ in \eqref{Eqn:2i4} could be essential is if it is detected in the Atiyah--Hirzebruch spectral sequence by some element $\alpha[m,n]$ with $\alpha \in \pi_{**}^\r(S^{0,0})$ base changing to zero. 

We begin by reducing to proving the proposition for just one $N \in \z$. First, James periodicity implies that the attaching maps of $\underline{L}^\infty_{-\infty}$ relevant to our arguments below are $4$-periodic with respect to $N$. Therefore if the proposition holds for some $N \in \z$, it also holds any $N'$ with $N' \equiv N \mod 4$. Second, we claim that if the proposition holds for one congruence class of $N \mod 4$, then it holds for all congruence classes. To see this, suppose for contradiction that the result holds for $N$ but not for $N'$ with $N'\not\equiv N \mod 4$. In view of the first reduction, the result holds for some $N'' \equiv N \mod 4$ with $0 < N'' - N' < 4$. But now if the result does not hold for $N'$, then there is some element in the Atiyah--Hirzebruch spectral sequence for $\Sigma^{1,0} \underline{L}^\infty_{-N'-4+1}$ which detects a new multiple of $2+\rho \eta$, and this element would detect a new multiple of $2+\rho \eta$ in the Atiyah--Hirzebruch spectral sequence for $\Sigma^{1,0} \underline{L}^\infty_{-N''-4+1}$. But this contradicts the assumption that there are no new multiples of $2+\rho \eta$ detected in the latter, so there could not have been such a multiple in the former. 

We have therefore reduced to proving the proposition for a single $N \in \z$; we will prove the case $N = 2$.  Let $L_m \in \Motr$ denote the subcomplex of $\underline{L}^\infty_{-\infty}$ with cells in topological dimensions $-2  \leq d \leq 5$, and let $X = L_m \wedge D^\r L_m$ where $D^\r(-)$ is the $\r$-motivic Spanier--Whitehead dual functor $D^\r(-) = F(-,S^{0,0})$. We will show that in the Atiyah--Hirzebruch spectral sequence for $X$, no classes of the form $\alpha[-m,-n]$ with $\alpha \in \pi_{m,n}^\r(S^{0,0})$ base changing to zero survive.\footnote{We will use the notation $\alpha[m,n]$ to denote a class of the form $\alpha$ on a cell in bidegree $(m,n)$. If there is only one cell in that bidegree, we will refer to $\alpha[m,n]$ as \emph{the class}, but if there are multiple classes in that bidegree, we will say $\alpha[m,n]$ is \emph{a class of the form} $\alpha[m,n]$.} In other words, there are no ``purely $\r$-motivic" classes which contribute to $\pi_{0,0}^\r(X)$, or by adjunction, which contribute an additional degree of $2+\rho \eta$-divisibility to the identity map of $L_m$. The proposition for $N=2$ (and thus all $N \in \z$) then follows from comparing the Atiyah--Hirzebruch spectral sequence for $L_m$ with the Atiyah--Hirzebruch spectral sequence for $\underline{L}^\infty_{-N+4-1}$. 

We start by reducing the possible classses $\alpha[-m,-n]$ by restricting the bidegrees of $\alpha$. The $\r$-motivic homotopy group $\pi_{0,0}^\r(X)$ may be computed via the Atiyah--Hirzebruch spectral sequence arising from the filtration of $X$ by topological dimension. Observe that $X$ is a $64$-cell complex with cells concentrated in bidegrees of the form $(2k \pm \epsilon, k \pm \epsilon)$ where $-3 \leq k \leq 3$ and $\epsilon \in \{0,1\}$. The possible contributions to $\pi_{0,0}^\r(X)$ in the Atiyah--Hirzebruch spectral sequence therefore have the form $\alpha[2k \pm  \epsilon, k \pm \epsilon]$ with $\alpha \in \pi_{-2k \mp \epsilon, -k \mp \epsilon}^\r(S^{0,0})$. In particular, we note that $mw(\alpha) \leq 3$, so the relevant $\r$-motivic stable stems were calculated by Dugger and Isaksen in \cite{DI16a}. Since $mw(\alpha) \geq 0$ for all $\alpha \in \pi_{**}^\r(S^{0,0})$, we really only need to consider the cases $0 \leq k \leq 3$ in the sequel. 

We may also exclude any $\rho$-torsion free class $\alpha \in \pi_{m,n}^\r(S^{0,0})$. Indeed, we claim that $\alpha[-m,-n]$ must die in the Atiyah--Hirzebruch spectral sequence. To see this, observe that the $\r$-points of $X$ are the classical spectrum $X_{cl} = P_{-1}^2 \wedge DP_{-1}^2$ and the $\r$-points of $(2+\rho \eta)^4$ is $2^4$, which is zero on $X_{cl}$ by \cite{Tod63}. On the other hand, real Betti realization is given by $\rho$-localization \cite[Thm. 1.2]{BS20}, so any class detecting $(2+\rho \eta)^4$ must be $\rho$-torsion. 

In summary, we must consider the classes $\alpha[-m,-n]$ in the Atiyah--Hirzebruch spectral sequence for $X$ with the following properties:
\begin{itemize}
\item $\alpha$ base-changes to zero in $\pi_{**}^\c(S^{0,0})$. 
\item $\alpha \in \pi_{2k\pm \epsilon, k \pm \epsilon}^\r(S^{0,0})$ with $0 \leq k \leq 3$ and $\epsilon \in \{0,1\}$. 
\item $\alpha$ is $\rho$-torsion in $\pi_{**}^\r(S^{0,0})$. 
\end{itemize}
Examining \cite[Fig. 3]{DI16a}, we obtain Table \ref{Table:2} containing all possible classes $\alpha$ with these properties. 

\begin{table}
\begin{tabular}{| c| c | c| c|}
\hline 
$k$ & $G_{2k-1,k-1}$ & $G_{2k,k}$ & $G_{2k+1,k+1}$ \\
\hline
$0$ & & &  \\
\hline
$1$ & $\rho \tau h_1^2$ & & \\
\hline
$2$ & & & \\
\hline
$3$ & $\rho^{1+i} h_1^i \tau h_2^2$, $0 \leq i \leq 2$ & $\rho^{i} h_1^i \tau h_2^2$, $1 \leq i \leq 2$ & $\rho^{1+i}h_1^i c_0$, $0 \leq i \leq 2$ \\
\hline
\end{tabular}
\caption{Classes in $\pi_{**}^\r(S^{0,0})$ which could detect $(2+\rho\eta)^4 \in \pi_{**}^\r(X)$. Classes are described using their Adams names from \cite{BI20}.}\label{Table:2}
\end{table}

We now argue that $\alpha[-2k\mp \epsilon, -k\mp \epsilon]$ dies in the Atiyah--Hirzebruch spectral sequence for each $\alpha \in G_{2k \pm \epsilon, k \pm \epsilon}$. 

Any class of the form $\rho \tau h_1^2[-1,0]$ is killed by a $d_1$-differential of the form $d_1(\tau h_1[0,0]) = \rho \tau h_1^2[-1,0]$. Indeed, the $d_1$-differentials in the Atiyah--Hirzebruch spectral sequence have the form $d_1(x[m,n]) = h_0x[m-1,n]$ whenever the cell in bidegree $(m-1,n)$ supports a nontrivial action of $Sq^1$. Applying the Cartan formula and Lemma \ref{Lem:LCoh}, we see that any cell of $X$ in bidegree $(-1,0)$ supports a nontrivial action of $Sq^1$, and consulting \cite{DI16a}, we see that $h_0 \tau h_1 = \rho \tau h_1^2$, so the claimed differentials occur.

Except for $\rho \tau h_2^2$, all of the classes in the case $k=3$ are killed by $d_2$-differentials of the form $d_2(x[m,n]) = h_1x[m-2,n-1]$. Inspecting \cite{DI16a}, we see that each class $\alpha \in G_{2k\pm \epsilon, k \pm \epsilon}$ has the form $\alpha = h_1 \beta$, and $\alpha$ and $\beta$ do not support $h_0$-multiplication and are not divisible by $h_0$. Since $d_1$-differentials are given by $h_0$-multiplication, we see that $\alpha$ and $\beta$ both survive to the $E_2$-term. Now, $d_2$-differentials in the Atiyah--Hirzebruch spectral sequence have the form $d_2(x[m,n]) = h_1 x[m-2,n-1]$ whenever the cell in bidegree $(m-2,n-1)$ supports a nontrivial action of $Sq^2$, so we only need to check that each cell in a bidegree of the form $(-6\mp \epsilon, -3\mp \epsilon)$ supports a nontrivial action of $Sq^2$. This follows from the Cartan formula and Lemma \ref{Lem:LCoh}. Therefore the claimed differentials occur.

The last remaining element is $\rho \tau h_2^2[-5,-2]$. The only cell of $X$ in bidegree $(-5,-2)$ is $e_{-1,0} \otimes f_{-4,-2}$, so we only need to rule out one element. We claim that $\rho \tau h_2^2[-5,-2]$ is killed by a $d_6$-differential $d_6(\rho \tau h_1[1,1]) = \rho \tau h_2^2[-5,-2]$, or possibly a shorter differential. If we can show that $\rho \tau h_1[1,1]$ survives to $E_6$, then the claimed $d_6$-differential will follow from the Toda bracket $\rho \tau \nu^2 \in \langle \eta, \nu, \rho \tau \eta \rangle$ and the relation $Sq^2Sq^4(e_{-1,0} \otimes f_{-4,-2}) = e_{-1,0} \otimes f_{2,1} + e_{5,3} \otimes f_{-4,-2} + \tau(e_{2,1} \otimes f_{-1,-1}) =: c$ derived using the Cartan formula and Lemma \ref{Lem:LCoh}. 

We now check that $\rho \tau h_1[1,1]$ survives to $E_6$. Since $\rho \tau h_1$ is not $h_0$-divisible and does not support multiplication by $h_0$, it survives to $E_2$. It is not divisible by $h_1$ but it supports $h_1$-multiplication, so it could support a $d_2$-differential. However, the sum of cells $c$ is not in the image of $Sq^2$ since $e_{-1,0} \otimes f_{2,1}$ does not appear as a summand in the image of $Sq^1$ or $Sq^2$ applied to any lower cells. Therefore $\rho \tau h_1[1,1]$ survives to $E_3$. Now, there are no further multiplicative relations or Toda brackets involving $\rho \tau h_1$ which could give rise to a $d_r$-differential for $3 \leq r \leq 5$, so $\rho \tau h_1[1,1]$ survives to $E_6$. Therefore we have the claimed differential, or $\rho \tau h_2^2[-5,-2]$ was killed prior to the $E_6$-page by a shorter differential.

We have shown that there can be no class detecting $(2+\rho \eta)^4 \in \pi_{0,0}^\r(X)$, so it must be zero.
\end{proof}

\subsection{Generalized Mahowald invariants of $(2+\rho\eta)^i$}
We conclude this section with some of our main results.

\begin{thm}\label{Thm:rm2i}
Let $ i \geq 0$ and let $j \in \{2,3\}$. The $\r$-Mahowald invariant of $(2+\rho\eta)^{4i+j}$ is given by
\[
M^\r((2+\rho\eta)^{4i+j}) \ni \begin{cases}
v^{4i}_1 \tau \eta^2 \quad & j=2, \\
v^{4i}_1 \tau \eta^3 \quad & j=3.
\end{cases}
\]
\end{thm}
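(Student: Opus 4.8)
The plan is to combine the upper and lower bounds established in the preceding two subsections. From Proposition \ref{Prop:UB}(1) we already have the inequality $|M^\r((2+\rho\eta)^{4i+j})| \leq |v_1^{4i}\tau\eta^j|$, together with the promotion statement: if this inequality is an equality, then $v_1^{4i}\tau\eta^j \in M^\r((2+\rho\eta)^{4i+j})$. So the entire content of the theorem reduces to proving the matching lower bound
\[
|M^\r((2+\rho\eta)^{4i+j})| \geq |v_1^{4i}\tau\eta^j|
\]
for all $i \geq 0$ and $j \in \{2,3\}$.

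First I would handle the base cases $i = 0$, i.e. the computations $M^\r((2+\rho\eta)^2) \ni \tau\eta^2$ and $M^\r((2+\rho\eta)^3) \ni \tau\eta^3$. These follow from Proposition \ref{Prop:ElementaryR}, which records exactly that $\tau\eta^2 \in M^\r((2+\rho\eta)^2)$ and $\tau\eta^3 \in M^\r((2+\rho\eta)^3)$; combined with the upper bound from Proposition \ref{Prop:UB}(1) (with $i=0$), these elementary computations pin down the stem of the Mahowald invariant exactly in the base cases.

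Next I would run the induction on $i$ using Proposition \ref{Prop:16Null}. That proposition asserts that for every $N \in \z$, if the composite $S^{0,0} \xrightarrow{(2+\rho\eta)^i} S^{0,0} \to \Sigma^{1,0}\underline{L}^\infty_{-N+1}$ is null, then so is $S^{0,0} \xrightarrow{(2+\rho\eta)^{i+4}} S^{0,0} \to \Sigma^{1,0}\underline{L}^\infty_{-N-4+1}$. In other words, increasing the exponent by $4$ forces the minimal $N$ detecting nontriviality to increase by at least $4$, which translates (via the bidegree bookkeeping for $\underline{L}^\infty_{-N}$, whose bottom cell sits in bidegree roughly $(-2N, -N)$) into $|M^\r((2+\rho\eta)^{4(i+1)+j})| \geq |M^\r((2+\rho\eta)^{4i+j})| + 8$. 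Since $|v_1^{4i}\tau\eta^j|$ also increases by exactly $8$ when $i$ increases by $1$ (as $v_1^4$ has stem $8$), the inductive step matches the upper bound increment precisely: if $|M^\r((2+\rho\eta)^{4i+j})| = |v_1^{4i}\tau\eta^j|$, then $|M^\r((2+\rho\eta)^{4(i+1)+j})| \geq |v_1^{4i}\tau\eta^j| + 8 = |v_1^{4(i+1)}\tau\eta^j| \geq |M^\r((2+\rho\eta)^{4(i+1)+j})|$, forcing equality. Applying Proposition \ref{Prop:UB}(1) once more promotes this equality of stems to the membership statement $v_1^{4(i+1)}\tau\eta^j \in M^\r((2+\rho\eta)^{4(i+1)+j})$, closing the induction.

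The main obstacle in this argument is entirely contained in the lower bound input, Proposition \ref{Prop:16Null} — the Atiyah--Hirzebruch spectral sequence analysis of the self-dual stunted lens complex $X = L_m \wedge D^\r L_m$, where one must verify that no "purely $\r$-motivic" class (one that base-changes to zero over $\c$, is $\rho$-torsion, and lies in low Milnor--Witt stem) survives to contribute an extra factor of $2+\rho\eta$-divisibility to the identity map. But that work has already been carried out in the preceding subsection; given it, the proof of Theorem \ref{Thm:rm2i} itself is just the bookkeeping combination of Propositions \ref{Prop:ElementaryR}, \ref{Prop:UB}, and \ref{Prop:16Null} described above, and I would present it in that compact form.
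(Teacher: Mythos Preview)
Your proposal is correct and follows essentially the same approach as the paper: the paper's proof is just the brief statement that Proposition \ref{Prop:UB} gives the upper bound, low-dimensional computations (i.e., Proposition \ref{Prop:ElementaryR}) establish tightness for $i=0$, and induction via Proposition \ref{Prop:16Null} handles the rest. Your write-up is slightly more detailed about the $+8$ stem bookkeeping, but the logical structure is identical.
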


\begin{proof}
Proposition \ref{Prop:UB} gives an upper bound on $|M^\r((2+\rho\eta)^{4i+j})|$, and if this upper bound is tight, then the theorem holds. Low dimensional computations show that the upper bound is tight for $i=0$. The result now follows by induction using Proposition \ref{Prop:16Null}.\footnote{Compare with \cite{MR93} and \cite{Qui17}.}
\end{proof}

\begin{thm}\label{Thm:em2i}
Let $ i \geq 0$ and let $j \in \{2,3\}$. The $C_2$-equivariant Mahowald invariant of $(2+\rho\eta)^{4i+j}$ is given by
\[
M^{C_2}((2+\rho\eta)^{4i+j}) \ni \begin{cases}
v^{4i}_1 \tau \eta^2 \quad & j=2, \\
v^{4i}_1 \tau \eta^3 \quad & j=3.
\end{cases}
\]
\end{thm}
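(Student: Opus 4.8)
The plan is to run exactly the ``upper bound from the classical setting, lower bound from the $\r$-motivic setting'' strategy used elsewhere in this section, the new ingredients being equivariant Betti realization (to import Theorem~\ref{Thm:rm2i}) and the Belmont--Guillou--Isaksen comparison theorem (Theorem~\ref{DI16}, to pin down the range in which that import is valid). A direct $C_2$-equivariant analogue of the Atiyah--Hirzebruch argument of Proposition~\ref{Prop:16Null} would require control of $C_2$-equivariant stunted projective spectra in an enormous range of coweights and is out of reach; routing through $\Motr$ avoids this.

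For the upper bound, Proposition~\ref{Prop:UB}(2) already gives $|M^{C_2}((2+\rho\eta)^{4i+j})| \le |v_1^{4i}\tau\eta^j|$ together with the implication that equality of dimensions forces $v_1^{4i}\tau\eta^j \in M^{C_2}((2+\rho\eta)^{4i+j})$; this came from applying Lemma~\ref{compatibility} to the forgetful functor $U$ and the Mahowald--Ravenel computation $v_1^{4i}\eta^j \in M(2^{4i+j})$ of Lemma~\ref{Lem:OldMI}, using $U((2+\rho\eta)^{4i+j}) = 2^{4i+j}$ and $U(v_1^{4i}\tau\eta^j)=v_1^{4i}\eta^j$. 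So everything reduces to proving the matching lower bound $|M^{C_2}((2+\rho\eta)^{4i+j})| \ge |v_1^{4i}\tau\eta^j|$.

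For the lower bound I would argue by contradiction, exactly as in Proposition~\ref{Prop:ElementaryC2}. Suppose $|M^{C_2}((2+\rho\eta)^{4i+j})| = D < |v_1^{4i}\tau\eta^j|$. Since $(2+\rho\eta)^{4i+j}\neq 0$ (its image under $U$ is $2^{4i+j}\neq 0$), the coset $M^{C_2}((2+\rho\eta)^{4i+j})$ is nonempty; by definition it is detected on a single cell $S^{-2N+1,-N}$ or $S^{-2N+2,-N+1}$, so it contains $\gamma\in\pi^{C_2}_{s,w}(S^{0,0})$ with $(s,w)=(2N-1,N)$ or $(2N-2,N-1)$. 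In either case $s=D$, $2w-s\in\{0,1\}$, and $(s,w)\neq(0,2)$, so Theorem~\ref{DI16} shows $Re_{C_2}:\pi^\r_{s,w}(S^{0,0})\to\pi^{C_2}_{s,w}(S^{0,0})$ is an isomorphism in this bidegree; choose $\gamma'\in\pi^\r_{s,w}(S^{0,0})$ with $Re_{C_2}(\gamma')=\gamma$, so $|\gamma'|=D$. Since $Re_{C_2}$ is monoidal and carries the $\r$-motivic classes $\rho$ and $\eta$ to their $C_2$-equivariant namesakes, $Re_{C_2}((2+\rho\eta)^{4i+j})=(2+\rho\eta)^{4i+j}$, so part (1) of Lemma~\ref{compatibility} (for $Re_{C_2}:\Motr\to\Sptc$) applies and gives $|M^\r((2+\rho\eta)^{4i+j})|\le|\gamma'|=D<|v_1^{4i}\tau\eta^j|$, contradicting Theorem~\ref{Thm:rm2i} (which gives $|M^\r((2+\rho\eta)^{4i+j})|=|v_1^{4i}\tau\eta^j|$).

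Combining, $|M^{C_2}((2+\rho\eta)^{4i+j})|=|v_1^{4i}\tau\eta^j|$, and the equality clause of Proposition~\ref{Prop:UB}(2) gives the desired membership. The one place needing genuine care is the coweight bookkeeping in the lower bound: $D$ and $N$ are a priori unbounded, so it matters that Mahowald invariant cosets always land in bidegrees with $2w-s\le 1$, comfortably inside the isomorphism range $2w-s<5$ of Theorem~\ref{DI16}. Everything else is a matter of citing Theorem~\ref{Thm:rm2i}, Lemma~\ref{compatibility}, and Proposition~\ref{Prop:UB}; the substantive work lies in Theorem~\ref{Thm:rm2i} itself, which rests on the Atiyah--Hirzebruch analysis of Proposition~\ref{Prop:16Null}.
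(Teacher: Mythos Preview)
Your proof is correct and follows essentially the same squeeze strategy as the paper: upper bound via $U$ and the classical computation, lower bound via $Re_{C_2}$ and Theorem~\ref{Thm:rm2i}. Your explicit invocation of Theorem~\ref{DI16} to lift the hypothetical $\gamma \in M^{C_2}$ back to $\pi^\r_{**}$ is a detail the paper's terse proof leaves implicit (it simply cites Lemma~\ref{compatibility} for both functors), but it is exactly what is needed to apply Lemma~\ref{compatibility} as stated to $Re_{C_2}$, and it mirrors the paper's own argument in Proposition~\ref{Prop:ElementaryC2}.
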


\begin{proof}
We have that $Re_{C_2}(v_1^{4i} \tau \eta^j ) = v^{4i}_1 \tau \eta^j$  by definition and $U(v^{4i}_1 \tau \eta^j) = v^{4i}_1 \eta^j \in \pi_*(S^{0,0})$.  The theorem then follows from applying Lemma \ref{compatibility} to both functors in the composite
$$\Motr \xrightarrow{Re_{C_2}} \Sptc \xrightarrow{U} \Spt$$
along with our computation of $M^\r((2+\rho\eta)^{4i+j})$ in the previous theorem and the computation of $M(2^{4i+j})$ in \cite[Thm. 2.17]{MR93}. 
\end{proof}

\section{Real motivic and $C_2$-equivariant Mahowald invariants of $\eta^i$}\label{SectionEta}

Following the same approach as we used to prove Theorems \ref{MT:Rv1} and \ref{MT:Ev1}, we now prove our second set of main results, Theorems \ref{MT:Rw1} and \ref{MT:Ew1}. In this section, we lift Andrews' $\c$-motivic $w_1$-periodic families \cite{And14} to $\pi_{**}^\r(S^{0,0})$ and compute the $\r$-motivic and $C_2$-equivariant Mahowald invariants of $\eta^i$. We lift Andrews' $w_1$-periodic families in $\pi_{**}^\c(S^{0,0})$ by studying the motivic Adams spectral sequence and the $\rho$-Bockstein spectral sequence. In particular, we prove a vanishing result for $\Ext^{***}_{A^\r}(\m_2^\r,\m_2^\r)$ which implies that Andrews' $w_1$-periodic families lift \emph{uniquely} to $\pi_{**}^\r(S^{0,0})$. We then show that these lifts are contained in the $\r$-motivic Mahowald invariants $M^\r(\eta^i)$ which proves Theorem \ref{MT:Rw1}. 

We then proceed to the $C_2$-equivariant setting, where we define $w_1$-periodic families via equivariant Betti realization. We then utilize our comparison results along with our previous $\r$-motivic calculations and the classical calculations of Mahowald and Ravenel \cite{MR93} to prove Theorem \ref{MT:Ew1}. 

\subsection{Upper bounds: $w_1$-periodic families in the $\r$-motivic stable stems}
Our first step is to show that the exotic periodic families in the $\c$-motivic stable stems constructed by Andrews in \cite{And14} lift to the $\r$-motivic setting. We begin with a quick review of these elements in the $\c$-motivic setting. Andrews defines explicit cocycles in the motivic Adams-Novikov spectral sequence, shows that these are permanent cycles for tridegree reasons, and then shows that they detect nontrivial composites of the form
$$S^{s,t} \xrightarrow{\alpha} C\eta \xrightarrow{w^{4i}_1} \Sigma^{-20i,-12i} C\eta \to S^{-20i+2,-12i+1}.$$
These composites are denoted $w^{4i}_1\alpha$, and if they are nontrivial for all $i \geq 1$ we say that $\alpha$ is \emph{$w^4_1$-periodic}. 

\begin{thm}\cite[Thm. 3.12]{And14}
The classes $\nu, \nu^2, \nu^3, \eta^2 \eta_4, \overline{\sigma}$, and $\overline{\sigma} \nu$ in the $\c$-motivic stable stems are $w^4_1$-periodic. 
\end{thm}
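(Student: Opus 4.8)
The plan is to reconstruct Andrews' argument [And14], which takes place entirely in the $\c$-motivic Adams--Novikov spectral sequence (ANSS) $E_2^{***} = \Ext_{BP_{**}BP}(BP_{**},BP_{**}) \Rightarrow \pi_{**}^\c(S^{0,0})$. The starting observation is that, motivically, $\eta$ is non-nilpotent, so the class $\alpha_1$ detecting $\eta$ is a non-nilpotent element of the $E_2$-page; inverting it gives access to an algebraic periodicity operator $w_1$ (an ``$\eta$-analog of $v_1$'') whose fourth power $w_1^4$ has the tridegree making it act, after the usual lift, by the map $C\eta \xrightarrow{w_1^{4i}} \Sigma^{-20i,-12i}C\eta$ appearing in the definition of $w_1^4$-periodicity. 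So the first step is to recall (from Andrews, building on the $\eta$-inverted computations) that such a self-map of $C\eta$ exists: one checks by a tridegree/sparseness argument in the motivic ANSS that the obstructions to realizing $w_1^4$ as a genuine self-map of $C\eta$ lie in groups that vanish, exactly as for the classical Adams self-map $v_1^4$ on the mod-$2$ Moore spectrum.

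The second step is, for each of the six input classes $x \in \{\nu,\nu^2,\nu^3,\eta^2\eta_4,\overline\sigma,\overline\sigma\nu\}$, to write down an explicit cocycle in the cobar complex for $BP_{**}BP$ representing $w_1^{4i}x$ --- concretely, the product of $w_1^{4i}$ with a fixed ANSS representative of $x$ --- and then to prove (a) that this cocycle is a permanent cycle and (b) that the homotopy class it detects is nonzero, i.e.\ that the composite $S^{s,t} \xrightarrow{\tilde x} C\eta \xrightarrow{w_1^{4i}} \Sigma^{-20i,-12i}C\eta \to S^{-20i+2,-12i+1}$ is essential. Point (a) is again a consequence of the sparseness of the motivic ANSS: because of the extra weight grading the $E_2$-page vanishes in large regions, and one checks that in the tridegree of each $w_1^{4i}x$ there is no nonzero group that could be the source or target of a nontrivial differential. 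Point (b) then follows because, the cocycle surviving to $E_\infty$ and there being no class of strictly higher Adams--Novikov filtration in the same stem and weight, the detected homotopy element cannot be zero (one can also identify the effect of the collapse $C\eta \to S^{-20i+2,-12i+1}$ directly on cobar representatives). The verification that all of this goes through for these specific six families is precisely [And14, Thm. 3.12], which we invoke.

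The main obstacle --- and the real content of Andrews' theorem --- is the tridegree bookkeeping of step two near the $\alpha_1$-periodic locus of $\Ext_{BP_{**}BP}(BP_{**},BP_{**})$: one needs sufficiently precise control of both the $\alpha_1$-torsion-free part and the $\alpha_1$-torsion part of the motivic ANSS $E_2$-page in a band around the image of $w_1^{4i}$ in order to certify that every potential Adams--Novikov differential into or out of $w_1^{4i}x$ has zero source or zero target, and that no filtration jump destroys the resulting element of $\pi_{**}^\c(S^{0,0})$. Everything else (constructing the cobar cocycles, tracking the collapse map) is routine once that input is available. For our purposes only the four families $\nu,\nu^2,\nu^3,\eta^2\eta_4$ are needed, and in the next subsection we lift these --- and their iterated images under $w_1^4$ --- to $\pi_{**}^\r(S^{0,0})$.
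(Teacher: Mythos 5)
The paper itself offers no independent proof of this statement; the citation \cite[Thm. 3.12]{And14} \emph{is} the proof, preceded only by a one-sentence summary of Andrews' method in the surrounding prose (explicit cocycles in the motivic Adams--Novikov spectral sequence are shown to be permanent cycles by tridegree and sparseness considerations, and the classes they detect are the composites through $w_1^{4i}$ on $C\eta$). Your proposal is a faithful, somewhat more detailed reconstruction of Andrews' argument, and since you ultimately invoke the same citation, your approach matches the paper's.
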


To produce $\r$-motivic analogs of these classes, we begin by studying $\Ext_{A^\r}^{***}(\m_2^\r,\m_2^\r)$. 

\begin{cor}
The following statements hold:
\begin{enumerate}
\item The elements $h_2,h_2^2,h_2^3,h_4h_1^3$ in $\Ext^{***}_{A^\r}(\m_2^\r,\m_2^\r)$ are permanent cycles in the $\rho$-Bockstein spectral sequence.  
\item If $h_1^4 x = 0$, define a periodicity operator $P_w -$ by setting $P_w x := \langle h_4, h_1^4, x \rangle$. Let $P^i_w x$ be the $i$-fold iterate of this periodicity operator. Then the sets $P^i_w h_2, P^i_w h_2^2, P^i_w h_2^3, P^i_w h_4 h_1^3$ each consist of precisely one nontrivial element, and all of these elements are permanent cycles in the $\rho$-Bockstein spectral sequence for all $i \geq 0$. 
\item All of the classes above are $\rho$-torsion free. 
\end{enumerate}
\end{cor}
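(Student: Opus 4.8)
The plan is to run a $\rho$-Bockstein argument completely parallel to the proof of Proposition~\ref{Cor:Pntau} in the $v_1$-periodic case, using the $\rho$-Bockstein spectral sequence $E_1 = \Ext_{A^\c}^{***}(\m_2^\c,\m_2^\c)[\rho] \Rightarrow \Ext_{A^\r}^{***}(\m_2^\r,\m_2^\r)$. All of the classes in question have $\rho$-Bockstein filtration zero (they are named by elements of $\Ext_{A^\c}$ carrying no $\rho$), so they cannot be hit by a differential, and ``permanent cycle in the $\rho$-Bockstein spectral sequence'' means precisely that they support no differential. Part (1) is then the $i=0$ case of part (2), so it suffices to prove (2) and (3) simultaneously by induction on $i$. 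For $x \in \{h_2, h_2^2, h_2^3, h_4 h_1^3\}$ the inductive step requires four things: (a) $P_w^i(x) = \langle h_4, h_1^4, P_w^{i-1}(x)\rangle$ is defined, i.e.\ $h_1^4 P_w^{i-1}(x)=0$; (b) it is nonzero; (c) it has zero indeterminacy; and (d) it is a $\rho$-Bockstein permanent cycle. Claim (a) is handled as in Proposition~\ref{Cor:Pntau}: reduce to the corresponding $\c$-motivic statement of \cite{And14} via naturality along the map $p\colon \Ext_{A^\r}\to\Ext_{A^\c}$ of Lemma~\ref{Lem:pExt}, then rule out elements in positive $\rho$-Bockstein filtration in the relevant tridegree.

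For (b) and for part (3), the clean tool is the Dugger--Isaksen doubling isomorphism $\Ext_{A^{cl}}^{**}(\f_2,\f_2)[\rho^{\pm 1}] \cong \Ext_{A^\r}^{***}(\m_2^\r,\m_2^\r)[\rho^{-1}]$ of Theorem~\ref{Thm:Double}. Under $\rho$-localization the operator $P_w=\langle h_4,h_1^4,-\rangle$ is carried to the classical Adams periodicity operator $\langle h_3,h_0^4,-\rangle$, and the generators $h_2,h_2^2,h_2^3,h_4h_1^3$ are carried to $h_1,h_1^2,h_1^3,h_3h_0^3$ in $\Ext_{A^{cl}}$. By Adams~\cite{Ada66} the iterates $P^i(h_1^{\ell})$ for $\ell\in\{1,2,3\}$ and $P^i(h_3h_0^3)$ are nonzero for all $i\ge 0$ (they detect the $v_1$-periodic families $v_1^{4i}\eta^{\ell}$ and $v_1^{4i}8\sigma$), so each $P_w^i(x)$ has nonzero image after $\rho$-localization and is therefore both nonzero and $\rho$-torsion free. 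Together with a dimension count showing $\Ext_{A^\c}$ is one-dimensional in the relevant tridegrees, this identifies the surviving $\rho$-Bockstein class in each such degree with the named generator, which is another route to part (1).

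The remaining points (c) and (d) are the main obstacle, exactly as the indeterminacy and permanent-cycle computations were the technical heart of Proposition~\ref{Cor:Pntau}. For (d) I would apply Moss's higher Leibniz rule \cite{Mos70} for the $\rho$-Bockstein spectral sequence to reduce a hypothetical differential $d_r(P_w^i(x))$ to a coset containing $P_w(d_r(P_w^{i-1}(x)))$, which vanishes by induction once one knows $h_4$ and $h_1^4$ are $\rho$-Bockstein cycles, the leftover ambiguity being absorbed by the vanishing analysis of the indeterminacy. For (c), the indeterminacy of $P_w^i(x)$ lies in $h_4\cdot\Ext_{A^\r}^{***}(\m_2^\r,\m_2^\r) + \Ext_{A^\r}^{***}(\m_2^\r,\m_2^\r)\cdot h_1^4$ in a fixed tridegree; its image under $\rho$-localization is the (vanishing) indeterminacy of the classical $P$, so the group is $\rho$-power torsion, and one finishes by showing via the $\rho$-Bockstein spectral sequence and the $\c$-motivic May spectral sequence that it is actually zero. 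The genuine difficulty relative to the $v_1$-periodic case is that the $w_1^4$-families lie on a line of slope $1/5$, strictly below the Guillou--Isaksen slope-$1/2$ $h_1$-periodicity line, so one cannot simply invoke Theorem~\ref{Thm:GIetaC}; instead the relevant $\Ext_{A^\c}$-groups must be pinned down directly, leaning on Andrews' explicit cocycle representatives and the motivic Adams--Novikov spectral sequence from \cite{And14}.
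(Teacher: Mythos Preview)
Your plan is workable but far more elaborate than the paper's argument, which is essentially one sentence: all three parts follow from the Dugger--Isaksen doubling isomorphism (Theorem~\ref{Thm:Double}) applied to Adams' classical periodic families in $\Ext_{A^{cl}}(\f_2,\f_2)$ \cite{Ada66b}.

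The point you are under-using is that (1), (3), and the permanent-cycle claim in (2) are not independent tasks. A class in $\rho$-Bockstein filtration zero is a permanent cycle precisely when it has nonzero image in $\Ext_{A^\r}[\rho^{-1}]$, and that same condition is exactly $\rho$-torsion freeness. So once you have invoked Theorem~\ref{Thm:Double} for (b) and (3) --- which you do correctly --- parts (1) and your step (d) are already done, with no need for Moss's higher Leibniz rule, induction on $i$, or any auxiliary vanishing computation. Likewise, because Theorem~\ref{Thm:Double} is highly structured (it preserves products and Massey products), the definedness, nontriviality, and zero indeterminacy of $P_w^i(x)=\langle h_4,h_1^4,-\rangle$ in $\Ext_{A^\r}[\rho^{-1}]$ transfer verbatim from Adams' classical facts about $\langle h_3,h_0^4,-\rangle$. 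This bypasses $\Ext_{A^\c}$ entirely, so your worry about the $w_1$-families lying below the slope-$1/2$ Guillou--Isaksen line is a red herring at this stage: the argument lives in $\Ext_{A^{cl}}[\rho^{\pm 1}]$, not in $\Ext_{A^\c}$, and no $\c$-motivic May or Andrews-style cocycle input is needed.

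Your residual concern --- that passing from $\Ext_{A^\r}[\rho^{-1}]$ back to $\Ext_{A^\r}$ could in principle introduce $\rho$-torsion indeterminacy --- is legitimate, but the paper does not handle it here. That refinement is exactly the content of the subsequent Proposition~\ref{Prop:Unique} and Corollary~\ref{Cor:Unique}, which show there is nothing in higher $\rho$-Bockstein filtration in the relevant tridegrees. So the division of labor is: this corollary uses the doubling theorem to get existence, nontriviality, permanence, and $\rho$-torsion freeness for free; the delicate $\Ext_{A^\c}$ analysis you anticipate is deferred to Proposition~\ref{Prop:Unique}.
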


\begin{proof}
All of the statement follow from applying Theorem \ref{Thm:Double} to the classical periodic families in $\Ext^{**}_{A^{cl}}(\f_2,\f_2)$ constructed by Adams in \cite{Ada66b}. 
\end{proof}

\begin{rem2}
In view of the previous corollary, we will use the notation $P^i_w x$ to denote the unique element in the set $P^i_w x$ (instead of the entire set). This convention will be applied throughout the remainder of the section. 
\end{rem2}

We can say even more about these classes. Recall that the Milnor--Witt stem of an element $\alpha \in \pi_{s,w}^k(S^{0,0})$ is $mw(x) = s-w$. 

\begin{prop}\label{Prop:Unique}
Fix $\alpha \in \{P^i_w h_2, P^i_w h_2^2, P^i_w h_2^3, P^i_w h_4 h_1^3 : i \geq 0\}$ and take $(s,f,w) \in \n \times \n \times \n$ such that $\alpha \in \Ext^{s,f,w}_{A^\r}(\m_2^\r,\m_2^\r)$. There are no classes in higher $\rho$-Bockstein filtration which can detect a class in $\Ext^{s,t,w}_{A^\r}(\m_2^\r,\m_2^\r)$. 
\end{prop}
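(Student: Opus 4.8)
The plan is to run the entire argument through the Chow degree $c(-) = s+f-2w$, which is additive for products and Massey products in $\Ext$ and which is a well-defined grading on each page of the $\rho$-Bockstein spectral sequence. The one thing to check is that every class in the list $\{P^i_w h_2, P^i_w h_2^2, P^i_w h_2^3, P^i_w h_4 h_1^3\}$ lies in Chow degree $0$; once that is established, the statement is essentially formal, because nothing in positive $\rho$-Bockstein filtration can contribute in Chow degree $0$.

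First I would record the relevant Chow degrees from the May degrees \eqref{Eqn:MayDeg}. Every generator $h_n = h_{1,n}$ with $n \geq 1$ has Chow degree $0$, so $c(h_2) = c(h_2^2) = c(h_2^3) = 0$ and $c(h_4 h_1^3) = 0$. Since Massey products in $\Ext$ are additive with respect to Chow degree and $c(h_4) = c(h_1) = 0$, the operator $P_w(-) = \langle h_4, h_1^4, -\rangle$ preserves Chow degree. Hence $c(\alpha) = 0$ for every $\alpha$ in the list, so the tridegree $(s,f,w)$ appearing in the statement satisfies $s+f-2w = 0$. (This is exactly the feature that distinguishes the present situation from that of Section \ref{Sec:2i}: there the periodicity operator is $\langle h_3, h_0^4, -\rangle$ and $c(h_0) = 1$, so $v_1$-periodic classes live in increasingly positive Chow degree and the delicate analysis of Lemma \ref{Lem:rho2} and Proposition \ref{Cor:Pntau} is forced; here the Chow degree never moves.)

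Next I would recall that $\Ext_{A^\c}(\m_2^\c,\m_2^\c)$, and hence $\Ext_{A^\r}(\m_2^\r,\m_2^\r)$, is concentrated in non-negative Chow degree: in the $\c$-motivic May spectral sequence the generators $\tau$, $h_{i,0}$, and $h_{i,j}$ with $j > 0$ have Chow degrees $2$, $1$, and $0$, and May differentials preserve Chow degree, so the abutment is concentrated in Chow degrees $\geq 0$; passing up the $\rho$-Bockstein spectral sequence \eqref{Eqn:rhoBSS}, whose $E_1$-page is $\Ext_{A^\c}(\m_2^\c,\m_2^\c)[\rho]$ with $c(\rho) = 1$, keeps us in Chow degree $\geq 0$, so $\Ext_{A^\r}(\m_2^\r,\m_2^\r)$ is concentrated in Chow degree $\geq 0$ as well. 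Now suppose some class in $\Ext^{s,f,w}_{A^\r}(\m_2^\r,\m_2^\r)$ lay in positive $\rho$-Bockstein filtration; then it would be $\rho$-divisible, hence the image of a class in $\Ext^{s+1,f,w+1}_{A^\r}(\m_2^\r,\m_2^\r)$, a group of Chow degree $(s+1)+f-2(w+1) = c(\alpha) - 1 = -1$, which is zero. This contradiction proves the proposition. I do not expect a genuine obstacle: the only real work is the degree bookkeeping of the second paragraph confirming that $P_w$ and the four seed classes all sit in Chow degree $0$, which is immediate from \eqref{Eqn:MayDeg}.
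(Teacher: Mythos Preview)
Your proof is correct and considerably more efficient than the paper's own argument. Both proofs begin the same way: a contribution in higher $\rho$-Bockstein filtration would have the form $\rho^m x$ with $m \geq 1$ and $x \in \Ext_{A^\c}^{s+m,f,w+m}(\m_2^\c,\m_2^\c)$, and one must show no such $x$ exists. From there the two diverge. The paper introduces the ratio $r(i,j) = s(h_{i,j})/mw(h_{i,j})$ on May generators, argues that any monomial in the correct tridegree must contain a factor of $h_1$ or $h_2$, and then rules out each possibility by a separate analysis using low-dimensional $\Ext$ computations, Theorem~\ref{Thm:Ang}, and an Adams-filtration count. Your argument replaces all of this with a single observation: every $\c$-motivic May generator has Chow degree $\geq 0$, so $\Ext_{A^\c}$ is concentrated in Chow degree $\geq 0$; since $c(h_n)=0$ for $n\geq 1$ and Massey products are Chow-additive, the four seed classes and the operator $P_w$ all sit in Chow degree $0$, whence $c(x) = c(\alpha) - m = -m < 0$ forces $x = 0$.

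What your approach buys is transparency: the vanishing is a formal consequence of a grading inequality rather than a case analysis, and it makes explicit \emph{why} the $w_1$-periodic situation is so much cleaner than the $v_1$-periodic one in Section~\ref{Sec:2i} (there $c(h_0)=1$, so $P_\r$ increases Chow degree and the argument genuinely cannot be this short). The paper's route, by contrast, keeps closer to the May spectral sequence and the $r(i,j)$-analysis already used elsewhere, but at the cost of a longer and less conceptual argument.
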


\begin{exm}
The case $i=0$ follows for $h_2, h_2^2$, and $h_2^3$ by inspection of \cite[Fig. 3]{DI16a} and for $h_4 h_1^3$ by inspection of the machine calculations of Knight Fu and Glen Wilson \cite{WilCharts}. 
\end{exm}

\begin{proof}[Proof of Proposition \ref{Prop:Unique}]
Fix $\alpha \in \Ext^{s,f,w}_{A^\r}$ to be one of the classes above, and write $f = 4k+\ell$ with $k \geq 0$ and $1 \leq \ell \leq 4$. Explicitly, we have
\begin{equation}
\begin{split}
s = \begin{cases}
20k + 3 \quad & \text{ if } \ell=1, \\
20k+6 \quad & \text{ if } \ell=2, \\
20k+9 \quad & \text{ if } \ell=3, \\
20k+18 \quad & \text{ if } \ell=4,
\end{cases}
\end{split}
\quad \quad \text{and} \quad \quad
\begin{split}
w = \begin{cases} 
12k+2 \quad & \text{ if } \ell=1, \\
12k+4 \quad & \text{ if } \ell=2, \\
12k+6 \quad & \text{ if } \ell=3, \\
12k+ 11 \quad & \text{ if } \ell=4.
\end{cases}
\end{split}
\end{equation}
The possible contributions to $\Ext^{s,f,w}_{A^\r}$ with higher $\rho$-Bockstein filtration in the $\rho$-Bockstein spectral sequence thus have the form $\rho^m x$ where $x \in \Ext^{s+m, f, w+m}_{A^\c}$ with $m \geq 1$. We will show that any $x$ in these tridegrees must necessarily be zero. To do so, we use the $\c$-motivic May spectral sequence \cite{DI10,Isa14} converging to $\Ext^{***}_{A^\c}$ to show $\Ext^{s+m, f, w+m}_{A^\c} = 0$. 

Recall that the $E_1$-page of the motivic May spectral sequence is generated over $\f_2[\tau]$ by $\{h_{i,j} : i > 0, j \geq 0\}$ with $|h_{i,0} | = (2^i-2,1,2^{i-1}-1)$ and $|h_{i,j}| = (2^j(2^i-1)-1,1,2^{j-1}(2^i-1))$. Of course we can find polynomials in the May generators which have the correct stem, weight, or Adams filtration to detect a nontrivial $x$, but we claim that there are no polynomials which simultaneously have the correct stem, weight, \emph{and} Adams filtration. 

We introduce a function which relates stem and motivic weight. Let $r(i,j)$ denote the ratio of stem to Milnor-Witt degree for $h_{i,j}$, i.e. let $r(i,j) = s(h_{i,j})/(s(h_{i,j})-w(h_{i,j}))$. Extend $r$ to monomials by the rule $r(ab) = r(a) + r(b)$ and observe that the summands in a polynomial contributing to a fixed tridegree in the May spectral sequence all give the same value of $r$. With these conventions, we observe that any polynomial $P$ which detects $x$ must have $r(P)$ greater than approximately $5/2$. On the other hand, we have $r(i,j) > r(2,1)$ for all $i>0$, $j\geq 0$, except for $r(1,1) = \infty$ and $r(1,2) = 3$. Moreover, for all pairs $(i,j)$, we have $11/6 \leq r(i,j) \leq 7/3$ unless $(i,j) \in S := \{(1,1), (1,2), (2,1)\}$. Explicit calculations then imply that any polynomial detecting $x$ necesarily contains $h_{ij}$ for some $(i,j) \in \{(1,1), (1,2)\}$, since otherwise $r(P)$ would be too small. 

So far we have argued using the relationship between stem and motivic weight that any polynomial detecting $x$ must contain $h_1$ or $h_2$. We now use Adams filtration and properties of the $\c$-motivic May spectral sequence to show that any such polynomial detects zero. Let $P$ denote a polynomial which could detect $x$ and let $y$ denote any monomial obtained from $P$ by dividing any of its summands by $h_1$ and $h_2$ as much as possible. We will show that any such $y$ is zero, thereby proving $P$ is zero and thus $x$ is zero. Note $h_1h_2 = 0$ in the May spectral sequence starting at $E_2$, so we need only consider mutliplication by $h_1$ and $h_2$ separately. 


We first consider multiplication by $h_2$ and show $y$ must be zero by low-dimensional $\Ext$ computations. We have $h_2^4 = 0$ on the $E_2$-page of the motivic May spectral sequence, so we may multiply $y$ by at most $h_2^3$ to obtain (a summand of) $P$. The class $h_2^3$ has stem $9$, Milnor-Witt stem $3$, and $r(h_2^3) = 3$. Basic arithmetic then shows that if $y$ has stem greater than $9$, we cannot obtain a class in the same stem and Milnor-Witt stem as $x$ exclusively via $h_2$-multiplication. On the other hand, explicitly searching $\Ext^{**}_{A^\c}(\m_2^\c,\m_2^\c)$ (depicted in \cite{Isa14b}) in stems less than or equal to $9$ shows that there are no nontrivial classes $y$ which could be mutliplied by $h_2^\ell$, $\ell \leq 3$, to obtain a summand of $P$. Thus there are no classes $y$ which could give rise to $x$ through $h_2$-multiplication. 

We now analyze multiplication by $h_1$ and argue that $y$ must be zero using Adams filtration. If $y$ has stem $a$ and Milnor-Witt stem $b$, then $h_1^c y$ has stem $a+c$ and Milnor-Witt stem $b$. For $h_1^c y $ to have the same Milnor-Witt stem as $x$, we must have $b = w$. In the edge case, we then have $7a = 4b = 4w$, or $a = 4/7 w$. On the other hand, for $h_1^c y $ to have the same as $x$, we must have $c = s - a$. Substituting the values of $s$ and $w$ above, we have 
$$c = 20k - \epsilon - 4/7 w = 20k - \epsilon - 4/7(12k + \delta) = 20k - \epsilon - 48/7 k - 4/7 \delta \geq 13k - \epsilon - \delta \geq 11k$$
for all $k \geq 1$, where $(\epsilon, \delta) \in \{(3,2),(6,4),(9,6),(18,11)\}$. In particular, the Adams filtration of $h_1^c y$ is at least $11k > 4k + 4 \geq f$ for all $k \geq 1$. Therefore we cannot obtain $x$ from $y$ by $h_1$-multiplication and we have ruled out all possible ways of obtaining $x$ from $y$. We conclude that $x =0$. 
\end{proof}

\begin{cor}\label{Cor:Unique}
The following statements hold:
\begin{enumerate}
\item There is no indeterminacy in the names of the classes $P^i_w h_2$, $P^i_w h_2^2$, $P^i_w h_2^3$, and $P^i_w h_4 h_1^3$ in $\Ext^{***}_{A^\r}(\m_2^\r,\m_2^\r)$. 
\item Base-change $f^* : \Motr \to \Motc$ along $f : Spec(\c) \to Spec(\r)$ sends each class above to the class of the same name in $\Ext^{***}_{A^\c}(\m_2^\c, \m_2^\c)$. 
\item If a class above is a permanent cycle in the $\r$-motivic Adams spectral sequence, then it survives the $\r$-motivic Adams spectral sequence and detects a nontrivial class in $\pi_{**}^\r(S^{0,0})$. 
\end{enumerate}
\end{cor}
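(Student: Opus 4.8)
The three assertions are all formal consequences of Proposition \ref{Prop:Unique}, the preceding corollary (which records that each $P^i_w x$ is a single nonzero $\rho$-Bockstein permanent cycle and is $\rho$-torsion free), and naturality of Massey products under base change. Write $f^* : \Ext^{***}_{A^\r}(\m_2^\r,\m_2^\r) \to \Ext^{***}_{A^\c}(\m_2^\c,\m_2^\c)$ for the map on $\Ext$ induced by $f : \Spec(\c) \to \Spec(\r)$, i.e.\ the map setting $\rho = 0$ and fixing the $h_i$ and $\tau$. The first point I would extract is that $f^*$ is \emph{injective} in each tridegree $(s,f,w)$ occupied by one of the classes $P^i_w h_2, P^i_w h_2^2, P^i_w h_2^3, P^i_w h_4 h_1^3$: Proposition \ref{Prop:Unique} says that in such a tridegree $\Ext^{s,f,w}_{A^\r}$ is detected entirely in $\rho$-Bockstein filtration zero; since the $\rho$-Bockstein differentials increase $\rho$-filtration, filtration zero is hit by no differential, so the edge homomorphism of the $\rho$-Bockstein spectral sequence identifies $\Ext^{s,f,w}_{A^\r}$ with a subgroup of $\Ext^{s,f,w}_{A^\c}$, and that edge homomorphism is exactly $f^*$.

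Given this, part (2) follows by induction on $i$: naturality of Massey products gives $f^*\langle h_4, h_1^4, P^{i-1}_w x\rangle \subseteq \langle h_4, h_1^4, f^* P^{i-1}_w x\rangle$, the base case $f^* x = x$ for $x \in \{h_2, h_2^2, h_2^3, h_4 h_1^3\}$ is immediate, and so $f^*(P^i_w x)$ lies in the $\c$-motivic coset $P^i_w x$; since $P^i_w x \neq 0$ in $\Ext_{A^\r}$ by the preceding corollary and $f^*$ is injective in that tridegree, $f^*(P^i_w x)$ is the unique nonzero $\c$-motivic class of the same name. Part (1) is then the same argument applied to indeterminacy: the indeterminacy of the $\r$-motivic Massey product $\langle h_4, h_1^4, P^{i-1}_w x\rangle$ is a subgroup of the tridegree occupied by $P^i_w x$, on which $f^*$ is injective, and by naturality it maps into the indeterminacy of the $\c$-motivic Massey product of the same name, which is trivial by \cite{And14} (ultimately via Theorem \ref{Thm:Double} and Adams' classical families \cite{Ada66b}); hence the $\r$-motivic indeterminacy vanishes. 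For part (3) I would use that base change is realized by a functor $SH_\r \to SH_\c$ carrying the mod $2$ motivic Eilenberg--MacLane spectrum to its $\c$-motivic analogue, hence inducing a map from the $\r$-motivic to the $\c$-motivic Adams spectral sequence. If one of the classes above, say $z$, were a permanent cycle but equal to $d_r(y)$, then applying the comparison map would give $d_r(f^* y) = f^* z$, which by part (2) is the $\c$-motivic class of the same name; but Andrews \cite{And14} proved that class survives the $\c$-motivic Adams spectral sequence, so it is not a boundary --- a contradiction. Thus $z$ survives to a nonzero element of $E_\infty$, and by convergence of the ($2$-complete) $\r$-motivic Adams spectral sequence it detects a nontrivial class of $\pi^\r_{**}(S^{0,0})$.

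I do not expect a serious obstacle here: once Proposition \ref{Prop:Unique} is granted, everything reduces to naturality and standard spectral-sequence formalism. The one genuinely delicate point is making precise that $f^*$ coincides with the $\rho$-Bockstein edge homomorphism (so that filtration-zero concentration yields injectivity), together with the input that the $\c$-motivic operators $P^i_w(-)$ are single-valued with trivial indeterminacy; both are handled exactly as in the proof of the preceding corollary, via Theorem \ref{Thm:Double} and Adams' classical families \cite{Ada66b}.
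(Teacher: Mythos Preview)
Your argument is correct and is exactly the intended elaboration: the paper states this result as an immediate corollary of Proposition~\ref{Prop:Unique} and the preceding corollary, without proof, and the mechanism you describe (vanishing of higher $\rho$-Bockstein filtration in the relevant tridegree forces $f^*$ to be injective there, after which everything is naturality under base-change) is precisely how one unpacks it.

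One small comment: for part~(1) you work somewhat harder than necessary. The preceding corollary already records that each $\r$-motivic Massey product $P^i_w x$ consists of a single nontrivial element, via Theorem~\ref{Thm:Double} and Adams' classical result; so the Massey-product indeterminacy is known to vanish before Proposition~\ref{Prop:Unique} enters. What Proposition~\ref{Prop:Unique} adds, and what ``no indeterminacy in the names'' is really pointing at, is that the tridegree itself contains nothing beyond this single class (no hidden contributions from higher $\rho$-filtration), so the symbol $P^i_w x$ unambiguously names an element of $\Ext^{***}_{A^\r}$. Your injectivity argument proves this too, of course. Also, your citation of \cite{And14} for trivial $\c$-motivic indeterminacy is slightly off---Andrews works in the Adams--Novikov spectral sequence---but the substance is available: the $\c$-motivic classes $P^i_w x$ in $\Ext_{A^\c}$ survive and detect Andrews' elements, which is what you actually use in part~(3).
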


We now show that all of the classes in $\r$-motivic $\Ext$ listed above are permanent cycles in the $\r$-motivic Adams spectral sequence; we thank Mark Behrens for suggesting the following argument. Recall from \cite{And14} that the $\c$-motivic class $w^{4i}_1 \nu^j$, $i \geq 0$, $1 \leq j \leq 3$, is represented by the composite
$$S^{s,t} \xrightarrow{\widetilde{\nu^j}} C\eta \xrightarrow{w^{4i}_1} \Sigma^{-20i,-12i}C\eta \to S^{-20i+2,-12i+1}$$
where $\widetilde{\nu^j} \in \pi_{3j+2,2j+1}^\c(C\eta)$ is a lift of $\nu^j$ to the top cell of $C\eta$. Similarly, the $\c$-motivic class $w^{4i}_1 \eta^2 \eta_4$, $i \geq 1$, is represented by the composite
$$S^{0,0} \xrightarrow{\iota} C\eta \xrightarrow{w^{4i}_1} \Sigma^{-20i,-12i} C\eta \to S^{-20i+2,-12i+1}$$
where $\iota :S^{0,0} \to C\eta$ is the inclusion of the bottom cell. 

Consider the diagram
$$
\begin{tikzcd}
\pi^\r_{**}(C\eta)\arrow{r}  \arrow{d} & \pi_{**}^\c(C\eta) \arrow{d} \\
\pi^\r_{**}(S^{2,1}) \arrow{r} & \pi_{**}^\c(S^{2,1}). 
\end{tikzcd}
$$
where the horizontal arrows are base-change along $f : \r \to \c$ and the vertical arrows are projection onto the top cell.\footnote{We use $f$ instead of $i$ for the inclusion $\r \to \c$ in this section to avoid confusion with the inclusion $\iota : S^{0,0} \to C\eta$.} The classes $w^{4i}_1 \alpha \in \pi_{**}^\c(S^{2,1})$ with $\alpha \in \{\nu^j, \eta^2 \eta_4 : 1 \leq j \leq 3, i \geq 0\}$ lift to the classes $\widetilde{w^{4i}_1 \alpha} \in \pi_{**}^\c(C\eta)$ which are represented by the composites above where we do not project onto the top cell. 

We will show that our new $\r$-motivic classes $P^i_w x$ survive the $\r$-motivic Adams spectral sequence in two steps. First, we show that the corresponding $\r$-motivic classes in the $\r$-motivic Adams spectral sequence for $C\eta$ survive using base-change to $\c$. We then argue that our classes (obtained from the $C\eta$ classes by projection onto the top cell) survive by naturality.

\begin{lem}
The classes $\widetilde{w^{4i}_1 \alpha} \in \pi_{**}^\c(C\eta)$ lift uniquely to classes $\widetilde{w^{4i}_1 \alpha}' \in \pi_{**}^\r(C\eta)$. Moreover, the image of $\widetilde{w^{4i}_1 \alpha}'$ under the projection onto the top cell of $C\eta$ satisfies the following:
\begin{enumerate}
\item It is a nontrivial class in $\pi_{**}^\r(S^{0,0})$. 
\item It base-changes to $w^{4i}_1 \alpha$. 
\item It is detected in the $\r$-motivic Adams spectral sequence by $P^i_w x$ where $x = h_2^j$ if $\alpha = \nu^j$ and $x = h_1^3 h_4$ if $\alpha = \eta^2 \eta_4$. 
\end{enumerate} 
\end{lem}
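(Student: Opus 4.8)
The strategy is to transport Andrews' $\c$-motivic construction to $\r$ via base-change along $f : \r \to \c$, running the same comparison-and-vanishing argument used for the $v_1$-periodic families in Section \ref{Sec:2i}.

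First I would record the relevant algebra of $C\eta$. The cofiber sequence $S^{1,1} \xrightarrow{\eta} S^{0,0} \xrightarrow{\iota} C\eta \xrightarrow{p} S^{2,1}$ gives a short exact sequence of $A^\r$-modules
$$0 \to \Sigma^{2,1}\m_2^\r \to H^{**}(C\eta) \to \m_2^\r \to 0$$
whose extension class is $h_1$, hence a long exact sequence relating $\Ext^{***}_{A^\r}(\m_2^\r,H^{**}(C\eta))$ to $\Ext^{***}_{A^\r}(\m_2^\r,\m_2^\r)$ with connecting map multiplication by $h_1$ (and the analogous statement over $\c$). Feeding the vanishing statements of Proposition \ref{Prop:Unique} for $\Ext^{***}_{A^\r}(\m_2^\r,\m_2^\r)$ in the tridegree of $P^i_w x$ and in the nearby tridegree of the bottom-cell contribution into this long exact sequence, I would show that in the tridegrees occupied by the would-be lifts of the $\c$-motivic classes detecting $\widetilde{w^{4i}_1\alpha}$, the $\rho$-Bockstein spectral sequence $\Ext^{***}_{A^\c}(\m_2^\c,H^{**}(C\eta))[\rho] \Rightarrow \Ext^{***}_{A^\r}(\m_2^\r,H^{**}(C\eta))$ is concentrated in $\rho$-Bockstein filtration zero. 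In particular, the $\c$-motivic detecting classes lift uniquely to classes $\widetilde{P^i_w x}' \in \Ext^{***}_{A^\r}(\m_2^\r,H^{**}(C\eta))$, and base-change $\pi_{**}^\r(C\eta) \to \pi_{**}^\c(C\eta)$ is injective in these tridegrees.

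Next I would prove that each $\widetilde{P^i_w x}'$ is a permanent cycle in the $\r$-motivic Adams spectral sequence for $C\eta$, by the differential-chasing technique of Section \ref{Sec:2i}: a hypothetical differential $d_r(\widetilde{P^i_w x}') = z$ maps under base-change to $d_r$ of the $\c$-motivic detecting class, which vanishes since $\widetilde{w^{4i}_1\alpha}$ is a permanent cycle by \cite{And14}; hence $z$ base-changes to zero, so $z$ is a $\rho$-multiple in the $\rho$-Bockstein spectral sequence, contradicting the concentration in $\rho$-Bockstein filtration zero established above. The same comparison rules out $\widetilde{P^i_w x}'$ being a boundary, so it survives and detects a nonzero class $\widetilde{w^{4i}_1\alpha}' \in \pi_{**}^\r(C\eta)$; its base-change is detected by the $\c$-motivic class detecting $\widetilde{w^{4i}_1\alpha}$, and since base-change is injective here this forces $\widetilde{w^{4i}_1\alpha}'$ to base-change exactly to $\widetilde{w^{4i}_1\alpha}$ and to be its unique lift. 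Finally, applying the top-cell projection $p_*$ and using naturality of base-change and of the Adams spectral sequence, $p_*(\widetilde{w^{4i}_1\alpha}')$ is detected by the element of $\Ext^{***}_{A^\r}(\m_2^\r,\m_2^\r)$ whose base-change is Andrews' class $P^i_\c x$; by the uniqueness of names (Corollary \ref{Cor:Unique}) this element is $P^i_w x$, giving (3). Then (2) follows from $p_*(\widetilde{w^{4i}_1\alpha}) = w^{4i}_1\alpha$ together with naturality, and (1) follows since $P^i_w x \neq 0$ in $\Ext^{***}_{A^\r}(\m_2^\r,\m_2^\r)$, so the class it detects is nonzero.

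The main obstacle I expect is the first step: verifying that the vanishing argument of Proposition \ref{Prop:Unique}, which was written for $\Ext^{***}_{A^\r}(\m_2^\r,\m_2^\r)$ in the precise tridegrees of $P^i_w x$, really does rule out positive-$\rho$-filtration classes in $\Ext^{***}_{A^\r}(\m_2^\r,H^{**}(C\eta))$ in the tridegrees of $\widetilde{P^i_w x}'$ and of all its potential Adams differentials. The long exact sequence introduces the extra family of tridegrees coming from the bottom cell, and for $\alpha = \eta^2\eta_4$, where the construction begins at the bottom cell rather than at a top-cell lift, one must track which end of the long exact sequence is relevant; both cases require re-running the numerical estimates ($r$-invariants of May generators, Adams-filtration growth, Theorem \ref{Thm:Ang}) of Proposition \ref{Prop:Unique} in a slightly shifted range.
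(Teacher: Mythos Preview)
Your overall strategy---lift the $\c$-motivic detecting classes in $\Ext^{***}_{A^\c}(\m_2^\c,H^{**}(C\eta))$ to $\r$ via a modification of Proposition~\ref{Prop:Unique}, then use base-change to rule out differentials and boundaries---is the same as the paper's. The paper frames the $C\eta$ computation via the algebraic Atiyah--Hirzebruch spectral sequence (writing classes as $P^i_w h_2^j[2,1]$ and $P^i_w h_1^3 h_4[0,0]$) rather than the long exact sequence, but this is equivalent.

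The one place where you miss the paper's key simplification is the permanent-cycle step. You propose to show that a hypothetical target $z$ of $d_r$ base-changes to zero, hence lies in positive $\rho$-Bockstein filtration, and then to contradict this by re-running the vanishing argument of Proposition~\ref{Prop:Unique} in the infinitely many tridegrees $(s-1,f+r,w)$ for $r \geq 2$. You correctly flag this as the main obstacle. The paper avoids it entirely with a structural observation: the $r$-invariant analysis from the proof of Proposition~\ref{Prop:Unique} already shows that any class in the target tridegree of such a differential must be $h_1$-divisible, and since $\eta$ is null on $C\eta$ one has $h_1 = 0$ in $\Ext^{***}_{A^\r}(\m_2^\r,H^{**}(C\eta))$. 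Hence there are simply no nonzero targets, for any $r$, and no further $\rho$-Bockstein bookkeeping is needed. This dissolves your obstacle in one line; once you add it, your argument and the paper's coincide.
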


\begin{proof}
We begin with an outline of the proof. First, we will define classes in the $\c$-motivic Adams spectral sequence which detect $\widetilde{w^{4i}_1 \alpha} \in \pi_{**}^\c(C\eta)$. Second, we will produce unique classes in the $\r$-motivic Adams spectral sequence for $C\eta$ which base-change to these. Third, we will show that our new $\r$-motivic classes are permanent cycles in the $\r$-motivic Adams spectral sequence for $C\eta$, thereby proving the first statement in the lemma using naturality of the motivic Adams spectral sequence. Last, we will project onto the top cell of $C\eta$ and use base-change to conclude (1)-(3). 

The groups $\pi_{**}^\c(C\eta)$ may be computed using the motivic Adams spectral sequence
$$\Ext^{***}_{A^\c}(\m_2^\c,H^{**}(C\eta)) \Rightarrow \pi_{**}^\c(C\eta).$$
The $\Ext$-group above may be computed by means of the algebraic Atiyah--Hirzebruch spectral sequence arising from the filtration of $H^{**}(C\eta)$ by topological dimension; a class coming from the top cell will be denoted $x[2,1]$ and a class coming from the bottom cell will be denoted $x[0,0]$, where $x \in \Ext^{***}_{A^\c}(\m_2^\c,\m_2^\c)$. Since $\widetilde{\nu^j}$ is a lift to the top cell of $C\eta$, we see that $\widetilde{w^{4i}_1 \nu^j}$ is represented by $P^i_w h_2^j[2,1]$, and similarly $\widetilde{w^{4i}_1 \eta^2 \eta_4}$ is represented by $P^i_w h_1^3 h_4[0,0]$.\footnote{These names follow from the computations of \cite[Sec. 5]{Qui17} where analogous names were given in the $C\tau$-linear motivic Adams spectral sequence, i.e. the motivic Adams spectral sequence based on $\overline{H} = H \wedge C\tau$. The names we give here are obtained by lifting along the map of spectral sequences induced by the map $H \to H \wedge C\tau$.}

The groups $\pi_{**}^\r(C\eta)$ may also be computed using the motivic Adams spectral sequence
$$\Ext^{***}_{A^\r}(\m_2^\r,H^{**}(C\eta)) \Rightarrow \pi_{**}^\r(C\eta).$$
A slight modification of the analysis in the proof of Proposition \ref{Prop:Unique} shows that for all $i \geq 0$, the classes $P^i_w h_1^3 h_4[0,0]$ and $P^i_w h_2^j[2,1]$, $1 \leq j \leq 3$, are nonzero in $\Ext^{***}_{A^\r}(\m_2^\r,H^{**}(C\eta))$ and that there are no classes in the same tridegrees in higher $\rho$-Bockstein filtration. In other words, these classes are unique lifts of the analogous $\c$-motivic $\Ext$ classes. 
 
We now show that our new $\r$-motivic classes are permanent cycles. First, observe that Adams differentials decrease stem by one, decrease Milnor-Witt stem by one, and increase Adams filtration. In the proof of Proposition \ref{Prop:Unique}, we saw that multiplication by $h_1$ was the only means of producing classes in the same stem and Milnor-Witt stem as $P^i_w x$, $x \in \{h_2, h_2^2, h_2^3, h_1^3 h_4\}$ and $i \geq 0$, if we start with a class detected in the $\c$-motivic May spectral sequence by a polynomial in $h_{ij}$'s with $r(i,j)<13/6$. The same argument implies that any possible target $y \in \Ext^{***}_{A^\r}(\m_2^\r,\m_2^\r)$ of an Adams differential $d_r(P^i_w x) = y$ would be $h_1$-divisible. However, since $h_1$ detects $\eta$, we see that $h_1 = 0$ in $\Ext^{***}_{A^\r}(\m_2^\r, H^{**}(C\eta)).$ Therefore there are no nonzero targets for Adams differentials and $P^i_w x$ is a permanent cycle. 

Since $P^i_w x \in \Ext^{***}_{A^\r}(\m_2^\r, H^{**}(C\eta))$ base-changes to $P^i_w x \in \Ext^{***}_{A^\c}(\m_2^\c,H^{**}(C\eta))$ which survives to detect $\widetilde{w^{4i}_1 \alpha} \in \pi_{**}^{\c}(C\eta)$, we see that $P^i_w x \in \Ext^{***}_{A^\r}(\m_2^\r, H^{**}(C\eta))$ survives to detect a nontrivial class $\widetilde{w^{4i}_1 \alpha}' \in \pi_{**}^\r(C\eta)$. This proves the first sentence of the lemma. 

Now, the image of $\widetilde{w^{4i}_1 \alpha}'$ under the upper-right composite is $w^{4i}_1 \alpha \in \pi_{**}^\c(S^{2,1})$. Since its image is nonzero, we see that $\widetilde{w_1^{4i} \alpha'}$ is nonzero, proving (1). Since the classes detecting these homotopy classes in the $\r$-motivic Adams spectral sequence base-change to the analogous classes in the $\c$-motivic Adams spectral sequence, the homotopy classes base-change to each other, proving (2). The image of $\widetilde{w^{4i}_1 \alpha}'$ under the projection onto the top cell of $C\eta$ is a nontrivial class which we call $w^{4i}_1 \alpha'$, and the image of $w^{4i}_1 \alpha'$ under base-change is $w^{4i}_1 \alpha$. By Corollary \ref{Cor:Unique}, $P^i_w x$ is the unique class in the $\r$-motivic Adams spectral sequence which base-changes to the class $P^i_w x$ in the $\c$-motivic Adams spectral sequence. Therefore $P^i_w x$ detects $w^{4i}_1 \alpha'$, which proves (3). 
\end{proof}

\begin{defin}
We define the following classes in $\pi_{**}^\r(S^{0,0})$:
\begin{enumerate}
\item For all $i \geq 0$ and $1 \leq j \leq 3$, define $w^{4i}_1 \nu^j$ to be the class detected by $P^i_w h_2^j$ in the $\r$-motivic Adams spectral sequence.
\item For all $i \geq 0$, define $w^{4i}_1 \eta^2 \eta_4$ to be the class detected by $P^i_w h_4 h_1^3$ in the $\r$-motivic Adams spectral sequence. 
\end{enumerate}
\end{defin}

We have shown the following:

\begin{thm}
For all $i \geq 0$, the classes $w^{4i}_1 \nu^j$, $1 \leq j \leq 3$, and $w^{4i}_1 \eta^2 \eta_4$ are nonzero in $\pi_{**}^\r(S^{0,0})$. Moreover, their images under the base-change functor $f^* : \Motr \to \Motc$ are Andrews' $w_1$-periodic elements with the same names. 
\end{thm}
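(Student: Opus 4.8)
The plan is to deduce the theorem directly from the preceding Lemma together with the Definition that immediately precedes the statement: essentially all of the substantive work — the $\rho$-Bockstein permanent cycle statements, the uniqueness/vanishing analysis of Proposition \ref{Prop:Unique}, and the $C\eta$-level lifting argument — has already been carried out, so what remains is to assemble these pieces and check that the two descriptions of the relevant homotopy classes agree.

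First I would recall that, by the Definition, $w^{4i}_1\nu^j$ (for $1\le j\le 3$) and $w^{4i}_1\eta^2\eta_4$ are by fiat the classes of $\pi_{**}^\r(S^{0,0})$ detected in the $\r$-motivic Adams spectral sequence by the permanent cycles $P^i_w h_2^j$ and $P^i_w h_4 h_1^3$ respectively. For this to be meaningful I need two inputs: that each $P^i_w x$ is a permanent cycle — supplied by the preceding Lemma, which proves this for the corresponding class in $\Ext^{***}_{A^\r}(\m_2^\r, H^{**}(C\eta))$ and then transports it across the top-cell projection $C\eta\to S^{2,1}$ by naturality — and that the name $P^i_w x$ is unambiguous in its tridegree, which is exactly Corollary \ref{Cor:Unique}(1), itself a consequence of Proposition \ref{Prop:Unique}. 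I would then identify the class of the Definition with the class $w^{4i}_1\alpha'$ produced in the Lemma as the projection onto the top cell of the unique lift $\widetilde{w^{4i}_1\alpha}'\in\pi_{**}^\r(C\eta)$: both are detected by $P^i_w x$, and by the uniqueness just invoked there is only one class with that detection, so they coincide.

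With this identification in hand, the two assertions of the theorem are precisely parts (1) and (2) of the preceding Lemma applied to this class: part (1) gives nonvanishing in $\pi_{**}^\r(S^{0,0})$, and part (2) — combined with Corollary \ref{Cor:Unique}(2), which identifies the base-change of $P^i_w x$ with the $\c$-motivic class of the same name detecting Andrews' $w^{4i}_1\alpha$ — gives that $f^*$ of our class is Andrews' $w_1$-periodic element of the same name. I expect the only real obstacle to be bookkeeping rather than mathematics: one must be sure that the vanishing statement of Proposition \ref{Prop:Unique} and Corollary \ref{Cor:Unique} has been established in enough generality to apply to the module $H^{**}(C\eta)$ as well as to $\m_2^\r$ (the Lemma asserts this via ``a slight modification of the analysis''), and that the naturality squares relating the $\r$- and $\c$-motivic Adams spectral sequences for $S^{0,0}$ and for $C\eta$ commute with the top-cell projections. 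Granting the preceding Lemma, the theorem is a one-line corollary, and I would present it as such, inheriting the indexing conventions ($i\ge 0$, $1\le j\le 3$) verbatim from the Definition.
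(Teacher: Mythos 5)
Your proposal matches the paper exactly: the paper introduces the theorem with ``We have shown the following,'' i.e.\ it is presented as an immediate assembly of the preceding Lemma (parts (1) and (2) giving nonvanishing and the base-change identification) together with the Definition and Corollary \ref{Cor:Unique}. The one caveat you flag — that Proposition \ref{Prop:Unique} must be adapted to the module $H^{**}(C\eta)$ — is precisely the point the paper handles inside the Lemma via its ``slight modification of the analysis'' remark, so nothing further is needed.
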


\subsection{$w_1$-periodic families in the $C_2$-equivariant stable stems}
We now construct the $C_2$-equivariant analogs of Andrews' $w_1$-periodic families. We begin with the following lemma. 

\begin{lem}
The equivariant Betti realization of $\eta^i \in \pi^\r_{i,i}(S^{0,0})$ is $\eta^i \in \pi^{C_2}_{i,i}(S^{0,0})$. 
\end{lem}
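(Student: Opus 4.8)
The plan is to reduce to the case $i=1$ using the monoidal structure of equivariant Betti realization, and then to identify $Re_{C_2}(\eta)$ with the $C_2$-equivariant Hopf map by comparing the two explicit geometric descriptions. First I would recall that $Re_{C_2} : \Motr \to \Sptc$ is strong symmetric monoidal \cite[Prop. 4.8]{HO16}, so the induced map on bigraded homotopy groups of the sphere is a ring homomorphism. In particular $Re_{C_2}(\eta^i) = Re_{C_2}(\eta)^i$ in $\pi^{C_2}_{**}(S^{0,0})$, and it suffices to prove that $Re_{C_2}(\eta) = \eta \in \pi^{C_2}_{1,1}(S^{0,0})$.

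For the case $i=1$, I would unwind the definitions recalled in the introduction. The $\r$-motivic Hopf map is the stable class of $\a^2_\r \setminus \{0\} \xrightarrow{\pi} \p^1_\r$, i.e. of a map $S^{3,2} \to S^{2,1}$ in $\Motr$. Equivariant Betti realization sends an $\r$-scheme to its space of $\c$-points with the Galois $C_2$-action; thus it carries $\a^2_\r \setminus \{0\}$ to $\c^2 \setminus \{0\} \simeq S(\c^2)$ with the complex-conjugation action, and $\p^1_\r$ to $\c\p^1$ with the complex-conjugation action, compatibly with the projection $\pi$. Together with the identifications $Re_{C_2}(S^{1,0}) \simeq S^1$ and $Re_{C_2}(S^{1,1}) \simeq S^\sigma$ (the $C_2$-equivariant refinement of Morel--Voevodsky's observation $Re_B(S^{1,1}) \simeq S^1$ \cite{MV99}, also treated in \cite{HO16}), this shows that $Re_{C_2}$ takes the defining map of the $\r$-motivic $\eta$ to exactly the composite $S^{3,2} \simeq S(\c^2) \xrightarrow{\pi} \c\p^1 \simeq S^{2,1}$ defining the $C_2$-equivariant $\eta$. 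Hence $Re_{C_2}(\eta) = \eta$, and combining with the previous paragraph gives $Re_{C_2}(\eta^i) = \eta^i$.

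The only point requiring care is the bookkeeping of gradings: one must check that under $Re_{C_2}$ the $(s,w)$-indexed group $\pi^\r_{i,i}(S^{0,0})$ maps to $\pi^{C_2}_{i,i}(S^{0,0})$ in the $RO(C_2)$-grading convention where $\rho \in RO(C_2)$ has bidegree $(2,1)$. This is precisely the compatibility built into Heller--Ormsby's comparison of motivic weight with the sign-representation grading \cite{HO16}, so I expect this to be the main (and essentially only) obstacle, and once it is invoked there is nothing further to do.
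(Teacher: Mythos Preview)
Your proof is correct and takes a genuinely different route from the paper. The paper argues algebraically: equivariant Betti realization induces a map of cobar complexes (via the map of Hopf algebroids $(A^\r)^\vee \to (A^{C_2})^\vee$, which is the identity on $\xi_1$), so the class $[\xi_1 | \cdots | \xi_1]$ detecting $\eta^i$ in the $\r$-motivic Adams spectral sequence is sent to the class with the same name detecting $\eta^i$ in the $C_2$-equivariant Adams spectral sequence, and the result follows by naturality. You instead reduce to $i=1$ using that $Re_{C_2}$ is strong symmetric monoidal, and then match the explicit unstable models $\a^2_\r \setminus \{0\} \to \p^1_\r$ and $S(\c^2) \to \c\p^1$ directly under passage to $\c$-points with Galois action. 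Your approach is more elementary and self-contained---it avoids invoking the Adams spectral sequence entirely and the implicit step that detection by $h_1^i$ pins down $\eta^i$ up to higher filtration---while the paper's approach has the advantage of extending uniformly to any class whose cobar representative is visibly preserved, which is useful for its later computations (e.g.\ Lemma \ref{geomASS}). Both are valid; yours is the cleaner argument for this specific lemma.
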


\begin{proof}
Let $C^\r$ denote the cobar complex whose homology is the $E_2$-page of the $\r$-motivic Adams spectral sequence and let $C^{C_2}$ denote the cobar complex whose homology is the $E_2$-page of the $C_2$-equivariant Adams spectral sequence. Equivariant Betti realization induces a map between these cobar complexes which sends $[\xi_1 | \cdots | \xi_1] \in C^\r$ to $[\xi_1 | \cdots | \xi_1] \in C^{C_2}$ by \cite[Section 3]{DI16}. Since these classes detect $\eta^i$ in the $\r$-motivic and $C_2$-equivariant settings, the result follows.
\end{proof}

\begin{lem}\footnote{We first learned of this lemma from Mike Hill.}
The image of $\eta^i \in \pi^{C_2}_{i,i}(S^{0,0})$ under the geometric fixed points functor $\Phi^{C_2}$ is $2^i \in \pi_0(S^0)$ for all $i \geq 1$. 
\end{lem}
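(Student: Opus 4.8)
The plan is to identify the geometric fixed points functor $\Phi^{C_2}$ on $\pi_{**}^{C_2}(S^{0,0})$ concretely enough to evaluate it on $\eta^i$. Recall that $\eta \in \pi_{1,1}^{C_2}(S^{0,0})$ is the $C_2$-equivariant Hopf map, the stable class of $S(\c^2) \to \c\p^1$ with $C_2$ acting by complex conjugation. Since geometric fixed points is symmetric monoidal and $\eta^i$ is the $i$-fold composite $S^{i,i} \xrightarrow{\eta^{\wedge i}} S^{0,0}$, it suffices to compute $\Phi^{C_2}(\eta)$ and then observe $\Phi^{C_2}(\eta^i) = \Phi^{C_2}(\eta)^i$.

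First I would compute $\Phi^{C_2}(\eta)$ directly from the geometry. The $C_2$-space $S(\c^2)$ with the conjugation action has fixed points $S(\r^2) \cong S^1$, and the $C_2$-space $\c\p^1$ with conjugation has fixed points $\r\p^1 \cong S^1$; under the identifications $S^{3,2} \simeq S(\c^2)$ and $S^{2,1} \simeq \c\p^1$ as representation spheres, taking fixed points sends $S^{3,2} \mapsto S^1$ and $S^{2,1} \mapsto S^1$ (the fixed points of $S^{p + q\sigma}$ is $S^p$). So $\Phi^{C_2}(\eta)$ is represented by the map on fixed points $S^1 \to S^1$ induced by the Hopf fibration restricted to real points, namely $S(\r^2) \to \r\p^1$, which is the double cover $S^1 \to S^1$. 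Hence $\Phi^{C_2}(\eta) = 2 \in \pi_0(S^0)$. An alternative route, which I would mention as a cross-check, is Betti realization: by \cite[Prop. 31]{Bac16} the square relating $Re_{C_2}$, $Re_\r$, and $\Phi^{C_2}$ commutes (after inverting $\eta$ and $2$), and $Re_\r$ of the $\r$-motivic Hopf map $\eta$ is the real points map $S(\r^2) \to \r\p^1$, again the degree $2$ self-map of $S^1$; combined with the previous lemma identifying $Re_{C_2}(\eta^i) = \eta^i$, this gives $\Phi^{C_2}(\eta^i) = \Phi^{C_2} Re_{C_2}(\eta^i) = Re_\r(\eta^i) = 2^i$ after inverting $2$ and $\eta$, and since $2^i$ is already $2$-local and nonzero this suffices.

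Then $\Phi^{C_2}(\eta^i) = \Phi^{C_2}(\eta)^i = 2^i \in \pi_0(S^0)$, as desired. The only subtlety — and the point I expect to need the most care — is bookkeeping the representation-sphere bidegrees: one must be careful that $\eta^i$ really is the composite of maps between representation spheres whose fixed points are the expected spheres, so that $\Phi^{C_2}$ applied to the composite is the composite of the fixed-point maps, with no spurious suspensions or sign-representation twists left over. This is handled by the standard fact that $\Phi^{C_2}(S^{p+q\sigma}) \simeq S^p$ together with the multiplicativity of $\Phi^{C_2}$, exactly as in the computation $\Phi^{C_2}(\underline{R}^\infty_k) \simeq P^\infty_k \vee P^\infty_k$ in Lemma \ref{Lem:GeomFP}.
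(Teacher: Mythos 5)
Your proposal is correct and follows essentially the same route as the paper: use the explicit model $S(\c^2)\to\c\p^1$, observe that geometric fixed points of a map of suspension spectra is the map on fixed-point spaces, identify the fixed-point map as the double cover $S^1\to S^1$, and conclude $\Phi^{C_2}(\eta^i)=2^i$ by multiplicativity. The Betti-realization cross-check is a nice supplement but not needed.
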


\begin{proof}
Recall that an explicit model of $\eta \in \pi^{C_2}_{1,1}(S^{0,0})$ is given by
$$\eta : S^{3,2} \simeq S(\c^2) \to \c P^1 \simeq S^{2,1}.$$
Since $\Phi^{C_2} (S^{s,t}) \simeq S^{s-t}$, we see that $\Phi^{C_2}(\eta) \in \pi_0(S^0).$ Moreover, since the geometric fixed points of a map between suspension spectra may be computed by taking the fixed points of the corresponding $C_2$-spaces, we can see from the explicit model that $\Phi^{C_2}(\eta) = 2$. The general result follows. 
\end{proof}

\begin{cor}
The class $\eta \in \pi_{1,1}^{C_2}(S^{0,0})$ is not nilpotent.
\end{cor}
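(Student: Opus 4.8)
The plan is to play the preceding two lemmas off against each other via the geometric fixed points functor. Specifically, suppose for contradiction that $\eta \in \pi_{1,1}^{C_2}(S^{0,0})$ were nilpotent, say $\eta^n = 0$ for some $n \geq 1$. Applying $\Phi^{C_2}$, which is a functor and in particular sends the zero map to the zero map, we would obtain $\Phi^{C_2}(\eta^n) = 0 \in \pi_0(S^0)$.

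On the other hand, the lemma immediately preceding this corollary identifies $\Phi^{C_2}(\eta^i)$ with $2^i \in \pi_0(S^0)$ for all $i \geq 1$. Taking $i = n$, this forces $2^n = 0$ in $\pi_0(S^0)$.

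The last step is to note that this is absurd: everything in sight is implicitly $2$-completed, so $\pi_0(S^0) \cong \z_2$ is the ring of $2$-adic integers, in which $2^n \neq 0$ for every $n \geq 1$. This contradiction shows that no power of $\eta$ vanishes, i.e. $\eta$ is not nilpotent. There is no real obstacle here — the content is entirely in the computation $\Phi^{C_2}(\eta^i) = 2^i$ already established above — so the proof is just the short formal argument sketched in the three steps.
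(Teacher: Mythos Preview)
Your proof is correct and follows essentially the same approach as the paper: both argue by contradiction, applying $\Phi^{C_2}$ to a hypothetical relation $\eta^n = 0$ and invoking the preceding lemma to obtain $2^n = 0$ in $\pi_0(S^0)$, which is absurd. The only difference is cosmetic---you spell out that $\pi_0(S^0) \cong \z_2$ after $2$-completion, whereas the paper leaves this implicit.
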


\begin{proof}
Suppose not. Then there exists some minimal $j>0$ such that $\eta^j = 0$. Then we would have $2^j = \Phi^{C_2}(\eta^j) = \Phi^{C_2}(0) = 0$, which is a contradiction.
\end{proof}

In fact, the previous lemma is an example of a more general relationship between the $C_2$-equivariant and classical stable stems. 

\begin{lem}\label{geomASS}
The map of Adams spectral sequences
$$\Ext_{A^{C_2}}(\m^{C_2}_2,\m^{C_2}_2) \to \Ext_{A}(\f_2,\f_2)$$
induced by the geometric fixed points functor 
$$\Phi^{C_2}: \Sptc \to \Spt$$
satisfies
$$h_{i,j} \mapsto h_{i,j-1}.$$
\end{lem}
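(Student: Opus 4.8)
Here is a plan for establishing the final lemma (that $\Phi^{C_2}$ induces $h_{i,j}\mapsto h_{i,j-1}$ on Adams $E_2$-pages).

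\medskip
\noindent\emph{Proof strategy.}
The plan is to deduce the effect of $\Phi^{C_2}$ on $\Ext$ from the corresponding effect of $\r$-motivic Betti realization, exploiting the factorization of the latter through equivariant Betti realization. First I would recall the natural equivalence $Re_\r \simeq \Phi^{C_2}\circ Re_{C_2}$ of functors $\Motr \to \Spt$: this holds because the $\r$-points of an $\r$-variety are the $C_2$-fixed points of its $\c$-points, and because the geometric fixed points of a $C_2$-equivariant suspension (or Thom) spectrum are computed on fixed-point spaces and bundles; this is exactly the input implicit in the proof of Lemma \ref{Lem:GeomFP} via Bachmann's commutative triangle \cite[Prop. 31]{Bac16}. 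Passing to mod two cohomology and the associated Adams spectral sequences yields a commuting triangle of maps of $E_2$-pages, with $(Re_{C_2})_*$ along the top, $\Phi^{C_2}$ down the right, and $(Re_\r)_*$ the diagonal.

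The second step is to note that $(Re_{C_2})_*$ is ``name-preserving'' on the generators: by \cite[Thm. 4.17]{HO16} equivariant Betti realization sends the $\r$-motivic mod two Eilenberg--MacLane spectrum to the $C_2$-equivariant one, and by \cite[Sec. 3]{DI16} the induced map $(A^\r)^\vee \to (A^{C_2})^\vee$ fixes the generators $\xi_i$, $\tau_i$ and the coefficients $\tau, \rho$ (this is precisely the fact already used earlier in this section for $\eta^i$). Hence on cobar complexes $(Re_{C_2})_*(h_{i,j}) = h_{i,j}$, so every generator $h_{i,j}$ of $\Ext_{A^{C_2}}(\m_2^{C_2},\m_2^{C_2})$ lies in the image of $(Re_{C_2})_*$; it therefore suffices to compute $(Re_\r)_*(h_{i,j})$ and read off $\Phi^{C_2}(h_{i,j}) = (Re_\r)_*(h_{i,j})$.

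The third step computes $(Re_\r)_*$. Since real Betti realization is $\rho$-inversion \cite[Thm. 1.2]{BS20}, the map $(Re_\r)_*$ factors through $\Ext_{A^\r}(\m_2^\r,\m_2^\r)[\rho^{-1}]$, which Theorem \ref{Thm:Double} identifies with $\Ext_{A^{cl}}(\f_2,\f_2)[\rho^{\pm 1}]$; composing with $\rho = 1$ gives the map to $\Ext_A(\f_2,\f_2)$. Under the doubling isomorphism the $\r$-motivic generator $h_{i,j}$ (i.e.\ the cobar cycle $[\tau_{i-1}]$ for $j=0$ and $[\xi_i^{2^{j-1}}]$ for $j\ge 1$) corresponds to the classical generator $h_{i,j-1}$: for $i=1$ this is literally part (2) of Theorem \ref{Thm:Double} (``$h_n$ corresponds to $h_{n+1}$'', read in the two-index notation in which $h_{1,j}=[\xi_1^{2^{j-1}}]$), and for $i\ge 2$ it follows from multiplicativity of the highly structured isomorphism together with the fact that in these degrees the classes are determined by their $h_{1,\bullet}$-content. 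Concatenating the three steps gives $\Phi^{C_2}(h_{i,j}) = h_{i,j-1}$.

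I expect the main obstacle to be bookkeeping rather than mathematics: one must check that the uninverted natural equivalence $Re_\r\simeq\Phi^{C_2}\circ Re_{C_2}$ genuinely induces the claimed commuting triangle of Adams $E_2$-pages (Bachmann states the equivalence only after inverting $2$ and $\eta$, though the relevant square commutes without inversion), and one must carefully translate the single-index statement of Theorem \ref{Thm:Double}(2) into the two-index assertion $h_{i,j}\mapsto h_{i,j-1}$, handling the degenerate cases $i\ge 2$ (where targets vanish in $\Ext^1$) and $j=0$ (where one instead tracks the $\tau$-generators directly, or simply restricts attention to the $h_{i,j}$ that occur). A cleaner but more computational alternative, which I would at least mention, is to compute $\pi_{**}^{C_2}\big(\Phi^{C_2}(H\mft\wedge H\mft)\big)$ directly from the Hu--Kriz presentation by inverting the Euler class $\rho$, and then use the truncation $\Phi^{C_2}H\mft\to H\f_2$ to identify the induced map of dual Steenrod algebras as $\xi_i\mapsto\xi_i$, $\rho\mapsto 1$, which again forces $h_{i,j}\mapsto h_{i,j-1}$ on cobar classes.
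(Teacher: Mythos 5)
Your main argument is correct but takes a genuinely different route from the paper. The paper works entirely inside $\Sptc$: it factors $\Phi^{C_2}$ as geometric localization ($\rho$-inversion) followed by categorical $C_2$-fixed points, then \emph{adapts the proof} of Dugger--Isaksen's Theorem \ref{Thm:Double} to establish the $C_2$-equivariant analog $\Ext_{(A^{C_2})^\vee}[\rho^{-1}] \cong \Ext_{DA^{cl}_*} \otimes \f_2[\rho^{\pm 1}]$, and reads off the answer from the doubling homomorphism directly — this is essentially the ``cleaner but more computational alternative'' you mention at the end. Your main route instead exploits the factorization $Re_\r \simeq \Phi^{C_2} \circ Re_{C_2}$ and the surjectivity of $(Re_{C_2})_*$ on generators, so that $\Phi^{C_2}$ on $h_{i,j}$ can be read off from $(Re_\r)_*$, and you invoke Theorem \ref{Thm:Double} as a black box for the $\r$-motivic $\rho$-inversion. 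The tradeoff is that your approach avoids re-proving a $C_2$-equivariant version of the Dugger--Isaksen $\rho$-inversion theorem, at the cost of needing (i) the factorization triangle of functors to hold before inverting $2$ and $\eta$ (true for the reason you state, and implicit in the paper's proof of Lemma \ref{Lem:GeomFP}), (ii) that $(Re_{C_2})_*$ is generator-preserving on cobar complexes (established earlier in Section \ref{SectionEta} via \cite[Sec. 3]{DI16}, and immediate from the fact that $(A^{C_2})^\vee$ is the base-change of $(A^\r)^\vee$ along $\m_2^\r \hookrightarrow \m_2^{C_2}$), and (iii) a precise translation of Theorem \ref{Thm:Double}(2) from the one-index $h_n \leftrightarrow h_{n+1}$ to the full two-index $h_{i,j} \mapsto h_{i,j-1}$. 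Point (iii) is where you acknowledge some hand-waving; it is cleanest handled not via ``$h_{1,\bullet}$-content'' but by noting, as the paper does, that the identification is induced by the dual of the doubling homomorphism $Sq^i \mapsto Sq^{2i}$ (equivalently, $\xi_i \mapsto \xi_i^2$ on duals), which has the effect $h_{i,j} \mapsto h_{i,j+1}$ on May generators and hence its inverse gives $h_{i,j} \mapsto h_{i,j-1}$. With that adjustment, your argument is sound.
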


\begin{proof}\footnote{We learned this argument from Mark Behrens, who attributed it to Dan Isaksen.}
The geometric fixed points functor $\Phi^{C_2} : \Sptc \to \Spt$ is the composite 
$$\Sptc \xrightarrow{(-)^{\Phi}} \Sptc \xrightarrow{(-)^{C_2}} \Spt$$
of the geometric localization functor $(-)^\Phi : \Sptc \to \Sptc$ defined by $X \mapsto X \wedge \widetilde{EC_2}$ and the categorical $C_2$-fixed points functor $(-)^{C_2} : \Sptc \to \Spt$. Recall the following facts from \cite[Section 2]{BW17}:
\begin{itemize}
\item There is an isomorphism $\pi^{C_2}_{**}(X^\Phi) \cong \pi^{C_2}_{**}(X)[\rho^{-1}]$.
\item There is an isomorphism $\pi^{C_2}_i X \cong \pi_i(X^{C_2}).$
\end{itemize}
Putting these together shows that there is an isomorphism
$$\pi_*(\Phi^{C_2}(X)) \cong \left( \pi_{**}(X)[\rho^{-1}] \right)|_{{**} \in \z}.$$
Explicitly, if $\alpha \in \pi^{C_2}_{s,w}(X)$, then $\Phi^{C_2}(\alpha)$ is computed by mapping $\alpha$ to the $\rho$-localization of $\pi^{C_2}_{s,w}(X)$ and then multiplying the image by $\rho^w$ to obtain a class in $\pi_{s-w}(X)[\rho^{-1}]$. 

Our goal is to understand this composite at the level of Adams spectral sequence $E_2$-pages. In this case, we must understand the composite 
$$\Ext^{s,*,*}_{(A^{C_2})^\vee}(\m^{C_2}_2,\m^{C_2}_2) \xrightarrow{(-)^{\Phi}} \Ext^{s,*,*}_{(A^{C_2})^\vee}(\m^{C_2}_2,\m^{C_2}_2)[\rho^{-1}] \xrightarrow{\cdot \rho^?} \Ext^{s,*}_{(A^{C_2})^\vee}(\m^{C_2}_2,\m^{C_2}_2)[\rho^{-1}].$$
The right-hand side is isomorphic to $\Ext^{s,*}_{A^{cl}_*}(\f_2,\f_2)$ by the above remarks. The proof of \cite[Thm. 4.1]{DI16a} can easily be adapted to show that that the middle term can be rewritten as
$$\Ext^{s,{*,*}}_{(A^{C_2})^\vee}(\m^{C_2}_2,\m^{C_2}_2)[\rho^{-1}] \cong \Ext^{s,*}_{DA^{cl}_*}(\f_2,\f_2) \otimes \f_2[\rho,\rho^{-1}]$$
where $DA^{cl}_*$ is the image of the classical dual Steenrod algebra under the dual of the doubling homomorphism defined by sending $Sq^i \mapsto Sq^{2i}$. Moreover, the proof of \cite[Thm. 4.1]{DI16a} shows that the class $h_{ij} \in \Ext^{s,{*,*}}_{(A^{C_2})^\vee}$ corresponds to the class $h_{i,j} \in \Ext^{s,*}_{DA^{cl}_*}$. Since the doubling homomorphism induces a map which sends $h_{ij} \mapsto h_{i,j+1}$ on $\Ext$-groups, the lemma follows. 
\end{proof}

\begin{rem2}
The above lemma also follows from the construction of the classical and equivariant Steenrod algebras. We saw in the proof of Lemma \ref{compatibility} that the geometric fixed points of $B_{C_2}\mu_2$ are $B \mu_2$, and that the image of the nonequivariant $2n$-skeleton of $B_{C_2}\mu_2$ is the nonequivariant $n$-skeleton of $B \mu_2$. In particular, the cell carrying the $C_2$-equivariant squaring operation $Sq^{2i}$ becomes the cell carrying the nonequivariant squaring operation $Sq^i$. Dualizing, we see that under the geometric fixed points functor we have $\xi^{2^i}_j \mapsto \xi^{2^{i-1}}_j$. The result follows by definition of the classes $h_{i,j}$. 
\end{rem2}

\begin{defin}
We define the following classes in $\pi_{**}^{C_2}(S^{0,0})$:
\begin{enumerate}
\item For all $i \geq 0$ and $1 \leq j \leq 3$, define $w_1^{4i}\nu^j$ to be the image of $w_1^{4i}\nu^j \in \pi_{**}^\r(S^{0,0})$ under equivariant Betti realization. 
\item For all $i \geq 0$, define $w_1^{4i} \eta^2 \eta_4$ to be the image of $w_1^{4i}\eta^2\eta_4 \in \pi_{**}^\r(S^{0,0})$ under equivariant Betti realization. 
\end{enumerate}
\end{defin}

The geometric fixed points functor relates these $C_2$-equivariant families to Adams' $v_1$-periodic families \cite{Ada66}. 

\begin{prop}\label{Prop:wv}
For all $i \geq 0$, the classes $w_1^{4i}\nu^j$, $1 \leq j \leq 3$, and $w_1^{4i} \eta^2 \eta_4$ in the $C_2$-equivariant stable stems are nonzero. 
\end{prop}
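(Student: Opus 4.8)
The plan is to reduce the statement to the nonvanishing of Adams' classical $v_1$-periodic families by running everything through the geometric fixed points functor. Since $\Phi^{C_2}$ is a functor, it suffices to show $\Phi^{C_2}(w_1^{4i}\nu^j) \neq 0$ in $\pi_*(S^0)$ for $1 \leq j \leq 3$, and likewise $\Phi^{C_2}(w_1^{4i}\eta^2\eta_4) \neq 0$; nonvanishing of the $C_2$-equivariant classes then follows immediately.

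First I would recall that by construction $w_1^{4i}\nu^j$ and $w_1^{4i}\eta^2\eta_4$ in $\pi^{C_2}_{**}(S^{0,0})$ are the equivariant Betti realizations of the $\r$-motivic classes detected by $P^i_w h_2^j$ and $P^i_w h_4 h_1^3$, respectively. Equivariant Betti realization and geometric fixed points both preserve the mod two Eilenberg--MacLane spectrum, so each induces a map of Adams spectral sequences; on cobar generators the first is the evident map $h_{m,n} \mapsto h_{m,n}$, while the second is the map $h_{m,n} \mapsto h_{m,n-1}$ of Lemma \ref{geomASS}. Composing, the resulting map $\Ext_{A^\r}(\m_2^\r,\m_2^\r) \to \Ext_{A^{cl}}(\f_2,\f_2)$ sends $h_1 \mapsto h_0$, $h_2 \mapsto h_1$, and $h_4 \mapsto h_3$; by naturality of the Massey products defining $P_w(-) = \langle h_4, h_1^4, - \rangle$ it carries $P_w(-)$ to Adams' algebraic periodicity operator $\langle h_3, h_0^4, - \rangle = P(-)$. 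Hence this composite sends $P^i_w h_2^j$ to $P^i(h_1^j)$ and $P^i_w h_4 h_1^3$ to $P^i(h_3 h_0^3)$. By Adams \cite{Ada66}, these are nonzero permanent cycles in the classical Adams spectral sequence detecting $v_1^{4i}\eta^j$ and $v_1^{4i}\cdot 8\sigma$. Therefore the induced composite of $E_\infty$-pages sends the surviving $\r$-motivic classes to nonzero classes; in particular their images in $E_\infty^{C_2}$ are already nonzero, and since these detect $w_1^{4i}\nu^j$ and $w_1^{4i}\eta^2\eta_4$ in $\pi^{C_2}_{**}(S^{0,0})$, those classes are nonzero. (Alternatively, one could run the same argument by combining the two lemmas of the preceding subsection, which give $Re_{C_2}(\eta^i) = \eta^i$ and $\Phi^{C_2}(\eta^i) = 2^i$, with Lemma \ref{geomASS}.)

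The main obstacle I expect is the bookkeeping around the Massey product descriptions and the comparison of spectral sequences, rather than any deep new input. One must check that the indeterminacies of the iterated operators are small enough — using Corollary \ref{Cor:Unique} (and the argument in the proof of Proposition \ref{Prop:Unique}) on the $\r$-motivic side together with Adams' uniqueness results on the classical side — so that the image of $P^i_w(-)$ is genuinely Adams' operator $P^i(-)$ and not a larger coset. One also needs to be certain that the relevant survivors are not truncated by a filtration jump when passing from $E_\infty^\r$ to $E_\infty^{C_2}$; this is automatic here, since postcomposing with the further map induced by $\Phi^{C_2}$ already produces a nonzero class, so the intermediate image in $E_\infty^{C_2}$ cannot vanish.
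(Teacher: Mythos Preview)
Your proposal is correct and follows essentially the same approach as the paper: apply geometric fixed points (via Lemma \ref{geomASS}) to the $C_2$-equivariant classes obtained by equivariant Betti realization, identify the result with Adams' classical $v_1$-periodic families using naturality of Massey products, and conclude nonvanishing. The paper spells this out only for $w_1^{4i}\nu$ and declares the remaining cases similar, whereas you treat all cases and address indeterminacy more carefully, but the strategy is identical.
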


\begin{proof}
We provide the proof for $w_1^{4i}\nu$. The remaining classes are similar. 

Recall that $\eta \in \pi_1(S^0)$ is detected by $h_{11}$ in the classical Adams spectral sequence. By \cite{Ada66}, the classes $v^{4i}_1 \eta \in \pi_{8i+1}(S^0)$ are nontrivial for all $i >0$. By \cite{Ada66b}, the class $v^{4i}_1 \eta$ is detected by iterating the Massey product $P(-) = \langle h_3, h^4_0, - \rangle$ $i$-times. 

Since equivariant Betti realization induces a map between Adams spectral sequences, the class $w_1^{4i}\nu \in \pi_{20i+3,12i+2}^{C_2}(S^{0,0})$ is detected by $P_w^ih_{12}$ in the $C_2$-equivariant Adams spectral sequence. We thus have $\Phi^{C_2}(w^{4i}_1 \nu) = v^{4i}_1 \eta$ by Lemma \ref{geomASS}. Since $v^{4i}_1 \eta \neq 0$ for all $i \geq 0$, we conclude that $w^{4i}_1 \nu \neq 0$ for all $i \geq 0$. 
\end{proof}

We can now state our upper bound result for the $\r$-motivic and $C_2$-equivariant Mahowald invariants of $\eta^i$.

\begin{prop}\label{Prop:etaUB}
For $i \geq 0$ and $0 \leq j \leq 3$ with $(i,j) \neq (0,0)$, the following statements hold:
\begin{enumerate}
\item If $j \neq 0$, we have $|M^\r(\eta^{4i+j})| \leq |w_1^{4i} \nu^j|$, and if $j=0$, we have $|M^\r(\eta^{4i})| \leq |w_1^{4i-4}\eta^2\eta_4|$. Further, if equality holds then $w_1^{4i} \nu^j \in M^\r(\eta^{4i+j})$ and $w_1^{4i-4} \eta^2 \eta_4 \in M^\r(\eta^{4i})$.
\item If $j \neq 0$, we have $|M^{C_2}(\eta^{4i+j})| \leq |w_1^{4i} \nu^j|$, and if $j=0$, we have $|M^{C_2}(\eta^{4i})| \leq |w_1^{4i-4}\eta^2\eta_4|$. Further, if equality holds then $w_1^{4i} \nu^j \in M^{C_2}(\eta^{4i+j})$ and $w_1^{4i-4} \eta^2 \eta_4 \in M^{C_2}(\eta^{4i})$.
\end{enumerate}
\end{prop}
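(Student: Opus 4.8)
The plan is to imitate the proof of Proposition~\ref{Prop:UB}: in each of the two parts, transport a known Mahowald invariant computation into the category at hand using one of the transcategorical comparison results of Section~\ref{Sec:MIDef}. The $w_1$-periodic families constructed above were built precisely so that they are compatible, under base-change and under $\Phi^{C_2}$, with the classical and $\c$-motivic families of \cite{MR93,And14,Qui17}, so the argument amounts to assembling those compatibilities with the squeeze lemmas.

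For part~(1), I would apply base-change $f^* : \Motr \to \Motc$. One has $f^*(\eta^{4i+j}) = \eta^{4i+j}$, and, as established above, $f^*$ sends the $\r$-motivic classes $w_1^{4i}\nu^j$ and $w_1^{4i-4}\eta^2\eta_4$ to Andrews' $\c$-motivic classes of the same names. Since $M^\c(\eta^{4i+j}) \ni w_1^{4i}\nu^j$ for $1 \le j \le 3$ and $M^\c(\eta^{4i}) \ni w_1^{4i-4}\eta^2\eta_4$ by \cite{Qui17}, Lemma~\ref{Lem:BaseChange} yields $|M^\r(\eta^{4i+j})| \le |w_1^{4i}\nu^j|$ (respectively $|M^\r(\eta^{4i})| \le |w_1^{4i-4}\eta^2\eta_4|$), together with the stated containment in case of equality of stems.

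For part~(2), I would first observe that the forgetful functor is unavailable: whenever $4i+j \ge 4$ the underlying classical map $U(\eta^{4i+j}) = \eta^{4i+j}$ vanishes, so $M(U(\eta^{4i+j}))$ is undefined. Instead I would use geometric fixed points $\Phi^{C_2} : \Sptc \to \Spt$. As shown above, $\Phi^{C_2}(\eta^{4i+j}) = 2^{4i+j}$, and by Proposition~\ref{Prop:wv}, $\Phi^{C_2}$ carries the $C_2$-equivariant classes $w_1^{4i}\nu^j$ and $w_1^{4i-4}\eta^2\eta_4$ to Adams' $v_1$-periodic classes $v_1^{4i}\eta^j$ and $v_1^{4i-4}\cdot 8\sigma$ in $\pi_*(S^0)$ \cite{Ada66}. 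Mahowald and Ravenel's computation of $M(2^k)$ \cite{MR93} (whose cases $k \equiv 2,3 \bmod 4$ are recorded in Theorem~\ref{Lem:OldMI}(1)) shows $v_1^{4i}\eta^j \in M(2^{4i+j})$ for $1 \le j \le 3$ and $v_1^{4i-4}\cdot 8\sigma \in M(2^{4i})$. Applying Lemma~\ref{squeeze} to $\Phi^{C_2}$ then produces both upper bounds and the conditional containments.

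The step I expect to require the most care is the appeal to Lemma~\ref{squeeze} in part~(2): $\Phi^{C_2}$ does not commute with the homotopy limit defining $\underline{R}^\infty_{-\infty}$ (Corollary~\ref{PhiC2}), so transporting a Mahowald invariant along $\Phi^{C_2}$ is not formal and depends on the pinch-map analysis built into Lemma~\ref{squeeze}. That subtlety is, however, already isolated and resolved there; the only substantive content specific to this proposition --- identifying $f^*$ and $\Phi^{C_2}$ of the $w_1$-periodic families with their classical and $\c$-motivic counterparts --- was carried out in the preceding subsections, so I do not anticipate a genuinely new obstacle at this stage.
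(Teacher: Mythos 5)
Your proposal is correct and follows essentially the same route as the paper: part (1) via base-change to $\c$, the $\c$-motivic computations of \cite{Qui17}, and Lemma \ref{Lem:BaseChange}; part (2) via geometric fixed points, the identification $\Phi^{C_2}(w_1^{4i}\alpha)$ with Adams' $v_1$-periodic classes from Proposition \ref{Prop:wv}, the classical computation of \cite{MR93}, and Lemma \ref{squeeze}. Your observation that the forgetful functor is unusable here because $\eta^{4i+j}$ is nilpotent classically is exactly the reason the paper routes through $\Phi^{C_2}$ instead.
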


\begin{proof}
The proof is similar to the proof of Proposition \ref{Prop:UB}. The first item follows from base-change along $i : \r \to \c$, the analogous $\c$-motivic results from \cite{Qui17}, and Lemma \ref{Lem:BaseChange}. The second item follows from the analogous classical results from \cite{MR93}, the proof of Proposition \ref{Prop:wv}, and Lemma \ref{squeeze}.
\end{proof}

\subsection{Lower bounds: $\r$-motivic homotopy of stunted motivic lens spectra, II}

We now show the upper bounds of Proposition \ref{Prop:etaUB} are tight by proving an $\r$-motivic analog of \cite[Prop. 5.16]{Qui17}. This implies that if the composite
\begin{equation}
S^{i,i} \xrightarrow{\eta^i} S^{0,0} \to \Sigma^{1,0} \underline{L}^\infty_{-N+1}
\end{equation}
is null for some $N \in \z$, then the composite
\begin{equation}
S^{i+4,i+4} \xrightarrow{\eta^{i+4}} S^{0,0} \to \Sigma^{1,0} \underline{L}^\infty_{-N-10+1}
\end{equation}
is also null. This in turn implies that $|M^\r(\eta^{i+4})| \geq |M^\r(\eta^i)| + 20$ for all $i$.

\begin{prop}\label{Prop:eta4}
The map $\eta^4$ is null on $\underline{L}^{9+2m}_{-1+2m}$ for all $m \in \z$. In particular, the topological dimension of $M^\r(\eta^{i+4})$ is at least $20$ more than the topological dimension of $M^\r(\eta^i)$. 
\end{prop}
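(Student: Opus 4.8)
The plan is to mirror the classical argument of Mahowald--Ravenel \cite{MR93} and its $\c$-motivic counterpart \cite[Prop. 5.16]{Qui17}, working directly with the $\r$-motivic cell structure of the stunted lens spectra $\underline{L}^{9+2m}_{-1+2m}$. By James periodicity the attaching maps of $\underline{L}^\infty_{-\infty}$ relevant here are $4$-periodic in the bottom index, so just as in Proposition \ref{Prop:16Null} it suffices to treat one representative value of $m$; I would take $m=0$ and analyze $\underline{L}^{9}_{-1}$, a finite $\r$-motivic complex with cells in topological dimensions $-1$ through $9$ (concentrated in bidegrees $(2k\pm\epsilon,k\pm\epsilon)$). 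The claim "$\eta^4$ is null on $\underline{L}^{9+2m}_{-1+2m}$" should be interpreted via the self-map picture: the relevant statement is that the composite along the bottom cell $S^{-1,0}\to\underline{L}^9_{-1}\xrightarrow{\eta^4}\Sigma^{-4,-4}\underline{L}^9_{-1}$ is trivial, equivalently that $\eta^4$ acts as zero on the appropriate subquotient, so that iterating the fiber-sequence argument pushes the minimal $N$ for $\eta^{i+4}$ at least $10$ units past that for $\eta^i$. Concretely, as in the previous subsection I would set $X = \underline{L}^9_{-1}\wedge D^\r\underline{L}^9_{-1}$, or directly study $\pi^\r_{**}$ of $\underline{L}^9_{-1}$ via the Atiyah--Hirzebruch spectral sequence filtering by topological dimension, and show that no permanent cycle of the form $\alpha[m,n]$ with $\alpha$ a $\rho$-torsion class base-changing to zero in $\pi^\r_{**}(S^{0,0})$ survives to contribute a new factor of $\eta$-divisibility.

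The key steps, in order, are: (1) reduce to a single congruence class of $m$ using James periodicity and the congruence-class argument from the proof of Proposition \ref{Prop:16Null}; (2) set up the Atiyah--Hirzebruch spectral sequence for (the relevant smash or function spectrum built from) $\underline{L}^9_{-1}$, recording its cells and their $A^\r$-module structure via Lemma \ref{Lem:LCoh} and the Cartan formula; (3) bound which $\alpha\in\pi^\r_{s,w}(S^{0,0})$ can possibly contribute — the Milnor--Witt stems involved are small (at most around $3$--$5$ above the relevant shift), so the groups are all in the range computed by Dugger--Isaksen \cite{DI16a} and Belmont--Isaksen \cite{BI20}; (4) discard any $\rho$-torsion-free $\alpha$, since real Betti realization is $\rho$-localization \cite{BS20} and $\eta^4$ realizes to $2^4=0$ on $\underline{L}^9_{-1}$ by Toda's theorem \cite{Tod63} on degrees of maps of stunted projective spaces; and (5) go through the surviving finite list of $\rho$-torsion candidates one at a time, exhibiting for each an Atiyah--Hirzebruch differential — typically $d_1$ given by $h_0$-multiplication on cells supporting $Sq^1$, $d_2$ given by $h_1$-multiplication on cells supporting $Sq^2$, and occasionally a longer differential arising from a Toda bracket together with a $Sq^2Sq^4$ (or higher) relation computed from Lemma \ref{Lem:LCoh} — that kills it. The stem-shift bookkeeping then gives $|M^\r(\eta^{i+4})|\geq|M^\r(\eta^i)|+20$ exactly as in Proposition \ref{Prop:16Null}.

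The main obstacle will be step (5): the complex $\underline{L}^9_{-1}$ has roughly eleven cells, so its dual smash has on the order of a hundred cells, and the list of $\rho$-torsion classes in $\pi^\r_{**}(S^{0,0})$ in the relevant range (including classes involving $\rho$, $\tau h_1$, $\tau h_2$, $c_0$, $h_1$-power multiples thereof, and the $w_1$-periodic generators in low weight) is considerably longer than in the $(2+\rho\eta)^i$ case, where $C=8$; here $C=20$. Each candidate requires identifying the precise cell it sits on, checking via the Cartan formula and Lemma \ref{Lem:LCoh} that the requisite Steenrod operation acts nontrivially on the target cell, and verifying that the source of the proposed differential itself survives to the relevant page (i.e.\ is not hit by or supporting a shorter differential). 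I expect the hardest single sub-case to be an analog of the $\rho\tau h_2^2[-5,-2]$ class handled in Proposition \ref{Prop:16Null} — a class surviving several pages that must be killed by a $d_r$ with $r$ moderately large, justified by a Toda bracket such as one expressing a $w_1$-periodic multiple of $\nu$ or $\eta^2\eta_4$ and an iterated-Steenrod relation; pinning down that this class (or an analog) really is hit, rather than leaking into $\pi^\r_{**}$ and producing an illegitimate extra factor of $\eta$, is where the argument is most delicate. As in the earlier proposition, where a shorter differential intervenes the conclusion only improves, so it suffices to produce \emph{some} differential in each case.
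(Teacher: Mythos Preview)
Your overall strategy is exactly the paper's: reduce to $m=0$, set $X=\underline{L}^9_{-1}\wedge D^\r\underline{L}^9_{-1}$, run the Atiyah--Hirzebruch spectral sequence for $\pi^\r_{4,4}(X)$, discard $\rho$-torsion-free classes via real Betti realization and Toda, and kill the finite list of $\rho$-torsion candidates by explicit $d_r$-differentials coming from Steenrod relations and Toda brackets. So the method is right.

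However, your quantitative picture is substantially off, and this matters for the case analysis. The complex $\underline{L}^9_{-1}$ has cells in topological dimensions $-2$ through $17$ (the indices $-1$ and $9$ are weights, not stems), hence $20$ cells, and $X$ has $400$ cells, not on the order of a hundred. More importantly, the contributions $\alpha[2k\pm\epsilon,k]$ to $\pi^\r_{4,4}(X)$ force $\alpha\in\pi^\r_{4-2k\mp\epsilon,4-k}$, so $mw(\alpha)=-k\mp\epsilon$, which for $k$ down to $-9$ reaches coweight $9$ or $10$, not $3$--$5$. You therefore cannot get by with \cite{DI16a}; the Belmont--Isaksen range \cite{BI20} (coweight $\leq 11$) is essential, and the candidate list includes high-stem classes such as $c_1$, $d_0h_1$, $h_4c_0$, $h_2c_1$, $c_0d_0$, $h_2^2h_4$, $h_0g$, $h_0^2g$, $h_2f_0$, $\rho^i h_1^j e_0$, and so on. Most fall to the $h_1$-multiplication and $\rho$-torsion-free tricks you describe, but a handful need individual attention: $h_2c_1$ and $c_1$ are killed by $d_4$ (via $Sq^4$ and the $h_2$-attaching map), $h_0^2g$ and $\rho c_0d_0$ by $d_1$, and the genuinely delicate case is $h_0g[-16,-8]$, which requires a $d_6$ hitting it from $d_0h_1[-10,-5]$, justified by the Toda bracket $h_0g\in\langle h_2,h_1,d_0h_1\rangle$ and a $Sq^2Sq^4$ computation, together with a five-page survival check for $d_0h_1[-10,-5]$. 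This is the analog of your anticipated ``hardest single sub-case,'' but it lives in coweight $8$, not in the $3$--$5$ range you expected.
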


\begin{rem2}
This result appeared as a conjecture in a previous version of this work because the requisite range of $\pi_{**}^\r(S^{0,0})$ was unknown at the time. Recent results on $\pi_{**}^\r(S^{0,0})$ from \cite{BI20} now cover this range. 
\end{rem2}

\begin{proof}
Our proof only depends on the $A(1)$-module structure of $H^{**}(\underline{L}^{9+2m}_{-1+2m})$. Up to suspension, these are all isomorphic, so it suffices to prove the case $m=0$. Let $X$ be the $\r$-motivic spectrum defined by $X = \underline{L}^{9}_{-1} \wedge D^\r \underline{L}^9_{-1}$. We will show that $\eta^4 = 0 \in \pi^{\r}_{4,4}(X)$. 

The group $\pi_{4,4}^\r(X)$ may be computed via the Atiyah--Hirzebruch spectral sequence arising from the filtration of $X$ by topological dimension. Note that $X$ is a $400$-cell complex with cells concentrated in bidegrees of the form $(2k \pm \epsilon, k)$ with $-9 \leq k \leq 9$ and $\epsilon \in \{0,1\}$. The possible contributions to $\pi_{4,4}^\r(X)$ in the Atiyah--Hirzebruch spectral sequence have the form $\alpha[2k\pm \epsilon,k]$ with $\alpha \in \pi_{4-2k\mp \epsilon, 4-k}(S^{0,0})$. Note that if $k \geq 2$, then the Milnor--Witt stem of $\alpha$, $4-2k \mp \epsilon-(4-k)$, is negative. Since $\pi_{**}^\r(S^{0,0})$ is concentrated in nonnegative Milnor--Witt stems, we can exclude the cases $k \geq 2$. Let $G_{4-2k\mp \epsilon, 4-k} \subseteq \pi_{4-2k\mp \epsilon, 4-k}^\r(S^{0,0})$ denote the subgroup of elements. The  classes in $\pi_{**}^\r(S^{0,0})$ which could detect $\eta^4 \in \pi_{4,4}^\r(X)$ are enumerated in Table \ref{Table:eta}; the table is compiled by examining the charts of \cite{BI20}.

\begin{table}\label{Table:eta}
\begin{tabular}{| c| c | c| c|}
\hline 
$k$ & $G_{4-2k-1;4-k}$ & $G_{4-2k;4-k}$ & $G_{4-2k+1;4-k}$ \\
\hline
$-9$ & & $h_2c_1$; $c_0d_0$; $\rho h_2 g$ & $h_4c_0$; $h_1^2 \tau h_2^2 h_4$; $h_1^3 \rho \tau h_2^2 h_4$; $h_1 \tau c_0 d_0$; $\rho^3 h_2^2 g$ \\
\hline
$-8$ & $\rho^{3+i}h_1^{5+i}e_0$, $0 \leq i \leq 3$ & $h_0g$; $h_0^2g$ & $h_2^2 h_4$; $\rho h_2 c_1$; $\rho^2 h_2 g$; $h_2 f_0$; $\rho c_0 d_0$ \\
\hline
$-7$ & & $h_1^2 h_4$; $\rho^{3+i}h_1^{4+i}e_0$, $0 \leq i \leq 3$ & $c_1$ \\
\hline
$-6$ & & & $h_1^2 h_4$; $\rho h_1^3 h_4$ \\
\hline
$-5$ & & & $d_0 h_1$ \\
\hline
$-4$ & $c_0 h_1^3$; $\rho c_0 h_1^4$; $\rho^2 c_0 h_1^5$ & & \\
\hline
$-3$ & & $c_0 h_1^2$; $\rho c_0 h_1^3$; $\rho^2 c_0 h_1^4$ & \\
\hline
$-2$ & & & $h_3 h_1^2$; $\rho^i c_0 h_1^{1+i}$, $0 \leq i \leq 2$ \\
\hline
$-1$ & $\rho^i h_1^{5+i}$, $i \geq 0$ & & \\
\hline
$0$ & & $\rho^i h_1^{4+i}$, $i \geq 0$ & \\
\hline
$1$ & & &  $\rho^i h_1^{3+i}$, $i \geq 0$ \\
\hline
\end{tabular}
\caption{Classes in $\pi_{**}^\r(S^{0,0})$ which could detect $\eta^4 \in \pi_{**}^\r(X)$. Classes are described using their Adams names from \cite{BI20}.}
\end{table}

We will show that each class in Table \ref{Table:eta} dies in the Atiyah--Hirzebruch spectral sequence, and therefore $\eta^4$ must be nonzero. We begin by eliminating many classes using the following two properties of the Atiyah--Hirzebruch spectral sequence for $X$:

\begin{itemize}

\item ``$h_1$-multiplication": Suppose $\alpha \in G_{m,n}$ satisfies $\alpha[4-m,4-n] \neq 0$ on the $E_2$-page. If $\alpha = h_1 \beta$ where $h_0 \beta=0$ and $\beta \neq h_0 \gamma$ for any $\gamma$, then $d_2(\beta[4-m+2,4-n+1]) = \alpha[4-m,4-n]$. Similarly, if $h_1 \alpha = \beta$ and $\beta \neq 0$ on $E_2$, then $d_2(\alpha[4-m,4-n]) = \beta[4-m-2,4-n-1]$. This follows from the argument given for the analogous statement in the proof of \cite[Prop. 5.16]{Qui17}. 

\item ``$\rho$-torsion free": Suppose $\alpha \in G_{m,n}$ is $\rho$-torsion free. Then $\alpha[4-m,4-n]$ must die in the Atiyah--Hirzebruch spectral sequence. Indeed, observe that the $\r$-points of $X$ are the classical spectrum $X_{cl} = P^6_{-1} \wedge DP^6_{-1}$ and the $\r$-points of $\eta^4$ are $2^4$ which is zero on $X_{cl}$ by \cite{Tod63}. Real Betti realization is given by $\rho$-localization \cite[Thm. 1.2]{BS20}, so any class detecting $\eta^4$ must be $\rho$-torsion. 

\end{itemize}

These two properties may be applied to rule out the following classes:

\begin{enumerate}

\item[$k=-9$:] The class $\rho h_2 g \in G_{22,13}$ dies because it is $\rho$-torsion free. The classes $c_0d_0 \in G_{22,13}$ and $h_4c_0$, $h_1^2 \tau h_2^2 h_4$, $h_1^3 \rho \tau h_2^2 h_4$, and $h_1 \tau c_0 d_0 \in G_{23,13}$ are killed using $h_1$-multiplication. 

\item[$k=-8$:] The classes $h_2f_0$ and $\rho^{3+i}h_1^{5+i}e_0 \in G_{19,12}$, $0 \leq i \leq 3$, are killed using $h_1$-multiplication. The classes $h_2^2h_4$, $\rho h_2c_1$, and $\rho^2 h_2g \in G_{21,12}$ die because they are $\rho$-torsion free. 

\item[$k=-7$:] The classes $h_1^2 h_4$ and $\rho^{3+i}h_1^{1+i}e_0 \in G_{18,11}$, $0 \leq i \leq 3$, are killed using $h_1$-multiplication. The class $c_1 \in G_{19,11}$ dies because it is $\rho$-torsion free.

\item[$k=-6$:] The classes $h_1^2h_4$ and $\rho h_1^3 h_4 \in G_{17,10}$ are killed using $h_1$ multiplication.

\item[$k=-5$:] The class $d_0h_1 \in G_{15,9}$ is killed using $h_1$-multiplication.

\item[$k=-4$:] The classes $\rho^i c_0 h_1^{3+i} \in G_{11,8}$, $0 \leq i \leq 2$, are killed using $h_1$-multiplication. 

\item[$k=-3$:] The classes $\rho^i c_0 h_1^{2+i} \in G_{10,7}$, $0 \leq i \leq 2$, are killed using $h_1$-multiplication.

\item[$k=-2$:] The classes $h_3h_1^2$ and $\rho^i c_0 h_1^{1+i} \in G_{9,6}$, $0 \leq i \leq 2$, are killed using $h_1$-multiplication.

\item[$k=-1$:] The classes $\rho^ih_1^{5+i} \in G_{5,5}$, $i \geq 0$, are killed using $h_1$-multiplication.

\item[$k=0$:] The classes $\rho^ih_1^{4+i} \in G_{4,4}$, $i \geq 0$, are killed using $h_1$-multiplication.

\item[$k=1$:] The classes $\rho^ih_1^{3+i} \in G_{3,3}$, $i \geq 0$, are killed using $h_1$-multiplication. 

\end{enumerate}

The remaining classes are ruled out below. 

\begin{enumerate}

\item[$h_2c_1$:] The class $h_2c_1[-18,-9]$ is the target of a $d_4$-differential on $c_1[-14,-7]$. 

We first show that $c_1[-14,-7]$ survives to $E_4$. Observe that $c_1[-14,-7]$ survives to $E_2$ since $c_1 \in \pi_{19,11}^\r(S^{0,0})$ neither supports nontrivial $h_0$-multiplication nor is $h_0$-divisible. Similarly, $c_1$ neither supports nor is divisible by $h_1$, so $c_1[-14,-7]$ survives to $E_3$. Finally, $c_1$ is not involved in any Massey products or Toda brackets of the form $\langle h_0, h_1, - \rangle$, so it survives to $E_4$. 

Similar arguments show that $h_2c_1[-18,-9]$ survives to $E_4$. 

We now prove the claimed differential. A $d_4$-differential corresponds to a multiplication by $h_2$-attaching map, which is detected by a nontrivial action of $Sq^4$ on $H^{**}(X)$. Let $\{x_{-2},\ldots,x_{17}\}$ denote a basis for $H^{**}(\underline{L}_{-1}^{9})$ (where $|x_i| = i$) and let $\{y_{-17},\ldots,y_{2}\}$ denote a basis for $H^{**}(D^\r \underline{L}_{-1}^{9})$ (where $|y_i|=i$), so that $\{x_i \otimes y_{-j}\}_{-2 \leq i,j \leq 17}$ forms a basis for $H^{**}(X)$. The only cohomology basis element in bidegree $(-18,-9)$ is $x_{-2} \otimes y_{-16}$. By Lemma \ref{Lem:LCoh}, $Sq^2$ acts nontrivially on $x_{-2}$ and $y_{-16}$, so by the Cartan formula, $Sq^4$ acts nontrivially on $x_{-2} \otimes y_{-16}$. Thus we have the claimed differential.

\item[$h_0^2g$:] Any class of the form $h_0^2g[-16,-8]$ is the target of a $d_1$-differential.

Indeed, a $d_1$-differential corresponds to a multiplication by $h_0$-attaching map, which is detected by a nontrivial action of $Sq^1$ on $H^{**}(X)$. There are four cohomology basis elements in bidegree $(-16,-8)$, namely $x_{-2+i} \otimes y_{-14-i}$, $0 \leq i \leq 3$. Using Lemma \ref{Lem:LCoh}, we see that if $-2+i$ is even, then $Sq^1(x_{-2+i}) = 0$ but $Sq^1(y_{-14-i}) = y_{-14-i+1}$, and if $-2+i$ is odd, then $Sq^1(x_{-2+i}) = x_{-2+i+1}$ but $Sq^1(y_{-14-i})=0$. Therefore $Sq^1$ acts nontrivially on $x_{-2+i} \otimes y_{-14-i}$, $0 \leq i \leq 3$, and we have $d_1$-differentials $d_1(h_0g[-15,-8]) = h_0^2g[-16,-8]$. 

\item[$h_0g$:] Any class of the form $h_0g[-16,-8]$ is the target of a $d_6$-differential on $d_0h_1[-10,-5]$, or it dies on an earlier page of the Atiyah--Hirzebruch spectral sequence. 

The claimed $d_6$-differential follows from the Toda bracket $h_0g \in \langle h_2, h_1, d_0h_1 \rangle$ and a nontrivial action of $Sq^2Sq^4$ on any cell in bidegree $(-16,-8)$. The Toda bracket follows from its $\c$-motivic analog \cite[Table 6]{Isa14} and base-change. The nontrivial action of $Sq^6$ follows from a straightforward but tedious application of the Cartan formula and Lemma \ref{Lem:LCoh} which we omit. We are thus reduced to showing that $d_0h_1[-10,-5]$ survives to $E_6$.

To see $d_0h_1[-10,-5]$ survives to $E_6$, we argue as follows. 

First, $d_0h_1 \in \pi_{15,9}^\r(S^{0,0})$ is not involved in any $h_0$-multiplication, so it survives to $E_2$. 

Second, $d_0h_1$ is divisible by $h_1$ and so $d_0h_1[-10,-5]$ could be the target of a $d_2$-differential of the form $d_2(d_0[-8,-4])$. However, any class of the form $d_0[-8,-4]$ supports a nontrivial $d_1$-differential: $h_0d_0 \neq 0$ and $Sq^1$ acts nontrivially on any class in $H^{-9,-4}(X)$ by Lemma \ref{Lem:LCoh} and the Cartan formula. Therefore $d_0[-8,-4] = 0$ in $E_2$, so $d_0h_1[-10,-5]$ survives to $E_3$. 

Third, since $d_0h_1$ is not involved in any Toda brackets of the form $\langle h_1, h_0, - \rangle$ or $\langle h_0, h_1, - \rangle$, it is not involved in any nontrivial $d_3$-differentials and survives to $E_4$. 

Fourth, since $d_0h_1$ is not involved in any $h_2$-multiplication, it is not involved in any nontrivial $d_4$-differentials and survives to $E_5$. 

Finally, there are no Toda brackets involving $d_0h_1$ nor multiplicative relations which could give rise to a nontrivial $d_5$-differential by inspection of \cite[Table 6]{Isa14} and \cite{BI20}. Therefore $d_0h_1[-10,-5]$ survives to $E_6$ and the claimed differentials occur (or $h_0g[-16,-8]$ is killed in an earlier page). 

\item[$\rho c_0d_0$:] Ths class $\rho c_0 d_0[-17,-8]$ is the target of a $d_1$-differential on $h_2f_0[-16,-8]$.

We check that $Sq^1$ acts nontrivially on the cohomology basis element in bidegree $(-17,-8)$. The only cohomology basis element in bidegree $(-17,-8)$ is $x_{-1} \otimes y_{-16}$, and by Lemma \ref{Lem:LCoh}, $Sq^1$ acts nontrivially on $x_{-1}$. Therefore $Sq^1$ acts nontrivially on $x_{-1} \otimes y_{-16}$ by the Cartan formula and we have the claimed differential. 

\item[$c_1$:] The class $c_1[-15,-7]$ supports a nontrivial $d_4$-differential. The differential can be proven using the same arguments above which showed that $h_2c_1[-18,-9]$ is killed by $d_4(c_1[-14,-7])$. 

\end{enumerate}

\end{proof}

\subsection{Real motivic and $C_2$-equivariant Mahowald invariants of $\eta^i$}

We can now compute $M^\r(\eta^i)$ and $M^{C_2}(\eta^i)$.

\begin{thm}\label{rmieta}
For any $i \geq 0$ and $0 \leq j \leq 3$ with $(i,j) \neq (0,0)$, we have
$$M^\r(\eta^{4i+j}) \ni \begin{cases}
w_1^{4(i-1)} \eta^2 \eta_4 \quad&j = 0, \\
w_1^{4i} \nu \quad & j=1, \\
w_1^{4i}\nu^2 \quad & j=2, \\
w_1^{4i}\nu^3 \quad & j=3.
\end{cases}$$
\end{thm}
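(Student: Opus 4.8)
The plan is to run a single induction on $i$, driven by the upper bounds of Proposition~\ref{Prop:etaUB} and the lower bounds of Proposition~\ref{Prop:eta4}, in exact parallel with the proof of Theorem~\ref{Thm:rm2i}. Proposition~\ref{Prop:etaUB}(1) already gives, for all admissible $(i,j)$,
\[
|M^\r(\eta^{4i+j})| \leq
\begin{cases}
|w_1^{4(i-1)}\eta^2\eta_4|, & j = 0,\\
|w_1^{4i}\nu^j|, & 1 \leq j \leq 3,
\end{cases}
\]
together with the assertion that equality forces the displayed containment. So the entire proof reduces to establishing the matching lower bound on $|M^\r(\eta^{4i+j})|$ for every such pair.

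First I would settle the base cases $1 \leq 4i+j \leq 4$. The equality $|M^\r(\eta)| = |\nu|$ is immediate from $\nu \in M^\r(\eta)$ (Proposition~\ref{Prop:ElementaryR}(2)). For $\eta^2$, $\eta^3$, and $\eta^4$ I would run the Atiyah--Hirzebruch spectral sequence for $\underline{L}^\infty_{-\infty}$ in low topological dimension, using base-change $i^* : SH_\r \to SH_\c$ and the $\c$-motivic computations of \cite{Qui17} to fix the relevant differentials, and verify that the classes of the form $\nu^2[\,\cdot\,]$, $\nu^3[\,\cdot\,]$, and $(\eta^2\eta_4)[\,\cdot\,]$ survive. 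This pins down both the lower bound and the wedge summand of $\Sigma^{1,0}\underline{L}^\infty_{-N}$ on which the Mahowald invariant is detected.

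For the inductive step, Proposition~\ref{Prop:eta4} says that $\eta^4$ is null on the stunted lens subquotients $\underline{L}^{9+2m}_{-1+2m}$, whence $|M^\r(\eta^{(4i+j)+4})| \geq |M^\r(\eta^{4i+j})| + 20$. Since $w_1^4$ has topological dimension $20$, we have $|w_1^{4(i+1)}\nu^j| = |w_1^{4i}\nu^j| + 20$ and $|w_1^{4i}\eta^2\eta_4| = |w_1^{4(i-1)}\eta^2\eta_4| + 20$, so the inductive hypothesis together with the upper bound above pins $|M^\r(\eta^{4(i+1)+j})|$ to exactly the claimed value, and the containment then follows from the equality clause of Proposition~\ref{Prop:etaUB}.

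Once Propositions~\ref{Prop:etaUB} and~\ref{Prop:eta4} are granted, the only genuinely computational work left is the handful of base cases above and the bookkeeping of which cell carries the invariant; everything else is formal. Accordingly, the real obstacle in this circle of ideas is not Theorem~\ref{rmieta} itself but Proposition~\ref{Prop:eta4}: establishing that $\eta^4$ dies on a $400$-cell stunted lens subquotient requires a long Atiyah--Hirzebruch argument over $\pi_{**}^\r(S^{0,0})$ throughout coweight $\leq 11$, relying on the $A(1)$-module structure computed via Lemma~\ref{Lem:LCoh} and the recent calculations of \cite{BI20}.
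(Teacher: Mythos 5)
Your proposal is correct and follows essentially the same route as the paper: Proposition~\ref{Prop:etaUB} supplies the upper bound together with the equality clause, low-dimensional Atiyah--Hirzebruch computations settle the base cases $\eta,\dots,\eta^4$, and Proposition~\ref{Prop:eta4} drives the induction by forcing $|M^\r(\eta^{k+4})| \geq |M^\r(\eta^k)| + 20$, matching the dimension shift of $w_1^4$. Your closing remark is also accurate — the substantive work in this circle of ideas lives in Proposition~\ref{Prop:eta4}, not in the deduction of the theorem from it.
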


\begin{proof}
The proof is analogous to that of Theorem \ref{Thm:rm2i}. Proposition \ref{Prop:etaUB} provides an upper bound and implies that if the upper bound is tight, then the theorem holds. Low dimensional computations show the lower bound is tight for $i = 0$, and induction using Proposition \ref{Prop:eta4} shows the lower bound is tight for all $i \geq 0$. 
\end{proof}

We now use Lemma \ref{compatibility} to calculate $M^{C_2}(\eta^i)$ by comparison with $M^\r(\eta^i)$ and $M^{cl}(2^i)$. 

\begin{thm}\label{cmieta}
For any $i \geq 0$ and $0 \leq j \leq 3$ with $(i,j) \neq (0,0)$, we have
$$M^{C_2}(\eta^{4i+j}) \ni \begin{cases}
w_1^{4(i-1)} \eta^2 \eta_4 \quad&j = 0, \\
w_1^{4i} \nu \quad & j=1, \\
w_1^{4i}\nu^2 \quad & j=2, \\
w_1^{4i}\nu^3 \quad & j=3.
\end{cases}$$
\end{thm}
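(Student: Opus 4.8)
The plan is to follow the template set by Theorem \ref{Thm:em2i}, transporting the $\r$-motivic computation of $M^\r(\eta^{4i+j})$ from Theorem \ref{rmieta} into the $C_2$-equivariant setting via the comparison lemmas of Section \ref{Sec:MIDef}. The key inputs are: the just-proved $\r$-motivic result $M^\r(\eta^{4i+j}) \ni \beta_{i,j}$, where $\beta_{i,j}$ is $w_1^{4(i-1)}\eta^2\eta_4$ when $j=0$ and $w_1^{4i}\nu^j$ when $1\le j\le 3$; the classical result $M^{cl}(2^{4i+j}) \ni v_1^{4(i-1)} 8\sigma$ or $v_1^{4i}\eta^j$ from \cite{MR93,Ada66}; and the behavior of $\eta$ and of the classes $\beta_{i,j}$ under the equivariant Betti realization, forgetful, and geometric fixed points functors established in Section \ref{SectionEta}.

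First I would record the relevant functoriality: equivariant Betti realization $Re_{C_2}$ sends $\eta^{4i+j} \in \pi^\r_{4i+j,4i+j}(S^{0,0})$ to $\eta^{4i+j} \in \pi^{C_2}_{4i+j,4i+j}(S^{0,0})$ (this is the lemma preceding Lemma \ref{geomASS}), and sends the $\r$-motivic class $\beta_{i,j}$ to the $C_2$-equivariant class of the same name by definition of the latter. Next, geometric fixed points $\Phi^{C_2}$ sends $\eta^{4i+j}$ to $2^{4i+j} \in \pi_0(S^0)$ (the lemma attributed to Mike Hill) and sends $\beta_{i,j}$ to Adams' classical $v_1$-periodic class $v_1^{4(i-1)}8\sigma$ (for $j=0$) or $v_1^{4i}\eta^j$ (for $1\le j\le 3$), which is the content of the proof of Proposition \ref{Prop:wv} together with Lemma \ref{geomASS}. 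Applying Lemma \ref{squeeze}(1) to $\Phi^{C_2}$, together with the classical fact $M^{cl}(2^{4i+j}) \ni \Phi^{C_2}(\beta_{i,j})$ from \cite{MR93}, gives the upper bound $|M^{C_2}(\eta^{4i+j})| \le |\beta_{i,j}|$. This is exactly the statement of Proposition \ref{Prop:etaUB}(2), which may simply be cited.

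For the reverse inequality, I would invoke the $\r$-motivic computation: since $Re_{C_2}(\eta^{4i+j}) = \eta^{4i+j}$ and $Re_{C_2}(\beta_{i,j}) = \beta_{i,j}$, Lemma \ref{compatibility}(1) applied to $Re_{C_2} : \Motr \to \Sptc$ gives $|M^{C_2}(\eta^{4i+j})| \le |\beta_{i,j}|$ again — but this is the wrong direction for a lower bound. The genuine lower bound must instead come from the fact that the two upper bounds (via $\Phi^{C_2}$ from the classical side, and via $Re_{C_2}$ from the $\r$-motivic side) are both achieved by the \emph{same} stem $|\beta_{i,j}|$, and that $M^\r(\eta^{4i+j})$ is \emph{exactly} in stem $|\beta_{i,j}|$ by Theorem \ref{rmieta}. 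The clean way to phrase this: equivariant Betti realization sends the defining nullhomotopy/essential-map diagram for $M^\r(\eta^{4i+j})$ to the corresponding diagram in $\Sptc$, so any map witnessing that $\eta^{4i+j}$ is \emph{not} detected below stem $|\beta_{i,j}|$ in $\Sptc$ would pull back, via Theorem \ref{DI16} (the Belmont--Guillou--Isaksen isomorphism range), to an $\r$-motivic map contradicting Theorem \ref{rmieta} — provided the relevant coweights fall in the isomorphism range $2w-s < 5$. Here lies the one point requiring care, and I expect it to be the main obstacle: one must check that the classes potentially detecting a \emph{smaller} $C_2$-equivariant Mahowald invariant live in bidegrees where Theorem \ref{DI16} applies, just as in the proof of Proposition \ref{Prop:ElementaryC2}; if they do not, one instead leans directly on the $\r$-motivic lower bound through $Re_{C_2}$ as packaged in the "further, if $|M^{\cc}(\alpha')| = |\beta'|$" clause of Lemma \ref{compatibility}. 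In practice, the cleanest route is: the upper bound $|M^{C_2}(\eta^{4i+j})| \le |\beta_{i,j}|$ is Proposition \ref{Prop:etaUB}(2); the matching lower bound $|M^{C_2}(\eta^{4i+j})| \ge |\beta_{i,j}|$ follows because $Re_{C_2}$ carries the essential composite $S^{4i+j,4i+j}\xrightarrow{\eta^{4i+j}} S^{0,0}\to \Sigma^{1,0}\underline{L}^\infty_{-N}$ of Theorem \ref{rmieta} to an essential composite in $\Sptc$ (as $Re_{C_2}$ is a functor and $Re_{C_2}(\underline{L}^\infty_{-N}) = \underline{R}^\infty_{-N}$ by Lemma \ref{Lem:ReC2U}), forcing the $C_2$-equivariant $N$ to be at least as large. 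Combining the two inequalities gives equality of stems, and then the last sentence of Lemma \ref{compatibility}(1) upgrades this to the membership $\beta_{i,j} \in M^{C_2}(\eta^{4i+j})$, which is the desired conclusion.

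I would write the proof itself in three or four sentences mirroring the proof of Theorem \ref{Thm:em2i}: state the functor images of $\eta^{4i+j}$ and $\beta_{i,j}$ under $Re_{C_2}$, $U$, and $\Phi^{C_2}$; cite Proposition \ref{Prop:etaUB}(2) for the upper bound and Theorem \ref{rmieta} together with Lemma \ref{compatibility}(1) (applied to $Re_{C_2}$) for the matching lower bound and the membership statement; and remark that the $j=0$ case uses $\Phi^{C_2}(w_1^{4(i-1)}\eta^2\eta_4) = v_1^{4(i-1)}8\sigma$ from the proof of Proposition \ref{Prop:wv} in place of the $\nu^j$ computations. The only genuinely delicate verification — ensuring no $C_2$-equivariant class sneaks in at a lower stem — is already handled abstractly by the squeeze lemmas, so no new low-dimensional casework beyond what appears in Section \ref{SectionEta} should be needed.
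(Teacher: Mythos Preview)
Your outline and choice of ingredients match the paper's proof exactly: upper bound from Proposition~\ref{Prop:etaUB}(2), lower bound from Theorem~\ref{rmieta} together with Lemma~\ref{compatibility}(1) applied to $Re_{C_2}$, then conclude. However, your explanation of \emph{how} Lemma~\ref{compatibility}(1) yields the lower bound has the direction reversed, and the reasoning you give is not valid as written.

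Specifically: with $F = Re_{C_2} : \Motr \to \Sptc$, the source category $\cc$ is $\Motr$ and the target $\cd$ is $\Sptc$. The lemma takes as \emph{input} a containment $M^{C_2}(\alpha) \ni \beta$ and outputs the bound $|M^{\r}(\alpha')| \leq |\beta'|$; its ``further, if equality holds'' clause gives membership in $M^{\r}$, not in $M^{C_2}$. So your sentence ``Lemma~\ref{compatibility}(1) applied to $Re_{C_2}$ gives $|M^{C_2}(\eta^{4i+j})| \le |\beta_{i,j}|$ again'' misreads the lemma, and the claim that the last clause of Lemma~\ref{compatibility}(1) ``upgrades this to the membership $\beta_{i,j} \in M^{C_2}(\eta^{4i+j})$'' is wrong for the same reason. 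Likewise, your assertion that ``$Re_{C_2}$ carries the essential composite \ldots\ to an essential composite in $\Sptc$'' is false in general: functors preserve \emph{null} maps, not essential ones.

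The correct mechanism is the contrapositive, and it is precisely what the proof of Lemma~\ref{compatibility} establishes. Whatever level $N$ the $C_2$-equivariant Mahowald invariant is detected at, the $\r$-motivic composite at that same $N$ must also be nontrivial (else its image under $Re_{C_2}$ would be null). Hence $|M^{\r}(\eta^{4i+j})| \leq |M^{C_2}(\eta^{4i+j})|$. Since Theorem~\ref{rmieta} gives $|M^{\r}(\eta^{4i+j})| = |\beta_{i,j}|$, this forces $|M^{C_2}(\eta^{4i+j})| \geq |\beta_{i,j}|$, matching the upper bound. The membership $\beta_{i,j} \in M^{C_2}(\eta^{4i+j})$ then follows from the ``if equality holds'' clause of Proposition~\ref{Prop:etaUB}(2), not from Lemma~\ref{compatibility}. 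No appeal to Theorem~\ref{DI16} is needed, and no separate treatment of the $j=0$ case is required beyond what is already packaged in Proposition~\ref{Prop:etaUB}(2).
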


\begin{proof}
Proposition \ref{Prop:etaUB} gives an upper bound on $|M^{C_2}(\eta^i)|$ and implies that if the upper bound is tight, then the theorem holds. On the other hand, Theorem \ref{rmieta} and Lemma \ref{compatibility} imply that the lower bound is tight. The result follows. 
\end{proof}

\appendix

\section{The $C_2$-equivariant analog of Lin's Theorem}\label{App:Lin}

The purpose of this appendix is to prove the $C_2$-equivariant analog of Lin's Theorem discussed in Section \ref{SS:Qui}. The first three sections of the appendix are essentially an exercise in rewriting Gregersen's thesis \cite{Gre12} in $C_2$-equivariant language, with three notable exceptions:
\begin{enumerate}
\item Gregersen shows that the subalgebras $A(n)$ of the motivic Steenrod algebra are free over $\m_2^k$ using some technical diagram chasing. We avoid this in the $C_2$-equivariant setting using Ricka's theory of profile functions \cite{Ric14}. 
\item The $C_2$-equivariant homology of a point $\m_2^{C_2}$ is larger than $\m_2^\r$, so we sometimes must check that $\m_2^{C_2}$ vanishes in the necessary bidegrees.
\item There is some ambiguity in choosing an analog of projective spectra in the $C_2$-equivariant setting: we must work with $C_2$-equivariant vector bundles and $C_2$-equivariant classifying space, instead of classical projective spectra equipped with a trivial $C_2$-action.\footnote{A similar ambiguity also arises in the motivic setting, where one chooses to work with projective spectra obtained as motivic Thom spectra of bundles over a geometric classifying space, instead of, for example, the naive projective spectrum obtained by taking a constant presheaf of classical projective spectra.}
\end{enumerate}

We proceed as follows. In Section \ref{SS:Steenrod}, we recall the $C_2$-equivariant Steenrod algebra $A^{C_2}$ and show that its subalgebras $A(n)$ are free over the cohomology of a point. These are then used in Section \ref{SS:Singer} to define and study the $C_2$-equivariant Singer construction $R_+(M)$ of any $A^{C_2}$-module $M$. We explain how Gregersen's analysis of the motivic Singer construction carries over to the $C_2$-equivariant setting to show that $R_+(M)$ is $\Tor$-equivalent to $M$. We then show in Sections \ref{SS:A3} and \ref{SS:A4} that if $M = \m_2^{C_2}$ is the $C_2$-equivariant Bredon cohomology of the $C_2$-equivariant sphere spectrum, then $R_+(M)$ is obtained as the colimit of the cohomologies of stunted projective spectra, with these spectra defined using equivariant classifying spaces or equivariant Betti realization. Finally, we invoke the $C_2$-equivariant Adams spectral sequence and the $\Tor$-equivalence mentioned above to prove our $C_2$-equivariant analog of Lin's Theorem, which says that the $C_2$-equivariant sphere spectrum is equivalent to the homotopy limit of stunted projective $C_2$-spectra. 

\subsection{The $C_2$-equivariant Steenrod algebra and some subalgebras}\label{SS:Steenrod}

We begin by recalling some properties of the category of $C_2$-equivariant spectra and the $C_2$-equivariant Steenrod algebra from \cite{HK01} and \cite{Ric14}. 

Let $RO(C_2)$ denote the ring of real orthogonal representations of $C_2$. Then there is an isomorphism
$$RO(C_2) \cong \z \{ \epsilon \} \oplus \z\{\sigma\}$$
where $\epsilon$ is the trivial $1$-dimensional representation and $\sigma$ is the sign representation. We will use $RO(C_2)$-graded groups in the sequel with the convention that $|\epsilon| = (1,0)$ and $|\sigma| = (1,1)$. The regular representation $\rho$ has bidegree $(2,1)$. 

If $X$ and $Y$ are $C_2$-spectra, then equivariant stable homotopy classes of maps from $X$ to $Y$ will be denoted $[X,Y]$. Maps between $C_2$-spectra form an $RO(C_2)$-graded Mackey functor in which restriction is induced by the projection ${C_2}_+ \to S^0$ and transfer is induced by its Spanier--Whitehead dual $S^0 \to {C_2}_+$. 

Let $\underline{\z}$ and $\underline{\f_2}$ denote the constant $C_2$-Mackey functors, i.e. the $C_2$-Mackey functors whose values on any finite $C_2$-set are always $\z$ or $\f_2$, respectively, whose restriction maps are always the identity homomorphism, and whose transfers are always given by multiplication by $2$. There are Eilenberg-MacLane $C_2$-spectra $HM$ for any Mackey functor $M$. 

\begin{prop}\cite[Prop. 6.2]{HK01}(see also \cite[Prop. 2.13]{Ric14})
The Bredon cohomology of a point with coefficients in $\mft$ is given by
$$\m_2^{C_2} \cong \f_2[\tau, \rho] \oplus \bigoplus_{s \geq 0} \dfrac{\f_2[\tau]}{\tau^\infty}\left\{ \dfrac{\gamma}{\rho^s} \right\}$$
where $|\tau| = (0,1)$, $|\rho| = (1,1)$, $|\gamma| = (0,-1)$, and following \cite[Rmk. 2.1]{GHIR17}, $\frac{\f_2[\tau]}{\tau^\infty}$ is the infinitely divisible $\f_2[\tau]$-module consisting of elements of the form $\frac{x}{\tau^k}$ for $k \geq 1$. 
\end{prop}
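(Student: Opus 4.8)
The plan is to compute $\m_2^{C_2} = \pi^{C_2}_{-\star}(H\mft)$ from the isotropy separation cofiber sequence of $C_2$-spectra
\[
(EC_2)_+ \wedge H\mft \longrightarrow H\mft \longrightarrow \widetilde{EC_2}\wedge H\mft ,
\]
by applying $\pi^{C_2}_\star(-)$ to obtain a long exact sequence of $RO(C_2)$-graded groups, identifying the two outer terms explicitly, and then assembling $\m_2^{C_2}$ from the resulting exact sequence. Both outer terms have $\f_2$-dimension at most one in every bidegree, which is what makes the bookkeeping manageable.

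\textbf{The two outer terms.} For the geometric (right-hand) term, $\widetilde{EC_2} = S^{\infty\sigma}$, so smashing with it inverts the Euler class $\rho = a_\sigma$; hence $\pi^{C_2}_\star(\widetilde{EC_2}\wedge H\mft) \cong \m_2^{C_2}[\rho^{-1}]$, and $(\widetilde{EC_2}\wedge H\mft)^{C_2} = \Phi^{C_2}(H\mft)$. Using the standard computation $\pi_*\Phi^{C_2}(H\mft)\cong\f_2[t]$ with $|t|=1$ together with the $\rho$-periodicity, one identifies this term with $\f_2[\tau,\rho^{\pm1}]$ (the class $t$ corresponding to $\tau$). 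For the Borel (left-hand) term, $(EC_2)_+\wedge H\mft$ is a free $C_2$-spectrum, and its $\sigma$-twisted genuine fixed points are computed by the mod-$2$ Thom isomorphism: $\big(S^{-q\sigma}\wedge (EC_2)_+\wedge H\mft\big)^{C_2}\simeq \Sigma^{-q}H\f_2\wedge (BC_2)_+$. Consequently $\pi^{C_2}_{p,q}\big((EC_2)_+\wedge H\mft\big)\cong H_p(BC_2;\f_2)$ for all $q$, i.e. this term is $\f_2$ in each bidegree $(p,q)$ with $p\ge 0$ and $0$ otherwise (in homotopy grading); dually it contributes classes to $\m_2^{C_2}$ in a half-plane of negative degrees, built on the $BC_2$-fundamental class, which is exactly the shape of the divisible summand $\bigoplus_{s\ge0}\tfrac{\f_2[\tau]}{\tau^\infty}\{\gamma/\rho^s\}$ with $|\gamma| = (0,-1)$.

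\textbf{Assembly.} Feeding these into the long exact sequence
\[
\cdots \to \pi^{C_2}_{\star}\big((EC_2)_+\wedge H\mft\big)\to \m_2^{C_2}\to \pi^{C_2}_{\star}\big(\widetilde{EC_2}\wedge H\mft\big)\xrightarrow{\ \partial\ }\pi^{C_2}_{\star-(1,0)}\big((EC_2)_+\wedge H\mft\big)\to\cdots,
\]
the map $\m_2^{C_2}\to\m_2^{C_2}[\rho^{-1}]=\f_2[\tau,\rho^{\pm1}]$ is $\rho$-localization; its image is the $\rho$-torsion-free part $\f_2[\tau,\rho]$ (the ``positive cone''), and this is precisely $\ker\partial$. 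The kernel of $\rho$-localization, i.e. the $\rho$-power torsion of $\m_2^{C_2}$, is then $\operatorname{coker}\partial$, a quotient of the Borel term; identifying $\operatorname{coker}\partial$ with the complement of the image of $\partial$ realizes it as the ``negative cone'' $\bigoplus_{s\ge0}\tfrac{\f_2[\tau]}{\tau^\infty}\{\gamma/\rho^s\}$. Finally the short exact sequence $0\to\operatorname{coker}\partial\to\m_2^{C_2}\to\f_2[\tau,\rho]\to0$ splits as a module over $\f_2[\tau,\rho]$ because the positive cone is free (hence projective) over $\f_2[\tau,\rho]$, giving the claimed direct-sum decomposition; one then reads off that $\tau$ and $\rho$ are the orientation and Euler classes of $\sigma$ and that $\gamma$ is the indicated generator. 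I expect the main obstacle to be the degree bookkeeping: one must pin down the connecting map $\partial$ precisely enough to see that it is ``maximally nondegenerate'' (forced by consistency with the geometric fixed points and the underlying spectrum), and verify that no exotic extension survives. An alternative, perhaps cleaner, packaging of exactly the same input is the $\rho$-Bockstein spectral sequence obtained by filtering by powers of $a_\sigma$, with $E_1$-page $H^*(BC_2;\f_2)[\rho]$, whose differentials are determined by the collapse of the isotropy separation sequence; I would present whichever of the two is less computation-heavy.
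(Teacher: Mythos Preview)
The paper does not give its own proof of this proposition: it is quoted as a known computation from \cite[Prop.~6.2]{HK01} and \cite[Prop.~2.13]{Ric14}, with no argument supplied. So there is nothing in the paper to compare your proposal against.

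That said, your approach via the isotropy separation sequence is exactly the standard one and is correct in outline. The identification of the geometric term with $\f_2[\tau,\rho^{\pm 1}]$ via $\Phi^{C_2}(H\mft)\simeq H\f_2[t]$, the identification of the Borel term with $H_*(BC_2;\f_2)$ in every $\sigma$-weight via the Thom isomorphism, and the splicing of the long exact sequence into the positive and negative cones are all right. The splitting argument at the end (freeness of $\f_2[\tau,\rho]$ over itself) is fine for the $\f_2[\tau,\rho]$-module statement, which is all that is being asserted. One small point worth tightening if you write this up: you should make explicit why the image of $\m_2^{C_2}\to\f_2[\tau,\rho^{\pm1}]$ is exactly $\f_2[\tau,\rho]$ and not something larger; this amounts to checking that no negative power of $\rho$ lifts, which follows either from the known underlying homotopy of $H\mft$ or by inspecting the connecting map $\partial$ in one representative bidegree.
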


The cooperations $A^{C_2}_{**} = \pi_{**}^{C_2}(H\mft \wedge H\mft)$ form an $RO(C_2)$-graded Hopf algebroid called the \emph{$C_2$-equivariant dual Steenrod algebra}.

\begin{thm}\cite[Thm. 6.41]{HK01}
The $C_2$-equivariant dual Steenrod algebra has presentation
$$A^{C_2}_{**} = \pi_{**}^{C_2}(H \mft_{**} \wedge H\mft) = \m^{C_2}_2[ \xi_{i+1}, \tau_i : i \geq 0] / I$$
where $|\xi_{i}| = (2^{i+1}-2,2^i-1)$, $|\tau_i| = (2^{i+1}-1,2^i-1)$, and $I$ is the ideal generated by the relation $\tau^2_i = a \xi_{i+1} + (a \tau_0 + u) \tau_{i+1}$. 
\end{thm}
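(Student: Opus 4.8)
The statement is Hu--Kriz's computation of the $RO(C_2)$-graded dual Steenrod algebra, so the plan is to reconstruct a streamlined version of their argument. The task reduces to identifying $\pi^{C_2}_{**}(H\mft \wedge H\mft)$ as an $\m_2^{C_2}$-algebra (the Hopf-algebroid structure maps then follow by naturality once the generators are pinned down). \emph{Step 1: freeness.} First I would show that $H\mft \wedge H\mft$ splits as a wedge of $RO(C_2)$-suspensions of $H\mft$, so that $\pi^{C_2}_{**}(H\mft \wedge H\mft)$ is a free $\m_2^{C_2}$-module of finite type in each bidegree. The cleanest route is to exhibit a representation-cell structure on $H\mft$ and run the resulting cellular filtration, or to invoke Ricka's structural results on $A^{C_2}$ and its sub-Hopf-algebroids \cite{Ric14} (which are used elsewhere in this appendix anyway). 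This is the step I expect to be the main obstacle: because $\m_2^{C_2}$ has the infinitely-divisible ``negative cone'' summand $\bigoplus_s \frac{\f_2[\tau]}{\tau^\infty}\{\gamma/\rho^s\}$, the usual ``free module'' and ``degree-reason'' arguments have to be carried out with care, and one must separately verify that no exotic classes are supported entirely in the negative cone.

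\emph{Step 2: locating the generators.} With freeness in hand, I would pin down generators by comparing along three functors whose targets are already understood: (i) the forgetful functor $U : \Sptc \to \Spt$, under which $H\mft$ becomes classical $H\f_2$ and $A^{C_2}_{**}$ maps onto Milnor's $A^{cl}_* = \f_2[\xi_1,\xi_2,\dots]$; (ii) geometric fixed points $\Phi^{C_2} : \Sptc \to \Spt$, under which $H\mft$ becomes a wedge of suspensions of $H\f_2$, so that $\Phi^{C_2}(H\mft\wedge H\mft)$ records the ``$\rho$-inverted'' information; and (iii) equivariant Betti realization $Re_{C_2} : \Motr \to \Sptc$, which by Heller--Ormsby \cite{HO16} sends motivic $H\f_2$ to $H\mft$ and hence carries Voevodsky's $\r$-motivic dual Steenrod algebra $\m_2^\r[\xi_{i+1},\tau_i]/(\tau_i^2 = \rho\xi_{i+1}+(\rho\tau_0+\tau)\tau_{i+1})$ into $A^{C_2}_{**}$. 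The images of the motivic $\xi_i,\tau_i$ are the asserted classes, and their bidegrees are forced by naturality of the gradings. The isotropy-separation square $(EC_2)_+ \to S^0 \to \widetilde{EC_2}$ smashed with $H\mft\wedge H\mft$ then shows that $\{\xi_i,\tau_i\}$ generate: the Borel piece contributes the classical polynomial generators together with the twist classes (the $\tau_i$), and the geometric piece is detected by $\Phi^{C_2}$.

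\emph{Step 3: the relation and a counting argument.} The relation $\tau_i^2 = a\xi_{i+1} + (a\tau_0 + u)\tau_{i+1}$ is the Betti-realization image (with $\rho \mapsto a$, $\tau \mapsto u$) of Voevodsky's motivic relation, so it certainly holds in $A^{C_2}_{**}$; to see there is no correction term I would compare both sides in the relevant bidegree, noting that their images in $A^{cl}_*$ and in $\pi_{**}\Phi^{C_2}(H\mft\wedge H\mft)$ already agree, so any discrepancy would lie in the intersection of the kernels of these two comparison maps, which one checks vanishes in that bidegree (in particular, no negative-cone class intervenes). Finally I would confirm that $\m_2^{C_2}[\xi_{i+1},\tau_i]/I$ is all of $A^{C_2}_{**}$, and not a proper subalgebra, by a Poincar\'e-series count over $\m_2^{C_2}$: the monomial basis $\{\prod_i \xi_i^{a_i}\prod_i \tau_i^{\epsilon_i} : a_i \geq 0,\ \epsilon_i \in \{0,1\}\}$ has, in each bidegree, exactly the $\m_2^{C_2}$-rank produced by Step 1 (matching the classical Milnor count after applying $U$). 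This closes the argument.
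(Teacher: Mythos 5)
First, a point of framing: the paper does not prove this statement. It is quoted verbatim from Hu--Kriz \cite[Thm.~6.41]{HK01}, so there is no in-paper argument to compare yours against; what follows is an assessment of your reconstruction on its own terms.

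The main gap is in your Step 1, and it propagates. You propose to establish \emph{first} that $H\mft \wedge H\mft$ splits as a wedge of $RO(C_2)$-suspensions of $H\mft$, and only then to locate generators. But the splitting is not something one can see in advance: the actual argument (in \cite{HK01} and in all subsequent treatments) runs in the opposite order. One first constructs the classes $\xi_i, \tau_i$ (from $H\mft^{**}(B_{C_2}\mu_2)$, or, as you suggest, as equivariant Betti realizations of the $\r$-motivic classes), uses them to write down a candidate map $\bigvee_\alpha \Sigma^{|m_\alpha|} H\mft \to H\mft \wedge H\mft$ indexed on the monomial basis, and then proves \emph{that specific map} is an equivalence. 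Your two suggested routes to an a priori splitting do not work: a representation-cell filtration of $H\mft$ gives a spectral sequence over $\m_2^{C_2}$, which, because of the negative-cone summand $\bigoplus_s \frac{\f_2[\tau]}{\tau^\infty}\{\gamma/\rho^s\}$, is exactly the kind of ring over which ``cellular implies free'' fails; and Ricka's structural results \cite{Ric14} take the Hu--Kriz presentation as input, so invoking them here is circular.

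The second gap is that the decisive computation is missing from Step 3. Granting the candidate map above, one checks it is an equivalence on underlying spectra (where it reduces to Milnor's theorem) and on geometric fixed points. Since $\pi_*\Phi^{C_2}(H\mft) \cong \f_2[t]$ with $|t|=1$, the target $\pi_*\Phi^{C_2}(H\mft\wedge H\mft) \cong \f_2[t]\otimes A^{cl}_*\otimes\f_2[t']$ is large, and one must identify $\Phi^{C_2}(\xi_i)$ and $\Phi^{C_2}(\tau_i)$ explicitly and verify that the monomials in them, together with the wedge summands of $\Phi^{C_2}H\mft$, exhaust this target. That combinatorial matching is the heart of the proof and is not supplied by the phrase ``the geometric piece is detected by $\Phi^{C_2}$.'' Relatedly, a Poincar\'e-series count over $\m_2^{C_2}$ cannot close the argument: equality of $\m_2^{C_2}$-ranks in each bidegree does not force a map of (not-yet-known-to-be-free) modules to be an isomorphism when negative-cone classes — which are invisible to both $U$ and $\Phi^{C_2}$, since they are killed by the forgetful functor and are $\rho$-power-torsion — could a priori appear. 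Your verification of the relation $\tau_i^2 = a\xi_{i+1}+(a\tau_0+u)\tau_{i+1}$ via Betti realization is fine once the module structure is known, but the ``intersection of kernels vanishes in that bidegree'' claim again presupposes the basis you are trying to establish.
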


The dual of $A^{C_2}_**$ is the \emph{$C_2$-equivariant Steenrod algebra} $A^{C_2} =  \pi_{**}^{C_2}(F(H\mft,H\mft))$ which is generated over $\m_2^{C_2}$ by the equivariant Steenrod operations $Sq^i$, $i \geq 0$, modulo equivariant Adem relations (c.f. \cite[Prop. 3.1.8]{Gre12}). 

We note that the $C_2$-equivariant dual Steenrod algebra $A^{C_2}_{**}$ is isomorphic to the $\r$-motivic dual Steenrod algebra $A^{\r}$ base-changed along $\m_2^\r \hookrightarrow \m_2^{C_2}$. 

We have now recalled enough background to discuss subalgebras of the $C_2$-equivariant Steenrod algebra and its dual. This discussion was initiated by Ricka \cite{Ric14}, who translated certain profile function techniques of Margolis \cite{Mar11} to the $C_2$-equivariant setting in order to study certain quotient Hopf algebroids of $A^{C_2}_{**}$. We closely follow \cite[Sec. 5]{Ric14} for the remainder of the subsection. 

\begin{defin}
A \emph{profile function} is a pair of maps $(h,k)$,
$$h : \n \setminus \{0\} \to \n \cup \{\infty\} \quad \text{and}\quad k : \n \to \n \cup \{ \infty \}.$$
Given a profile function, denote by $I(h,k)$ the two-sided ideal of $A^{C_2}_{**}$ generated by $\xi^{2^{h(i)}}_i$ and $\tau^{2^{k(i)}}_i$ with the convention that $x^\infty = 0$. 

A profile function is \emph{minimal} if for all $i,n \geq 0$, $\tau^{2^n}_i \in I(h,k)$ if and only if $n \geq k(i)$.
\end{defin}

The following proposition provides a condition for when $A^{C_2}_{**}/ I(h,n)$ is a quotient Hopf algebroid of $A^{C_2}_{**}$. 

\begin{prop}\cite[Proposition 5.13]{Ric14} Let $(h,k)$ be a minimal profile function. Then $I(h,k)$ is a $RO(C_2)$-graded Hopf algebroid ideal if and only if the profile functions satisfy:
$$ \forall i,j \geq 1, \quad  h(i) \leq j + h(i+j) \quad or \quad h(j) \leq h(i+j),$$
$$\forall i \geq 1, j \geq 0, \quad h(i) \leq j + k(i+j) \quad or \quad k(j) \leq k(i+j).$$
\end{prop}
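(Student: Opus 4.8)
The plan is to invoke the standard criterion for a quotient of a commutative, graded-connected Hopf algebroid: writing $\psi$, $c$, $\epsilon$ for the coproduct, conjugation, and counit of $A^{C_2}_{**}$, the ideal $I(h,k)$ is a Hopf algebroid ideal if and only if
\[
\psi\bigl(I(h,k)\bigr)\subseteq I(h,k)\otimes_{\m_2^{C_2}}A^{C_2}_{**}+A^{C_2}_{**}\otimes_{\m_2^{C_2}}I(h,k),\qquad c\bigl(I(h,k)\bigr)\subseteq I(h,k),\qquad \epsilon\bigl(I(h,k)\bigr)=0.
\]
The counit condition is automatic because $I(h,k)$ is generated in positive degree, and the conjugation condition will follow formally (by the usual induction on degree, using the antipode axioms) once the coproduct condition is established, so the real content is the coproduct condition. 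Since $\psi$ is a ring homomorphism into $A^{C_2}_{**}\otimes_{\m_2^{C_2}}A^{C_2}_{**}$ and the right-hand submodule above is an ideal of the target, it suffices to test $\psi$ on the algebra generators $\xi_i^{2^{h(i)}}$ and $\tau_i^{2^{k(i)}}$ of $I(h,k)$. To extract the sharp inequalities one uses that $A^{C_2}_{**}$ is free over $\m_2^{C_2}$ on the monomials $\prod_i\xi_i^{a_i}\prod_i\tau_i^{\varepsilon_i}$ with $\varepsilon_i\in\{0,1\}$ (this follows, as in the motivic setting, from identifying $A^{C_2}_{**}$ with the $\r$-motivic dual Steenrod algebra base-changed along $\m_2^\r\hookrightarrow\m_2^{C_2}$), and that \emph{minimality} of $(h,k)$ is precisely what forces $I(h,k)$ to be spanned by the subset of these basis monomials divisible by one of the generators; membership in $I(h,k)\otimes A^{C_2}_{**}+A^{C_2}_{**}\otimes I(h,k)$ can then be checked one monomial at a time.

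For the polynomial generators, the coproduct $\psi(\xi_n)=\sum_{a+b=n}\xi_a^{2^b}\otimes\xi_b$ (with $\xi_0=1$) together with the Frobenius identity (squaring is additive in the $\f_2$-algebra $A^{C_2}_{**}\otimes_{\m_2^{C_2}}A^{C_2}_{**}$) gives
\[
\psi\bigl(\xi_n^{2^{h(n)}}\bigr)=\xi_n^{2^{h(n)}}\otimes 1+1\otimes\xi_n^{2^{h(n)}}+\sum_{a+b=n,\,a,b\geq 1}\xi_a^{2^{b+h(n)}}\otimes\xi_b^{2^{h(n)}}.
\]
The first two summands lie in $I(h,k)\otimes A^{C_2}_{**}$ and $A^{C_2}_{**}\otimes I(h,k)$, while a middle summand lies in $I(h,k)\otimes A^{C_2}_{**}+A^{C_2}_{**}\otimes I(h,k)$ exactly when $\xi_a^{2^{b+h(n)}}\in I(h,k)$ or $\xi_b^{2^{h(n)}}\in I(h,k)$, i.e.\ $h(a)\leq b+h(n)$ or $h(b)\leq h(n)$; renaming $(a,b,n)=(i,j,i+j)$ and letting $n$ vary, this is the first displayed inequality. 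For the exterior generators, $\psi(\tau_n)=\tau_n\otimes 1+\sum_{a+b=n}\xi_a^{2^b}\otimes\tau_b$ and the same Frobenius identity give
\[
\psi\bigl(\tau_n^{2^{k(n)}}\bigr)=\tau_n^{2^{k(n)}}\otimes 1+1\otimes\tau_n^{2^{k(n)}}+\sum_{b=0}^{n-1}\xi_{n-b}^{2^{b+k(n)}}\otimes\tau_b^{2^{k(n)}},
\]
and the summand indexed by $b$ lies in $I(h,k)\otimes A^{C_2}_{**}+A^{C_2}_{**}\otimes I(h,k)$ exactly when $\xi_{n-b}^{2^{b+k(n)}}\in I(h,k)$ (equivalently $h(n-b)\leq b+k(n)$) or $\tau_b^{2^{k(n)}}\in I(h,k)$; by minimality the latter holds precisely when $k(b)\leq k(n)$, and renaming $(n-b,b,n)=(i,j,i+j)$ gives the second displayed inequality. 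Reading both computations in both directions — invoking the monomial basis to see that the offending summands neither cancel nor individually lie in $I(h,k)\otimes A^{C_2}_{**}+A^{C_2}_{**}\otimes I(h,k)$ unless the inequalities hold — establishes the equivalence.

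The step I expect to be the main obstacle is the analysis of the exterior generators. Because $\tau_i^2$ is not zero but equals the polynomial $a\xi_{i+1}+(a\tau_0+u)\tau_{i+1}$, a power $\tau_i^{2^{k(i)}}$ with $k(i)\geq 1$ is not itself a basis monomial, so a priori neither it nor the terms of its coproduct admit a transparent test for membership in $I(h,k)$. Taming this is exactly the purpose of the minimality hypothesis: following Margolis' treatment of sub-Hopf-algebras of the classical mod $2$ Steenrod algebra and Ricka's equivariant adaptation \cite{Ric14}, minimality pins down the exponent at which each $\tau_i$ first enters $I(h,k)$ and so restores the ``monomial'' description of the ideal needed to make the term-by-term argument valid. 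Once that description is in hand, the remaining bookkeeping is formally the classical computation of sub-Hopf-algebras of the dual Steenrod algebra.
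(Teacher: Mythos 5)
The paper does not prove this proposition at all --- it is quoted verbatim from Ricka \cite[Prop.~5.13]{Ric14} and used as a black box --- so there is no in-paper argument to compare against. Judged on its own terms, your sketch is the correct standard argument (Margolis' profile-function analysis, adapted as Ricka does): reduce the Hopf-algebroid-ideal condition to the coproduct condition on the generators $\xi_i^{2^{h(i)}}$, $\tau_i^{2^{k(i)}}$, apply the Milnor coproduct formulas and the Frobenius, and read off the two inequalities term by term; your bookkeeping with the indices $(a,b,n)=(i,j,i+j)$ reproduces exactly the ranges $i,j\geq 1$ and $i\geq 1$, $j\geq 0$ in the statement, and you correctly identify that ``$\tau_b^{2^{k(n)}}\in I(h,k)$ iff $k(b)\leq k(n)$'' is literally the definition of minimality. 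Two caveats on where the real work hides. First, your claim that minimality forces $I(h,k)$ to be spanned by the basis monomials divisible by a generator is not a restatement of the definition but a nontrivial consequence of it (it is essentially the content of Ricka's surrounding lemmas), and it is needed twice: to make sense of ``$\tau_b^{2^{k(n)}}\in I(h,k)$'' term by term, and for the converse. Second, for the ``only if'' direction the non-cancellation argument in $A^{C_2}_{**}\otimes_{\m_2^{C_2}}A^{C_2}_{**}$ is more delicate than in the Hopf-algebra case, because $\eta_R\neq\eta_L$ (e.g.\ $\eta_R(\tau)=\tau+\rho\tau_0$) means scalars do not slide across the tensor product innocently, so exhibiting a surviving basis element requires a careful description of a basis of the tensor product. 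You flag both issues, so as a sketch this is sound; a complete proof would have to fill in those two points.
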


Additional conditions on the profile function $(h,k)$ can be imposed to guarantee the quotient Hopf algebroid $A^{C_2}_{**}/I(h,k)$ has nice properties. 

\begin{defin}
A profile function $(h,k)$ is \emph{free} if for all $i \geq 0$, $m \geq k(i)$, and $j \leq m$, 
$$k(i+m) =0 \quad \text{and} \quad h(i+j) \leq m-j.$$
\end{defin}

\begin{prop}\cite[Proposition 5.16]{Ric14} If $(h,k)$ is a free profile function, then $A^{C_2}_{**}/I(h,k)$ is free as a left $\m^{C_2}$-module.
\end{prop}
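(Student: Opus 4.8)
The plan is to prove this the way Margolis \cite{Mar11} treats profile functions for the classical Steenrod algebra, transported to the Hopf algebroid $(\m_2^{C_2}, A^{C_2}_{**})$ — i.e.\ exhibit an explicit $\m_2^{C_2}$-basis of $A^{C_2}_{**}/I(h,k)$ rather than mimic Gregersen's diagram chase. The first step is to record that $A^{C_2}_{**}$ itself is free over $\m_2^{C_2}$: the relations $\tau_i^2 = a\xi_{i+1} + (a\tau_0 + u)\tau_{i+1}$ let one rewrite every monomial uniquely with all $\tau$-exponents $\leq 1$, so a basis is given by the \emph{admissible monomials} $\prod_{i\geq 1}\xi_i^{r_i}\prod_{j\geq 0}\tau_j^{\epsilon_j}$ with $r_i\geq 0$, $\epsilon_j\in\{0,1\}$ (equivalently, base-change the corresponding $\r$-motivic fact along $\m_2^\r\hookrightarrow\m_2^{C_2}$ — note, however, that one cannot deduce the quotient statement by base change, since $\m_2^{C_2}$ is not flat over $\m_2^\r$ because of its infinitely divisible summand). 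Call an admissible monomial \emph{$(h,k)$-allowed} if $r_i<2^{h(i)}$ for all $i$ and $\epsilon_j=0$ whenever $k(j)=0$; the goal is to show the images of the $(h,k)$-allowed monomials form an $\m_2^{C_2}$-basis of the quotient.

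The next step is to simplify $I(h,k)$. Iterating the quadratic relation shows that $\tau_i^{2^m}$ is an $\m_2^{C_2}$-combination of $\xi_{i+1}^{2^{m-1}},\dots,\xi_{i+m}$ and $\tau_{i+m}$; taking $m=k(i)$, the free-profile inequalities $h(i+\ell)\leq m-\ell$ ($\ell\leq m$) force each $\xi_{i+\ell}^{2^{m-\ell}}$ into $(\xi_*^{2^{h(*)}})$ and the condition $k(i+m)=0$ forces $\tau_{i+m}$ into $(\tau_j:k(j)=0)$. Hence the generators $\tau_i^{2^{k(i)}}$ with $k(i)\geq1$ are redundant and
$$I(h,k) = \bigl(\xi_i^{2^{h(i)}}:i\geq1\bigr) + \bigl(\tau_j:k(j)=0\bigr).$$
Spanning is then routine: working modulo this ideal, any occurrence of $\xi_i^{2^{h(i)}}$ or of $\tau_j$ with $k(j)=0$ is killed, and the only bookkeeping is that multiplying a factor $\tau_{j'}$ (with $\epsilon_{j'}=1$) by some $\tau_j$, $k(j)=0$, reintroduces lower $\xi$'s and $\tau$'s via the quadratic relation — a process that terminates precisely because the relevant higher powers have been forced into $I(h,k)$ by the free-profile conditions.

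Linear independence of the allowed monomials in the quotient is handled with a multiplicative May-type filtration on $A^{C_2}_{**}$ in which $a$ is given negative weight (compare the $\r$-motivic May filtration of Section \ref{SS:RMay}), chosen so that in the associated graded the cross-terms $a\xi_{i+1}$ and $(a\tau_0+u)\tau_{i+1}$ are strictly dominated by $\tau_i^2$; then $\mathrm{gr}\,A^{C_2}_{**}\cong \m_2^{C_2}[\xi_i:i\geq1]\otimes_{\m_2^{C_2}}E_{\m_2^{C_2}}(\tau_j:j\geq0)$ with the $\tau_j$ genuine exterior generators, $\mathrm{gr}\,I(h,k)$ is the monomial ideal $(\xi_i^{2^{h(i)}})+(\tau_j:k(j)=0)$, and $\mathrm{gr}\bigl(A^{C_2}_{**}/I(h,k)\bigr)$ is visibly $\m_2^{C_2}$-free on the allowed monomials. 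Since the filtration is multiplicative, exhaustive, and finite in each bidegree, a standard comparison lifts freeness from the associated graded to $A^{C_2}_{**}/I(h,k)$ itself.

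\textbf{Main obstacle.} The delicate point is the choice of filtration in the last step: arranging that $\tau_i^2$ strictly dominates both cross-terms in \emph{all} relevant bidegrees — including the ones that meet the infinitely divisible part $\f_2[\tau]/\tau^\infty$ of $\m_2^{C_2}$, which has no $\r$-motivic analogue — and then verifying that $\mathrm{gr}\,I(h,k)$ is exactly the leading-term ideal and not something larger. Once this is set up correctly, the remainder is bookkeeping with the two inequalities $h(i+j)\leq m-j$ and $k(i+m)=0$ that define a free profile function.
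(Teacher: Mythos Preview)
The paper does not give its own proof of this proposition; it is quoted verbatim from \cite[Prop.~5.16]{Ric14} and used as a black box. Your proposal therefore goes beyond what the paper does by actually sketching an argument.

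Your reduction of $I(h,k)$ to the generating set $(\xi_i^{2^{h(i)}})+(\tau_j:k(j)=0)$ under the free-profile hypothesis is correct and is the key structural step; spanning by the allowed monomials then follows. The soft spot is linear independence. You appeal to a May-type filtration making every $\tau_i$ exterior in the associated graded, but the paper's own $\r$-motivic May filtration does \emph{not} do this: there $\tau_0^2=\rho\tau_1$ in the associated graded (see the lemma in \S\ref{SS:RMay}), so that filtration is not an example of what you need. A multiplicative filtration with the required property does exist --- for instance, weight each $\tau_j$ by $1$, each $\xi_i$ by $0$, give $\rho$ a large negative weight $-N$, and assign the negative-cone class $\gamma/\rho^s\tau^k$ weight $+Ns$; one then checks multiplicativity and finiteness in each $RO(C_2)$-bidegree --- but you have not exhibited one, and your ``Main obstacle'' paragraph acknowledges the gap rather than closing it.

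A route that avoids the filtration issue entirely: after your simplification of $I(h,k)$, show directly that $I(h,k)$ coincides with the $\m_2^{C_2}$-span of the non-allowed admissible monomials. One containment is immediate. For the other, observe that the free-profile conditions force $\{j:k(j)=0\}$ to be upward-closed and $h(i)=0$ for all $i$ above that threshold; this lets you check, by induction on the highest $\tau$-index appearing, that multiplying any generator of the ideal by any admissible monomial expands (via $\tau_i^2=\rho\xi_{i+1}+\rho\tau_0\tau_{i+1}+\tau\tau_{i+1}$) into non-allowed monomials only. This direct argument is closer in spirit to Ricka's and sidesteps the negative-cone bookkeeping you flagged.
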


We can now apply the theory of profile functions to define some useful quotients of the $C_2$-equivariant dual Steenrod algebra. The following two examples are $C_2$-equivariant analogs of ideals appearing in Gregersen's work on the motivic Singer construction \cite{Gre12} which we will adapt to the $C_2$-equivariant setting in the sequel. 

\begin{exm}
Let $n \geq 0$. The $C_2$-equivariant analog of the ideal $I(n)$ from \cite[Definition 3.2.1]{Gre12} is defined by the profile function $(h,k)$ where
$$h = (n,n-1,n-3,\ldots,2,1,0,0,\ldots) \quad \text{and} \quad k = (\infty,\infty,\ldots,\infty,0,0\ldots),$$
where the first zero occurs as $k(n+1)$. One can verify using the previous propositions that this profile function is minimal and free, and that the corresponding quotient is a quotient Hopf algebra. We define $I(n) := I(h,k)$, and set
$$A^\vee(n) := A^{C_2}_{**} / I(n).$$
It follows from \cite[Theorem 6.15]{Ric14} that after dualizing with respect to $\m^{C_2}$, $A^{C_2}_{**}$ is cofree as a comodule over $A^\vee(n)$.
\end{exm}

\begin{exm}
Let $n \geq 0$. The $C_2$-equivariant analog of the ideal $J(n)$ from \cite[Definition 3.2.4]{Gre12} is defined by the profile function $(h',k)$ where
$$h' = (\infty,n-1,n-3,\ldots,2,1,0,0,\ldots) \quad \text{and} \quad k = (\infty,\infty,\ldots,\infty,0,0\ldots),$$
where the first zero in $k$ is $k(n+1)$. This profile function is minimal and free, but it does not follow from the above proposition that the resulting quotient is a quotient Hopf algebra. Following Gregersen, we define $J(n) := I(h',k)$, and let
$$C^\vee(n) := A^{C_2}_{**} / J(n) \quad \text{and} \quad B^\vee(n) := C(n)^\vee[\xi^{-1}_1].$$
\end{exm}

We will need to know that $B^\vee(n)$ is a left $A^\vee(n)$-comodule and a right $A^\vee(n-1)$-comodule in the sequel. This can be proven using Gregersen's results and proofs from \cite[Def. 3.2.4]{Gre12} to \cite[Def. 3.2.10]{Gre12}, which carry over almost without change to the $C_2$-equivariant setting since the coproducts in the motivic Steenrod algebra and the $C_2$-equivariant Steenrod have identical forms. The only necessary modifications are replacing tensor products over $\m_p$ (Gregersen's notation for $\m^F_2$) with tensor products over $\m^{C_2}_2$. 

\begin{defin}
We define the following subalgebras of the $C_2$-equivariant Steenrod algebra $A^{C_2}$ as follows:
$$A(n) := Hom_{\m^{C_2}_2}(A^\vee(n),\m^{C_2}_2), \quad B(n) := Hom_{\m^{C_2}_2}(B^\vee(n),\m^{C_2}_2), \quad C(n) := Hom_{\m^{C_2}_2}(C^\vee(n),\m^{C_2}_2).$$
\end{defin}

We remarked above that $B^{\vee}(n)$ is a left $A^\vee(n)$-comodule and a right $A^\vee(n-1)$-comodule. With the above definitions, we can ask about the dual properties (i.e. module structures) for $B(n)$ which will be needed to define the $C_2$-equivariant Singer construction in the sequel. Again, Gregersen has considered this question in the motivic situation, and the discussion in \cite[Def. 3.2.10]{Gre12} to \cite[Lem. 3.2.15]{Gre12} carries over \emph{mutatis mutandis} to the $C_2$-equivariant setting. The essential points are summarized in the following proposition which combines \cite[Lem. 3.2.13]{Gre12} and \cite[Lem. 3.2.14]{Gre12}. 

\begin{prop}
The subalgebra $B(n)$ is a free left $A(n)$-module with generators
$$\{Sq^{2k} : k \in \z, \quad 2^n | k\}.$$
Moreover, $B(n)$ is a free right $A(n-1)$-module with generators
$$\{Sq^{2k}, Sq^{2k+1} : k \in \z, \quad 2^n | k\}.$$
\end{prop}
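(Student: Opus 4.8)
The plan is to prove this by dualizing the comodule statements about $B^\vee(n)$ that were imported from Gregersen's thesis, exactly as Gregersen does in the motivic case. The key structural inputs are: (a) $B^\vee(n)$ is a left $A^\vee(n)$-comodule and a right $A^\vee(n-1)$-comodule, which has already been established via the profile-function machinery of Ricka and the verbatim translation of Gregersen's arguments; and (b) all three of $A^\vee(n)$, $A^\vee(n-1)$, and $B^\vee(n)$ are free (equivalently, the relevant profile functions are free) as $\m_2^{C_2}$-modules, so that $\m_2^{C_2}$-linear duality is well-behaved and exchanges comodule structures for module structures. Once one knows the dual of a cofree comodule is a free module with a dual basis, the two freeness statements follow formally.

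Concretely, I would first recall from the preceding examples that the profile functions defining $I(n)$ and $J(n)$ are free, so by Ricka's Proposition 5.16 the quotients $A^\vee(n)$, $C^\vee(n)$, and hence $B^\vee(n) = C^\vee(n)[\xi_1^{-1}]$ are free over $\m_2^{C_2}$; localizing at $\xi_1$ preserves freeness since it just adjoins a polynomial (Laurent) generator. Next I would invoke the cofreeness statement: by \cite[Theorem 6.15]{Ric14}, $B^\vee(n)$ is cofree as a left $A^\vee(n)$-comodule and as a right $A^\vee(n-1)$-comodule, with explicit comodule bases that one reads off from the monomial basis of $B^\vee(n)$ in the $\xi_i,\tau_i$ (the generators being the powers $\xi_1^{2k}$, $2^n \mid k$, on the left, and additionally the $\tau_0$-multiples $\xi_1^{2k+1}$ on the right, matching the degree bookkeeping $|\xi_1| = (2,1)$, $|\tau_0| = (1,1)$). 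Dualizing over $\m_2^{C_2}$ then turns "cofree comodule with basis $\{b_\alpha\}$" into "free module with dual basis $\{b_\alpha^\vee\}$", and tracking degrees identifies these dual generators with the asserted $Sq^{2k}$ (left, over $A(n)$) and $Sq^{2k}, Sq^{2k+1}$ (right, over $A(n-1)$). This is precisely the content of \cite[Lem. 3.2.13]{Gre12} and \cite[Lem. 3.2.14]{Gre12}, and the argument transfers because the coproduct formulas in $A^{C_2}_{**}$ and in the motivic dual Steenrod algebra have the same shape, with tensor products over $\m_p$ replaced by tensor products over $\m_2^{C_2}$.

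The main obstacle — really the only nontrivial point — is checking that $\m_2^{C_2}$-linear duality behaves as expected even though $\m_2^{C_2}$ is not a field and, unlike $\m_2^\r = \f_2[\tau,\rho]$, is not even a polynomial ring: it has the "negative cone" summand $\bigoplus_{s\geq 0} \frac{\f_2[\tau]}{\tau^\infty}\{\gamma/\rho^s\}$. However, the objects in play are \emph{free} $\m_2^{C_2}$-modules (this is the whole point of using free profile functions), and for free modules $\Hom_{\m_2^{C_2}}(-,\m_2^{C_2})$ sends a free module on a basis to the product over the dual basis, restricts to the direct sum in each bidegree because the gradings are bounded below in the relevant region, and converts comodule structure maps to module structure maps by the usual adjunction. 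So the duality step is formal once freeness is in hand; the real work was already done in establishing the profile functions are free and in the imported comodule statements, and the present proof simply assembles these. I would therefore keep the proof short: cite freeness of the profile functions, cite the cofreeness/comodule statements, and then say the module statements follow by $\m_2^{C_2}$-linear duality exactly as in \cite[Lem. 3.2.13, Lem. 3.2.14]{Gre12}, with the generating sets obtained by dualizing the explicit comodule bases and reading off degrees.
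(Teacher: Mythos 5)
Your approach is the same as the paper's, which gives no independent proof of this proposition: it simply declares that the argument of Gregersen's Lemmas~3.2.13 and~3.2.14 carries over \emph{mutatis mutandis} once one has (i) freeness of the profile functions, hence freeness of $A^\vee(n)$, $B^\vee(n)$, $C^\vee(n)$ over $\m_2^{C_2}$, and (ii) the imported left $A^\vee(n)$- and right $A^\vee(n-1)$-comodule structures on $B^\vee(n)$, and then dualizes over $\m_2^{C_2}$. You reconstruct exactly this chain, and you correctly identify that the only genuinely new wrinkle relative to Gregersen is the negative cone in $\m_2^{C_2}$, which is handled precisely because everything in sight is $\m_2^{C_2}$-free of finite type in each bidegree so that $\m_2^{C_2}$-linear duality exchanges cofree comodules for free modules with dual basis.

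One small bookkeeping slip: with $|\xi_1| = (2,1)$ and $|Sq^{2k}| = (2k,k)$, the comodule generators on the left should be the monomials $\xi_1^{k}$ with $2^n \mid k$ (not $\xi_1^{2k}$), and the additional right-hand generators should be of the form $\tau_0\xi_1^{k}$ (degree $(2k+1,k+1)$, dual to $Sq^{2k+1}$) rather than ``$\xi_1^{2k+1}$''. This is just an index mismatch in your informal sketch and does not affect the structure or validity of the argument.
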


By cofreeness of $A^\vee(n)$, we see that $A^{C_2}$ is free as a module over $A(n)$ for each $n$. 

\subsection{The $C_2$-equivariant Singer construction}\label{SS:Singer}

We are now ready to define the $C_2$-equivariant Singer construction, which is the essential algebraic construction needed to prove our $C_2$-equivariant analog of Lin's Theorem. 

\begin{defin}
Let $M$ be a left $A^{C_2}$-module. The \emph{$C_2$-equivariant Singer construction of $M$} is defined by
$$R^{C_2}_+(M) := \colim_{n \to \infty} B(n) \otimes_{A(n-1)} M.$$
\end{defin}

The composites
$$B(n) \otimes_{A(n-1)} M \to A \otimes_{A(n-1)} M \to A \otimes_A M \cong M$$
are compatible with increasing $n$, so taking the colimit over $n$ defines a map
$$\epsilon : R^{C_2}_+(M) \to M$$
from the $C_2$-equivariant Singer construction of $M$ to $M$.

\begin{defin}
A map of $A^{C_2}$-modules $M \to N$ is a \emph{$\Tor$-equivalence} if the induced map
$$\Tor^{A^{C_2}}_{i,j,k}(\m^{C_2}_2, M) \to \Tor^{A^{C_2}}_{i,j,k}(\m^{C_2}_2, N)$$
is an isomorphism for all $i \in \z$ and bigrading $(j,k)$ of elements in $RO(C_2)$.
\end{defin}

Note that we work with $RO(C_2)$-graded $\Tor$, valued in abelian groups, instead of $\underline{\Tor}$, which is valued in Mackey functors. We want to show that $\epsilon : R^{C_2}_+(M) \to M$ is a $\Tor$-equivalence. To do so, we need the following lemma. 

\begin{lem}
If $M$ is a free $A^{C_2}$-module, then $R^{C_2}_+(M)$ is flat as an $A^{C_2}$-module and
$$\m^{C_2}_2 \otimes_{A^{C_2}} R^{C_2}_+(M) \xrightarrow{id \otimes \epsilon} \m^{C_2}_2 \otimes_{A^{C_2}} M$$
is an isomorphism.
\end{lem}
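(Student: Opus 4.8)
The plan is to reduce to the case $M = A^{C_2}$ itself, which is the only case that requires genuine computation. By definition $R^{C_2}_+(M) = \colim_{n} B(n) \otimes_{A(n-1)} M$. If $M$ is free, say $M = \bigoplus_\alpha \Sigma^{v_\alpha} A^{C_2}$, then since tensor products and filtered colimits commute with direct sums, we get $R^{C_2}_+(M) \cong \bigoplus_\alpha \Sigma^{v_\alpha} R^{C_2}_+(A^{C_2})$, and it suffices to treat $M = A^{C_2}$. For this case, first I would compute $B(n)\otimes_{A(n-1)} A^{C_2}$. By the previous proposition $B(n)$ is a free right $A(n-1)$-module on the generators $\{Sq^{2k}, Sq^{2k+1} : k \in \z,\ 2^n \mid k\}$, so $B(n)\otimes_{A(n-1)} A^{C_2}$ is a free \emph{left} $A^{C_2}$-module on those same generators (acting on the left of $A^{C_2}$ through the $A^{C_2}$-action). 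Hence each term in the colimit is free over $A^{C_2}$, and since freeness is preserved under filtered colimits along maps which are inclusions of summands — which is precisely what the structure maps $B(n)\otimes_{A(n-1)} A^{C_2} \to B(n+1)\otimes_{A(n)} A^{C_2}$ are, by inspection of the generator sets — the colimit $R^{C_2}_+(A^{C_2})$ is free, hence flat, over $A^{C_2}$. This is the $C_2$-equivariant analog of Gregersen's computation \cite[Lem.\ 3.3.x]{Gre12}, and the argument carries over once one knows the module structure results of the previous subsection, which were already established there \emph{mutatis mutandis}.

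It remains to check that $\mathrm{id}\otimes\epsilon : \m_2^{C_2}\otimes_{A^{C_2}} R^{C_2}_+(M) \to \m_2^{C_2}\otimes_{A^{C_2}} M$ is an isomorphism; again reduce to $M = A^{C_2}$, where the right-hand side is $\m_2^{C_2}$. On the left-hand side, applying $\m_2^{C_2}\otimes_{A^{C_2}} -$ to the free description above kills all the $A^{C_2}$-module structure and leaves the free $\m_2^{C_2}$-module on the generator set $\{Sq^{2k},Sq^{2k+1} : 2^n\mid k\}$ for each $n$; but the colimit structure maps, while injective on each finite stage, have the property that in the colimit everything in positive generator-degree is eventually not hit (the generator indexing set ``escapes to infinity''), leaving only the class corresponding to $Sq^0$ in the colimit. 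Concretely, the colimit over $n$ of $\m_2^{C_2}\{Sq^{2k},Sq^{2k+1}:2^n\mid k\}$ is $\m_2^{C_2}\{1\}$, generated by the image of $Sq^0$, and $\epsilon$ carries this isomorphically onto $\m_2^{C_2} = \m_2^{C_2}\otimes_{A^{C_2}} A^{C_2}$. This is the step where I would be most careful, since the bookkeeping with the bigraded generators over the larger ring $\m_2^{C_2}$ is where the $C_2$-equivariant case genuinely differs from the motivic one: one must confirm that no extra contributions to the colimit arise from the infinitely-divisible ``negative cone'' part $\bigoplus_s \frac{\f_2[\tau]}{\tau^\infty}\{\gamma/\rho^s\}$ of $\m_2^{C_2}$.

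The main obstacle I anticipate is precisely this last point: verifying that the colimit computation behaves as in Gregersen's motivic argument despite the more complicated coefficient ring $\m_2^{C_2}$. The saving grace is that $\m_2^{C_2}$ is simply extended from $\m_2^\r$ along $\m_2^\r \hookrightarrow \m_2^{C_2}$ (as noted after the statement of the $C_2$-equivariant dual Steenrod algebra), and $A^{C_2}_{**}$ is correspondingly the base-change of $A^\r_{**}$; so the entire module-theoretic computation is obtained from the $\r$-motivic one by applying $\m_2^{C_2}\otimes_{\m_2^\r} -$, which is exact since $\m_2^{C_2}$ is flat (indeed free) over $\m_2^\r$ is \emph{not} quite true — but the relevant colimit diagrams and tensor products are built from free modules throughout, so base-change commutes with all the operations in sight. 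Thus the isomorphism statement, and the flatness statement, follow formally from Gregersen's results by base-change, with the only real work being to confirm the generator sets and structure maps translate correctly, which was arranged in the previous subsection.
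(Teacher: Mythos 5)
Your overall strategy is the one the paper uses: reduce to Gregersen's motivic argument (his Lemma 3.3.5), with the only genuinely new input being control of the coefficient ring $\m_2^{C_2}$. Two points, however, need repair.

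First, the module bookkeeping in the flatness step is off. The left $A^{C_2}$-module structure on $R_+^{C_2}(M)$ — the one relevant to $\m_2^{C_2}\otimes_{A^{C_2}}(-)$ — comes from the left $A(n)$-module structure on $B(n)$, not from "acting on the left of $A^{C_2}$ through the right tensor factor" (which is not even well defined across the $A(n-1)$-balancing for general free $M$). The correct route is: $M$ free over $A^{C_2}$ implies $M$ free over $A(n-1)$ (by cofreeness of $A^\vee(n-1)$), so $B(n)\otimes_{A(n-1)}M$ is a direct sum of copies of $B(n)$, which is free as a \emph{left} $A(n)$-module on $\{Sq^{2k}: 2^n\mid k\}$ by the proposition preceding the Singer construction. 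Relatedly, a filtered colimit of free modules along arbitrary maps need not be free — your claim that the structure maps are split inclusions is not justified — but it is flat, and flatness is all the lemma asserts.

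Second, and more importantly, you correctly identify the crux (the negative cone $\bigoplus_s \frac{\f_2[\tau]}{\tau^\infty}\{\gamma/\rho^s\}$ of $\m_2^{C_2}$ could in principle contribute extra classes to the colimit) but you do not resolve it. Your proposed resolution by base-change along $\m_2^\r\hookrightarrow\m_2^{C_2}$ fails for exactly the reason you half-concede: $\m_2^{C_2}$ is not flat over $\m_2^\r$ (the negative cone has $\tau$- and $\rho$-torsion), and the subsequent appeal to "everything in sight being free" does not address the actual issue, which is that the identification of the colimit with $\m_2^{C_2}\{Sq^0\}$ requires, in each fixed bidegree, that the $(2k,k)$-fold suspensions of $\m_2^{C_2}$ indexed by the escaping generators $Sq^{2k}, Sq^{2k+1}$ with $2^n\mid k$, $k\neq 0$, eventually vanish as $n\to\infty$. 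This is a concrete vanishing-range statement about $\m_2^{C_2}$ itself, and the paper settles it by direct inspection of Ricka's computation of $\m_2^{C_2}$: both the positive cone $\f_2[\tau,\rho]$ and the negative cone are supported in regions that the relevant shifted bidegrees leave once $2^n$ is large. Your proof needs this explicit check in place of the base-change argument.
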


\begin{proof}
The proof of flatness is identical to the first part of the proof of \cite[Lemma 3.3.5]{Gre12}. 

The second claim is also follows as in the proof of \cite[Lemma 3.3.5]{Gre12}, although we no longer have vanishing in the same range for $\m^{C_2}_2$. However,  inspection of \cite[Proposition 2.13]{Ric14} reveals that $\m^{C_2}_2$ vanishes in the correct degrees for the proof to carry through.
\end{proof}

Taking a free resolution of $M$ and applying the $C_2$-equivariant Singer construction at each level allows us to compare resolutions of $M$ and resolutions of $R^{C_2}_+(M)$. Using the explicit description of $B(n)$ as an $A(n)$-module, one can study the maps in the $C_2$-equivariant Singer construction as in the proof of \cite[Lem. 3.3.5]{Gre12}. It then follows from homological algebra that if $F$ is a free $A^{C_2}$-module, then the map
$$\m_2^{C_2} \otimes_{A^{C_2}} R^{C_2}_+(F) \xrightarrow{id \otimes \epsilon} \m_2^{C_2} \otimes_{A^{C_2}} F$$
is an isomorphism. Therefore we obtain isomorphisms at each stage of the free resolutions we were comparing and we have proven the following:

\begin{thm}\label{algc2singer}
Let $M$ be an $A^{C_2}$-module. Then the map $\epsilon : R^{C_2}_+(M) \to M$ is a $\Tor$-equivalence, i.e. the induced map
$$\Tor^{A^{C_2}}_{*,*,*}(\m_2^{C_2},R_+^{C_2}(M)) \to \Tor^{A^{C_2}}_{*,*,*}(\mft^{C_2}_{2},M)$$
is an isomorphism.
\end{thm}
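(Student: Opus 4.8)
The proof of Theorem \ref{algc2singer} will closely follow the strategy of Gregersen's proof of the motivic analog \cite[Sec. 3.3]{Gre12}, which in turn is modeled on the classical argument of Bruner--Greenlees and the original Lin--Davis--Mahowald--Adams approach. The plan is to first reduce to the case where $M$ is a free $A^{C_2}$-module, and then to compare free resolutions. The main technical input is the previous lemma, which asserts that $R^{C_2}_+$ of a free module is $A^{C_2}$-flat and that $\epsilon$ induces an isomorphism after $- \otimes_{A^{C_2}} \m_2^{C_2}$ on free modules.

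Here is the sequence of steps. First, given an arbitrary $A^{C_2}$-module $M$, choose a free resolution $F_\bullet \to M$ of $M$ over $A^{C_2}$. Since the category of $A^{C_2}$-modules has enough projectives and free modules are projective, such a resolution exists; one should take care that it can be chosen with the appropriate $RO(C_2)$-grading conventions in place, which is routine. Second, apply the $C_2$-equivariant Singer construction $R^{C_2}_+(-)$ levelwise. The functor $R^{C_2}_+$ is built as a filtered colimit of functors of the form $B(n) \otimes_{A(n-1)} (-)$; since $B(n)$ is a free right $A(n-1)$-module (by the proposition identifying its generators $\{Sq^{2k}, Sq^{2k+1} : 2^n \mid k\}$) and filtered colimits are exact, $R^{C_2}_+(-)$ is an exact functor. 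Therefore $R^{C_2}_+(F_\bullet) \to R^{C_2}_+(M)$ is again a resolution, and by the previous lemma it is a resolution by $A^{C_2}$-flat (hence acyclic for $\Tor$) modules. Third, observe that $\epsilon$ assembles into a map of resolutions $R^{C_2}_+(F_\bullet) \to F_\bullet$ lying over $\epsilon : R^{C_2}_+(M) \to M$; applying $\m_2^{C_2} \otimes_{A^{C_2}} -$ yields, on each level, the isomorphism $\m_2^{C_2} \otimes_{A^{C_2}} R^{C_2}_+(F_i) \xrightarrow{\cong} \m_2^{C_2} \otimes_{A^{C_2}} F_i$ from the lemma. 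Fourth, since $R^{C_2}_+(F_\bullet)$ computes $\Tor^{A^{C_2}}_{*,*,*}(\m_2^{C_2}, R^{C_2}_+(M))$ (it is a flat resolution) and $F_\bullet$ computes $\Tor^{A^{C_2}}_{*,*,*}(\m_2^{C_2}, M)$, passing to homology of the two complexes — which agree levelwise via the above isomorphisms, compatibly with differentials — gives the desired isomorphism on $\Tor$.

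The step I expect to require the most care is verifying that $R^{C_2}_+$ is exact and that it carries free modules to $\Tor$-acyclic modules with the stated behavior under $\m_2^{C_2} \otimes_{A^{C_2}} -$; but in the present write-up this is exactly the content of the preceding lemma, whose proof is asserted to follow Gregersen's \cite[Lem. 3.3.5]{Gre12} with the one genuine modification being that $\m_2^{C_2}$ is larger than $\m_2^\r$, so one must check that $\m_2^{C_2}$ vanishes in the relevant bidegrees using the explicit description in \cite[Prop. 2.13]{Ric14}. Granting that lemma, the remaining argument is pure homological algebra: building the resolution, applying the exact functor $R^{C_2}_+$ levelwise, and comparing homology. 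The subtlety of ``$RO(C_2)$-graded $\Tor$ valued in abelian groups rather than $\underline{\Tor}$ valued in Mackey functors'' is harmless here, since everything in sight — the resolutions, the functor $R^{C_2}_+$, the map $\epsilon$ — is defined at the level of $RO(C_2)$-graded $\m_2^{C_2}$-modules, and $\Tor$ in this graded sense is computed by exactly the same complexes.

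One final bookkeeping point worth noting in the write-up: to conclude that the levelwise isomorphisms $\m_2^{C_2} \otimes_{A^{C_2}} R^{C_2}_+(F_i) \xrightarrow{\cong} \m_2^{C_2} \otimes_{A^{C_2}} F_i$ assemble into an isomorphism of chain complexes, one needs that these maps commute with the induced differentials, which follows because $\epsilon$ is a natural transformation $R^{C_2}_+ \Rightarrow \mathrm{id}$ and the resolution differentials $F_{i+1} \to F_i$ are maps of $A^{C_2}$-modules. Hence the naturality square commutes before tensoring, and a fortiori after. This completes the plan; the argument is essentially formal once the previous lemma is in hand.
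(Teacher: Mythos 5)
Your proposal is correct and follows essentially the same route as the paper: take a free resolution $F_\bullet \to M$, apply $R^{C_2}_+$ levelwise (using freeness of $B(n)$ over $A(n-1)$ for exactness) to obtain a flat resolution of $R^{C_2}_+(M)$, and conclude via the levelwise isomorphisms $\m_2^{C_2}\otimes_{A^{C_2}} R^{C_2}_+(F_i)\cong \m_2^{C_2}\otimes_{A^{C_2}} F_i$ supplied by the preceding lemma. The only difference is that you spell out the exactness and naturality bookkeeping that the paper leaves implicit.
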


\subsection{Homotopical realization via equivariant Thom spectra}\label{SS:A3}

Our goal in the next two subsections is to provide a homotopical realization of the $C_2$-equivariant Singer construction. That is, our goal is to construct a $C_2$-spectrum $X$ with $H^{**}(X) \cong R_+^{C_2}(\m_2^{C_2})$. This spectrum will come with a map $S^{0,0} \to \Sigma^{1,0} X$ which induces the map in cohomology $\epsilon: R_+^{C_2}(\m_2^{C_2}) \to \m_2^{C_2}$ of Theorem \ref{algc2singer}. The $\Tor$ groups in Theorem \ref{algc2singer} are the $E_2$-pages of $C_2$-equivariant Adams spectral sequences, so we will obtain an isomorphism of spectral sequences from $E_2$-terms and thus an equivalence of $C_2$-spectra after $2$-completion. 

We now construct the $C_2$-spectrum $X$. Our disucssion closely follows Gregersen's discussion from the motivic setting \cite[Sec. 4]{Gre12}. 

\begin{defin}
Let $Q^n$ denote the (non-equivariant) $2n$-skeleton of the $C_2$-equivariant classifying space of $\mu_2$, $B_{C_2}\mu_2$.\footnote{Throughout this section, $C_2$ denotes the cyclic group of order two which our objects are genuinely equivaraint with respect to, while $\mu_2$ denotes a cyclic group of order two which acts on an object.} Let $Q^\infty := B_{C_2}\mu_2 =\colim_{n \to \infty} Q^n$.
\end{defin}

\begin{lem} \cite[Theorem 6.22]{HK01}
There are ring isomorphisms
$$H^{**}_{C_2}(Q^n) \cong \m^{C_2}_2[x,y]/(x^2 + \rho x + \tau y, y^n),$$
$$H^{**}_{C_2}(Q^\infty) \cong \m^{C_2}_2[x,y]/(x^2 + \rho x + \tau y)$$
where $|x| = (1,1)$ and $|y| = (2,1)$. 
\end{lem}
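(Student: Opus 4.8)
The statement is \cite[Thm.~6.22]{HK01}; the plan is to recall that computation for $Q^\infty = B_{C_2}\mu_2$ and then deduce the finite-skeleton statement by passing to the colimit. First I would treat $Q^\infty$. By definition $Q^\infty = B_{C_2}\mu_2$, and Hu and Kriz produce classes $x \in H^{1,1}_{C_2}(B_{C_2}\mu_2)$ and $y \in H^{2,1}_{C_2}(B_{C_2}\mu_2)$ satisfying $x^2 = \rho x + \tau y$, together with the fact that $H^{**}_{C_2}(B_{C_2}\mu_2)$ is free over $\m^{C_2}_2$ on $\{y^k,\,xy^k : k \geq 0\}$. Since these monomials are exactly an $\m^{C_2}_2$-basis, the tautological ring map $\m^{C_2}_2[x,y]/(x^2+\rho x+\tau y) \to H^{**}_{C_2}(B_{C_2}\mu_2)$ is an isomorphism. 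Alternatively --- and more in the spirit of Section~\ref{SS:Qui2} --- one can identify $B_{C_2}\mu_2$ with the equivariant Betti realization of the motivic $B_{gm}\mu_2$ of Definition~\ref{Def:Linftyk} and invoke the fact that $Re_{C_2}$ preserves Eilenberg--MacLane spectra \cite[Thm.~4.17]{HO16}, so that $H^{**}_{C_2}(B_{C_2}\mu_2)$ is obtained from the motivic cohomology of $B_{gm}\mu_2$ (recorded in Section~\ref{SS:Qui}) by extension of scalars along $\m^\r_2 \to \m^{C_2}_2$; this yields the same formula.

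For the finite skeleta I would argue as follows. Since $Q^\infty = \colim_n Q^n$, there is a Milnor sequence
$$0 \longrightarrow \lim\nolimits^{1}_{n}\, H^{*-1,*}_{C_2}(Q^n) \longrightarrow H^{**}_{C_2}(Q^\infty) \longrightarrow \lim_n H^{**}_{C_2}(Q^n) \longrightarrow 0.$$
I claim, by induction on $n$, that $H^{**}_{C_2}(Q^n)$ is free over $\m^{C_2}_2$ on $\{y^k,\,xy^k : 0 \leq k < n\}$ --- so that the ring structure inherited from $Q^\infty$ makes it $\m^{C_2}_2[x,y]/(x^2+\rho x+\tau y,\, y^n)$ --- and that the restriction $H^{**}_{C_2}(Q^\infty) \to H^{**}_{C_2}(Q^n)$ is the evident surjection with kernel the ideal $(y^n)$. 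The inductive step uses the skeletal cofiber sequence exhibiting $Q^n$ as $Q^{n-1}$ with two cells attached, in bidegrees $(2n-2,n-1)$ and $(2n-1,n)$; one checks that the associated long exact sequence in Bredon cohomology splits into short exact sequences, identifying the new classes with $y^{n-1}$ and $xy^{n-1}$. Granting the claim, the restriction maps are surjective onto finitely generated free $\m^{C_2}_2$-modules, so the tower $\{H^{**}_{C_2}(Q^n)\}$ is Mittag--Leffler, the $\lim^1$ term vanishes, and taking the limit over $n$ recovers the $Q^\infty$ computation, which is the consistency check closing the induction.

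The one step needing genuine care --- and the main obstacle --- is the splitting of the skeletal long exact sequences: a priori a connecting homomorphism could be nonzero because $\m^{C_2}_2$ is strictly larger than $\m^\r_2$, containing the negative cone $\bigoplus_{s\geq 0}\frac{\f_2[\tau]}{\tau^\infty}\{\gamma/\rho^s\}$. I would rule this out by the degree-counting method used repeatedly in this appendix: the cohomological attaching maps of $B_{C_2}\mu_2$ are governed by the $A^{C_2}$-action described in Section~\ref{SS:Qui}, and in the bidegrees where a connecting map could land $\m^{C_2}_2$ vanishes, so the only possibility is zero. Equivalently, transporting the (compatible) skeletal filtration of $B_{gm}\mu_2$ through $Re_{C_2}$ reduces the splitting to the motivic statement, where it is immediate, and then extends along $\m^\r_2 \to \m^{C_2}_2$. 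Either route completes the proof.
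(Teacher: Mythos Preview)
The paper does not supply its own proof of this lemma: it simply cites \cite[Theorem~6.22]{HK01} and moves on. So there is no argument in the paper to compare your proposal against.

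That said, your proposal is a reasonable reconstruction of the computation. The $Q^\infty$ case is exactly Hu--Kriz's result, and your alternative route via equivariant Betti realization and extension of scalars along $\m^\r_2 \to \m^{C_2}_2$ is precisely the mechanism the paper uses elsewhere in the appendix (e.g.\ for the $\underline{R}$-spectra in Section~\ref{SS:A4}). For the finite skeleta your inductive argument via the skeletal cofiber sequence and Milnor $\lim^1$ is sound, and your caution about the negative cone in $\m^{C_2}_2$ possibly producing a nonzero connecting map is well placed; the degree-counting or Betti-realization-transport arguments you outline are the same maneuvers the paper invokes in Section~\ref{SS:Singer} to handle analogous issues. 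In short: your write-up is correct and more detailed than what the paper provides, which is just a citation.
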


By construction, $B_{C_2}\mu_2$ is equipped with a tautological $C_2$-equivariant line bundle $\gamma^1$. Let $\gamma^1_{n-1}$ denote the restriction of the tautological line bundle over $B_{C_2}\mu_2$ to $Q^n$. 

\begin{defin}
Define
$$\underline{Q}^{n-k}_{-k} := \Sigma^{-2kn,-kn} Th(k\gamma^1_{n-1} \to Q^n).$$
\end{defin}

The proof of \cite[Proposition 4.1.24]{Gre12} translates to the $C_2$-equivariant setting without change to give the following calculation.

\begin{lem}
As modules over $\m^{C_2}_2$, there is an isomorphism
$$H^{**}_{C_2}(\underline{Q}^{n-k}_{-k}) \cong \Sigma^{-2k,-k}\m^{C_2}_2[x,y]/(x^2 + \rho x + \tau y, y^n).$$
\end{lem}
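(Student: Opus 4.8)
The plan is to prove this exactly as Gregersen proves the motivic analogue \cite[Prop.~4.1.24]{Gre12}, transporting each step to $C_2$-equivariant Bredon cohomology with $\mft$-coefficients. The only genuinely geometric input is the Thom isomorphism, so I would begin by recording that the $C_2$-equivariant vector bundle $k\gamma^1_{n-1}\to Q^n$ is $H\mft$-orientable, with an $H\mft$-Thom class living in an \emph{integer} bidegree (i.e.\ in $\z^2\subset RO(C_2)$). In the $RO(C_2)$-graded setting this is not automatic, but it holds because $\gamma^1$ is the tautological line bundle over $B_{C_2}\mu_2$ and its Thom class is visible in the Hu--Kriz computation of $H^{**}_{C_2}(B_{C_2}\mu_2)$ and of the cohomology of its associated Thom spectra \cite[\S6]{HK01}; equivalently, $B_{C_2}\mu_2$ and $\gamma^1$ arise via equivariant Betti realization from the corresponding algebraic objects, whose Thom classes lie in integer bidegrees exactly as in Lemma \ref{Lem:LCoh}.

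Granting this, cup product with the Thom class is an isomorphism of $H^{**}_{C_2}(Q^n)$-modules, hence of $\m^{C_2}_2$-modules, between $\widetilde H^{**}_{C_2}\big(Th(k\gamma^1_{n-1}\to Q^n)\big)$ and a bidegree shift of $H^{**}_{C_2}(Q^n)$; in particular it exhibits $\widetilde H^{**}_{C_2}\big(Th(k\gamma^1_{n-1}\to Q^n)\big)$ as a free $\m^{C_2}_2$-module on the Thom class and its $H^{**}_{C_2}(Q^n)$-multiples. Plugging in the ring computation $H^{**}_{C_2}(Q^n)\cong\m^{C_2}_2[x,y]/(x^2+\rho x+\tau y,\,y^n)$ from the preceding lemma (\cite[Thm.~6.22]{HK01}) identifies this, up to the shift by the Thom-class bidegree, with $\m^{C_2}_2[x,y]/(x^2+\rho x+\tau y,\,y^n)$. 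Finally, the normalizing desuspension $\Sigma^{-2kn,-kn}$ built into the definition of $\underline{Q}^{n-k}_{-k}$ combines with that shift to place the Thom class in bidegree $(-2k,-k)$, which yields
$$H^{**}_{C_2}(\underline{Q}^{n-k}_{-k})\cong\Sigma^{-2k,-k}\,\m^{C_2}_2[x,y]/(x^2+\rho x+\tau y,\,y^n)$$
as $\m^{C_2}_2$-modules.

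The main obstacle, as flagged, is the Thom isomorphism in the $RO(C_2)$-graded setting: one must verify that $k\gamma^1_{n-1}$ really does carry an $H\mft$-Thom class in a bidegree of the form $(a,b)\in\z^2$ — so that the Thom-isomorphism module genuinely has the stated free $\m^{C_2}_2$-module structure — and that this bidegree is precisely the one that cancels against $\Sigma^{-2kn,-kn}$. Everything else is formal: the Thom-isomorphism statement itself, and the degree bookkeeping, follow Gregersen line by line, with tensor products over the motivic coefficient ring replaced by tensor products over $\m^{C_2}_2$ as elsewhere in this appendix. Since only a $\m^{C_2}_2$-module isomorphism is asserted here (not a ring or $A^{C_2}$-module isomorphism), no further structural identities are required.
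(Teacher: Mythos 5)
Your proposal matches the paper's approach exactly: the paper's entire proof is the remark that Gregersen's argument for \cite[Prop.~4.1.24]{Gre12} ``translates to the $C_2$-equivariant setting without change,'' which is precisely the Thom-isomorphism-plus-bookkeeping argument you spell out. Your extra care in checking that the $H\mft$-Thom class of $k\gamma^1_{n-1}$ lies in an integer bidegree (via Hu--Kriz or equivariant Betti realization) is a reasonable elaboration of what ``without change'' silently assumes, and is correct.
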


As in the classical and motivic settings, the top and bottom dimensions $n$ and $k$ can be varied compatibly in the previous definition. This allows us to define the following:

\begin{defin}
Define
$$\underline{Q}^\infty_{-k} := \colim_{n \to \infty} \underline{Q}^{n-k}_{-k} \quad \text{and} \quad \underline{Q}^\infty_{-\infty} := \lim_{k \to \infty} \underline{Q}^{\infty}_{-k}.$$
\end{defin}

The discussion from \cite[Sec. 4.2]{Gre12} also carries over without change to the $C_2$-equivariant setting to prove the following. 

\begin{prop}
As modules over $\m^{C_2}_2$, we have
$$H^{**}_{C_2} (\underline{Q}^\infty_{-k}) \cong \Sigma^{-2k,-k} \m^{C_2}_2[x,y]/(x^2 + ax + uy),$$
$$H^{**}_{C_2,c}(\underline{Q}^\infty_{-\infty}) := \colim_{k \to \infty} H^{**}_{C_2}(\underline{Q}^\infty_{-k}) \cong \m^{C_2}_2[x,y,y^{-1}]/(x^2 + ax + uy).$$

The $A^{C_2}$-module structure of $H^{**}(\underline{Q}^\infty_0)$ is given by
$$Sq^{2i}(y^je_0) = {2j \choose 2i} y^{j+i}e_0, \quad Sq^{2i+1}(y^je_0) = 0,$$
$$Sq^{2i}(xy^je_0) = {2j \choose 2i} xy^{j+i}e_0,\quad Sq^{2i+1}(xy^je_0) = {2j \choose 2i}y^{j+i+1}e_0.$$

The $A^{C_2}$-module structure of $H^{**}(\underline{Q}^\infty_k)$ for $k<0$ coincides with the $A^{C_2}$-module structure obtained by periodically extending the $A^{C_2}$-module structure of $H^{**}(\underline{Q}^\infty_0)$ to negative dimensions. The continuous $A^{C_2}$-module structure of $H^{**}_{C_2,c}(\underline{Q}^\infty_{-\infty})$ is determined similarly. 
\end{prop}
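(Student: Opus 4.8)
The plan is to follow Gregersen's analysis of the (motivic) Singer construction in \cite[Sec.~4.2]{Gre12} essentially verbatim: the relevant coproduct formulas and the Thom isomorphism are formally identical in the $\r$-motivic and $C_2$-equivariant settings, so the only places where genuine care is needed are the enlarged coefficient ring $\m_2^{C_2}$ and the appearance of $\rho$ in the equivariant Cartan and Adem relations.

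First I would establish the $\m_2^{C_2}$-module statements. The tautological $C_2$-equivariant line bundle $\gamma^1\to B_{C_2}\mu_2$ carries a mod two Thom class (its restriction to each $Q^n$ is $H\mft$-orientable, as one already sees from the representation-sphere case), so the $C_2$-equivariant Thom isomorphism applied to $\underline{Q}^{n-k}_{-k}=\Sigma^{-2kn,-kn}Th(k\gamma^1_{n-1}\to Q^n)$, together with the Hu--Kriz computation of $H^{**}_{C_2}(Q^n)$ recalled above, gives
\[
H^{**}_{C_2}(\underline{Q}^{n-k}_{-k})\cong \Sigma^{-2k,-k}\,\m_2^{C_2}[x,y]/(x^2+\rho x+\tau y,\ y^n)
\]
as a free module on the Thom class. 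The inclusions $Q^n\hookrightarrow Q^{n+1}$ induce maps of Thom spectra which on cohomology are the quotients killing $y^n$; since in each internal bidegree the system is eventually constant, the colimit over $n$ yields the stated formula for $H^{**}_{C_2}(\underline{Q}^\infty_{-k})$. The bundle maps $k\gamma\to(k+1)\gamma$ induce collapse maps $\underline{Q}^\infty_{-k-1}\to\underline{Q}^\infty_{-k}$ which on cohomology realize multiplication by $y$, and $H^{**}_{C_2,c}(\underline{Q}^\infty_{-\infty})$ is \emph{defined} as the colimit of these, so inverting $y$ produces the last displayed isomorphism. Here I would simply mirror Gregersen's bookkeeping, the one new input being that the infinitely divisible negative cone of $\m_2^{C_2}$ does not disrupt eventual constancy, which is immediate from \cite[Prop.~2.13]{Ric14}.

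Next I would pin down the $A^{C_2}$-action. Since $\underline{Q}^\infty_0\simeq \Sigma^\infty (B_{C_2}\mu_2)_+$, the action on $H^{**}_{C_2}(\underline{Q}^\infty_0)=\m_2^{C_2}[x,y]/(x^2+\rho x+\tau y)\{e_0\}$ is determined by the equivariant Cartan formula once one knows the total squares of the generators; bidegree considerations (or comparison along the forgetful and real Betti realization functors with the classical and $\r$-motivic answers) force $Sq(x)=x+y$ and $Sq(y)=y+y^2$, and expanding by Cartan reproduces the stated binomial formulas after using $\binom{j}{i}\equiv\binom{2j}{2i}\pmod{2}$. For $k<0$ the module generator is the Thom class $e_{2k}$ of $k\gamma$, and $Sq^i(e_{2k})=w_i(k\gamma)e_{2k}$ with $w(k\gamma)\equiv(1+x)^k$ modulo the relation; together with Cartan this determines the whole action, and one verifies directly that it agrees with the action obtained by periodically extending that of $H^{**}_{C_2}(\underline{Q}^\infty_0)$ into negative internal degrees (this is the $A^{C_2}$-module refinement of James periodicity, again following the equivariant form of \cite[Sec.~4.2]{Gre12}). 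The continuous module structure on $H^{**}_{C_2,c}(\underline{Q}^\infty_{-\infty})$ is then the colimit of these compatible actions.

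The step I expect to require the most care is the colimit identification: one must run Gregersen's degree estimates with the Hu--Kriz presentation of $\m_2^{C_2}$ in hand to be sure that neither the Thom isomorphism nor the passage to the two nested colimits is obstructed by the negative cone. By contrast, computing the Steenrod action on the generators $x$, $y$ and extending it periodically is routine once the $\m_2^{C_2}$-module computation is established.
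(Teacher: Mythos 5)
Your proposal is correct and takes essentially the same approach as the paper, which simply asserts that Gregersen's motivic analysis in \cite[Sec.~4.2]{Gre12} carries over without change to the $C_2$-equivariant setting. You fill in exactly the details the paper leaves to the reader: the Thom isomorphism over $\m_2^{C_2}$, the two colimit identifications, and the Cartan-formula computation of the $Sq^i$-action from the total squares $Sq(x)=x+y$, $Sq(y)=y+y^2$ together with the Lucas-theorem identity $\binom{j}{i}\equiv\binom{2j}{2i}\pmod 2$, and periodic extension for $k<0$ via the Thom-class formula $Sq(e_{2k})=w(k\gamma)e_{2k}$ — all of which match the motivic blueprint.
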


The proof of \cite[Prop. 4.1.37]{Gre12} now can be applied to show that $\underline{Q}^\infty_{-\infty}$ is a homotopical realization of the $C_2$-equivariant Singer construction of $\m_2^{C_2}$:

\begin{prop}
There is an isomorphism of $A^{C_2}$-modules
$$R_+^{C_2}(\m_2^{C_2}) \cong \Sigma^{1,0} H^{**}_{C_2,c}(\underline{Q}^\infty_{-\infty}).$$
\end{prop}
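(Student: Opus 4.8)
The plan is to transcribe the proof of \cite[Prop. 4.1.37]{Gre12} into the $C_2$-equivariant setting; the only genuinely new issue is bookkeeping over the larger coefficient ring $\m_2^{C_2}$. First I would pin down the underlying $\m_2^{C_2}$-module of $R_+^{C_2}(\m_2^{C_2})$. Since $B(n)$ is free as a right $A(n-1)$-module on $\{Sq^{2k}, Sq^{2k+1} : k \in \z,\ 2^n \mid k\}$, we have $B(n) \otimes_{A(n-1)} \m_2^{C_2} \cong \bigoplus \m_2^{C_2}$ on those generators; passing to the colimit over $n$ (the structure maps stabilize in each fixed bidegree), $R_+^{C_2}(\m_2^{C_2})$ is the free $\m_2^{C_2}$-module on $\{Sq^{2k} \otimes 1, Sq^{2k+1} \otimes 1 : k \in \z\}$, with $Sq^i \otimes 1$ in the bidegree of $Sq^i$.

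Next I would construct the comparison map. By the explicit description recalled above, $\Sigma^{1,0} H^{**}_{C_2,c}(\underline{Q}^\infty_{-\infty})$ is the free $\m_2^{C_2}$-module on the (shifted) monomials $\{x^\varepsilon y^j : \varepsilon \in \{0,1\},\ j \in \z\}$ subject to $x^2 = \rho x + \tau y$. A degree count produces a unique $\m_2^{C_2}$-linear, bidegree-preserving bijection $\phi$ matching these generators with the $Sq^{2k} \otimes 1$ and $Sq^{2k+1} \otimes 1$ above --- exactly the matching used in the motivic case --- and it remains only to check that $\phi$ intertwines the two left $A^{C_2}$-module structures (compatibility with the augmentation $\epsilon : R_+^{C_2}(\m_2^{C_2}) \to \m_2^{C_2}$ and with the bottom-cell class is then automatic).

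This last verification is where the real work is. The left $A^{C_2}$-action on $R_+^{C_2}(\m_2^{C_2}) = \colim_n B(n) \otimes_{A(n-1)} \m_2^{C_2}$ is left multiplication in $B(n)$, computed through the Adem relations in $A^{C_2}$, while the action on $H^{**}_{C_2,c}(\underline{Q}^\infty_{-\infty})$ is the explicit binomial-coefficient formula recorded in the preceding proposition. Because the $C_2$-equivariant dual Steenrod algebra is the $\r$-motivic dual Steenrod algebra base-changed along $\m_2^\r \hookrightarrow \m_2^{C_2}$, the Adem relations and coproducts --- hence all of Gregersen's module-structure identities from \cite[Sec. 4.1]{Gre12} --- hold verbatim after extension of scalars, so his calculation that these two actions agree under $\phi$ goes through unchanged.

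The main obstacle, a mild one, is making sure the $\m_2^{C_2}$-freeness statements above actually survive the passage to the larger ring: $\m_2^{C_2}$ carries the ``negative cone'' summand $\bigoplus_{s \geq 0} \frac{\f_2[\tau]}{\tau^\infty}\{\gamma/\rho^s\}$ absent from $\m_2^\r$, so one must confirm that $A(n)$, $B(n)$, $C(n)$ and the relevant quotients remain free $\m_2^{C_2}$-modules. This is precisely what Ricka's free-profile-function result \cite[Prop. 5.16]{Ric14} delivers (the profile functions defining these algebras were already checked to be free), and once all modules in sight are free over $\m_2^{C_2}$ the $\m_2^{C_2}$-linear generator-matching map $\phi$ is automatically an isomorphism of $\m_2^{C_2}$-modules; nothing in the negative cone can obstruct it, and the argument reduces to the $A^{C_2}$-linearity check of the previous paragraph.
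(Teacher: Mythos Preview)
Your proposal is correct and follows essentially the same approach as the paper: the paper simply states that the proof of \cite[Prop.~4.1.37]{Gre12} carries over, and what you have written is a careful unpacking of precisely that transcription, including the generator-matching isomorphism and the Adem-relation check. Your explicit attention to the negative cone via Ricka's free-profile-function result is exactly the ingredient the paper flags in the appendix introduction as the one substantive modification needed, so there is no gap.
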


Therefore by Theorem \ref{algc2singer}, we obtain an isomorphism 
$$\Tor^{A^{C_2}}_{*,*,*}(\mft^{C_2}_{**}, \Sigma^{1,0}H^{**}_{C_2}(\underline{Q}^\infty_{-\infty})) \cong \Tor^{A^{C_2}}_{*,*,*}(\mft^{C_2}_{**}, \mft^{C_2}_{**}).$$
The left-hand side is the $E_2$-page of the inverse limit of $RO(C_2)$-graded Adams spectral sequence converging to $\pi_{**}(\Sigma^{1,0} \underline{Q}^\infty_{\infty})$ and the right-hand side is the $E_2$-page of the $RO(C_2)$-graded Adams spectral sequence converging to $\pi_{**}(S^{0,0})$. Since this isomorphism is induced by a map of $C_2$-spectra $S^{0,0} \to \Sigma^{1,0} \underline{Q}^\infty_{-\infty}$, it fits into an isomorphism of spectral seuqences. We therefore obtain the desired $C_2$-equivariant analog of Lin's Theorem. 

\begin{thm}\label{Thm:Qui}
After $2$-completion, the inclusion of the $(0,0)$-cell
$$S^{0,0} \to \underline{Q}^\infty_{-\infty}$$
is an equivalence of $C_2$-spectra. 
\end{thm}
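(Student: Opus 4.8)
The plan is to follow the classical strategy of Lin and Davis--Mahowald--Adams, in the form adapted to the motivic setting by Gregersen: construct an algebraic Singer functor $R^{C_2}_+(-)$ on $A^{C_2}$-modules, prove it is a $\Tor$-equivalence over $A^{C_2}$, realize $\Sigma^{1,0}H^{**}_{C_2,c}(\underline{Q}^\infty_{-\infty})$ as $R^{C_2}_+(\m_2^{C_2})$, and then compare $RO(C_2)$-graded Adams spectral sequences. First I would set up the relevant subalgebras $A(n), B(n), C(n)$ of $A^{C_2}$ using Ricka's profile-function machinery \cite{Ric14}: one checks that the $C_2$-equivariant analogues of Gregersen's ideals $I(n)$ and $J(n)$ are given by minimal \emph{free} profile functions, so that $A(n)$ is free over $\m_2^{C_2}$, $A^{C_2}$ is free (indeed cofree as a comodule) over $A(n)$, and $B(n)$ is free as a left $A(n)$-module and as a right $A(n-1)$-module with the explicit bases $\{Sq^{2k} : 2^n\mid k\}$ and $\{Sq^{2k}, Sq^{2k+1} : 2^n \mid k\}$. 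Apart from invoking Ricka's freeness criterion in place of Gregersen's diagram chase, this is a transcription of \cite[Sec.\ 3.2]{Gre12} with $\m_p$ replaced by $\m_2^{C_2}$, using that the coproducts in the motivic and $C_2$-equivariant dual Steenrod algebras have the same shape.

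Next I would define $R^{C_2}_+(M) := \colim_n B(n)\otimes_{A(n-1)} M$ with its augmentation $\epsilon : R^{C_2}_+(M)\to M$ and prove Theorem \ref{algc2singer}, that $\epsilon$ is a $\Tor^{A^{C_2}}$-equivalence. Taking a free $A^{C_2}$-resolution of $M$ and applying $R^{C_2}_+$ levelwise (which is exact on free modules, by the explicit $A(n)$-module structure of $B(n)$) reduces this to the case $M$ free, where one shows $R^{C_2}_+(M)$ is $A^{C_2}$-flat and that $\m_2^{C_2}\otimes_{A^{C_2}}\epsilon$ is an isomorphism. This is where the first $C_2$-equivariant subtlety enters: $\m_2^{C_2}$ is strictly larger than $\m_2^\r$ because of the negative cone $\bigoplus_{s\ge 0}\tfrac{\f_2[\tau]}{\tau^\infty}\{\gamma/\rho^s\}$, so Gregersen's appeals to vanishing of $\m_p$ in a range must be replaced by direct verifications that $\m_2^{C_2}$ vanishes in the relevant bidegrees, read off from \cite[Prop.\ 2.13]{Ric14}.

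Then I would carry out the homotopical realization of Sections \ref{SS:A3}--\ref{SS:A4}: compute $H^{**}_{C_2}(Q^n)$ from \cite{HK01}, apply the $C_2$-equivariant Thom isomorphism to get $H^{**}_{C_2}(\underline{Q}^{n-k}_{-k})\cong \Sigma^{-2k,-k}\m_2^{C_2}[x,y]/(x^2+\rho x + \tau y, y^n)$, pass to the colimit in $n$ and the limit in $k$ to obtain the continuous cohomology $H^{**}_{C_2,c}(\underline{Q}^\infty_{-\infty})\cong \m_2^{C_2}[x,y,y^{-1}]/(x^2+\rho x+\tau y)$ with its $A^{C_2}$-action, and identify $\Sigma^{1,0}H^{**}_{C_2,c}(\underline{Q}^\infty_{-\infty})\cong R^{C_2}_+(\m_2^{C_2})$ as $A^{C_2}$-modules, following \cite[Prop.\ 4.1.37]{Gre12} verbatim. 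Combining with Theorem \ref{algc2singer} gives an isomorphism between the $E_2$-page of the inverse-limit $RO(C_2)$-graded Adams spectral sequence for $\Sigma^{1,0}\underline{Q}^\infty_{-\infty}$ and that of the $RO(C_2)$-graded Adams spectral sequence for $S^{0,0}$, and this isomorphism is induced by the inclusion of the $(0,0)$-cell $c : S^{0,0}\to \Sigma^{1,0}\underline{Q}^\infty_{-\infty}$, hence extends to an isomorphism of spectral sequences. Since a map inducing an isomorphism on Adams $E_2$-pages is an equivalence after $2$-completion (given convergence of both spectral sequences), $c$ is the asserted equivalence.

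The step I expect to be the main obstacle is controlling $\m_2^{C_2}$ — and in particular its negative cone — well enough to simultaneously (i) push through the $\Tor$-equivalence argument in the free case, where Gregersen relied on range vanishing that no longer holds, and (ii) guarantee convergence of the inverse-limit $RO(C_2)$-graded Adams spectral sequence for $\Sigma^{1,0}\underline{Q}^\infty_{-\infty}$, i.e.\ that no $\lim^1$ term or failure of conditional convergence in the negative cone obstructs the comparison, and that identifying the $E_2$-page of the limit with $\Tor^{A^{C_2}}(\m_2^{C_2},\Sigma^{1,0}H^{**}_{C_2,c}(\underline{Q}^\infty_{-\infty}))$ is exact. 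Everything else is a careful but essentially mechanical port of Gregersen's thesis to $C_2$-equivariant language.
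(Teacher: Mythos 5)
Your proposal follows the paper's argument essentially step for step: the same Ricka profile-function setup of $A(n)$, $B(n)$, $C(n)$, the same Singer construction and $\Tor$-equivalence via the free case (with the same caveat about checking vanishing of $\m_2^{C_2}$ directly rather than relying on Gregersen's range arguments), the same Thom-spectrum realization of $R^{C_2}_+(\m_2^{C_2})$, and the same Adams spectral sequence comparison to conclude. The approach is correct and matches the paper's.
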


\subsection{Homotopical realization via equivariant Betti realization}\label{SS:A4}

In the previous subsection, we proved a $C_2$-equivariant analog of Lin's Theorem where our analog of stunted projective spectra were constructed as Thom spectra of virtual bundles over a $C_2$-equivariant classifying space. We now present an alternative motivic-to-equivariant construction.

In \cite[Sec. 3.3]{MV99}, Morel and Voevodsky defined Betti realization functors
$$Re_B : \Motc \to \Spt \quad \text{and} \quad Re_{C_2} : \Motr \to \Sptc$$
which are induced by the functor which sends a scheme over $Spec(\c)$ or $Spec(\r)$ to its $\c$-points. For the first realization functor, they note that
$$Re_B(S^{1,0}) \cong Re_B(S^{1,1}) \cong S^1 \quad \text{and} \quad Re_B(BG) \cong B(G(\c))$$
for any smooth group scheme over $\c$. The non-equivariant Betti realization functor was studied further in \cite[Section 5]{Lev14}. In \cite{HO16}, Heller and Ormsby study the $C_2$-equivariant Betti realization functor. We recall some of their results below.

\begin{prop}\cite[Proposition 4.8]{HO16} There is a Quillen adjoint pair
$$Re_{C_2} : \Motr \leftrightarrows Sp^{C_2} : Sing^{C_2}_B.$$
Moreover, $Re_{C_2}$ is strong symmetric monoidal.
\end{prop}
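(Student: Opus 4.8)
The plan is to construct $Re_{C_2}$ at the level of simplicial presheaves on smooth $\r$-schemes and then verify the Quillen and monoidality properties by descending through the tower of Bousfield localizations defining $\Motr$. First I would fix the underlying point-set functor: send a smooth $\r$-scheme $X$ to its space of $\c$-points $X(\c)$ with the analytic topology, equipped with the $C_2$-action coming from $\mathrm{Gal}(\c/\r)$, i.e.\ complex conjugation. Left Kan extension along the Yoneda embedding prolongs this to all simplicial presheaves, and since the target (or its unstable precursor) is presentable, the resulting colimit-preserving functor automatically has a right adjoint $Sing^{C_2}_B$; the real content is that $Re_{C_2}$ is left Quillen.

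Next I would check left Quillenness in stages. For the injective model structure, $Re_{C_2}$ preserves colimits and monomorphisms, hence cofibrations, and it preserves objectwise weak equivalences of simplicial sets, hence the generating trivial cofibrations. For the Nisnevich-local structure, it suffices to see that $Re_{C_2}$ carries elementary distinguished squares to homotopy cocartesian squares of $C_2$-spaces; this rests on the fact that an étale map of $\r$-schemes realizes to a $C_2$-equivariant local homeomorphism. For the $\a^1$-local structure, the key geometric input is $Re_{C_2}(\a^1_\r) = \c$ with conjugation, which is $C_2$-equivariantly contractible (linearly, toward the origin), so $\a^1$-local equivalences are sent to equivalences. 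Passing to spectra requires the computation $Re_{C_2}(\p^1_\r) \simeq S^{1+\sigma} = S^\rho$, which follows from $Re_{C_2}(S^{1,0}) = S^1$ with trivial action and $Re_{C_2}(\g_{m,\r}) = \c^\times \simeq S^\sigma$; since $S^\rho$ is invertible in $\Sptc$, the $\p^1$-stabilization realizes to the $S^\rho$-stabilization and $Re_{C_2}$ prolongs to a left Quillen functor $\Motr \to \Sptc$ on the stable model structures.

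For strong symmetric monoidality, on schemes one has $(X \times_{\Spec \r} Y)(\c) = X(\c) \times Y(\c)$ as topological $C_2$-spaces with the diagonal action, so the point-set functor is strong symmetric monoidal; this property is inherited by the left Kan extension, since Day convolution is computed by colimits that $Re_{C_2}$ preserves, and then by the stable category, whose monoidal structure is assembled from the smash product of pointed spaces together with inverting $S^\rho = Re_{C_2}(\p^1)$. Compatibility with the unit is immediate since $Re_{C_2}$ of the motivic sphere spectrum is the $C_2$-equivariant sphere spectrum.

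I expect the main obstacle to be the Nisnevich-local step: establishing that equivariant analytic realization sends Nisnevich distinguished squares to homotopy cocartesian squares of genuine $C_2$-spaces. The delicate point is that such squares combine open immersions, which realize to open inclusions, with étale maps, which realize to local homeomorphisms but not obviously to equivariant cofibrations, so one must upgrade the naive comparison to something respecting equivariant cofibrancy (e.g.\ $C_2$-CW structures and cofibrant replacement) in order to conclude that the resulting pushout is genuinely a homotopy pushout of $C_2$-spaces. Once this is in place, everything else is formal model-category bookkeeping together with the explicit low-dimensional realization computations above.
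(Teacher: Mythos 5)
First, note that the paper does not prove this proposition at all: it is quoted verbatim from Heller--Ormsby \cite[Prop.~4.8]{HO16}, so there is no in-paper argument to compare against. Your outline is, in substance, the construction carried out in the cited source (and, non-equivariantly, in Morel--Voevodsky and Dugger): left Kan extend $X \mapsto X(\c)^{an}$ with its conjugation action from the site, descend through the Nisnevich-local and $\a^1$-local Bousfield localizations using the realizations of distinguished squares and of $\a^1$, and stabilize using $Re_{C_2}(\p^1) \simeq S^\rho$, which is invertible in $\Sptc$; monoidality is inherited from the site-level identity $(X\times_{\Spec\r}Y)(\c) = X(\c)\times Y(\c)$ through Day convolution and stabilization. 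Your identification of the Nisnevich step as the delicate one is also right, and the standard way to finish it is the one you gesture at: a homotopy pushout of (nice) genuine $C_2$-spaces is detected on underlying spaces and on fixed points, and the fixed points of $X(\c)$ are $X(\r)$, so one runs the excision/local-homeomorphism argument once for the complex-analytic square and once for the real-analytic square.

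There is one step that fails as written: the appeal to the \emph{injective} model structure. The left Kan extension of a site-level functor does not in general preserve monomorphisms of simplicial presheaves, nor objectwise weak equivalences, so ``preserves colimits and monomorphisms, hence cofibrations'' and ``preserves objectwise weak equivalences, hence the generating trivial cofibrations'' are both unjustified for the injective structure. The standard fix---and the one used in \cite{HO16}---is to start from the \emph{projective} (objectwise) model structure, whose generating (trivial) cofibrations are of the form $X \otimes \partial\Delta^n \to X\otimes\Delta^n$ (resp.\ $X\otimes\Lambda^n_k \to X\otimes\Delta^n$) for representable $X$; these visibly realize to (trivial) cofibrations of $C_2$-spaces, and then the localization and stabilization steps proceed exactly as you describe. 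With that substitution your argument goes through.
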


Heller and Ormsby go on to identify the image of equivariant Betti realization for $\p^1$-suspension spectra. We restrict to the case where the ground field is $\r$ below, but their result holds for any field with a real embedding.

\begin{prop}\label{rss}\cite[Lemma 4.14]{HO16} For any $\r$-scheme $X$, the natural map
$$\l Re_{C_2}(\Sigma^\infty X_+) \to \Sigma^\infty X(\c)^{an}_+$$
is an isomorphism in $SH_{C_2}$. Here $\l(-)$ denotes the left-derived functor of the left Quillen functor $Re_{C_2}$, $\Sigma^\infty$ is the $\p^1$-suspension spectrum functor on the left-hand side and the $S^\rho$-suspension spectrum functor on the right-hand side, and $(-)^{an}$ indicates that we equip $X(\c)$ with the analytic topology. 
\end{prop}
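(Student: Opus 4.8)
This statement is due to Heller and Ormsby; the proposal is to reprove it by exploiting the fact that equivariant Betti realization is a homotopy-colimit-preserving functor carrying $\p^1$ to the regular representation sphere, so that it intertwines the two stabilization procedures on suspension spectra. First I would recall the construction: following Morel--Voevodsky and \cite[Sec. 4]{HO16}, $Re_{C_2} : \Motr \to \Sptc$ is the left adjoint in a Quillen pair, obtained by left Kan extending along the Yoneda embedding the assignment sending a smooth $\r$-scheme $X$ to the $C_2$-space $X(\c)$ with its analytic topology and $C_2$-action by $\mathrm{Gal}(\c/\r)$ (complex conjugation). Consequently its left derived functor $\l Re_{C_2}$ preserves homotopy colimits, and on a cofibrant motivic space of the form $X_+$ with $X$ smooth it computes $X(\c)^{an}_+$ on the point-set level.

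The geometric input is the identification $\l Re_{C_2}(\p^1) \simeq S^\rho$, the one-point compactification of the regular representation $\rho$ of $C_2$. Indeed $\p^1(\c) = \c\p^1 \cong S^2$ with complex conjugation, and under the decomposition $\c \cong \r\{1\} \oplus \r\{i\}$ conjugation acts by $+1$ and $-1$, so $S^2 \cong S^{\r \oplus \r\sigma} = S^\rho$; equivalently this is recorded in \cite{HO16} via $Re_{C_2}(S^{1,0}) = S^1$ and $Re_{C_2}(S^{1,1}) = S^\sigma$, hence $Re_{C_2}(\p^1) = Re_{C_2}(S^{2,1}) = S^\rho$. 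Since $\l Re_{C_2}$ is strong symmetric monoidal \cite[Prop. 4.8]{HO16}, it carries the $\otimes$-invertible object $\p^1$ to the $\otimes$-invertible object $S^\rho$.

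Passing to spectra: $SH_\r$ is the universal homotopy-colimit-preserving recipient of motivic spaces in which $\p^1$ becomes invertible, and $SH_{C_2}$ receives $C_2$-spaces with $S^1$ and $S^\rho$ inverted (these generate all representation spheres up to the units $S^\epsilon$); since $\l Re_{C_2}$ preserves homotopy colimits and sends $S^1 \mapsto S^1$, $\p^1 \mapsto S^\rho$, it factors essentially uniquely through the stabilizations, and the resulting functor $SH_\r \to SH_{C_2}$ is the stable $\l Re_{C_2}$. Concretely, modelling $\Sigma^\infty X_+$ by the $\p^1$-spectrum $(X_+, \p^1 \wedge X_+, (\p^1)^{\wedge 2}\wedge X_+, \ldots)$ and applying $Re_{C_2}$ levelwise, level $n$ becomes $(S^\rho)^{\wedge n} \wedge X(\c)^{an}_+$ with structure maps the realizations of the motivic ones, which is exactly the $S^\rho$-suspension spectrum of $X(\c)^{an}_+$; comparing this with the canonical transformation induced by adjunction from the unstable identification yields the asserted equivalence.

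The main obstacle is the bookkeeping in the stabilization step: one must ensure $\Sigma^\infty X_+$ is modelled cofibrantly enough that $\l Re_{C_2}$ is computed levelwise without further cofibrant replacement, that $Re_{C_2}$ genuinely commutes with the relevant homotopy smash products and filtered homotopy colimits used to identify each level, and that the levelwise identifications are compatible with the structure maps so as to assemble into an equivalence of $S^\rho$-spectra, not merely a levelwise equivalence. All of this is carried out in \cite[Sec. 4]{HO16}; crucially, none of it requires any $C_2$-equivariant homotopy computation, only the formal properties of left Quillen functors together with the identification $Re_{C_2}(\p^1) \simeq S^\rho$.
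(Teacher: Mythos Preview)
The paper does not prove this proposition; it is simply quoted from \cite[Lemma 4.14]{HO16} without argument. Your proposal is a correct outline of exactly the proof Heller and Ormsby give: $Re_{C_2}$ is left Quillen and strong symmetric monoidal, sends $\p^1$ to $S^\rho$, and therefore intertwines $\p^1$-stabilization with $S^\rho$-stabilization, reducing the claim to the unstable identification $Re_{C_2}(X_+) \simeq X(\c)^{an}_+$. Since there is no proof in the present paper to compare against, and your sketch matches the cited source, there is nothing further to add.
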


Analyzing symmetric powers and using equivariant and motivic analogs of the Dold-Thom theorem proves the following. 

\begin{thm}\label{coh}\cite[Theorem 4.17]{HO16} Let $\Lambda$ be an abelian group. There is an isomorphism in $SH_{C_2}$
$$\l Re_{C_2}(H \Lambda) \cong H \underline{\Lambda}$$.
\end{thm}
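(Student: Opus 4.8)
The approach is to realize the motivic Eilenberg--MacLane spectrum through the Dold--Thom theorem and to exploit that $Re_{C_2}$ is a strong symmetric monoidal left Quillen functor, so that $\l Re_{C_2}$ commutes with the symmetric powers and colimits used to assemble $H\Lambda$. First I would fix Voevodsky's symmetric-product model for $H\Lambda \in \Motr$: the $\p^1$-spectrum whose $n$-th space is the group completion, with $\Lambda$-coefficients, of the commutative monoid $\coprod_{m \geq 0} \mathrm{Sym}^m\big((\p^1_\r)^{\wedge n}\big)$, where $\mathrm{Sym}^m(Y) := Y^{\wedge m}/\Sigma_m$; by the Suslin--Voevodsky comparison this space represents motivic cohomology, i.e.\ is motivically equivalent to $K(\Lambda(n),2n)$. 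After passing to cofibrant models I would apply $\l Re_{C_2}$ level by level. By Proposition~\ref{rss}, $\l Re_{C_2}$ sends the motivic Tate sphere $\p^1_\r$ to the conjugation sphere $\c\p^1 = S^\rho = S^{2,1}$, and since $\l Re_{C_2}$ is strong symmetric monoidal and preserves colimits (being a left adjoint), it carries $\mathrm{Sym}^m\big((\p^1_\r)^{\wedge n}\big)$ to the $C_2$-equivariant symmetric power $\mathrm{Sym}^m\big((S^\rho)^{\wedge n}\big)$ and commutes with the $\Lambda$-group completion and the colimit over $m$.

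The next step identifies the realized space. The equivariant Dold--Thom theorem (dos Santos, Lima-Filho--dos Santos) identifies, functorially in a representation $V$, the $\Lambda$-coefficient group completion of $\coprod_{m} \mathrm{Sym}^m(S^V)$ with the equivariant Eilenberg--MacLane space $K(\underline{\Lambda},V)$ of the constant Mackey functor $\underline{\Lambda}$. Applying this with $V = n\rho$ shows that the levelwise realization of the motivic model for $H\Lambda$ is exactly the standard $S^\rho$-spectrum model for $H\underline{\Lambda}$, and the bonding maps agree because $\l Re_{C_2}$ is strong monoidal. Passing from $\Lambda = \z$ to a general abelian group is routine --- one may carry $\Lambda$-coefficients throughout, or reduce to $\z$ and $\z/n$ and then use filtered colimits --- and monoidality of $\l Re_{C_2}$ upgrades the equivalence to a map of ring spectra when $\Lambda$ is a ring.

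The main obstacle is homotopical bookkeeping rather than any deep geometric input. Because $Re_{C_2}$ is only a \emph{left} Quillen functor, strict symmetric powers are not homotopy-invariant, so one must work with suitable cofibrant (and $\Sigma_m$-equivariantly cofibrant) models and verify that $\l Re_{C_2}$ commutes with the relevant homotopy colimits --- quotients by finite groups, group completion, and the colimit over $m$ --- up to coherent equivalence; this compatibility, with Proposition~\ref{rss} furnishing the base case for suspension spectra of smooth $\r$-schemes, is the technical core of the argument, and is exactly what is carried out in \cite{HO16}. A secondary point is the bigraded compatibility: one checks $\l Re_{C_2}(S^{1,0}) \simeq S^1$ and $\l Re_{C_2}(S^{1,1}) \simeq S^\sigma$, so that motivic weight realizes to $\sigma$-weight and the Dold--Thom identifications over $\r$ and over $C_2$ line up degree for degree.
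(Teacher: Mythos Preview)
Your proposal is correct and matches the paper's approach: the paper does not give its own proof but simply cites \cite[Theorem 4.17]{HO16} with the one-line hint ``Analyzing symmetric powers and using equivariant and motivic analogs of the Dold--Thom theorem proves the following,'' and your sketch is precisely an expansion of that hint into the Heller--Ormsby argument.
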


Our goal is to combine these results with some results of Gregersen to show that the equivariant Betti realizations of the stunted $\r$-motivic lens spaces $\underline{L}^\infty_{-n}$ provide a topological realization of the $C_2$-equivariant Singer construction. We will need the following result.

\begin{lem}\cite[Lemma 4.1.2]{Gre12} The motivic spaces $L^n := (\a^n \setminus 0)/\mu_2$ are represented by smooth schemes, where $\mu_2$ acts on $\a^n \setminus 0$ by inversion. 
\end{lem}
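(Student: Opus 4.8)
The plan is to construct $L^n$ explicitly by gluing affine quotient charts and then read off smoothness directly; the only other ingredient needed is the elementary observation that in characteristic zero the $\mu_2$-action is free.

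First I would note that $\mu_2 = \{\pm 1\}$ acts on $\a^n \setminus 0$ by $x \mapsto -x$, and that this action is free: a fixed point would satisfy $2x = 0$, hence $x = 0$, contradicting $x \in \a^n \setminus 0$ (recall $\operatorname{char}(k) = 0$). Since the standard affine opens $U_i = \{x_i \neq 0\} \subseteq \a^n \setminus 0$ are $\mu_2$-invariant and cover $\a^n \setminus 0$, it suffices to form the quotients $U_i/\mu_2$ and glue them. An element of $\co(U_i) = k[x_1, \dots, x_n][x_i^{-1}]$ is $\mu_2$-invariant exactly when every monomial occurring in it has even total degree, so setting $t = x_i^2$ and $y_j = x_j/x_i$ for $j \neq i$ one checks that $\co(U_i)^{\mu_2} = k[t^{\pm 1}, y_j : j \neq i]$, whence $U_i/\mu_2 \cong \g_m \times \a^{n-1}$. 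A routine check of the transition functions on the overlaps $U_i \cap U_j$ shows that these charts glue to a scheme $L^n$ equipped with a finite flat degree-two map $q : \a^n \setminus 0 \to L^n$ which is the (geometric) quotient.

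Smoothness of $L^n$ is then immediate from either viewpoint: each chart $\g_m \times \a^{n-1}$ is smooth over $k$ of dimension $n$, so $L^n$ is smooth; equivalently, freeness of the action makes $q$ a $\mu_2$-torsor, hence finite étale in characteristic $\neq 2$, and smoothness descends along the étale surjection $q$ from the smooth source $\a^n \setminus 0$. The main (and essentially only) point requiring verification is the computation of the invariant subrings $\co(U_i)^{\mu_2}$ and the compatibility of the gluing data, which is a short exercise with invariant monomials; after that the statement follows formally.
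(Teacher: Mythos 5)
The paper does not actually prove this lemma; it cites it to \cite[Lemma 4.1.2]{Gre12} and moves on, so there is no in-paper argument to compare against. That said, your proof is correct and is the standard argument. The computation of the invariant subring on each $\mu_2$-invariant chart $U_i = \{x_i \neq 0\}$ checks out: an element of $k[x_1,\dots,x_n][x_i^{-1}]$ is invariant under $x \mapsto -x$ exactly when each monomial has even total degree, and in the variables $t = x_i^2$, $y_j = x_j/x_i$ one gets $\co(U_i)^{\mu_2} = k[t^{\pm 1}, y_j : j \neq i]$, so $U_i/\mu_2 \cong \g_m \times \a^{n-1}$ is smooth. The transition maps $t_j = (y_j^{(i)})^2 t_i$, $y_k^{(j)} = y_k^{(i)}/y_j^{(i)}$ are regular on the overlaps, so the charts glue to a smooth scheme, and the map $q$ is a $\mu_2$-torsor (hence finite \'etale in characteristic $\neq 2$, consistent with the paper's standing assumption of characteristic zero). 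Both of your smoothness arguments are valid; note only that the descent argument already presupposes that the quotient exists as a scheme, which is exactly what the chart construction supplies (orbits are contained in the affine opens $U_i$), so the two observations are complementary rather than independent. A slightly more global way to package the same statement, and one closer in spirit to how $L^n$ is used as a model for the geometric classifying space $B_{gm}\mu_2$, is to observe that $\a^n \setminus 0 \to \p^{n-1}$ is a $\g_m$-torsor and $\mu_2 \subset \g_m$ acts through it, so $L^n = (\a^n\setminus 0)/\mu_2 \to \p^{n-1}$ is the $\g_m$-torsor associated to $\co_{\p^{n-1}}(-2)$ (equivalently, the complement of the zero section of that line bundle), whence smoothness and quasi-projectivity are immediate; this perspective also makes the identification of $H^{**}(L^\infty)$ with the expected polynomial algebra transparent. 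Either route is fine for the purposes of the paper.
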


We will denote the equivariant Betti realization of $L^n$ by $R^n$. By Proposition \ref{rss}, we have
$$R^n = \l Re_{C_2}(L^n) \simeq L^n(\c)^{an}$$
where $(-)^{an}$ indicates that we are using the analytic topology. Define $R^\infty := \colim_{n \to \infty} R^n$. 

\begin{lem}
There is an equivalence in $SH_{C_2}$
$$L^n(\c)^{an} \simeq Q^n.$$
\end{lem}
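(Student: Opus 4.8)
The plan is to identify both $L^n(\c)^{an}$ and $Q^n$ with the orbit space $S(\c^n)/\mu_2$, where $\c^n$ carries the $C_2$-action by coordinatewise complex conjugation and $\mu_2$ acts freely by $v \mapsto -v$. As a real $C_2$-representation one has $\c^n \cong n(1 \oplus \sigma)$, with $1$ the trivial and $\sigma$ the sign representation, so $S(\c^n) \cong S(n + n\sigma)$; the conjugation and the $v\mapsto -v$ actions commute, $\mu_2$ acts freely on the $C_2$-space $S(\c^n)$, and $S(\c^n)/\mu_2$ is a finite $C_2$-CW complex.

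First I would identify $L^n(\c)^{an}$. Since $\mu_2$ acts freely on $\a^n \setminus 0$, the quotient $L^n = (\a^n \setminus 0)/\mu_2$ is a geometric quotient and $L^n(\c) = (\c^n \setminus \{0\})/\{\pm 1\}$, with the Galois $C_2$-action induced by coordinatewise conjugation on $\c^n\setminus\{0\}$. The radial homotopy $r_t(v) = (1-t)v + t\,v/\lVert v\rVert$ is a strong deformation retraction of $\c^n\setminus\{0\}$ onto $S(\c^n)$ that is equivariant for both the $\mu_2$-action (as $\lVert -v\rVert = \lVert v\rVert$) and the conjugation action (as $\lVert \bar v\rVert = \lVert v\rVert$), hence descends to a $C_2$-equivariant strong deformation retraction $L^n(\c)^{an} \to S(\c^n)/\mu_2$. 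Thus $L^n(\c)^{an} \simeq_{C_2} S(\c^n)/\mu_2$.

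Next I would identify $Q^n$. The infinite sphere $S(\infty\c) = \colim_n S(\c^n)$ is $C_2$-equivariantly contractible: its underlying space is $S^\infty$ and its $C_2$-fixed points are $S(\infty\cdot\c^{C_2}) = S(\infty\cdot 1) = S^\infty$, both contractible. On $S(\infty\c)$ the group $\mu_2$ acts freely by $v\mapsto -v$, commuting with conjugation. Regarding $\mu_2$ and $C_2$ together as $\mu_2\times C_2$ with generators $a$ of $\mu_2$ and $b$ of $C_2$, the fixed-point computations $S(\infty\c)^{\langle a\rangle} = \emptyset$, $S(\infty\c)^{\langle b\rangle} = S(\infty\cdot\r) \simeq *$, and $S(\infty\c)^{\langle ab\rangle} = S(\infty\cdot i\r) \simeq *$ show that $S(\infty\c)$ is the universal space for the family $\{e, C_2, \Delta\}$ of subgroups $H$ with $H\cap\mu_2 = e$; hence $S(\infty\c)/\mu_2$ is a model for $B_{C_2}\mu_2$, filtered by the subspaces $S(\c^n)/\mu_2$. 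The $n$-th stage of this filtration is (a model for) $Q^n$; as a check, $H^{**}_{C_2}(S(\c^n)/\mu_2) \cong \m_2^{C_2}[x,y]/(x^2 + \rho x + \tau y,\ y^n)$ with $|x| = (1,1)$ and $|y| = (2,1)$, in agreement with \cite[Thm. 6.22]{HK01}. Combining the two identifications gives a chain of $C_2$-homotopy equivalences $L^n(\c)^{an} \simeq_{C_2} S(\c^n)/\mu_2 \simeq_{C_2} Q^n$ between $C_2$-spaces of the homotopy type of $C_2$-CW complexes; passing to suspension $C_2$-spectra yields the asserted equivalence in $SH_{C_2}$.

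The main obstacle is purely organizational: one must match the chosen geometric model $S(\infty\c)/\mu_2$ of $B_{C_2}\mu_2$ (and its skeletal filtration) with the definition of $Q^n$ coming from \cite{HK01}, and track the $C_2$-representation $\c^n \cong n(1\oplus\sigma)$ carefully through Betti realization so that the bidegrees of the cohomology generators land as $(1,1)$ and $(2,1)$. Beyond the equivariant contractibility of $S(\infty\c)$ and the equivariant radial deformation retraction onto the sphere, there is no nontrivial homotopy-theoretic input.
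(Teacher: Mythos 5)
Your proof is correct and follows essentially the same approach as the paper, which in its terse form observes that $Q^n$ is defined by direct analogy with Voevodsky's filtration giving $L^n$, and that equivariant Betti realization is compatible with that filtration. Your more detailed argument --- explicitly identifying both $L^n(\c)^{an}$ and $Q^n$ with $S(\c^n)/\mu_2$ via the $C_2$-equivariant radial deformation retraction and the universal-space characterization of $B_{C_2}\mu_2$ --- is a careful unpacking of what that analogy amounts to geometrically, not a different route.
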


\begin{proof}
The $C_2$-equivariant spaces $Q^n$ are defined by analogy with the definition of the motivic spaces $L^n$ in \cite[Section 6]{Voe03}. Since $\a^1_\r(\c) \cong \c$ and equivariant Betti realization is compatible with taking skeleta, we see that the equivariant Betti realization of $L^n$ is $Q^n$. 
\end{proof}

We now define another model for $C_2$-equivariant stunted projective spaces. Recall that for $n>0$, Gregersen defines
$$\underline{L}^{n-k}_{-k} := \Sigma^{-kn}_{\p^1} Th(k\epsilon^n, k\gamma^1_{n-1}),$$
where if $\eta \hookrightarrow \xi$ is an inclusion of vector bundles, then 
$$Th(\xi, \eta) := \dfrac{E(\xi)}{E(\xi) \setminus E(\eta)}.$$
Define $\underline{R}^{n-k}_{-k} := \l Re_{C_2}(\underline{L}^{n-k}_{-k})$. Since left derived functors preserve homotopy colimits, we have
\begin{align*}
\underline{R}^{n-k}_{-k} &:= \l Re_{C_2}(\underline{L}^{n-k}_{-k}) \\
&= \l Re_{C_2}(\Sigma^{-kn}_{\p^1} Th(k\epsilon^n, k\gamma^1_{n-1})) \\
&= \l Re_{C_2}((\p^1)^{-kn} \wedge Th(\l Re_{C_2}(k\epsilon^n), \l Re_{C_2}(k\gamma^1_{n-1})) \\ 
&= \Sigma^{-2kn,-kn} Th(k \epsilon^n, k\gamma^1_{n-1})
\end{align*}
where in the bottom line, $\epsilon$ and $\gamma^1_{n-1}$ denote the trivial and tautological Real vector bundles studied in \cite{HK01}. We can compute the Bredon cohomology of these $C_2$-spectra from the motivic cohomology $H^{**}(\underline{L}^{n-k}_{-k})$, which can be found in \cite[Proposition 4.1.24]{Gre12}.

\begin{lem}As modules over $\m^{C_2}_2$, there is an isomorphism
$$H^{**}_{C_2}(\underline{R}^{n-k}_{-k}) \cong \Sigma^{-2k,-k}\m^{C_2}_2[x,y]/(x^2 + ax + uy, y^n).$$
\end{lem}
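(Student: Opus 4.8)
The plan is to compute $H^{**}_{C_2}(\underline{R}^{n-k}_{-k})$ by transporting Gregersen's $\r$-motivic computation through equivariant Betti realization, exactly as in the displayed four-line calculation identifying $\underline{R}^{n-k}_{-k}$ with $\Sigma^{-2kn,-kn} Th(k\epsilon^n, k\gamma^1_{n-1})$. The key input is that $Re_{C_2}$ preserves Eilenberg--MacLane spectra (Theorem \ref{coh}/\cite[Thm. 4.17]{HO16}) and is strong symmetric monoidal, so it carries $H\underline{\f_2}$-cohomology computations over. Concretely, I would argue as follows.

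First, recall from \cite[Prop. 4.1.24]{Gre12} the $\r$-motivic identification
$$H^{**}(\underline{L}^{n-k}_{-k}) \cong \Sigma^{-2k,-k}\m_2^\r[x,y]/(x^2 + \rho x + \tau y, y^n)$$
as a module over $\m_2^\r$, with $|x|=(1,1)$, $|y|=(2,1)$. Second, apply the change-of-base-point principle: since $\underline{R}^{n-k}_{-k} = \l Re_{C_2}(\underline{L}^{n-k}_{-k})$ and $Re_{C_2}$ sends the motivic Eilenberg--MacLane spectrum $H\f_2$ to the genuine $C_2$-equivariant $H\mft$ (adjointness plus Theorem \ref{coh}), we get a natural isomorphism
$$H^{**}_{C_2}(\underline{R}^{n-k}_{-k}) \cong \m_2^{C_2} \otimes_{\m_2^\r} H^{**}(\underline{L}^{n-k}_{-k}).$$
This uses that $\underline{L}^{n-k}_{-k}$ is a finite motivic cell complex (it is a Thom spectrum over the finite-dimensional scheme $L^n$), so that $H^{**}(\underline{L}^{n-k}_{-k})$ is finitely generated free over $\m_2^\r$ in the relevant sense, and hence cohomology commutes with the realization functor and with the base change $\m_2^\r \to \m_2^{C_2}$. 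Tensoring the presentation above over $\m_2^\r$ with $\m_2^{C_2}$ (which is flat as an $\m_2^\r = \f_2[\tau,\rho]$-algebra in each fixed bidegree, or more simply: the presentation is by an $\m_2^\r$-free module on monomials $x^\delta y^j$, $\delta\in\{0,1\}$, $0\le j<n$) yields exactly
$$H^{**}_{C_2}(\underline{R}^{n-k}_{-k}) \cong \Sigma^{-2k,-k}\m_2^{C_2}[x,y]/(x^2 + \rho x + \tau y, y^n),$$
where I write $\rho$ for the image of the motivic $\rho$, which is the $C_2$-equivariant Euler class $a\in\pi_{-1,-1}^{C_2}$, and $\tau$ for the image of the motivic $\tau$, which is $u\in\pi_{0,-1}^{C_2}$ (in the notation of \cite{HK01}); this matches the $(a,u)$ notation used elsewhere in the excerpt.

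The main obstacle is the verification that cohomology genuinely commutes with $\l Re_{C_2}$ for these particular spectra, i.e. that $\l Re_{C_2}(\underline{L}^{n-k}_{-k} \wedge H\f_2) \simeq \underline{R}^{n-k}_{-k}\wedge H\mft$ and that taking $\pi_{**}$ commutes with the finite homotopy colimit defining the Thom spectrum. This is where finiteness of the cell structure of $\underline{L}^{n-k}_{-k}$ is essential: equivariant Betti realization need not commute with infinite homotopy limits (cf. the footnote to Definition \ref{ebrdef} and Corollary \ref{PhiC2}), but it does commute with the finite homotopy colimits (cofiber sequences, desuspensions by $\p^1$, Thom space constructions over finite skeleta) that build $\underline{L}^{n-k}_{-k}$, since $\l Re_{C_2}$ is a left adjoint. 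Once this is granted, the argument is a routine transport of Gregersen's presentation, and I would simply cite \cite[Prop. 4.1.24]{Gre12} together with \cite[Thm. 4.17, Lem. 4.14]{HO16} rather than reprove anything. (An alternative, entirely parallel route avoiding realization altogether: redo Gregersen's Thom-isomorphism computation directly over the base $\m_2^{C_2}$ using Lemma \ref{Lem:LCoh}'s $C_2$-analog — the cohomology of $Q^n$ from \cite[Thm. 6.22]{HK01} — but the realization argument is shorter since it reuses the established $\r$-motivic result verbatim.)
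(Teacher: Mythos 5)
Your approach --- transporting Gregersen's $\r$-motivic computation of $H^{**}(\underline{L}^{n-k}_{-k})$ through equivariant Betti realization --- is essentially the paper's. The paper's one-line proof ("similar to the previous lemma") leans on the preceding display identifying $\underline{R}^{n-k}_{-k} \simeq \Sigma^{-2kn,-kn}Th(k\epsilon^n, k\gamma^1_{n-1})$ as the Real Thom spectrum $\underline{Q}^{n-k}_{-k}$, whose Bredon cohomology was computed in the preceding subsection by translating the Thom-isomorphism proof of \cite[Prop.~4.1.24]{Gre12}; your parenthetical alternative (redo the Thom isomorphism directly over $\m_2^{C_2}$) is literally that argument. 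Your primary route, the base-change isomorphism $H^{**}_{C_2}(\underline{R}^{n-k}_{-k}) \cong \m_2^{C_2}\otimes_{\m_2^\r} H^{**}(\underline{L}^{n-k}_{-k})$, is a valid repackaging, but one caution: $\m_2^{C_2}$ is \emph{not} flat over $\m_2^\r$ (the negative cone summand is $\rho$- and $\tau$-torsion), so the isomorphism does not come for free from flatness; it is essential, as you correctly flag, that $H^{**}(\underline{L}^{n-k}_{-k})$ is finite and free over $\m_2^\r$. The cleanest way to close this step is to observe that freeness yields an $H\f_2^\r$-module splitting $\underline{L}^{n-k}_{-k}\wedge H\f_2^\r \simeq \bigvee_i \Sigma^{a_i,b_i}H\f_2^\r$, which the strong symmetric monoidal functor $Re_{C_2}$ (sending $H\f_2^\r$ to $H\mft$) carries to an $H\mft$-module splitting $\underline{R}^{n-k}_{-k}\wedge H\mft \simeq \bigvee_i \Sigma^{a_i,b_i}H\mft$; since the complex is finite, dualizing gives the asserted $\m_2^{C_2}$-module identification together with the relation $x^2 + ax + uy = 0$ and $y^n = 0$, inherited from the $\r$-motivic side.
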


\begin{proof}
The proof is similar to the proof of the previous lemma.
\end{proof}

We can now give a second definition of the $C_2$-equivariant analog of $P^\infty_{-k}$.

\begin{defin}\label{ebrdef}
Define
$$\underline{R}^\infty_{-k} := Re_{C_2}(\underline{L}^\infty_{-k}) \quad \text{and} \quad \underline{R}^\infty_{-\infty} := \lim_{k\to \infty} \underline{R}^\infty_{-k}.$$
\end{defin}

We opt to define $\underline{R}^\infty_{-\infty}$ as the above homotopy limit, instead of as the equivariant Betti realization of $\underline{L}^\infty_{-\infty}$, since it is not \emph{a priori} clear that equivariant Betti realization preserves homotopy limits. However, we will see below that in this case, these choices produce equivalent spectra.

Similar arguments to the above using equivariant Betti realization, the fact that left derived functors commute with homotopy colimits, and \cite[Proposition 4.1.32]{Gre12} allow us to compute the Bredon cohomology of these spectra.

\begin{prop}
As modules over $\m^{C_2}_2$, we have
$$H^{**}_{C_2} (\underline{R}^\infty_{-k}) \cong \Sigma^{-2k,-k} \m^{C_2}_2[x,y]/(x^2 + ax + uy),$$
$$H^{**}_{C_2, c}(\underline{R}^\infty_{-\infty}) \cong \m^{C_2}_2[x,y,y^{-1}]/(x^2 + ax + uy).$$
\end{prop}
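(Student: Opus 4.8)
The plan is to reduce the statement about $H^{**}_{C_2}(\underline{R}^\infty_{-k})$ and $H^{**}_{C_2,c}(\underline{R}^\infty_{-\infty})$ to the corresponding facts about the finite approximations $\underline{R}^{n-k}_{-k}$, exactly as Gregersen does in the motivic setting in \cite[Prop. 4.1.32]{Gre12}. First I would recall from Definition \ref{ebrdef} that $\underline{R}^\infty_{-k} := Re_{C_2}(\underline{L}^\infty_{-k})$ and that $\underline{L}^\infty_{-k} = \colim_{n\to\infty} \underline{L}^{n-k}_{-k}$ is a homotopy colimit of the finite stunted lens spectra. Since $Re_{C_2}$ is a left Quillen functor, its left-derived functor $\l Re_{C_2}$ commutes with homotopy colimits, so $\underline{R}^\infty_{-k} \simeq \colim_{n\to\infty} \underline{R}^{n-k}_{-k}$ where $\underline{R}^{n-k}_{-k} = \l Re_{C_2}(\underline{L}^{n-k}_{-k})$. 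The previous lemma in the excerpt already computes $H^{**}_{C_2}(\underline{R}^{n-k}_{-k}) \cong \Sigma^{-2k,-k}\m^{C_2}_2[x,y]/(x^2+\rho x+\tau y, y^n)$, so the first isomorphism follows by passing to the colimit in cohomology: because cohomology sends homotopy colimits to limits, I would instead use the (co)homology; the cleanest route is to note that the maps $\underline{R}^{n-k}_{-k} \to \underline{R}^{(n+1)-k}_{-k}$ induce the evident surjections killing $y^n$-filtration, so the inverse limit of the $H^{**}_{C_2}(\underline{R}^{n-k}_{-k})$ is $\Sigma^{-2k,-k}\m^{C_2}_2[x,y]/(x^2+\rho x+\tau y)$, and one checks the $\lim^1$ term vanishes since the tower is Mittag-Leffler (the maps are surjective in each bidegree).

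Next I would handle $H^{**}_{C_2,c}(\underline{R}^\infty_{-\infty})$. By Definition \ref{ebrdef}, $\underline{R}^\infty_{-\infty} = \lim_{k\to\infty} \underline{R}^\infty_{-k}$, and the collapse maps $\underline{R}^\infty_{-k} \to \underline{R}^\infty_{-(k-1)}$ (equivariant Betti realizations of the bundle maps $k\gamma \to (k+1)\gamma$) induce on cohomology the inclusions $\Sigma^{-2k,-k}\m^{C_2}_2[x,y]/(x^2+\rho x+\tau y) \hookrightarrow \Sigma^{-2(k-1),-(k-1)}\m^{C_2}_2[x,y]/(x^2+\rho x+\tau y)$ given by multiplication by $y$ (or by the Thom class shift). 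The \emph{continuous} cohomology $H^{**}_{C_2,c}(\underline{R}^\infty_{-\infty})$ is by definition $\colim_{k\to\infty} H^{**}_{C_2}(\underline{R}^\infty_{-k})$ over these maps; taking that filtered colimit over multiplication by $y$ yields precisely $\m^{C_2}_2[x,y,y^{-1}]/(x^2+\rho x+\tau y)$, which is the claimed answer. All of this is a verbatim $C_2$-equivariant transcription of the argument behind \cite[Prop. 4.1.32]{Gre12}, with $\m_2$ replaced by $\m^{C_2}_2$ throughout; the only new input is that $\l Re_{C_2}$ commutes with homotopy colimits (a left-adjoint property) and that it sends the relevant motivic spheres and Thom spaces to the expected $C_2$-spectra, both of which have been recorded above (Propositions \ref{rss} and the displayed computation of $\underline{R}^{n-k}_{-k}$).

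The main obstacle I anticipate is bookkeeping of gradings under the realization functor rather than any conceptual difficulty: one must verify that $\l Re_{C_2}$ sends $\Sigma^{a,b}$ to $S^{a-b}\wedge S^{b\sigma}$ consistently with the $RO(C_2)$-grading conventions fixed in the excerpt (regular representation $\rho$ in bidegree $(2,1)$), and that under this dictionary the classes $x,y$ — which come from the Thom isomorphism for the tautological Real line bundle $\gamma^1_{n-1}$ and have $|x|=(1,1)$, $|y|=(2,1)$ — land in the correct Bredon cohomological degrees, and that the relation $x^2+\rho x+\tau y=0$ is preserved (this uses $Re_{C_2}(\rho)=\rho$, $Re_{C_2}(\tau)=\tau$, which follows from Theorem \ref{coh} identifying $\l Re_{C_2}(H\f_2)$ with $H\mft$ and compatibility of the two with $\m_2^\r \hookrightarrow \m_2^{C_2}$). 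Once these identifications are in place, the colimit and limit computations are formal. A secondary technical point is the vanishing of the relevant $\lim^1$ terms when passing from the tower of finite approximations to $\underline{R}^\infty_{-k}$ and then to $\underline{R}^\infty_{-\infty}$; these follow from surjectivity of the tower maps in each fixed bidegree, using that $\m^{C_2}_2$ is bidegree-wise finite in the relevant range (from the explicit description in \cite[Prop. 2.13]{Ric14}).
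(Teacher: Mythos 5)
Your proposal is correct and follows essentially the same route as the paper, which likewise cites the commutation of $\l Re_{C_2}$ with homotopy colimits, the computation for the finite approximations $\underline{R}^{n-k}_{-k}$, and Gregersen's Proposition 4.1.32, together with the definition of continuous cohomology as the colimit over the collapse maps. Your added care about the Milnor $\lim^1$ term (handled by surjectivity of the tower in each bidegree) is a detail the paper leaves implicit; note only that, cohomology being contravariant, the induced inclusions run from $\Sigma^{-2(k-1),-(k-1)}\m^{C_2}_2[x,y]/(x^2+\rho x+\tau y)$ into $\Sigma^{-2k,-k}\m^{C_2}_2[x,y]/(x^2+\rho x+\tau y)$, which does not affect the identification of the colimit.
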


So far, we have not said anything about the $A^{C_2}$-module structure of $H^{**}_{C_2}(\underline{R}^\infty_{-k})$. We address this point in the following proposition. Intuitively, this proposition holds since equivariant Betti realization takes the cell in the motivic classifying space $B_{gm}\mu_2$ carrying $Sq^i \in A^\r$ to the cell in the $C_2$-equivariant classifying space $B_{C_2}\z/2$ carrying $Sq^i \in A^{C_2}$. 

\begin{prop}
Equivariant Betti realization preserves module structure over the Steenrod algebra for the motivic cohomology of schemes over $\r$. In particular, the action of $Sq^i \in A^{C_2}$ on $x,y \in H^{**}_{C_2}(\underline{R}^\infty_{-k})$ is identical to the action of $Sq^i \in A^{\r}$ on $u,v \in H^{**}_\r(\underline{L}^\infty_{-k})$. 
\end{prop}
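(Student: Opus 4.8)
The plan is to deduce the proposition from the fact that $Re_{C_2}\colon\Motr\to\Sptc$ is strong symmetric monoidal and carries $H\mft$ to $H\mft$, rather than by any cell-level comparison of Steenrod operations. First I would recall that the mod two Steenrod operations, both $\r$-motivically and $C_2$-equivariantly, are encoded in the $(A^\r)^\vee$- (resp.\ $(A^{C_2})^\vee$-) comodule structure on homology: for a motivic spectrum $X$ over $\r$ the coaction on $H_{**}(X)$ is induced by the natural map $H\mft\wedge X\to H\mft\wedge H\mft\wedge X$ obtained by smashing the unit $S^{0,0}\to H\mft$ into the middle factor, and likewise over $C_2$. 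Since $Re_{C_2}$ is strong symmetric monoidal \cite[Prop.~4.8]{HO16}, preserves the sphere and the unit, and sends $H\mft$ to $H\mft$ by Theorem \ref{coh}, applying $Re_{C_2}$ to this coaction map produces the $(A^{C_2})^\vee$-coaction map on $H_{**}^{C_2}(Re_{C_2}X)$, naturally in $X$. Thus $Re_{C_2}$ furnishes, for each $X$, a natural map $H_{**}^\r(X)\otimes_{\m_2^\r}\m_2^{C_2}\to H_{**}^{C_2}(Re_{C_2}X)$ intertwining the coactions, where on coefficients we use the identification $A^{C_2}_{**}\cong A^\r_{**}\otimes_{\m_2^\r}\m_2^{C_2}$ recorded above; when $H_{**}^\r(X)$ is free over $\m_2^\r$ this map is an isomorphism.

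The one point requiring care is to check that the ring map $A^\r_{**}\to A^{C_2}_{**}$ induced by $Re_{C_2}$ is precisely the canonical base-change, i.e.\ the identity on the Milnor generators $\xi_i,\tau_i$. This holds because both dual Steenrod algebras, together with their Hopf algebroid structure maps $\eta_L,\eta_R,\Delta$, are produced from $H\mft$ by the same construction; $Re_{C_2}$ carries $H\mft\wedge H\mft$ to $H\mft\wedge H\mft$ compatibly with these structure maps, and the classes $\xi_i,\tau_i$ are characterized by their images under them. One can also see this more concretely: $Re_{C_2}$ sends $B_{gm}\mu_2$ to $B_{C_2}\mu_2$ and the tautological bundle to the tautological bundle (this is exactly how $\underline{R}^\infty_{-k}$ is defined), and the induced map $H_{**}^\r(B_{gm}\mu_2)\to H_{**}^{C_2}(B_{C_2}\mu_2)$ is the base-change sending generators to generators, which already pins down the images of the classes dual to $x$ and $y$ and hence of all of $\xi_i,\tau_i$. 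Granting this, dualizing over $\m_2$ — which is harmless on the finite skeleta $\underline{L}^{n-k}_{-k}$, whose cohomology is finitely generated free in each bidegree — converts the comodule statement into the assertion that the $A^{C_2}$-module structure on $H^{**}_{C_2}(\underline{R}^{n-k}_{-k})$ is the base-change along $A^\r\to A^{C_2}$, $Sq^i\mapsto Sq^i$, of the $A^\r$-module structure on $H^{**}_\r(\underline{L}^{n-k}_{-k})$ computed in \cite[Prop.~4.1.24]{Gre12}.

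Finally I would pass to the limit. Left derived functors preserve homotopy colimits, so $\underline{R}^\infty_{-k}=\colim_n\underline{R}^{n-k}_{-k}$ and the module statement for $\underline{R}^\infty_{-k}$ follows from the skeletal case by taking the inverse limit of the corresponding cohomologies; the continuous cohomology $H^{**}_{C_2,c}(\underline{R}^\infty_{-\infty})=\colim_k H^{**}_{C_2}(\underline{R}^\infty_{-k})$ then inherits the base-changed $A^{C_2}$-module structure as well. Under the base-change isomorphism the motivic generators $u,v\in H^{**}_\r(\underline{L}^\infty_{-k})$ go to $x,y\in H^{**}_{C_2}(\underline{R}^\infty_{-k})$, so the displayed formulas of Lemma \ref{Lem:LCoh} transport verbatim, yielding the formulas recorded for $H^{**}(\underline{Q}^\infty_0)$ in Section \ref{SS:Qui} once $\underline{R}^\infty_{-k}\simeq\underline{Q}^\infty_{-k}$ is known. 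The main obstacle is the middle step — identifying the map on dual Steenrod algebras induced by $Re_{C_2}$ with the literal base-change; once that is settled, everything else is a formal consequence of $Re_{C_2}$ being monoidal and preserving $H\mft$.
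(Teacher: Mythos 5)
Your proposal is correct and follows essentially the same route as the paper: the paper also dualizes to the comodule statement, observes that the coaction is induced by smashing the unit of $H\mft$ into $X \wedge H\mft$, and applies $Re_{C_2}$ using that it is strong symmetric monoidal and preserves $H\mft$. The one addition you make — verifying that the induced map on dual Steenrod algebras is the literal base-change on Milnor generators — is a point the paper's proof leaves implicit, and your argument for it (via the Hopf algebroid structure maps and the behavior on classifying spaces) is a reasonable way to close that gap.
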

\begin{proof}
We will prove the dual statement, which says that equivariant Betti realization preserves comodules structures over the relevant dual Steenrod algebras. Consider the $A^\r_*$-coaction structure map
$$\nu : H^{\r}_{**}(X) \to H^{\r}_{**}(X) \otimes (A^\r_{**})^\vee.$$
This structure map is the induced map in homotopy for the map
$$S^{0,0} \wedge  X \wedge H\f_2^\r \to H\f_2^\r \wedge (X \wedge H\f_2^\r).$$
Applying equivariant Betti realization to both sides and using the fact that left derived functors commute with smash product (a homotopy colimit) produces the diagram
$$Re_{C_2}(\nu) : H\mft \wedge X(\c) \to H\mft \wedge (H\mft \wedge X(\c)).$$
Applying $\pi^{C_2}_{**}$ gives the $(A^{C_2})^\vee$-coaction on $H\mft_{**}(X(\c))$. 
\end{proof}

This strengthens the isomorphism above to give the following.

\begin{cor}
There is an isomorphism of $A^{C_2}$-modules
$$\Sigma^{1,0} H^{**}_{C_2,c}(\underline{R}^\infty_{-\infty}) \cong R_+(\m^{C_2}_2)$$
where the left-hand side is the continuous cohomology $H^{**}_{C_2,c}(\underline{R}^\infty_{-\infty}) := \colim_k H^{**}_{C_2}(\underline{R}^\infty_{-k}).$
\end{cor}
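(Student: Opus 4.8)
The plan is to bootstrap the isomorphism of $\m_2^{C_2}$-modules $\Sigma^{1,0}H^{**}_{C_2,c}(\underline{R}^\infty_{-\infty})\cong R_+(\m_2^{C_2})$ already obtained above into an isomorphism of $A^{C_2}$-modules by tracking the Steenrod action through equivariant Betti realization. There are two ingredients. On the algebraic side, the sub-Hopf-algebroids $A^\vee(n), B^\vee(n)$ of $A^{C_2}_{**}$ are, by construction, the base changes along $\m_2^\r\hookrightarrow\m_2^{C_2}$ of the corresponding $\r$-motivic objects (using the remark that $A^{C_2}_{**}$ is $A^\r$ base-changed along $\m_2^\r\hookrightarrow\m_2^{C_2}$); since base change is monoidal and commutes with the filtered colimit defining the Singer construction, one gets $R_+^{C_2}(\m_2^{C_2})\cong R_+(\m_2^\r)\otimes_{\m_2^\r}\m_2^{C_2}$ as $A^{C_2}$-modules. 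On the topological side, Heller--Ormsby's result \cite[Theorem 4.17]{HO16} that $Re_{C_2}$ carries $H\f_2^\r$ to $H\mft$, together with the preceding proposition that equivariant Betti realization preserves Steenrod module structure, gives $H^{**}_{C_2}(\underline{R}^\infty_{-k})\cong H^{**}_\r(\underline{L}^\infty_{-k})\otimes_{\m_2^\r}\m_2^{C_2}$ as $A^{C_2}$-modules; passing to the colimit over $k$ (which commutes with $\otimes_{\m_2^\r}\m_2^{C_2}$) identifies $H^{**}_{C_2,c}(\underline{R}^\infty_{-\infty})$ with $H^{**}_c(\underline{L}^\infty_{-\infty})\otimes_{\m_2^\r}\m_2^{C_2}$. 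Combining these with Gregersen's $A^\r$-module identification $R_+(\m_2^\r)\cong\Sigma^{1,0}H^{**}_c(\underline{L}^\infty_{-\infty})$ from \cite{Gre12} then yields the claimed $A^{C_2}$-module isomorphism.

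The route I would actually write out, to avoid fussing over base change of filtered colimits and of the Singer construction, is more hands-on: since the underlying $\m_2^{C_2}$-module isomorphism is in hand, it suffices to check that the two $A^{C_2}$-actions agree on the $\m_2^{C_2}$-module generators $x^\epsilon y^j e_0$ ($\epsilon\in\{0,1\}$, $j\in\z$). On the left-hand side this action is, by the proposition on $Re_{C_2}$, the realization of the $A^\r$-action of Lemma \ref{Lem:LCoh}; on the right-hand side, $R_+(\m_2^{C_2})$ was already identified with $\Sigma^{1,0}H^{**}_{C_2,c}(\underline{Q}^\infty_{-\infty})$ (via Theorem \ref{algc2singer} and the homotopical realization by equivariant Thom spectra), whose $A^{C_2}$-module structure was computed in the previous subsection. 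Both are given by the same Cartan-type formulas $Sq^{2i}(x^\epsilon y^j e_0)={2j\choose 2i}x^\epsilon y^{i+j}e_0$, $Sq^{2i+1}(y^je_0)=0$, and $Sq^{2i+1}(xy^je_0)={2j\choose 2i}y^{i+j+1}e_0$, so they coincide.

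The main obstacle here is bookkeeping rather than anything deep: one must be careful that the \emph{continuous} cohomology $H^{**}_{C_2,c}(\underline{R}^\infty_{-\infty})$, defined as $\colim_k H^{**}_{C_2}(\underline{R}^\infty_{-k})$ rather than as the cohomology of a homotopy limit, is the object carrying the realized $A^\r$-action, and that $Re_{C_2}$ interacts correctly with the colimits over the top dimension $n$ and the bottom dimension $k$. Since $Re_{C_2}$ is a left adjoint it commutes with the relevant homotopy colimits — this is exactly the input \cite[Lemma 4.14]{HO16} already used to compute the $\underline{R}^{n-k}_{-k}$ — so no genuinely new subtlety arises, and the $\tau,\rho$ in $\m_2^{C_2}$ are simply the images of the corresponding classes in $\m_2^\r$ (equivalently, the $a,u$ of the presentations).
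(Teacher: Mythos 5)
Your proposal is correct, and the second ("hands-on") paragraph is essentially the paper's proof. The paper gives only the contextual remark ``This strengthens the isomorphism above,'' but the intended argument is exactly what you spell out: the $\m_2^{C_2}$-module isomorphism was already in hand, the preceding proposition shows $Re_{C_2}$ transports the $A^\r$-module structure of $H^{**}_\r(\underline{L}^\infty_{-k})$ to the $A^{C_2}$-module structure of $H^{**}_{C_2}(\underline{R}^\infty_{-k})$, and the resulting Cartan-type formulas on the generators $x^\epsilon y^j e_0$ are the same as those identifying $\Sigma^{1,0}H^{**}_{C_2,c}(\underline{Q}^\infty_{-\infty})$ with $R_+^{C_2}(\m_2^{C_2})$ in Section \ref{SS:A3}. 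Your first paragraph (base-changing the entire Singer construction along $\m_2^\r\hookrightarrow\m_2^{C_2}$) is a reasonable alternative but requires justifying that $A^{C_2}(n)$, $B^{C_2}(n)$, and the tensor products $B(n)\otimes_{A(n-1)}(-)$ all commute with that base change; the explicit-formula route avoids this entirely, which is presumably why the paper organized the appendix around it.
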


As with $\underline{Q}^\infty_{-\infty}$, we may apply this corollary and the $C_2$-equivariant Adams spectral sequence to prove the following. 

\begin{thm}\label{C2Lin}
After $2$-completion, the inclusion of the $(0,0)$-cell
$$S^{0,0} \to \Sigma^{1,0} \underline{R}^\infty_{-\infty}$$
is an equivalence of $C_2$-spectra. 
\end{thm}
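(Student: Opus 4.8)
The plan is to run the argument proving Theorem~\ref{Thm:Qui}, with $\underline{Q}^\infty_{-\infty}$ replaced throughout by $\underline{R}^\infty_{-\infty}$. Inclusion of the $(0,0)$-cell defines a map of $C_2$-spectra $\iota\colon S^{0,0}\to \Sigma^{1,0}\underline{R}^\infty_{-\infty}$, and the claim is that $\iota$ is an equivalence after $2$-completion. By the corollary preceding the statement there is an isomorphism of $A^{C_2}$-modules $\Sigma^{1,0}H^{**}_{C_2,c}(\underline{R}^\infty_{-\infty})\cong R_+^{C_2}(\m^{C_2}_2)$, where $H^{**}_{C_2,c}(\underline{R}^\infty_{-\infty}):=\colim_k H^{**}_{C_2}(\underline{R}^\infty_{-k})$; unwinding the identifications, the map induced by $\iota$ on continuous Bredon cohomology is exactly the augmentation $\epsilon\colon R_+^{C_2}(\m^{C_2}_2)\to \m^{C_2}_2$ of the $C_2$-equivariant Singer construction.

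First I would set up the relevant $RO(C_2)$-graded Adams spectral sequences. Each $\underline{R}^\infty_{-k}=Re_{C_2}(\underline{L}^\infty_{-k})$ is bounded below, so it has a strongly convergent Adams spectral sequence with $E_2$-page $\Tor^{A^{C_2}}_{*,*,*}(\m^{C_2}_2,H^{**}_{C_2}(\underline{R}^\infty_{-k}))$. Since $\underline{R}^\infty_{-\infty}=\lim_k\underline{R}^\infty_{-k}$ is a homotopy limit along a tower, a $\lim^1$-argument in the style of Lin \cite{LDMA80} and Gregersen \cite{Gre12} --- using that $\Tor$ commutes with filtered colimits in the second variable --- identifies the Adams spectral sequence of $\underline{R}^\infty_{-\infty}$ as conditionally convergent with $E_2$-page $\Tor^{A^{C_2}}_{*,*,*}(\m^{C_2}_2,\Sigma^{1,0}H^{**}_{C_2,c}(\underline{R}^\infty_{-\infty}))$. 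Carrying this convergence bookkeeping out carefully is the step I expect to be the main obstacle: one must control the $\lim^1$ terms and check that the larger ring $\m^{C_2}_2$ (compared with $\m^\r_2$) introduces no pathologies in the relevant bidegrees. For the latter one inspects the explicit description of $\m^{C_2}_2$ in \cite[Prop.~2.13]{Ric14}, and the requisite finiteness statements descend from their $\r$-motivic counterparts in \cite{Gre12} because $Re_{C_2}$ is strong symmetric monoidal and preserves Eilenberg--MacLane spectra by \cite{HO16}.

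Second, Theorem~\ref{algc2singer} together with the corollary above shows that $\epsilon$ induces an isomorphism
\[
\Tor^{A^{C_2}}_{*,*,*}\bigl(\m^{C_2}_2,\Sigma^{1,0}H^{**}_{C_2,c}(\underline{R}^\infty_{-\infty})\bigr)\ \xrightarrow{\ \cong\ }\ \Tor^{A^{C_2}}_{*,*,*}\bigl(\m^{C_2}_2,\m^{C_2}_2\bigr),
\]
so $\iota$ induces an isomorphism on $E_2$-pages of Adams spectral sequences. A map of spectral sequences which is an isomorphism on $E_2$ is an isomorphism on every later page, hence on $E_\infty$; combined with convergence after $2$-completion, $\iota$ induces an isomorphism on $2$-complete homotopy groups and is therefore an equivalence of $C_2$-spectra after $2$-completion.

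Finally, I would remark that the theorem can alternatively be deduced directly from Theorem~\ref{Thm:Qui} and the $2$-complete equivalence $\underline{R}^\infty_{-\infty}\simeq\underline{Q}^\infty_{-\infty}$ of Section~\ref{SS:A4}; the Adams spectral sequence argument above is nonetheless the one that parallels Gregersen's motivic proof and displays the role of the Singer construction.
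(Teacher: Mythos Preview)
Your proposal is correct and follows essentially the same approach as the paper: identify the continuous cohomology with the Singer construction via the preceding corollary, invoke Theorem~\ref{algc2singer} to get an $E_2$-isomorphism of $C_2$-equivariant Adams spectral sequences, and conclude an equivalence after $2$-completion. The paper's proof is in fact just the one-line remark ``as with $\underline{Q}^\infty_{-\infty}$, we may apply this corollary and the $C_2$-equivariant Adams spectral sequence,'' so your discussion of the $\lim^1$ bookkeeping and the alternative deduction from Theorem~\ref{Thm:Qui} spells out details the paper leaves to the reader.
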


\bibliographystyle{alpha}
\bibliography{master}

\end{document}